\numberwithin{figure}{section}
\newtheorem{thm}{Theorem}[subsection]
\newtheorem{cor}[thm]{Corollary}
\newtheorem{lem}[thm]{Lemma}
\newtheorem{prop}[thm]{Proposition}
\theoremstyle{definition}
\newtheorem{defn}[thm]{Definition}
\theoremstyle{remark}
\newtheorem{conj}[thm]{Conjecture}
\newtheorem{rem}[thm]{Remark}
\newtheorem{example}[thm]{Example}
\newtheorem{prob}[thm]{Problem}
\numberwithin{equation}{section}
\numberwithin{figure}{subsection}
\newcommand{\eps}{\varepsilon}
\newcommand{\To}{\longrightarrow}
\newcommand{\Rr}{\mathbb{R}}
\newcommand{\reals}{\mathbb{R}}
\newcommand{\nats}{\mathbb{N}}
\newcommand{\ints}{\mathbb{Z}}
\newcommand{\crs}{\mathrm{cr}}
\newcommand{\sgn}{\mathrm{sgn~}}
\newcommand{\tw}{\mathrm{tw}}
\newcommand{\lk}{\mathrm{lk}}
\newcommand{\cs}{{ ~ \sharp ~}}
\newcommand{\poscrossing}{\hspace{.01cm} \xy (0,0)*{}="A";
(4,0)*{}="B"; (0,4)*{}="C"; (4,4)*{}="D"; (1.5,1.5)*{}="E";
(2.5,2.5)*{}="F"  ;"A"; "E" **\dir{-}; "F"; "D" **\dir{-}
?>*\dir{>}; "C"; "B"
**\dir{-} ?>*\dir{>};
\endxy \hspace{.2cm}}
\newcommand{\negcrossing}{\hspace{.01cm} \xy (0,0)*{}="A";
(4,0)*{}="B"; (0,4)*{}="C"; (4,4)*{}="D"; (1.5,2.5)*{}="E";
(2.5,1.5)*{}="F"  ;"C"; "E" **\dir{-}; "F"; "B" **\dir{-}
?>*\dir{>}; "A"; "D"
**\dir{-} ?>*\dir{>};
\endxy \hspace{.2cm}}
\newcommand{\negcrossingNoArrow}{\hspace{.01cm} \xy (0,0)*{}="A";
(4,0)*{}="B"; (0,4)*{}="C"; (4,4)*{}="D"; (1.5,2.5)*{}="E";
(2.5,1.5)*{}="F"  ;"C"; "E" **\dir{-}; "F"; "B" **\dir{-} ; "A"; "D"
**\dir{-} ;
\endxy \hspace{.2cm}}
\newcommand{\poscrossingNoArrow}{\hspace{.01cm} \xy (0,0)*{}="A";
(4,0)*{}="B"; (0,4)*{}="C"; (4,4)*{}="D"; (1.5,1.5)*{}="E";
(2.5,2.5)*{}="F"  ;"A"; "E" **\dir{-}; "F"; "D" **\dir{-}; "C"; "B"
**\dir{-};
\endxy \hspace{.2cm}}
\newcommand{\smilefrown}{\hspace{0.01cm} \xy (0,0)*{}="A"; (4,0)*{}="B"; (0,4)*{}="C";
(4,4)*{}="D"; "A";"B" **\crv{(2,2)}; "C";"D"
**\crv{(2,2)};
\endxy \hspace{.2cm}}
\newcommand{\smilefrownSmall}{\hspace{0.01cm} \xy (0,0)*{}="A"; (2,0)*{}="B"; (0,2)*{}="C";
(2,2)*{}="D"; "A";"B" **\crv{(1,1)}; "C";"D"
**\crv{(1,1)};
\endxy \hspace{.2cm}}
\newcommand{\infinityfig}{\hspace{0.01cm} \xy (0,0)*{}="A"; (2,0)*{}="B"; (0,2)*{}="C";
(2,2)*{}="D"; "A";"C" **\crv{(1,1)}; "B";"D"
**\crv{(1,1)};
\endxy \hspace{.2cm}}
\newcommand{\tanglea}{\includegraphics[scale = 0.5]{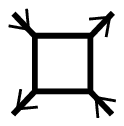}}
\newcommand{\tangleb}{\includegraphics[scale = 0.5]{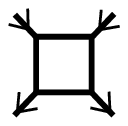}}
\renewcommand{\hrulefill}[1][]{\ifthenelse{\equal{#1}{}}%
  {\leaders\hrule\hfill}%
  {\leaders\hrule height #1\hfill}}
\begin{document}

AMS 57M25  \hrulefill[0pt]{} ISSN 0239-8885

{\center{ \vspace{1cm}  \thispagestyle{empty}

UNIWERSYTET WARSZAWSKI

 INSTYTUT MATEMATYKI

\vspace{2in}

J\'{o}zef H. Przytycki

Survey on recent invariants on classical knot theory

I. Conway Algebras

\vspace{4in}

\address{Department of Mathematics and Computer Science,
Warsaw University\footnote{This address added for e-print.
Actual current address:  George Washington University, Washington,
D.C., \emph{email:} przytyck@gwu.edu} PKiN IXp. 00-901 Warszawa, Poland}

Preprint 6/86

``na prawach r{\c e}kopisu''

\vspace{1cm}

Warszawa 1986

}}
\setcounter{page}{0}
\newpage

\section*{Introduction}
\numberwithin{figure}{section}

The purpose of this survey is to present a new combinatorial
method of constructing invariants of isotopy classes of tame
links. The period of time between the spring of 1984 and the
summer of 1985 was full of discoveries which revolutionized the
knot theory and will have a deep impact on some other branches of
mathematics.  It started by the discovery of Jones of the new
polynomial invariant of links (in May 1984; \cite{Jo_1},
\cite{Jo_2}) and the last big step (which will be described in
this survey) has been made by Kauffman \cite{K_5} in August 1985
when Kauffman applied his method which allowed him to unify almost
all previous work.  This survey is far from being complete, even
if we limit ourselves to the purely combinatorial methods and to
the period May, 1984 -- September, 1985.  (Since then, new
important results have been obtained.)  In particular, we don't
include the very interesting results of Lickorish and Millett
\cite{Li_M_2}, results which link some substitutions in the
Jones-Conway polynomial with old invariants of links.

The survey consists of five parts:  \begin{enumerate}

\item \emph{Diagrams of links and Reidemeister moves.}

This chapter makes the survey almost self-contained and makes it
accessible to non-specialists.

\item \emph{Conway algebras and their invariants of links.}

We consider in this chapter invariants of oriented links which have
the following striking common feature:  If $L_+$, $L_-$, and
$L_\circ$ are diagrams of oriented links which are identical, except
near one crossing point where they are as in Figure \ref{Fig L+ L-
L0}, then the value of the invariant for $L_+$ is uniquely
determined by the values of the invariant for $L_-$ and $L_\circ$,
and the value of the invariant for $L_-$ is uniquely determined by
the values of the invariant for $L_+$ and $L_\circ$.\medskip

\begin{figure}[htbp]

\centering

\includegraphics{L+L-L0.eps}

\caption{} \label{Fig L+ L- L0}

\end{figure}

We construct an abstract algebra (called the Conway algebra) which
formalizes the above approach.

\item \emph{Skein equivalence and properties of the invariants of Conway
type.}

We consider the properties of the invariants under reflection,
mutation, connected and disjoint sums of links.  We analyze closer
the Jones-Conway (HOMFLY) polynomial.

\item \emph{Partial Conway algebras.}

The generalization of Conway algebras, described in this chapter,
allows for the construction of new invariants of links; in
particular, a polynomial of infinitely many variables and a
supersignature.

\item \emph{Kauffman approach.}

The additional diagram $L_\infty$ (Figure \ref{FigLplus_Linfty})
allows the construction of a new link invariant (\cite{B_L_M} and
\cite{Ho}).

\begin{figure}[htbp]

\centering

\includegraphics{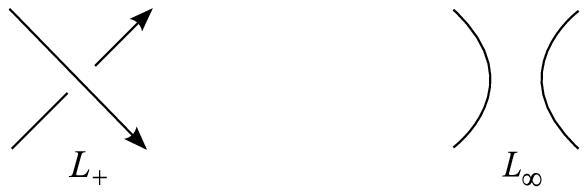}

\caption{} \label{FigLplus_Linfty}

\end{figure}

Kauffman uses regular isotopy of diagrams of links (instead of
isotopy) to build a polynomial invariant of links which generalizes
the previously known one.  We construct an algebraic structure
(Kauffman algebra) which allows us to describe invariants obtained
via the Kauffman method in a unified way.

\end{enumerate}

This survey is a detailed presentation of the eight lectures given
by the author at the University of Zaragoza in February of 1986. The
author would like to thank Jos\'{e} Montesinos for his exceptional
hospitality.

\section{Link diagrams and Reidemeister moves}
\numberwithin{figure}{section}

The classical knot theory studies the position of a circle (knots)
or of several circles (links) in $S^3$ (or $\Rr^3$).  We say that
two links $L_1$ and $L_2$ in $S^3$ are isotopic, written $L_1
\approx L_2$, if there exists an isotopy $F:S^3\times I \To S^3
\times I$ such that $F_0 =\mathrm{Id}$ and $F_1(L_1) = L_2$.  If
the links $L_1$ and $L_2$ are oriented, we assume additionally
that $F_1$ preserves orientations of the links.

We work all the time in the PL category; smooth category could be
considered equally well.  $S^3 = \Rr^3 \cup \infty$ and we can
always assume that the link omits $\infty$.  It is not difficult to
show that two links are isotopic in $\Rr^3$ if and only if they are
isotopic in $S^3$.  Links (up to isotopy) can be represented by
their diagrams on the plane.  Namely, let $p:\Rr^3 \To \Rr^2$ be a
projection and $L \subset \Rr^3$ a link.  A point $P\in p(L) \subset
\Rr^2$ whose preimage, $p^{-1}(P)$, contains more than one point is
called a multiple point.  A projection $p$ is called regular if
\begin{enumerate}

\item There are only finitely many multiple points, and all multiple
points are double points (called crossings), and

\item $P/L : L \to \Rr^2$ is the general position projection (in
some triangulation of $(\Rr^3, L)$ and $\Rr^2$, in which $P/L$ is
simplicial, no double points of $L$ are vertices).

\end{enumerate}

If, for a given regular projection of a link, all over-crossings
(bridges) at every crossing are marked, then the link can be
reconstructed from the projection.   The projection of the link with
just described additional information is called the diagram of the
link.

We call two diagrams equivalent (in oriented or unoriented category)
if they describe isotopic links.  The following theorem of
Reidemeister allows us to work entirely with diagrams.

\begin{thm}Two link diagrams are equivalent if and only if they are
connected by a finite sequence of Reidemeister moves, $\Omega^{\pm
1}_i$ $(i=1,2,3)$ see Figure \ref{Fig Reidemeister}.\end{thm}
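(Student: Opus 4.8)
The plan is to prove the two implications separately. The ``if'' direction is the easy one: each Reidemeister move $\Omega_i^{\pm1}$ $(i=1,2,3)$ is visibly induced by an ambient isotopy of $\Rr^3$ supported in a small ball that meets the link in the pictured elementary tangle, and a planar isotopy of a diagram (a modification that does not change its combinatorial type) is likewise induced by an ambient isotopy of $\Rr^3$. Composing finitely many such isotopies shows that two diagrams related by a finite sequence of Reidemeister moves represent isotopic links; when the links carry orientations, all the isotopies are chosen to preserve them, which costs nothing.

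For the ``only if'' direction, suppose $L_1 \approx L_2$ and work throughout in the PL category. I would first invoke the standard polygonal description of isotopy: an ambient isotopy of $S^3$ carrying $L_1$ onto $L_2$ can be replaced by a finite sequence of \emph{elementary ($\Delta$-) moves}, where one such move deletes an edge $[AB]$ of the polygonal link and replaces it by $[AC]\cup[CB]$ for some point $C$ with the triangle $T=ABC$ satisfying $T\cap L=[AB]$, or performs the inverse operation. (This follows from general position together with the PL isotopy extension theorem.) Since a diagram obtained from $L_1$ by a sequence of Reidemeister moves, then further so modified, is again Reidemeister-equivalent to $L_1$, it suffices to handle a single $\Delta$-move.

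So fix a $\Delta$-move on a triangle $T=ABC$ with $T\cap L=[AB]$ and fix the projection $p$. After a small perturbation of the direction of projection --- which alters each of $L_1$, $L_2$ only by Reidemeister moves --- I may assume that $p$ is regular on the whole configuration $L\cup T$, that $p$ restricted to $T$ is injective, and that the interior of $p(T)$ contains no vertex of $p(L)$. Because $T^\circ\cap L=\emptyset$, every strand of $L$ whose projection crosses $p(T)$ runs entirely above $T$ or entirely below it. Now I would subdivide $T$ into finitely many sub-triangles, chosen small enough that the $\Delta$-move across each one effects only a \emph{local} change of the diagram: pushing the moving edge across a single strand (passing above or below), across a single crossing of two other strands, across a single vertex of $p(L)$, or across empty space. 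Producing such a subdivision is an elementary general-position argument.

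Finally I would verify, case by case, that each of these local moves is a finite composition of $\Omega_1^{\pm1}$, $\Omega_2^{\pm1}$, $\Omega_3^{\pm1}$ and planar isotopies: sliding the edge past a single strand is realized by $\Omega_2^{\pm1}$; sliding it past a crossing is $\Omega_3^{\pm1}$, set up with $\Omega_2^{\pm1}$; sliding past a vertex or through empty space is a planar isotopy; and the degenerate sub-triangles that actually create or delete a small loop account for $\Omega_1^{\pm1}$. Reassembling these pieces expresses the $\Delta$-move, and hence the original isotopy, as a finite sequence of Reidemeister moves. The main obstacle is the bookkeeping in this last step: one must track, through the subdivision, which strands lie above and which below the moving triangle, so that the over/under data of the diagram is reproduced correctly at every stage; the preliminary general-position reductions are routine but must be arranged so as not to disturb this.
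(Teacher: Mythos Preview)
Your argument is the standard and correct proof of Reidemeister's theorem via PL $\Delta$-moves, subdivision of the moving triangle, and case analysis of the local pictures. The paper itself does not prove this theorem at all: it is stated in Section~1 as a classical result of Reidemeister, with a reference to the Reidemeister moves figure, and then immediately used as a black box throughout the rest of the survey. So there is no ``paper's own proof'' to compare against; your proposal simply supplies what the paper omits, and does so along the traditional lines one finds in Reidemeister's \emph{Knotentheorie} or in standard textbook treatments.
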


\begin{figure}[htbp]

\centering

\includegraphics[width=0.5\textwidth]{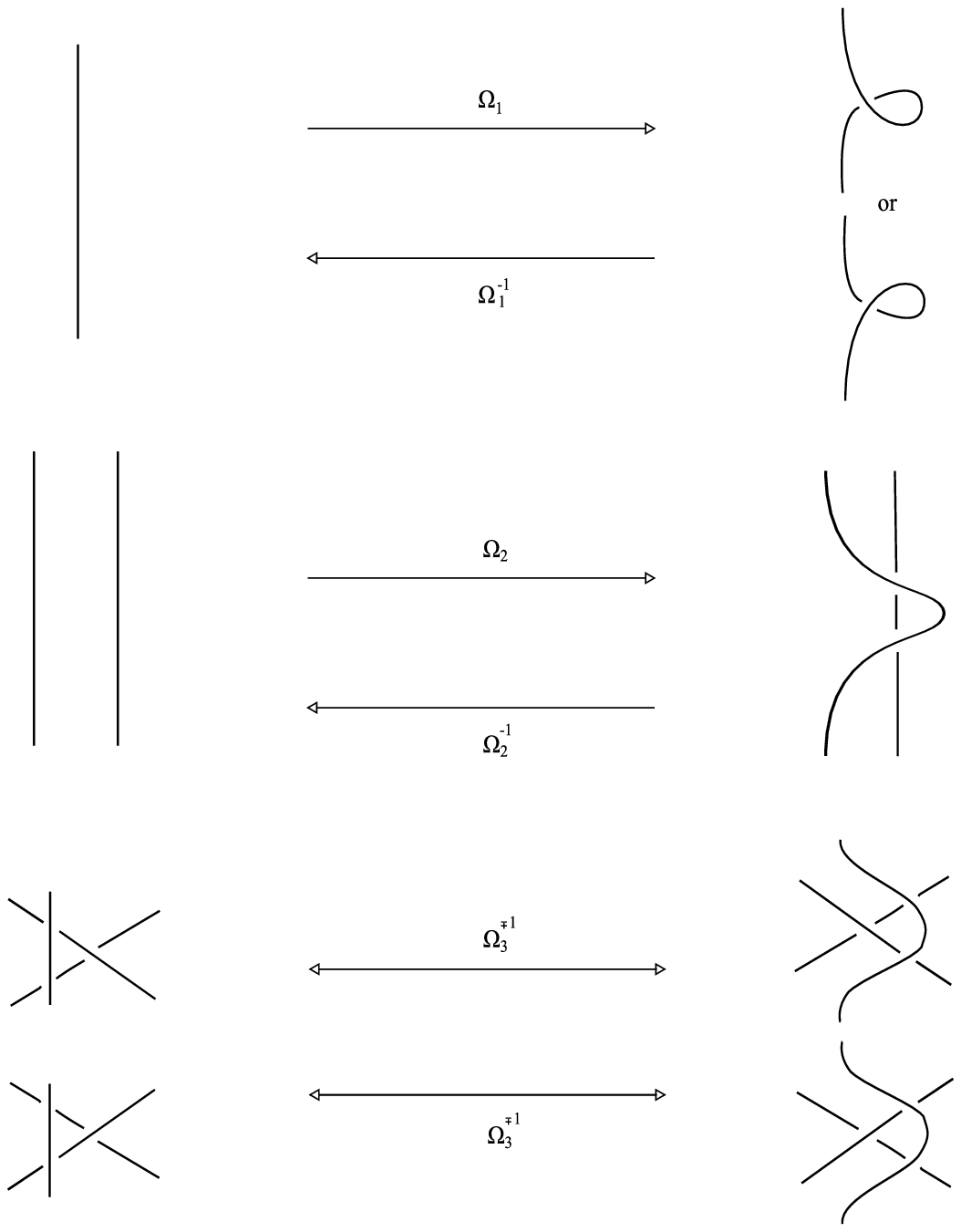}

\caption{} \label{Fig Reidemeister}

\end{figure}
\section{Conway algebras and their invariants of links}
\numberwithin{figure}{subsection}
\numberwithin{equation}{subsection}
\subsection{Conway algebras}\footnote{The equation numbers in this
subsection are off by one from the original.}

Conway \cite{Co}, considering the quick methods of computing the
Alexander polynomial of links, suggested a special normed form of
it (which we call the Conway polynomial) and he showed that the
Conway polynomial, $\nabla_L(z)$, satisfies \begin{enumerate}

\item\label{CA cond 1} $\nabla_{T_1}(z)=1$, where $T_1$ is the
trivial knot

\item\label{CA cond 2}
$\nabla_{L_+}(z)-\nabla_{L_-}(z)=z\nabla_{L_\circ}(z)$, where
$L_+$, $L_-$, and $L_\circ$ are diagrams of oriented links which
are identical, except near one crossing point, where they look
like in Figure \ref{Fig 2.1.0}\footnote{The remaining figures in
this section have their number shifted by one from the original,
where there were two Figures 2.1.1.}.

\end{enumerate}

\begin{figure}[htbp]

\centering

\includegraphics[width=0.5\textwidth]{L+L-L0.eps}

\caption{} \label{Fig 2.1.0}

\end{figure}

Furthermore, conditions (\ref{CA cond 1}) and (\ref{CA cond 2})
define uniquely $\nabla_L(z)$; \cite{Co}, \cite{K_2}, \cite{Gi},
\cite{B_M}.

At spring of 1984, V.~Jones \cite{Jo_1}, \cite{Jo_2}has shown that
there exists an invariant of oriented links which is a Laurent
polynomial of $\sqrt{t}$ and which satisfies:
\begin{enumerate}

\item $V_{T_1}(t)=1$, and

\item $-tV_{L_+} + \frac{1}{t} V_{L_-} = \left(
\sqrt{t}-\frac{1}{\sqrt{t}}\right) V_{L_\circ}(t).$

\end{enumerate}

It was an immediate idea, after these two examples, that there
exists an invariant of isotopy of oriented links which is a Laurent
polynomial of 2-variables $(P_L(x,y))$ which satisfies:

\begin{enumerate}

\item $P_{T_1}(x,y)=1$, and

\item $xP_{L_+}(x,y) + yP_{L_-}(x,y)=P_{L_\circ}(x,y)$.

\end{enumerate}

In fact, such an invariant exists and it was discovered four
months after the Jones polynomial (in September of 1984) by four
groups of researchers:  R.~Lickorish and K.~Millett, J.~Hoste,
A.~Ocneanu, P.~Freyd, and D.~Yetter \cite{F_Y_H_L_M_O} (and
independently in early December of 1984 by J.~Przytycki and
P.~Traczyk \cite{P_T_1}). We will call this polynomial the
Jones-Conway polynomial or HOMFLY polynomial (after initials of
the inventors of it).

Instead of looking for polynomial invariants of links related to
Figure 2.1.1, we can consider a more general point of view.
Namely, we can look for general invariants of links which have the
following common feature: $w_{L_+}$ is uniquely determined by
$w_{L_-}$ and $w_{L_\circ}$, and also $w_{L_-}$ is uniquely
determined by $w_{L_+}$ and $w_{L_\circ}$.  Invariants with this
property we call Conway type invariants.  Now we will develop this
idea based mainly on the paper \cite{P_T_1}.

Consider the following general situation.  Assume we are given a
set $A$ (called universum) with a sequence of fixed elements,
$a_1, a_2, \ldots$ (i.e. a function $f:\nats \to A$) and two
$2$-argument operations, $|$ and $\ast$, each mapping $A\times A$
into $A$.  (That is, we have an algebra $\mathcal{A} = (A;a_1,
a_2, \ldots, |,\ast)$.)  We would like to construct invariants of
oriented links satisfying the conditions:

$$ \begin{array}{l} w_{L_+} = w_{L_-} | w_{L_\circ} \textrm{,} \\
w_{L_-} = w_{L_+} \ast w_{L_\circ} \textrm{, and} \\ w_{T_n} = a_n
\textrm{, where } T_n \textrm{ is the trivial link with }n\textrm{
components.}
\end{array}$$

\begin{defn} We say that $\mathcal{A} = (A; a_1, a_2, \ldots, |,
\ast)$ is a Conway algebra if the following conditions are
satisfied: \begin{enumerate}

\item[C1.]  $a_n | a_{n+1} = a_n$

\item[C2.]  $a_n \ast a_{n+1} = a_n$

\item[C3.] $(a|b)|(c|d)=(a|c)|(b|d)$

\item[C4.] $(a|b)\ast (c|d) = (a\ast c)|(b\ast d)$

\item[C5.] $(a \ast b)\ast (c\ast d) = (a\ast c)\ast(b\ast d)$

\item[C6.] $(a|b)\ast b =a$

\item[C7.] $(a\ast b)|b =a$

\end{enumerate}

Note that C3 through C5 are transposition properties.

\end{defn}

The following is the main theorem of \cite{P_T_1}.

\begin{thm}\label{Theorem 2.1.1}\footnote{In the original, this
Theorem was mistakenly numbered 2.1.1, but referenced as 2.1.2
throughout.} For a given Conway algebra $\mathcal{A}$, there
exists a uniquely determined invariant, $w$, which attaches an
element $w_L$ from $A$ to every isotopy class of oriented links
and satisfies the conditions \begin{enumerate}

\item $\begin{array}{l}w_{T_n} = a_n\end{array} \textrm{(initial
conditions)}$ \vspace{.3cm}

\item $\begin{array}{l} w_{L_+} = w_{L_-} | w_{L_\circ} \\ w_{L_-} =
w_{L_+} \ast w_{L_\circ} \end{array}  \textrm{(Conway relations)}$

\end{enumerate}

\end{thm}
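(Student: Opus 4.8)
The plan is to prove existence and uniqueness simultaneously by induction, using a carefully chosen complexity measure on link diagrams. For a diagram $D$ with $n$ components, let me order the components and pick a base point on each (away from crossings). Traversing the components in order, starting from the base points, I call a crossing \emph{bad} if it is first encountered as an under-crossing. I take the complexity of $D$ to be the pair $(\crs(D), b(D))$ ordered lexicographically, where $\crs(D)$ is the number of crossings and $b(D)$ is the number of bad crossings; the key elementary fact (Conway's original observation) is that a diagram with $n$ components, $\crs(D)$ crossings, and zero bad crossings represents a \emph{descending} diagram, which is isotopic to the trivial $n$-component link $T_n$. So the base case of the induction forces $w_D = a_n$ by condition (1).

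For the inductive step I would define $w_D$ as follows. If $b(D) > 0$, pick the first bad crossing $v$ encountered in the traversal. Switching $v$ decreases $b(D)$ by one (keeping $\crs$ fixed), and smoothing $v$ decreases $\crs$ by one; both resulting diagrams have strictly smaller complexity. Depending on the sign of $v$, the diagram $D$ plays the role of either $L_+$ or $L_-$, and I \emph{define} $w_D$ by the appropriate Conway relation: $w_D := w_{D'} \mid w_{D_0}$ if $D = L_+$ (so $D'=L_-$ is the switched diagram), or $w_D := w_{D'} \ast w_{D_0}$ if $D = L_-$. The substantial work is then to show this is well-defined — independent of all the choices (ordering of components, base points, which bad crossing we pick, and which Reidemeister moves connect two diagrams of the same link) — and that the resulting $w$ genuinely satisfies both Conway relations at \emph{every} crossing of \emph{every} diagram, not just the selected bad one.

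This verification is where axioms C1–C7 come in, and it is the main obstacle. The strategy is a sequence of lemmas of increasing strength: first show (by induction on complexity, using C3–C5, the transposition axioms) that $w_D$ is independent of the order in which one resolves two different bad crossings — this is the "diamond lemma" / confluence step and it is the technical heart. Then C6 and C7 ($(a\mid b)\ast b = a$ and $(a\ast b)\mid b = a$) are exactly what is needed to show that switching a crossing and switching it back returns the original value, which together with the confluence gives independence of the choice of base points and component order. Axioms C1 and C2 handle the interaction with the initial conditions when a resolution produces a split-off trivial component (so that the component count changes). Once $w$ is shown to be a well-defined function of the diagram satisfying the Conway relations at the distinguished crossing, a further induction upgrades this to: the relations hold at an \emph{arbitrary} crossing. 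Finally, invariance under the three Reidemeister moves $\Omega_i^{\pm 1}$ must be checked; here one uses the previous invariance results to reduce each move to a local computation — $\Omega_1$ follows since a curl is a descending/removable crossing, $\Omega_2$ follows from C6/C7, and $\Omega_3$ follows from the transposition axioms C3–C5 — and then Theorem 1 (Reidemeister) promotes this to a genuine isotopy invariant.

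Uniqueness is the easy direction: any invariant satisfying (1) and (2) is forced, by the same induction on $(\crs, b)$, to take the values just constructed, since the trivial-link case is pinned down by (1) and every bad crossing lets us express the value via (2) in terms of strictly simpler diagrams. I expect the write-up to spend almost all its length on the well-definedness/confluence argument; the Reidemeister-invariance check, while it has three cases, is comparatively mechanical once the "relations hold at every crossing" lemma is in hand.
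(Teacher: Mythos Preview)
Your outline matches the paper's approach closely --- the nested induction on crossing number and bad crossings, descending diagrams as the base case, C3--C5 for confluence, C6--C7 for reversibility, C1--C2 for the base-point move past a self-crossing. Two points deserve sharpening, and one of them is a genuine gap.

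First, the induction bookkeeping: the quantity $b(D)$ depends on the choice of base points and component order, so a single lexicographic induction on $(\crs, b)$ is circular until those choices are shown irrelevant. The paper handles this by making the \emph{outer} induction purely on $\crs(D)$ (the Main Inductive Hypothesis includes Reidemeister invariance for all diagrams with at most $k$ crossings), and only \emph{inside} the inductive step introducing $w_b$ and the induction on $b(L)$. This layering matters because the smoothed diagram $L^p_\circ$ has $k$ crossings, so the full M.I.H.\ --- including Reidemeister invariance --- is already available for it; this is used repeatedly in the Reidemeister checks (e.g.\ for $\Omega_3$, the two smoothed diagrams differ by $\Omega_2$ moves, and you need to know those are already harmless). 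Your sketch of the Reidemeister checks (``$\Omega_2$ from C6/C7, $\Omega_3$ from C3--C5'') understates this: the axioms alone do not close the argument without the inductive appeal to $L^p_\circ$.

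Second, and more substantively: independence of the \emph{order of components} does not follow from C6/C7 plus confluence. After the usual reduction to descending diagrams, you must show that a descending diagram $U_n$ gets value $a_n$ regardless of which ordering made it descending. Swapping two adjacent components can turn many good crossings bad simultaneously, so this is not a local crossing-switch argument. The paper's solution is a separate geometric lemma (Lemma~\ref{Lemma 2.2.14}): any descending diagram either has a trivial circle component or admits a choice of base points for which it can be simplified to fewer crossings by Reidemeister moves \emph{that never increase the crossing number}. This lets one invoke I.R.M.\ and the M.I.H.\ to reduce to a diagram with strictly fewer crossings. The proof of this lemma is a nontrivial planar-combinatorics argument about innermost $1$- and $2$-gons and clearing $3$-gons inside them; it is the one piece of genuinely geometric content in the whole proof, and your proposal is missing it.
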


Before we give the proof, let us write here a few words about the
geometrical meaning of the axioms C1--C7 of Conway algebra.
Relations C1 and C2 are introduced to reflect the following
geometrical relations between the diagrams of trivial links of $n$
and $n+1$ components:

\begin{figure}[htbp]
\centering
\includegraphics{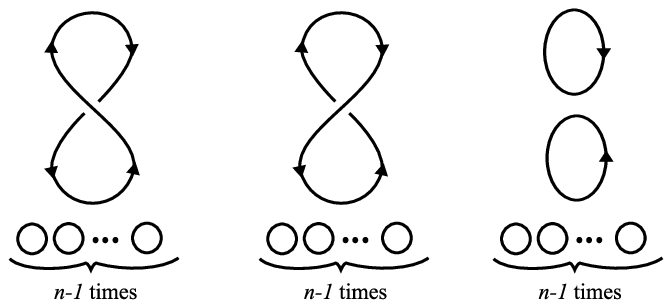}

\caption{}\label{Fig 2.1.1}
\end{figure}

Relations C3--C5 arise when considering rearranging a link at two
crossings of the diagram, but in different order.  It will be
explained in the proof of Theorem \ref{Theorem 2.1.1}.  Relations C6
and C7 reflect the fact that we need the operations $|$ and $\ast$
to be in some respects opposite to one another.

Before giving examples (models) of Conway algebra and sketching
the proof of Theorem \ref{Theorem 2.1.1}, we will show some
elementary properties of Conway algebra.  We have introduced in
the definition seven conditions mainly because of esthetic and
practical reasons (the roles of operations $|$ and $\ast$ are
equivalent). These conditions, however, often depend on one
another:

\begin{lem}\label{Lemma 2.1.3}We have the following dependencies among axioms C1--C7:

\renewcommand{\theenumi}{(\alph{enumi})}
\renewcommand{\labelenumi}{\theenumi}

\begin{enumerate}
\item C1 and C6 $\Rightarrow$ C2
\item C2 and C7 $\Rightarrow$ C1
\item C6 and C4 $\Rightarrow$ C7
\item C7 and C4 $\Rightarrow$ C6
\item C6 and C4 $\Rightarrow$ C5
\item C7 and C4 $\Rightarrow$ C3
\item C5, C6 and C7 $\Rightarrow$ C4
\item C3, C6, and C7 $\Rightarrow$ C4
\end{enumerate}
\end{lem}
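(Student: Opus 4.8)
The plan is to verify each of the eight implications by direct algebraic manipulation, substituting cleverly chosen elements into the axioms assumed in the hypothesis. Each part is a short equational derivation; the only art is in picking the right substitutions. I would organize the work so that the parts relying only on C4 together with one of C6, C7 come first, since these are the structural backbone, and the ``symmetric'' partner statements then follow by interchanging the roles of $|$ and $\ast$.

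For part (a), I would start from C6 in the form $(a_{n}|a_{n+1})\ast a_{n+1}=a_{n}$ and use C1 ($a_{n}|a_{n+1}=a_{n}$) to rewrite the left side as $a_{n}\ast a_{n+1}$, which immediately gives C2; part (b) is the mirror image, starting from C7 and C2. For part (c), the idea is to feed into C4 the instance that makes one side collapse via C6: set things up so that $(a\ast c)|(b\ast d)$ has $b\ast d$ in the ``cancellable'' slot — concretely, apply C4 with the second argument on each side chosen to be of the form $x\ast y$, then use C6 on the right-hand side and match with the left to extract C7. Part (d) is again the mirror image of (c) with the roles of $|$ and $\ast$ swapped. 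For (e), I would take C4 and substitute values so that one of the $|$-operations becomes trivial by C6, leaving a pure $\ast$-identity of the shape C5; part (f) is the mirror statement, deriving C3 from C7 and C4. Finally, for (g), I would reconstruct C4 from C5, C6, C7: write $a$ as $(a'|b)$ for a suitable $a'$ using C7 (so that $a'=a\ast b$, hence $a'|b=(a\ast b)|b=a$ by C7), do the same for the other arguments, then push everything through C5 and collapse with C6; part (h) is the same strategy but building the normal forms with C6 (writing $a=(a''\ast b)$ via $a''=a|b$, using C6) and collapsing with C3.

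The main obstacle — and it is a minor one — is bookkeeping in parts (g) and (h): one must choose the four substitution parameters in C5 (resp. C3) simultaneously so that after one application of C6 (resp. C7) in each of the four resulting subterms the expression collapses exactly to the two sides of C4, with no leftover operations. This requires writing each of $a,b,c,d$ in the ``pre-cancelled'' form dictated by C7 (resp. C6) before invoking the transposition axiom. All the other parts are one- or two-line substitutions, and I would present them compactly, indicating the substitution and the axioms used at each step rather than belaboring the arithmetic. Throughout, I would keep the $|$/$\ast$ symmetry explicit so that each ``mirror'' part is obtained from its partner by a single sentence rather than a repeated computation.
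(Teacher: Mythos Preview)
Your plan is correct and matches the paper's approach essentially step for step: the paper proves (a), (c), (e), (g) explicitly by the same substitute-and-collapse technique you describe, leaving the mirror parts (b), (d), (f), (h) to the $|$/$\ast$ symmetry just as you propose. One small point to make explicit when you write up (e): getting from C4 down to a pure $\ast$-identity requires C7 as well as C6 (you need $(a\ast c)|c=a$ to collapse the left side of the chosen C4 instance), so you should invoke part (c) first to obtain C7 from C6 and C4 --- the paper does exactly this, flagging ``or C6 and C4 by (c)'' at the relevant step.
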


\begin{proof}We will prove, as examples, (a), (c), (e), and (g).
\begin{enumerate}
\item[(a)]  $\textrm{C1} \iff a_n | a_{n+1} = a_n \implies
(a_n|a_{n+1})\ast a_{n+1} = a_n \ast a_{n+1}
\overset{\textrm{C6}}{\implies} \newline
\overset{\textrm{C6}}{\implies} a_n=a_n\ast a_{n+1} \iff
\textrm{C2}$.\medskip

\item[(c)] $\textrm{C6} \implies (a|(b|a))\ast(b|a) = a
\overset{\textrm{C4}}{\iff} (a\ast b)|((b|a)\ast a)=a
\overset{\textrm{1.6}}{\implies} \newline
\overset{\textrm{1.6}}{\implies} (a\ast b)|b = a \iff
\textrm{C7}$.\medskip

\item[(e), (g)] $$\begin{array}{lcccc} \textrm{C5} & \iff & (a\ast b)
\ast (c\ast d) & = & (a\ast c) \ast (b \ast d)\bigskip \\ & &
\Downarrow \textrm{C7} & & \textrm{C6} \Uparrow \bigskip \\ & &
(a\ast b) & = &((a\ast c)\ast(b\ast d))|(c\ast d) \bigskip \\ & & &
\Updownarrow \textrm{C7} & \textrm{(or C6 and C4 by (c))} \bigskip \\
\textrm{C4}& \implies & ((a\ast c)|c)\ast((b\ast d)|d) & = & ((a
\ast c)\ast(b\ast d))|(c\ast d) \bigskip \\  & & \textrm{substitute
} a=x|c \textrm{ and } b=y|d &  \Downarrow \textrm{C6} & \bigskip \\
& & (x|c) \ast (y|d) & = & (x\ast y) |(c\ast d) \bigskip \\ & & &
\Updownarrow \bigskip \\ & & & \textrm{C4}
\end{array}$$

\end{enumerate}

\end{proof}

\begin{lem}\label{Lemma 2.1.4} Let us define in each Conway algebra $\mathcal{A}$ and
for each $b\in A$ and action $|_b$ (respectively, $\ast_b$)$:A\to
A$ defined by $|_b (a) = a|b$ (respectively, $\ast_b(a) = a \ast
b$). Then $|_b$ and $\ast_b$ are bijections on $A$. Furthermore,
$|_b$ and $\ast_b$ are inverses one to another, i.e. $|_b \ast_b =
\ast_b |_b = \mathrm{Id}$.
\end{lem}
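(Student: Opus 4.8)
The plan is to read off everything directly from axioms C6 and C7. The claim has two parts for each operation: that the maps are bijections, and that they are mutually inverse; and the second part actually implies the first, so I would organize the proof around establishing the inverse relations first.

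First I would fix $b \in A$ and consider the composite $\ast_b \circ |_b$. For any $a \in A$ we have $(\ast_b \circ |_b)(a) = \ast_b(a|b) = (a|b)\ast b$, and by axiom C6 this equals $a$. Hence $\ast_b \circ |_b = \mathrm{Id}_A$. Symmetrically, $(|_b \circ \ast_b)(a) = |_b(a\ast b) = (a\ast b)|b = a$ by axiom C7, so $|_b \circ \ast_b = \mathrm{Id}_A$. Thus each of $|_b$ and $\ast_b$ has a two-sided inverse (namely the other one), and therefore both are bijections of $A$ onto itself.

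It remains only to note that the statement about bijectivity is precisely the content of having a two-sided inverse: a map between sets with a two-sided inverse is automatically a bijection. So I would close by simply remarking that since $|_b$ and $\ast_b$ are two-sided inverses of each other, they are in particular bijections, and $|_b \ast_b = \ast_b |_b = \mathrm{Id}$, which is exactly the assertion of the lemma.

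There is essentially no obstacle here: the lemma is an immediate formal consequence of C6 and C7, with no geometric input and no case analysis required. The only thing to be careful about is the order of composition notation — making sure that "$|_b \ast_b$" in the statement is interpreted consistently with the convention $|_b(a) = a|b$, so that the appeal to C6 versus C7 is matched correctly — but this is purely bookkeeping and does not affect the substance of the argument.
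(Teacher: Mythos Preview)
Your proof is correct and matches the paper's approach exactly: the paper simply states that Lemma~\ref{Lemma 2.1.4} follows from conditions C6 and C7, and you have spelled out precisely how.
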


Lemma \ref{Lemma 2.1.4} follows from conditions C6 and C7.  Now we
will describe some examples of Conway algebras.

\begin{example}[Number of components]\label{Example 2.1.5} Set $A=\nats$ (the set of
natural numbers), $a_i = i$, and $i|j = i\ast j = i$.  Verification
of conditions C1--C7 is immediate (the first letter of each side of
every relation is the same).  This algebra yields the number of
components of the link.
\end{example}

\begin{example}\label{Example 2.1.6}
Set $A=\{0, 1, 2\}$, $a_i \equiv i\mod 3$, the operation $\ast$ is
equal to $|$, and $|$ is given by the following table:$$
\begin{tabular}{c|ccc} $|$ & 0 & 1 & 2  \\ \hline 0 & 1&0&2 \\ 1&0&2&1
\\ 2 &2 &1&0 \end{tabular}$$ The invariant defined by this algebra
distinguishes, for example, the trefoil knot from the trivial knot
(Figure \ref{Fig 2.1.3}).

\begin{figure}[htbp]

\centering
\begin{tabular}{m{3cm}m{1cm}m{3cm}}
\includegraphics[totalheight=2cm]{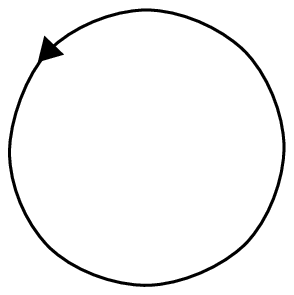} & $\not\approx$ &
  \includegraphics[totalheight=2cm]{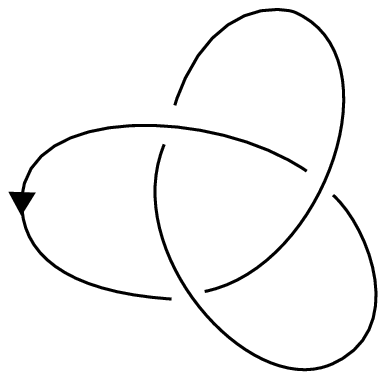} \end{tabular}

\caption{For the trivial knot (on the left), the value of the
invariant is given by $a_1  = 1$. The value of the invariant from
Example \ref{Example 2.1.6} for the right-handed trefoil (on the
right) is $a_1 | ( a_2 | a_1) = 2$.  We write $\not\approx$ for
``not isotopic.''} \label{Fig 2.1.3}

\end{figure}
\end{example}

\begin{example}\label{Example 2.1.7}Set $A= \{1,2,3,4\}$, $a_1 = 1$, $a_2 = 2$, $a_3=4$,
$a_4=1$, $a_5 = 2$, $a_6 = 4$, $\ldots$~.  Operations $|$ and $\ast$
are given by the following tables:  $$\begin{array}{lll}
\begin{tabular}{c|cccc} $|$ & 1 &2&3&4 \\ \hline 1&2&1&4&3 \\
2&3&4&1&2 \\ 3&1&2&3&4 \\ 4&4&3&2&1
\end{tabular} & & \begin{tabular}{c|cccc} $\ast$ &1&2&3&4  \\ \hline
1&3&1&2&4 \\ 2&1&3&4&2 \\ 3&2&4&3&1 \\
4&4&2&1&3\end{tabular}\end{array}$$

The invariant defined by this algebra distinguishes the right-handed
trefoil knot from the left-handed trefoil (Figure \ref{Fig 2.1.4}).

\begin{figure}[htbp]

\centering
\begin{tabular}{m{3cm}m{1cm}m{3cm}}
\includegraphics[totalheight=2cm]{trefoilRight.eps} & $\not\approx$ &
 \includegraphics[totalheight=2cm]{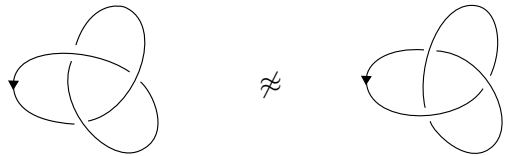}\end{tabular}

\caption{For the right-handed trefoil (on the left) the value of the
invariant is given by $a_1 | (a_2 | a_1) = 4$. The value of the
invariant for the left-handed trefoil (on the right) is $a_1 \ast (
a_2 \ast a_1) = 3$.} \label{Fig 2.1.4}

\end{figure}

\end{example}

T.~Przytycka has found (using a computer) all Conway algebras with
no more than five elements.  If we limit fixed elements to $a_1$ and
$a_2$ and assume $a_1 = 1$, $a_2 = 2$ then we get (up to
isomorphism):

\begin{center}\begin{tabular}{c|c} Number of elements & Number of algebras \\
\hline 2&2 \\ 3& 9 \\ 4& 51 \\ 5 & 204 \end{tabular}\end{center}

\begin{example}[Jones-Conway polynomial]\label{Example 2.1.8}
$A=\ints[x^{\mp}, y^{\mp}]$,  $a_1=1, a_2 = x+y, \ldots, a_i =
(x+y)^{i-1}, \ldots$~.

We define $|$ and $\ast$ as follows:  $w_2 | w_0 = w_1$ and $w_1
\ast w_0 = w_2$, where polynomials $w_1$, $w_2$, and $w_0$ satisfy
the following equation: \begin{equation}\label{Equation 2.1.9}
x w_1 + y w_2 = w_0 \end{equation}

The invariant of knots given by this algebra is the Jones-Conway
polynomial mentioned at the beginning of this part.  In particular,
if we substitute $x=\frac{1}{z}$ and $y=-\frac{1}{z}$ we get the
Conway polynomial, and after the substitution
$x=\frac{-t}{\sqrt{t}-\frac{1}{\sqrt{t}}}$, $y =
\frac{1}{t}\frac{1}{\sqrt{t}-\frac{1}{\sqrt{t}}}$, we get the Jones
polynomial.
\end{example}

Now we will show that the algebra from Example \ref{Example 2.1.8}
is in fact a Conway algebra.

The conditions C1 and C2 follow from the equality $$x(x+y)^{n-1} +
y(x+y)^{n-1} = (x+y)^n.$$

The conditions C6 and C7 follow from the fact that the actions
were defined using one linear equation.  It remains to show C3 (C4
and C5 will follow then by Lemma \ref{Lemma 2.1.3}).  We have from
the definition $$(a|b)|(c|d) = \frac{1}{x}((c|d)-y(a|b)) =
\frac{1}{x}\left(\frac{1}{x}(d-yc) - y\frac{1}{x}(b-ya)\right) =
\frac{1}{x^2}d - \frac{y}{x^2}c - \frac{y}{x^2}b +
\frac{y^2}{x^2}a.$$ Because coefficients of $b$ and $c$ are the
same, if we change the places of $b$ and $c$, the value of the
expression is not changed, which proves condition C3.

It is possible to generalize the algebra of Example \ref{Example
2.1.8} by introducing the new variable $z$ and considering instead
of \ref{Equation 2.1.9}, the equation $$xw_1 +yw_2 = w_0-z.$$
However, one does not get any stronger invariant than the
Jones-Conway polynomial (see Proposition 3.38).

\setcounter{thm}{9}

\begin{example}[Global linking number]\label{Example 2.1.10} Set $A = \nats \times \ints$,
$a_i=(i,0)$, and $$\begin{array}{l} (a,b)|(c,d) =
\left\{\begin{array}{ll} (a, b+1) & \textrm{if }a>c \\ (a,b) &
\textrm{if } a\leq c \end{array} \right. \\ (a,b)\ast (c,d) =
\left\{
\begin{array}{ll}(a, b-1) & \textrm{if } a>c \\ (a,b) & \textrm{if
}a \leq c \end{array}\right.\end{array}$$  The invariant associated
to a link is a pair:  (number of components, global linking number).
\end{example}

It is an easy exercise to read the global linking number from the
diagram.  Namely, call a crossing of type \poscrossing positive
and a crossing of type \negcrossing negative. We will write $\sgn
p = +$ or $-$ depending on whether the crossing $p$ is positive or
negative.  Then for a given diagram $D$, $\mathrm{lk}(D) =
\frac{1}{2} \sum \sgn p_i$, where summation is taken over all
crossings between different components of $D$, is equal to the
global index number of $D$.

Now we will show that the algebra from Example \ref{Example 2.1.10}
is in fact a Conway algebra.  The proof of conditions C1, C2, C6,
and C7 is very easy and we omit it.  We consider condition C3 in
more detail.  From the definition of the operation $|$ we have
$$((a_1,a_2)|(b_1,b_2))|((c_1,c_2)|(d_1,d_2)) =
\left\{\begin{array}{ll}(a_1, a_2+2) & \textrm{if } a_1> b_1
\textrm{ and } a_1>c_1 \medskip \\ (a_1, a_2+1) & \textrm{if
}(a_1>b_1 \textrm{ and } a_1 \leq c_1)\\ & \textrm{ or } (a_1 \leq
b_1 \textrm{ and } a_1>c_1) \medskip \\ (a_1, a_2) & \textrm{if }
a_1 \leq b_1, a_1\leq c_1\end{array}\right.$$

If we exchange the places of $b_i$ and $c_i$ then we get the same
result, so C3 is satisfied.

We will write $L^p_+$, $L^p_-$, and $L^p_\circ$ if we need the
crossing point $p$ to be explicitly specified.

\begin{defn}Let $T$ be a binary tree each of whose vertices
represents a link (trivial links at leaves) in such a way that the
situation at each vertex (except at leaves) looks like:

\begin{figure}[htbp]

\centering

$\xymatrix { & L_+ \ar@{-}[ddl] \ar@{-}[ddr]& & & & L_- \ar@{-}[ddl]
\ar@{-}[ddr]& \\ & & &  \textrm{or} & & & \\ L_- & & L_\circ & & L_+
& & L_\circ}$

\caption{} \label{Fig 2.1.5}

\end{figure}
\end{defn}

In a natural way it yields a binary tree with $a_i$'s at leaves and
$+$'s or $-$'s at other vertices.  We will call it a resolving tree
of the root link.

\begin{figure}[htbp]

\centering
\includegraphics[width=0.15\textwidth]{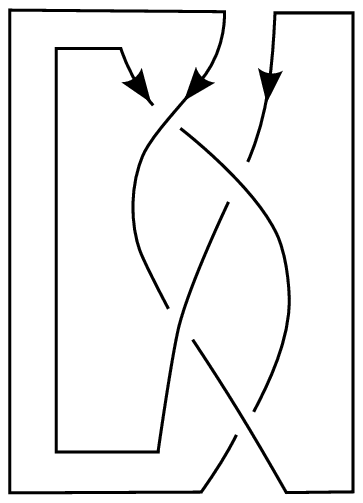}
\caption{}\label{Fig 2.1.6}

\end{figure}

\begin{example}Let $L$ be the figure-eight knot represented by the
diagram in Figure \ref{Fig 2.1.6}.

To determine $w_L$, let us consider the binary tree in Figure
\ref{Fig 2.1.7}.

As is easily seen, the leaves of the tree are trivial links and
every branching reflects a certain operation on the diagram at the
marked crossing point.  To compute $w_L$, it is enough to have the
resolving tree shown in Figure \ref{Fig 2.1.8}.

Here the sign indicates the sign of the crossing point at which the
operation was performed, and the leaf entries are the values of $w$
for the resulting trivial links.  Now we may conclude that $w_L =
a_1|(a_2\ast a_1)$.
\end{example}

\begin{figure}[htbp]

\centering
\includegraphics[width=0.4\textwidth]{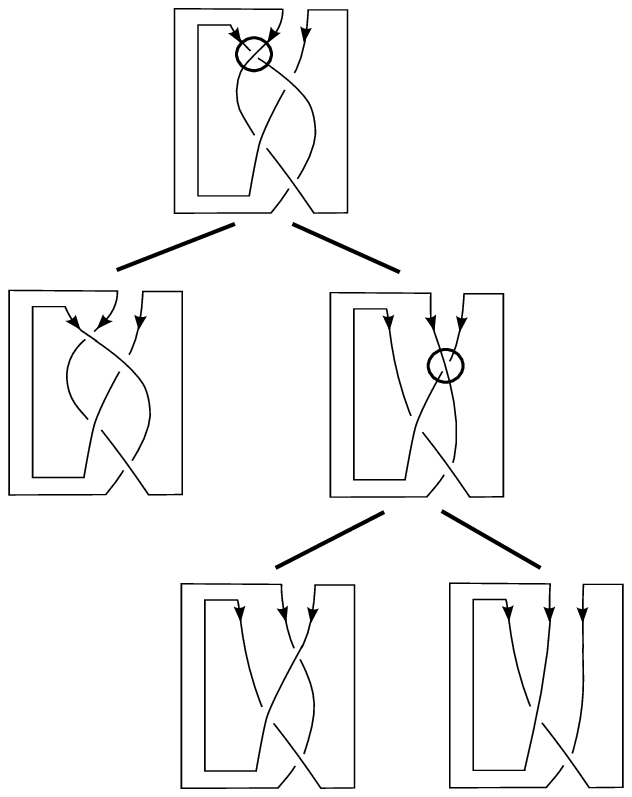}
\caption{}\label{Fig 2.1.7}

\end{figure}

\begin{figure}[htbp]

\centering

$\xymatrix { & + \ar@{-}[dl] \ar@{-}[dr]& & \\ a_1 & & - \ar@{-}[dl]
\ar@{-}[dr]& \\ & a_2 & & a_1 }$

\caption{} \label{Fig 2.1.8}

\end{figure}

There exists a standard procedure to obtain a resolving tree of a
given diagram.  It will be described in the next paragraph and it
will play an essential role in the proof of Theorem \ref{Theorem
2.1.1}.

\subsection{Proof of Theorem \ref{Theorem 2.1.1}}

\setcounter{equation}{1}

\begin{defn} Let $L$ be an oriented diagram of $n$ components and
let $b=(b_1, \ldots, b_n)$ be base points of $L$, one point from
each component of $L$, but not the crossing points.  Then we say
that $L$ is descending with respect to $b$ if the following holds:
If one travels along $L$ (according to the orientation of $L$)
starting from $b_1$, then after having returned to $b_1$ -- from
$b_2, \ldots, $ finally from $b_n$, then each crossing which is
met for the first time is crossed by an over-crossing
(bridge).\end{defn}

It is easily seen that for every diagram $L$ of an oriented link
there exists a resolving tree such that the leaf diagrams are
descending (with respect to appropriately chosen base points).  This
is obvious for diagrams with no crossings at all, and once it is
known for diagrams with less than $n$ crossings we can use the
following procedure for any diagram with $n$ crossings:  Choose base
points arbitrarily and start walking along the diagram until the
first ``bad'' crossing $p$ is met, i.e. the first crossing which is
crossed by an under-crossing (tunnel) when first met.  Then begin to
construct the tree changing the diagram in this point.  If for
example $\sgn p = +$, we get $$\xymatrix { & L=L_+^p \ar@{-}[dl] \ar@{-}[dr] & \\
L_-^p & & L_\circ^p }$$  Then we can apply the inductive hypothesis
to $L^p_\circ$ and we can continue the procedure with $L^p_-$
(walking further along the diagram and looking for the next bad
point).

To prove Theorem \ref{Theorem 2.1.1}, we will construct the function
$w$ as defined on diagrams.  In order to show that $w$ is an
invariant of isotopy classes of oriented links we will verify that
$w$ is preserved by the Reidemeister moves.

We use induction on the number $\mathrm{cr}(L)$ of crossing points
in the diagram.  For each $k \geq 0$ we define a function $w_k$
assigning an element of $A$ to each diagram of an oriented link with
no more than $k$ crossings.  Then $w$ will be defined for every
diagram by $w_L = w_k(L)$ where $k\geq \mathrm{cr}(L)$.  Of course
the function $w_k$ must satisfy certain coherence conditions for
this to work.  Finally, we will obtain the required properties of
$w$ from the properties of the $w_k$'s.

We begin from the definition of $w_0$.  For a diagram $L$ of $n$
components with $\crs(L) = 0$, we put \begin{equation}\label{Eqn
2.2.2} w_0(L) = a_n.\end{equation} To define $w_{k+1}$ and prove
its properties we will use the induction several times.  To avoid
misunderstandings, the following will be called the Main Inductive
Hypothesis (M.I.H.):  We assume that we have already defined a
function $w_k$ attaching an element of $A$ to each diagram $L$ for
which $\crs(L) \leq k$.  We assume that $w_k$ has the following
properties: \begin{equation}\label{Eqn 2.2.3} w_k(U_n) =
a_n\end{equation} for $U_n$ being a descending diagram of $n$
components (with respect to some choice of base points).
\begin{eqnarray} \label{Eqn 2.2.4} w_k(L_+) = w_k(L_-)|w_k(L_\circ) \\ \label{Eqn 2.2.5} w_k(L_-) =
w_k(L_+)\ast w_k(L_\circ),\end{eqnarray} for $L_+$, $L_-$, and
$L_\circ$ related as usually, and \begin{equation}\label{Eqn
2.2.6} w_k(L) = w_k(R(L)), \end{equation} where $R$ is a
Reidemeister move on $L$ such that $\crs(R(L))$ is still at most
$k$.

Then, as any reader may expect, we want to make the Main Inductive
Step (M.I.S.) to obtain the existence of a function $w_{k+1}$ with
analogous properties defined on diagrams with at most $k+1$
crossings.

Before dealing with the task of making the M.I.S.~let us explain
that it will really end the proof of the theorem.  It is clear
that the function $w_k$ satisfying the M.I.H.~is uniquely
determined by properties \ref{Eqn 2.2.3}, \ref{Eqn 2.2.4},
\ref{Eqn 2.2.5}, and the fact that for every diagram there exists
a resolving tree with descending leaf diagrams.  Thus the
compatibility of the functions $w_k$ is obvious and they define a
function $w$ on the diagram.

The function $w$ satisfies the conditions in (2) of Theorem
\ref{Theorem 2.1.1} because the functions $w_k$ satisfy such
conditions.

If $R$ is a Reidemeister move on a diagram $L$, then $\crs(R(L))$
equals at most $k=\crs(L)+2$. Whence $w_{R(L)} = w_k(R(L))$,
$w_L=w_k(L)$, and by properties of $w_k$, $w_k(L) = w_k(R(L))$,
which implies $w_{R(L)} = w_L$.  It follows that $w$ is an
invariant of the isotopy class of oriented links.

Now it is clear that $w$ has the required property (1) too, since
there is a descending diagram $U_n$ in the same isotopy class as
$T_n$ and we have $w_k(U_n) = a_n$.

The rest of this section will be occupied by the M.I.S.  For a given
diagram $D$ with $\crs(D) \leq k+1$ we will denote by $\mathcal D$
the set of diagrams which are obtained from $D$ by operations of the
kind \negcrossingNoArrow ${}^\rightarrow$ \poscrossingNoArrow or
\negcrossingNoArrow  ${}^\rightarrow$ \smilefrown.

Of course, once base points $b=(b_1, \ldots, b_n)$ are chosen for
$D$, then the same points can be chosen as base points for any $L\in
\mathcal D$, provided $L$ is obtained from $D$ by the operations of
the first type only.

Let us define $w_b$ for a given $D$ and $b$, assigning an element
$A$ to each $L\in \mathcal D$.

If $\crs(L) <k+1$ we put \begin{equation}\label{Eqn 2.2.7} w_b(L) =
w_k(L).\end{equation}  If $U_n$ is a descending diagram with respect
to $b$ we put \begin{equation}\label{Eqn 2.2.8} w_b(U_n) = a_n,
\end{equation} ($n$ denotes the number of components).

Now we proceed by induction on the number $b(L)$ of bad crossings
in $L$ (in the symbol $b(L)$, $b$ works simultaneously for ``bad''
and for $b=(b_1, \ldots, b_n)$.  For a different choice of base
points $b' = (b'_1, \ldots, b'_n)$ we will write $b'(L)$.) Assume
that $w_b$ is defined for all $L\in \mathcal D$ such that
$b(L)<t$.  Then for $L$, $b(L) = t$, let $p$ be the first bad
crossing of $L$ (starting from $b_1$ and walking along the
diagram). Depending on $p$ being positive or negative, we have $L
= L_+^p$ or $L=L_-^p$.

We put \begin{equation}\label{Eqn 2.2.9} w_b(L) = \left\{
\begin{array}{ll} w_b(L^p_-)|w_b(L^p_\circ) & \textrm{if }\sgn p = +
\\ & \\ w_b(L^p_+)\ast w_b(L^p_\circ) & \textrm{if }\sgn p =
-\end{array}\right.\end{equation}

We will show that $w_b$ is in fact independent of the choice of $b$
and that it has the properties required from $w_{k+1}$.

\subsubsection*{\underline{Conway relations for $w_b$}}

\ \smallskip

Let us begin with the proof that $w_b$ has properties \ref{Eqn
2.2.4} and \ref{Eqn 2.2.5}.  We will denote by $p$ the considered
crossing point.  We restrict our attention to the case when
$b(L_+^p)>b(L_-^p)$.  The opposite situation is quite analogous.

Now, we use induction on $b(L_-^p)$.  If $b(L_-^p)=0$, then
$b(L_+^p) = 1$, $p$ is the only bad point of $L_+^p$, and by the
defining Equalities \ref{Eqn 2.2.9}, we have
$$w_b(L_+^p)=w_b(L_-^p)|w_b(L_\circ^p),$$ and using C6 we obtain
$$w_b(L_-^p)=w_b(L_+^p) \ast w_b(L_\circ^p).$$

Assume now that the formulae \ref{Eqn 2.2.4} and \ref{Eqn 2.2.5} for
$w_b$ are satisfied for every diagram $L$ such that $b(L_-^p)<t$,
$t\geq 1$.  Let us consider the case $b(L_-^p)=t$.

By the assumption $b(L_+^p)\geq 2$.  Let $q$ be the first bad point
on $L_+^p$.  Assume that $q=p$.  Then by \ref{Eqn 2.2.9} we have
$$w_b(L_+^p)=w_b(L_-^p)|w_b(L_\circ^p).$$

Assume $q\neq p$. Let $\sgn q=+$, for example.  Then by \ref{Eqn
2.2.9} we have
$$w_b(L^p_+)=w_b(L^{pq}_{++})=w_b(L^{pq}_{+-})|w_b(L^{pq}_{+\circ}).$$
But $b(L^{pq}_{--})<t$ and $\crs(L^{pq}_{+\circ})\leq k$, whence by
the inductive hypothesis and M.I.H.~we have
$$w_b(L^{pq}_{+-})=w_b(L^{pq}_{--})|w_b(L^{pq}_{\circ -}), \textrm{
and}$$
$$w_b(L^{pq}_{+\circ})=w_b(L^{pq}_{-\circ})|w_b(L^{pq}_{\circ \circ}),$$
whence
$$ w_b(L^{p}_{+})=\left(w_b(L^{pq}_{--})|w_b(L^{pq}_{\circ
-})\right)|\left(w_b(L^{pq}_{-\circ})|w_b(L^{pq}_{\circ\circ})\right),$$
and by the transposition property C3 \begin{equation}\label{Eqn
2.2.10}
w_b(L^{p}_{+})=\left(w_b(L^{pq}_{--})|w_b(L^{pq}_{-\circ})\right)|\left(w_b(L^{pq}_{\circ
-})|w_b(L^{pq}_{\circ\circ})\right).\end{equation}

On the other hand, $b(L^{pq}_{--})<t$ and $\crs(L_\circ^p)\leq k$,
so using once more the inductive hypothesis and M.I.H.~we obtain
\begin{equation}\label{Eqn 2.2.11}\begin{array}{l}w_b(L^p_-)=w_b(L^{pq}_{-+})=w_b(L^{pq}_{--})|w_b(L^{pq}_{-\circ})
\\ \ \\ w_b(L^p_\circ)=w_b(L^{pq}_{\circ +})=w_b(L^{pq}_{\circ
-})|w_b(L^{pq}_{\circ\circ}).\end{array}\end{equation}

Putting \ref{Eqn 2.2.10} and \ref{Eqn 2.2.11} together, we obtain
$$w_b(L^p_+)=w_b(L^p_-)|w_b(L^p_\circ)$$ as required.  If $\sgn q =
-$, we use C4 instead of C3.  This completes the proof of Conway
relations for $w_b$.

\subsubsection*{\underline{Changing base points}}

\ \smallskip

We will show now that $w_b$ does not depend on the choice of $b$,
provided the order of components is not changed.  It amounts to
the verification that we may replace $b_i$ by $b'_i$ taken from
the same component in such a way that $b'_i$ lies after $b_i$ and
there is exactly one crossing point, say $p$, between $b_i$ and
$b'_i$.  Let $b' = (b_1, \ldots, b'_i, \ldots, b_n)$.  We want to
show that $w_b(L) = w_{b'}(L)$ for every diagram with $k+1$
crossings belonging to $\mathcal D$.  We will only consider the
case $\sgn p = + $; the case $\sgn p = -$ is quite analogous.

We use induction on $B(L)=\max(b(L), b'(L))$.  We consider three
cases. \medskip

\noindent\underline{CBP 1.}  Assume $B(L)=0$.  Then $L$ is
descending with respect to both choices of base points and by
\ref{Eqn 2.2.8},
$$w_b(L) = a_n= w_{b'}(L).$$  \medskip

\noindent\underline{CBP 2.}  Assume that $B(L)=1$ and $b(L) \neq
b'(L)$. This is possible only when $p$ is a self-crossing point of
the $i$th component of $L$.  There are two subcases to be
considered.\medskip

CBP 2(a):  \ $b(L) = 1$ and $b'(L)=0$.  Then $L$ is descending
with respect to $b'$ and by \ref{Eqn 2.2.8},
$$w_{b'}(L)=a_n, \textrm{ and}$$
$$w_b(L)=w_b(L^p_+)=w_b(L^p_-)|w_b(L^p_\circ).$$
Again, we have restricted our attention to the case $\sgn p=+$. Now,
$w_b(L_-^p)=a_n$ since $b(L_-^p)=0$, and $L^p_\circ$ is descending
with respect to a proper choice of base points.  Of course,
$L^p_\circ$ has $n+1$ components, so $w_b(L_\circ^p)=a_{n+1}$ by
\ref{Eqn 2.2.8}.

It follows that $w_b(L) = a_n|a_{n+1}$ and $a_n|a_{n+1} = a_n$ by
C1.\medskip

CBP 2(b): $b(L)=0$ and $b'(L)=1$.  This case can be dealt with
like CBP 2(a).\medskip

\noindent\underline{CBP 3.}  $B(L) = t>1$ or $B(L)=1=b(L)=b'(L)$.

We assume by induction $w_b(K) = w_{b'}(K)$ for $B(K)<B(L)$.  Let
$q$ be a crossing point which is bad with respect to $b$ and $b'$ as
well.  We will consider this time the case $\sgn q =-$.  The case
$\sgn q = +$ is analogous.

Using the already proven Conway relations for $w_b$ and $w_{b'}$ we
obtain $$w_b(L) = w_b(L^q_-) = w_b(L^q_+) \ast w_b(L^q_\circ),
\textrm{ and }$$  $$ w_{b'} = w_{b'}(L^q_-) = w_{b'}(L^q_+) \ast
w_{b'}(L^q_\circ).$$ But $B(L_+^q)<B(L)$ and $\crs(L_\circ^q)\leq
k$, whence by the inductive hypothesis and M.I.H.~hold $$w_b(L^q_+)
= w_{b'}(L_+^q), \textrm{ and }$$  $$w_b(L^q_\circ) =
w_{b'}(L_\circ^q),$$ which imply $w_b(L) = w_{b'}(L)$.  This
completes the proof of this step (C.B.P.).\medskip

Since $w_b$ turned out to be independent of base point changes which
preserve the order of components, we can now consider defined a
function $w^\circ$ which attaches an element of $A$ to every diagram
$L$, $\crs(L)\leq k+1$ with a fixed ordering of components.

\subsubsection{\underline{Independence of $w^\circ$ of Reidemeister
moves} (I.R.M.)}

\ \smallskip

When $L$ is a diagram with a fixed order of components and $R$ is a
Reidemeister move on $L$, then we have a natural ordering of
components on $R(L)$.  We will show now that $w^\circ(L) =
w^\circ(R(L))$.

Of course we assume that $\crs(L), \crs(R(L)) \leq k+1$.

We use the induction on $b(L)$ with respect to properly chosen
base points $b=(b_1, \ldots, b_n)$.  Of course the choice must be
compatible with the given ordering of components.  We choose the
base points to lie outside the part of the diagram involved in the
considered Reidemeister move $R$, so that the same points may work
for the diagram $R(L)$ as well.  We have to consider the three
standard types of Reidemeister moves (Figure \ref{Fig 2.2.1}).

\begin{figure}[htbp]
\centering
\includegraphics[width = .95\textwidth]{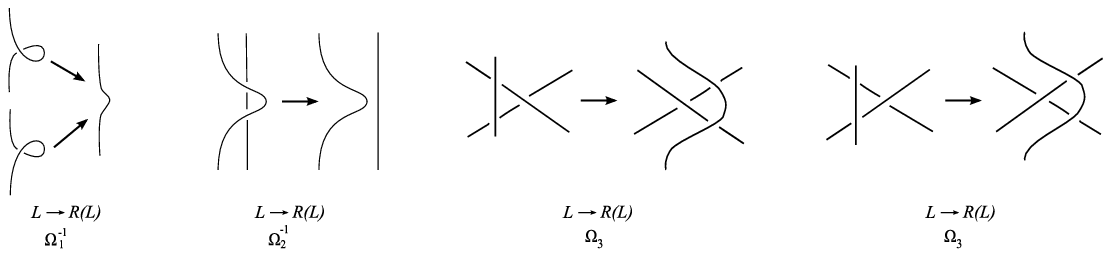}
\caption{}\label{Fig 2.2.1}
\end{figure}

Assume that $b(L)=0$.  Then it is easily seen that also $b(R(L))=0$,
and the number of components is not changed.  Thus, by \ref{Eqn
2.2.8},
$$w^\circ(L) = w^\circ(R(L)).$$

We assume now by induction that $w^\circ(L) = w^\circ(R(L))$ for
$b(L)<t$. Let us consider the case $b(L)=t$.  Assume that there is
a bad crossing $p$ in $L$ which is different from all the
crossings involved in the considered Reidemeister move.  Assume,
for example, that $\sgn p = +$.  Then, by the inductive
hypothesis, we have
\begin{equation}\label{Eqn 2.2.12} w^\circ(L^p_-) =
w^\circ(R(L^p_-)),\end{equation} and by
M.I.H.,\begin{equation}\label{Eqn 2.2.13}w^\circ(L^p_\circ) =
w^\circ(R(L^p_\circ)).\end{equation} Now, by the Conway relation
\ref{Eqn 2.2.4}, which was already verified for $w_0$, we have
$$ w^\circ(L) = w^\circ(L^p_+) = w^\circ(L^p_-)|w^\circ(L^p_\circ)$$ $$ w^\circ(R(L)) = w^\circ(R(L)^p_+) =
w^\circ(R(L)^p_-)|w^\circ(R(L)^p_\circ)$$ whence by \ref{Eqn
2.2.12} and \ref{Eqn 2.2.13} we have $$w^\circ(L) =
w^\circ(R(L)).$$ Obviously $R(L^p_-) = R(L)^p_-$ and $R(L^p_\circ)
= R(L)^p_\circ$.

It remains to consider the case when $L$ has no bad points, except
those involved in the considered Reidemeister move.  We will
consider the three types of moves separately. The most complicated
is the case of a Reidemeister move of the third type.  To deal
with it, let us formulate the following observation:

Whatever the choice of base points is, the crossing point of the
top arc and the bottom arc cannot be the only bad point of the
diagram.

\begin{figure}[htbp]

\centering
\includegraphics{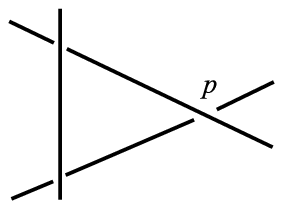}
\caption{}\label{Fig 2.2.2}

\end{figure}

The proof of the above observation amounts to an easy case by case
checking and we omit it.  The observation makes possible the
following induction: we can assume that we have a bad point at the
crossing between the middle arc and the lower or the upper arc.
Let us consider for example the situation described by Figure
\ref{Fig 2.2.2}.

We consider two subcases, according to $\sgn p$ being $+$ or $-$.

Assume $\sgn p = -$.  Then by Conway relations $$w^\circ(L) =
w^\circ(L^p_-) = w^\circ(L^p_+)\ast w^\circ(L^p_\circ)$$
$$w^\circ(R(L)) = w^\circ(R(L)^p_-) =
w^\circ(R(L)^p_+)\ast w^\circ(R(L)^p_\circ).$$ But $R(L)^p_+ =
R(L^p_+)$ and by the inductive hypothesis $$w^\circ(L^p_+) =
w^\circ(R(L^p_+)).$$  Also $R(L)^p_\circ$ is obtained from
$L^p_\circ$ by two subsequent Reidemeister moves of type two (see
Figure \ref{Fig 2.2.3}), whence by M.I.H. $$w^\circ(R(L)^p_\circ) =
w^\circ(L^p_\circ)$$ and the equality $w^\circ(L) = w^\circ(R(L))$
follows.

\begin{figure}[htbp]
\centering \includegraphics[width=0.5\textwidth]{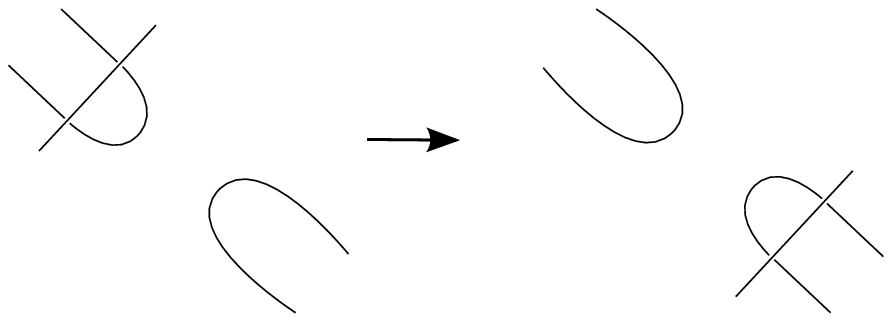}
\caption{}\label{Fig 2.2.3}
\end{figure}

Assume now that $\sgn p=+$.  Then by Conway relations $$w^\circ(L)
= w^\circ(L^p_+) = w^\circ(L^p_-)| w^\circ(L^p_\circ), \textrm{
and}
$$
$$w^\circ(R(L)) = w^\circ(R(L)^p_+) =
w^\circ(R(L)^p_-)| w^\circ(R(L)^p_\circ).$$ But $R(L)^p_- =
R(L^p_-)$ and by the inductive hypothesis $$w^\circ(L^p_-) =
w^\circ(R(L^p_-)).$$

Now, $L^p_\circ$ and $R(L)^p_\circ$ are essentially the same
diagrams (see Figure \ref{Fig 2.2.4}), whence $w^\circ(L^p_\circ) =
w^\circ(R(L)^p_\circ)$ and the equality $w^\circ(L) = w^\circ(R(L))$
follows.

\begin{figure}[htbp]
\centering
\includegraphics[width=0.5\textwidth]{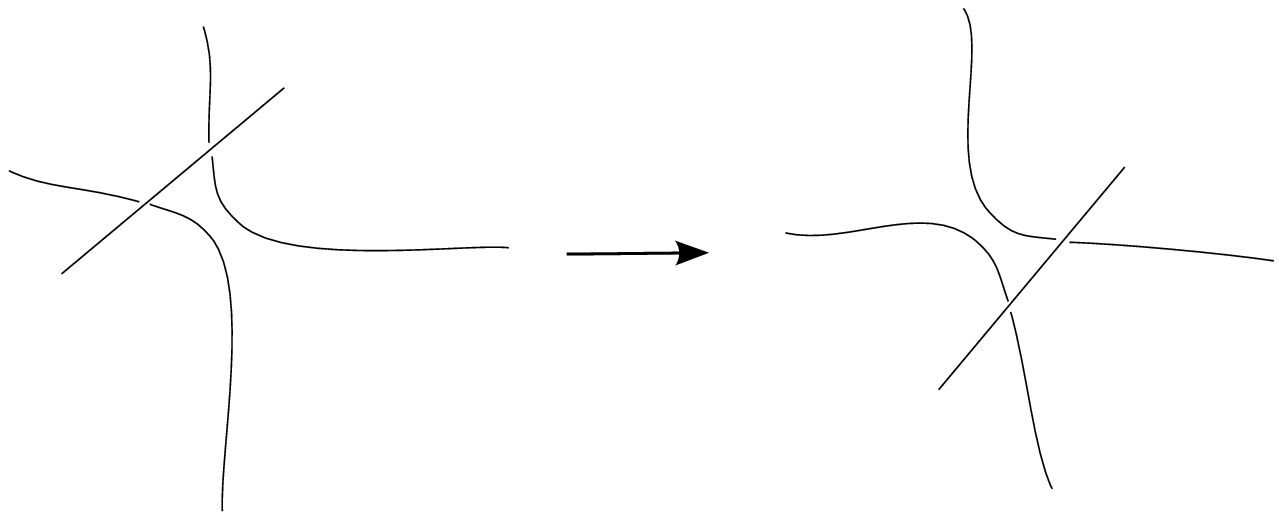}
\caption{}\label{Fig 2.2.4}
\end{figure}

\noindent Reidemeister moves of the first type.

\noindent The base points should always be chosen so that the
crossing point involved in the move is good.

\noindent Reidemeister moves of the second type.

\noindent There is only one case when we cannot chose base points
to guarantee the points involved in the move to be good.  It
happens when the involved arcs are parts of different components
and the lower arc is a part of the earlier component.  In this
case the both crossing points involved are of different signs, of
course. Let use consider the situation shown in Figure \ref{Fig
2.2.5}.

\begin{figure}[htbp]
\centering
\includegraphics{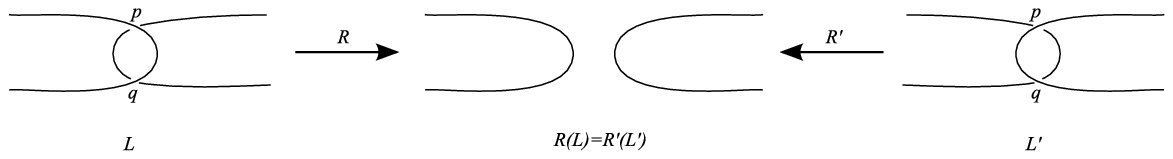}
\caption{}\label{Fig 2.2.5}
\end{figure}

We want to show that $w^\circ(R(L)) = w^\circ(L)$.  But by the
inductive hypothesis we have $$w^\circ(L') =
w^\circ(R'(L'))=w^\circ(R(L)).$$  Using the already proven Conway
relations, formulae C6 and C7 and M.I.H.~if necessary, it can be
proved that $w^\circ(L) = w^\circ(L')$.  Let us discuss in detail
the case involving M.I.H.  It occurs when $\sgn p=+$.   Then we have
$$w^\circ(L)=w^\circ(L^q_-)=w^\circ(L^q_+)\ast w^\circ(L_\circ^q)=
(w^\circ(L^{qp}_{+-})|w^\circ(L^{qp}_{+\circ}))\ast
w^\circ(L^q_\circ).$$ But $L^{qp}_{+-} = L'$ and by
M.I.H.~$w^\circ(L^{qp}_{+\circ})=w^\circ(L^q_\circ)$ (see Figure
\ref{Fig 2.2.6}, here $L^{qp}_{+\circ}$ and $L^q_\circ$ are obtained
from $K$ by a Reidemeister move of the first type).

\begin{figure}[htbp]
\centering
\includegraphics[width=0.8\textwidth]{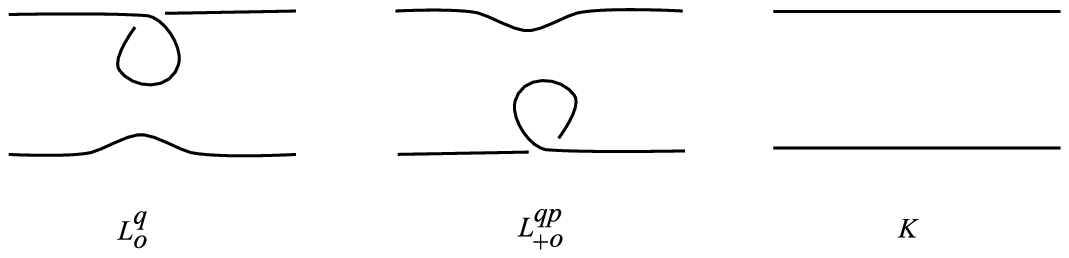}
\caption{}\label{Fig 2.2.6}
\end{figure}

$$w^\circ(L)=w^\circ(L')$$  $$w^\circ(L) = w^\circ(R(L)).$$  The
case: $\sgn p =-$ is even simpler and we omit it.  This completes
the proof of the independence of $w^\circ$ of Reidemeister
moves.\medskip

To complete the Main Inductive Step it is enough to prove the
independence of $w^\circ$ of the order of components.  Then we set
$w_{k+1} = w^\circ$.  The required properties have been already
checked.

\subsubsection*{\underline{Independence of the order of components} (I.O.C.)}

\ \smallskip

It is enough to verify that for a given diagram $L$ ($c(L) \leq
k+1$) and fixed base points $b=(b_1, \ldots, b_i, b_{i+1}, \ldots,
b_n)$ we have $$w_b(L) = w_{b'}(L)$$ where $b' = (b_1, \ldots,
b_{i+1}, b_{i}, \ldots, b_n)$.  This is easily reduced by the
usual induction on $b(L)$ to the case of a descending diagram. To
deal with this case we will choose $b$ in an appropriate way.

Before we do it, let us formulate the following observation:  If
$L_i$ is a trivial component of $L$, i.e. $L_i$ has no crossing
points, neither with itself, nor with other components, then the
specific position of $L_i$ in the plane has no effect on
$w^\circ(L)$; in particular, we may assume that $L_i$ lies
separately from the rest of the diagram:

\begin{figure}[htbp]
\centering
\includegraphics{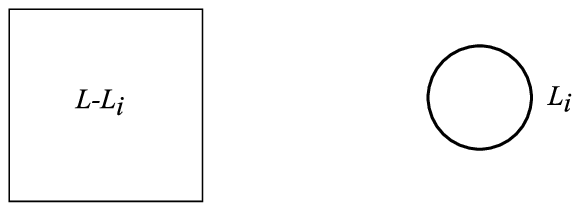}
\caption{}\label{Fig 2.2.7}
\end{figure}

This can be easily achieved by induction on $b(L)$, or better by
saying that it is obvious.

For a descending diagram we will be done if we show that it can be
transformed into another one with less crossings by a series of
Reidemeister moves which do not increase the crossing number.  We
can then use I.R.M.~ and M.I.H.  This is guaranteed by the
following lemma.

\setcounter{thm}{13}

\begin{lem}\label{Lemma 2.2.14} Let $L$ be a diagram with $k$ crossings and a given
ordering of components $L_1, L_2, \ldots, L_n$.  Then either $L$
has a trivial circle as a component or there is a choice of base
points $b=(b_1,\ldots, b_n); ~b_i\in L_i$ such that a descending
diagram $L^d$ associated with $L$ and $b$ (that is, all the bad
crossings of $L$ are changed to good ones) can be changed into a
diagram with less than $k$ crossings by a sequence of Reidemeister
moves not increasing the number of crossings.\end{lem}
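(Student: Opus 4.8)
The plan is to induct on the number of components $n$, and within that to do a geometric analysis of a descending diagram $L^d$ built from a carefully chosen base point on the last component. First I would dispose of the trivial case: if some component $L_i$ bounds a trivial circle disjoint from everything, we are done immediately, so assume no component is a trivial circle. Now look at the last component $L_n$ in the ordering. Since $L^d$ is descending, every crossing involving $L_n$ and an earlier component is one where $L_n$ passes \emph{under}; and once we reach $L_n$ in the descending walk, every self-crossing of $L_n$ met for the first time is an over-crossing. I would choose $b_n$ just after the point where the descending walk enters $L_n$.

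The key step is to show that $L_n$, as an arc traversed from $b_n$ back to $b_n$, can be pushed off the rest of the diagram by Reidemeister moves that never raise the crossing count, thereby either removing a crossing of $L_n$ or revealing that $L_n$ is a trivial circle. The geometric picture: $L_n$ lies entirely \emph{below} every strand of $L_1\cup\cdots\cup L_{n-1}$ at their mutual crossings, and its own self-crossings are ordered (first-encountered strand on top). Starting from the \emph{last} portion of $L_n$ before returning to $b_n$ — an arc that, by descendingness, goes under at every crossing it meets — I would argue this terminal arc can be slid underneath everything via $\Omega_2$ and $\Omega_3$ moves (and $\Omega_1$ to absorb kinks), all of which either keep the crossing number fixed or decrease it, until a crossing is eliminated. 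The point is that an innermost such under-arc, bounded by two consecutive crossings along $L_n$ with nothing of $L_n$ between them on one side, forms a bigon or triangle with the strands it passes under, and the corresponding $\Omega_2$ (removing a bigon) or $\Omega_3$ (sliding past a triangle) move applies. One continues until $L_n$ has no crossings at all, at which point it is a trivial circle — contradicting our assumption unless we have already strictly decreased $k$ along the way, which is exactly the conclusion sought.

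I would organize the induction so that the inner claim is: for a descending diagram with the above base-point choice, either $L_n$ is already crossing-free (trivial circle, done) or there is an outermost under-passing sub-arc of $L_n$ admitting a crossing-non-increasing Reidemeister reduction. The main obstacle is verifying that such an outermost arc always yields a \emph{legitimate} bigon or triangle — i.e. that the region it cuts off is empty of other strands, or can be made empty by prior $\Omega_3$ moves that themselves do not increase crossings. This is the genuinely combinatorial heart, and it is where the "descending" hypothesis does all the work: because the over/under pattern along $L_n$ is forced, no obstructing strand can be trapped in a way that blocks the reduction — any strand crossing into the region must exit it, and the forced under-passing of $L_n$ means those crossings can be cleared in order. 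Once the inner claim is established, the lemma follows: apply the reduction, use I.R.M.\ and M.I.H.\ to transport $w^\circ$ across the moves, and land on a diagram with fewer than $k$ crossings as required.
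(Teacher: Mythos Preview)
Your proposal has a genuine gap at exactly the point you flag as ``the genuinely combinatorial heart.'' You correctly identify that the whole difficulty is producing a \emph{specific} empty bigon or triangle on which a crossing-non-increasing Reidemeister move can be performed, but your resolution --- ``any strand crossing into the region must exit it, and the forced under-passing of $L_n$ means those crossings can be cleared in order'' --- is an assertion, not an argument. An arc that goes under everything does not automatically bound an empty region; there may be many strands weaving through, and clearing them by $\Omega_3$ moves one at a time requires knowing that \emph{each} such move is available, i.e.\ that each intermediate $3$-gon is itself empty. Nothing in your setup guarantees this. Your induction on $n$ and your focus on the last component $L_n$ also do not help here: the obstruction to finding an empty bigon is a planar phenomenon that does not care which component the strands belong to. (Separately, your base-point prescription ``choose $b_n$ just after the point where the descending walk enters $L_n$'' is circular, since the walk enters $L_n$ at $b_n$; and you never say how $b_1,\ldots,b_{n-1}$ are chosen.)

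The paper's argument is organized quite differently and supplies precisely the missing idea. It works with the underlying planar projection (ignoring over/under), finds an \emph{innermost} $f$-gon --- a $0$-, $1$-, or $2$-gon containing no smaller $f$-gon --- and then chooses \emph{all} base points to lie outside that region. The $0$-gon and $1$-gon cases are immediate; for an innermost $2$-gon $X$, every arc meeting $\mathrm{int}\,X$ must enter one edge and exit the other (since there is no smaller $f$-gon inside), and this structural fact is what lets one prove (Proposition~\ref{Prop 2.2.16}) that some $3$-gon in $X$ touching $\partial X$ has empty interior. The descending hypothesis, with base points outside $X$, then guarantees that this $3$-gon supports an $\Omega_3$ move. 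Iterating reduces the number of crossings inside $X$ until an $\Omega_2$ move becomes available. The key move you are missing is this passage through the planar innermost-region argument; the choice of base points is dictated by the region, not by the component structure.
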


\begin{proof} A closed part cut out of the plane by arcs of $L$ is
called an $i$-gon if it has $i$ vertices (see Figure \ref{Fig
2.2.8}). Every $i$-gon with $i\leq 2$ will be called an $f$-gon
($f$ works for few).  Now let $X$ be an innermost $f$-gon, that
is, and $f$-gon which does not contain any other $f$-gon inside.

\begin{figure}[htbp]
\centering
\includegraphics{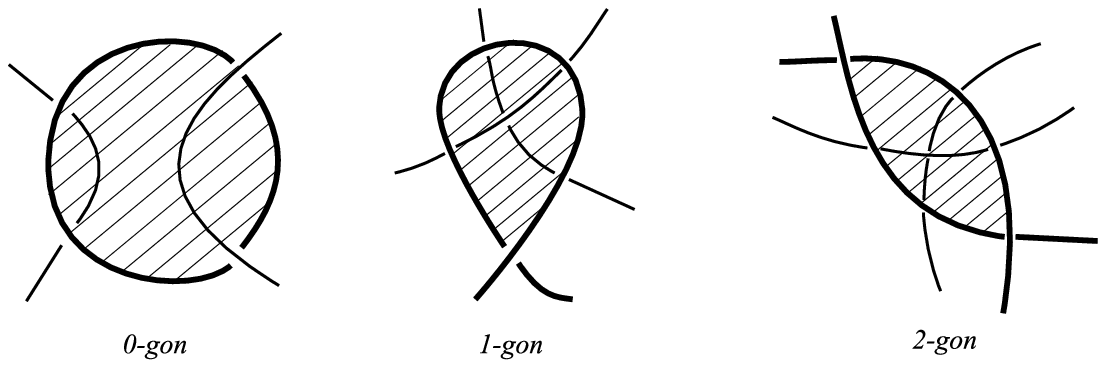}
\caption{}\label{Fig 2.2.8}
\end{figure}

If $X$ is a $0$-gon we are done because $\partial X$ is a trivial
circle.  If $X$ is a $1$-gon then we are done because
$\mathrm{int} X \cap L = \emptyset$ so we can perform on $L^d$ a
Reidemeister move which decreases the number of crossings of $L^d$
(Figure \ref{Fig 2.2.9}).

\begin{figure}[htbp]
\centering
\includegraphics{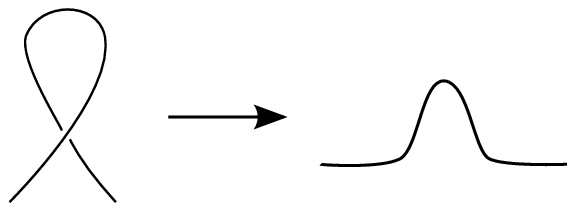}
\caption{}\label{Fig 2.2.9}
\end{figure}

Therefore, we assume that $X$ is a $2$-gon.  Each arc which cuts
$\mathrm{int} X$ goes from one edge to another. Furthermore, no
component of $L$ lies fully in $X$ so we can choose base points
$b=(b_1, \ldots, b_n)$ lying outside $X$.  This has important
consequences:  If $L^d$ is an untangled diagram associated with
$L$ and $b$ then each $3$-gon in $X$ supports a Reidemeister move
of the third type (i.e. the situation of the Figure \ref{Fig
2.2.10} is impossible).

\begin{figure}[htbp]
\centering \includegraphics{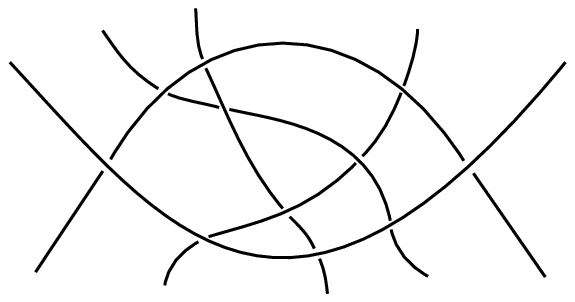} \caption{}\label{Fig
2.2.10}
\end{figure}

Now we will prove Lemma \ref{Lemma 2.2.14} by induction on the
number of crossings of $L$ contained in the 2-gon $X$ (we denote
this number by $c$).

If $c=2$ then $\mathrm{int} X\cap L =\emptyset$ and we are done by
the previous remark (2-gon $X$ can be used to make the
Reidemeister move of the second type on $L^d$ and to reduce the
number of crossings in $L^d$).

Assume that $L$ has $c>2$ crossings in $X$ and that Lemma \ref{Lemma
2.2.14} is proved for less than $c$ crossings in $X$. In order to
make the inductive step we need the following fact.

\begin{prop}\label{Prop 2.2.15} If $X$ is an innermost 2-gon with
$\mathrm{int}X\cap L\neq \emptyset$ then there is a 3-gon,
$\triangle \subset X$ such that $\triangle \cap \partial X \neq
\emptyset$, $\mathrm{int}\triangle \cap L\neq\emptyset$.\end{prop}

Before we prove Proposition \ref{Prop 2.2.15}, we will show how
Lemma \ref{Lemma 2.2.14} follows from it.

We can perform the Reidemeister move of the third type using the
3-gon $\triangle$ and reduce the number of crossings of $L^d$ in
$X$ (compare Figure \ref{Fig 2.2.11}).

\begin{figure}[htbp]
\centering
\includegraphics[width=0.5\textwidth]{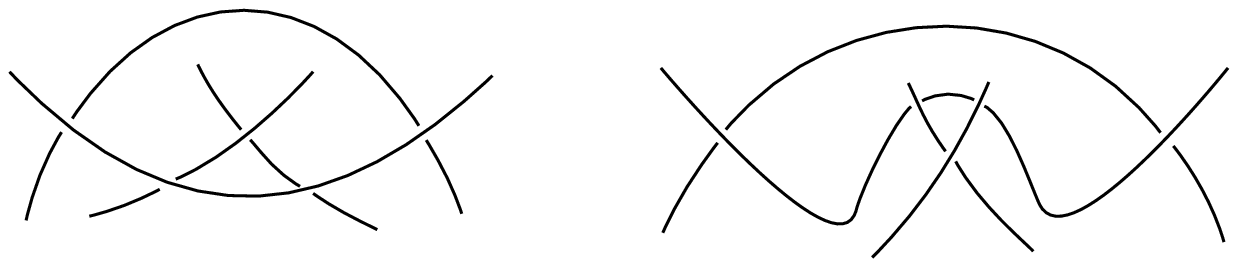}
\caption{}\label{Fig 2.2.11} \end{figure}

Now either $X$ is an innermost $f$-gon with less than $c$ crossings
in $X$ or it contains an innermost $f$-gon with less than $c$
crossings in it.  In both cases we can use the inductive hypothesis.

Instead of proving Proposition \ref{Prop 2.2.15}, we will show a
more general fact, which has Proposition \ref{Prop 2.2.15} as a
special case.

\begin{prop}\label{Prop 2.2.16}  Consider a 3-gon $Y=(a,b,c)$ such
that each arc which cuts it goes from the edge $\overline{ab}$ to
the edge $\overline{ac}$ without self-intersections (we allow $Y$ to
be a 2-gon considered as a degenerate 3-gon with $\overline{bc}$
collapsed to a point.  Furthermore, let $\mathrm{int} Y$ be cut by
some arc.  Then there is a 3-gon $\triangle\subset Y$ such that
$\triangle \cap \overline{ab} \neq \emptyset $ and $\mathrm{int}
\triangle$ is not cut by any arc.\end{prop}

\begin{proof}[Proof of Proposition \ref{Prop 2.2.16}]  We proceed by
induction on the number of arcs in $\mathrm{int} Y\cap L$ (each such
arc cuts $\overline{ab}$ and $\overline{ac}$).  For one arc it is
obvious (Figure \ref{Fig 2.2.12}).  Assume it is true for $k$ arcs
($k\geq 1$)  and consider the $(k+1)$st arc $\gamma$.  Let
$\triangle_\circ = (a_1,b_1,c_1)$ be a 3-gon from the inductive
hypothesis with and edge $\overline{a_1 b_1}\subset \overline{ab}$
(Figure \ref{Fig 2.2.13}).

\begin{figure}[htbp]
\centering
\includegraphics[width=0.3\textwidth]{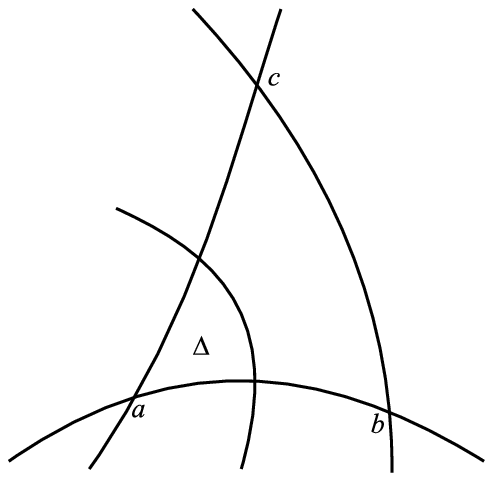}
\caption{}\label{Fig 2.2.12}
\end{figure}

If $\gamma$ does not cut $\triangle_\circ$ or it cuts
$\overline{a_1b_1}$ we are done (Figure \ref{Fig 2.2.13}).
Therefore let us assume that $\gamma$ cuts $\overline{a_1c_1}$ (in
$u_1$) and $\overline{b_1c_1}$ (in $w_1$).  Let $\gamma$ cut
$\overline{ab}$ in $u$ and $\overline{ac}$ in $w$ (Figure \ref{Fig
2.2.14}).

\begin{figure}[htbp]
\centering
\includegraphics[width=0.4\textwidth]{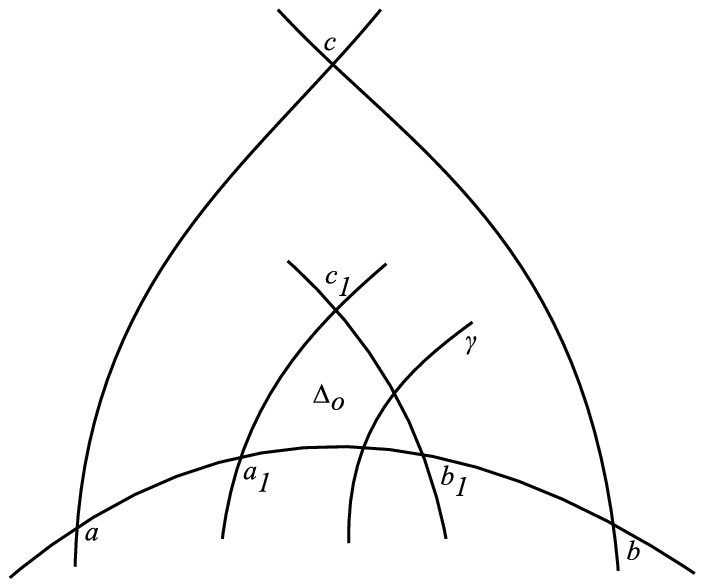}
\caption{}\label{Fig 2.2.13}
\end{figure}

We have to consider two cases: \begin{enumerate} \item[(a)]
$\overline{uu}_1\cap \mathrm{int}\triangle_\circ=\emptyset$ (so
$\overline{ww}_1 \cap \mathrm{int}\triangle_\circ = \emptyset$);
Figure \ref{Fig 2.2.14}.

Consider the 3-gon $ua_1u_1$.  No arc can cut the edge
$\overline{a_1u_1}$ so each arc which cuts the 3-gon $ua_1u_1$
cuts the edges $\overline{ua}_1$ and $\overline{uu}_1$.

Furthermore, this 3-gon is cut by less than $k+1$ arcs so by the
inductive hypothesis there is a 3-gon $\triangle$ in $ua_1u_1$
with an edge on $\overline{ua_1}$ the interior of which is not cut
by any arc.  The $\triangle$ satisfies the thesis of Proposition
\ref{Prop 2.2.16}.

\item[(b)] $\overline{uw_1}\cap\mathrm{int}\triangle_\circ =
\emptyset$ (so $\overline{wu_1}\cap\mathrm{int}\triangle_\circ =
\emptyset$). In this case we proceed like in case (a).
\end{enumerate}

This completes the proof of Proposition \ref{Prop 2.2.16} and hence
the proof of Lemma \ref{Lemma 2.2.14}.

\begin{figure}[htbp]
\centering
\includegraphics[width=0.4\textwidth]{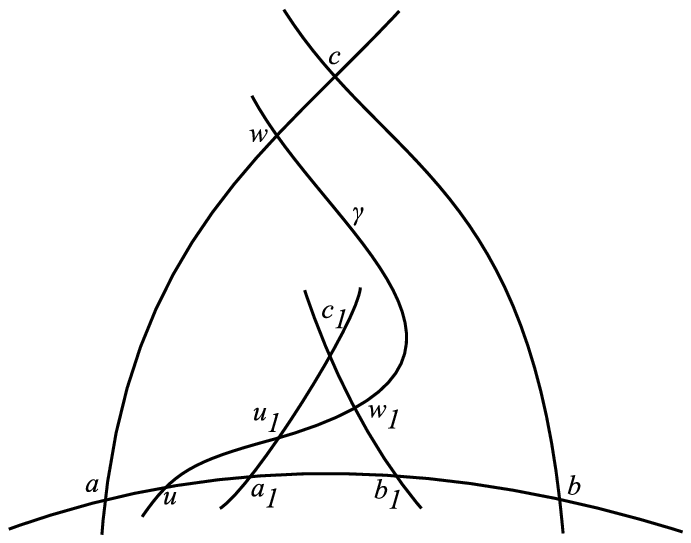}
\caption{}\label{Fig 2.2.14}
\end{figure}

\end{proof}
\end{proof}

\vspace{2in}

\ \ \ \ \ \ \ \ \ \ \ \ \


\newpage \thispagestyle{empty}


AMS 57M25  \hrulefill[0pt]{} ISSN 0239-8885

{\center{ \vspace{1cm}

UNIWERSYTET WARSZAWSKI

 INSTYTUT MATEMATYKI

\vspace{2in}

J\'{o}zef H. Przytycki

Survey on recent invariants on classical knot theory

II. Skein equivalence and properties of invariants \\ of Conway
type. Partial Conway algebras

\vspace{3in}

%


Preprint 8/86

``na prawach r{\c e}kopisu''

\vspace{1cm}

Warszawa 1986

}}
\newpage

\section{Skein equivalence and properties of invariants of Conway type}
\setcounter{thm}{0} \numberwithin{thm}{section}
\numberwithin{figure}{section} \numberwithin{equation}{section}

Let us start this chapter by introducing the equivalence relation on
oriented links which identifies links which cannot be distinguished
by an invariant of Conway type.  This relation is called the skein
equivalence and is denoted by $\sim_S$ (\cite{Co}, \cite{Li_M_1},
\cite{P_T_1}).

\begin{defn}  $\sim_S$ is the smallest equivalence relation on
isotopy classes of oriented links which satisfies the following
condition:

Let $L_1$ (respectively, $L_2$) be a diagram of a link $L_1$
(respectively $L_2$) such that $p_1$ and $p_2$ are crossings of the
same sign and $$(L'_1)^{p_1}_{-\sgn p_1} \sim_S (L'_2)^{p_2}_{-\sgn
p_2}, \textrm{ and }(L'_1)^{p_1}_{\circ} \sim_S
(L'_2)^{P_2}_{\circ}$$ then $L_1 \sim_S L_2$.\end{defn}

From the definition we get on the spot:

\begin{lem}  Two oriented links are not skein equivalent iff there
exists an invariant of Conway type which distinguishes them.  In
particular, assigning to an oriented link its skein equivalence
class ins an invariant of Conway type.\end{lem}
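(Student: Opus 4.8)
The plan is to prove the two implications of the first sentence separately; the second sentence (``in particular\ldots'') will be exactly what is established in proving the ``if'' direction, so no extra work is needed for it.

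\emph{No invariant of Conway type distinguishes skein equivalent links.} Let $w$ be an invariant of Conway type and define a relation $\approx_w$ on isotopy classes of oriented links by $L\approx_w L'$ iff $w_L=w_{L'}$. This is manifestly an equivalence relation on isotopy classes, and I claim it satisfies the property used to define $\sim_S$. Suppose $p_1,p_2$ are crossings of the same sign in diagrams $L'_1,L'_2$ of $L_1,L_2$, with $(L'_1)^{p_1}_{-\sgn p_1}\approx_w (L'_2)^{p_2}_{-\sgn p_2}$ and $(L'_1)^{p_1}_\circ\approx_w (L'_2)^{p_2}_\circ$. If the common sign is $+$, then $L'_1,L'_2$ play the role of $L_+$, the diagrams $(L'_i)^{p_i}_{-\sgn p_i}$ the role of $L_-$, and $(L'_i)^{p_i}_\circ$ the role of $L_\circ$ in two skein triples on which $w$ agrees at the $L_-$- and $L_\circ$-members; since $w_{L_+}$ is uniquely determined by $w_{L_-}$ and $w_{L_\circ}$, we get $w_{L_1}=w_{L_2}$. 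If the common sign is $-$, the same reasoning with the roles of $+$ and $-$ interchanged uses instead that $w_{L_-}$ is uniquely determined by $w_{L_+}$ and $w_{L_\circ}$. Either way $L_1\approx_w L_2$, so $\approx_w$ has the defining property of $\sim_S$; since $\sim_S$ is the smallest such equivalence relation, $\sim_S\subseteq\approx_w$, i.e.\ $L_1\sim_S L_2\Rightarrow w_{L_1}=w_{L_2}$. Contrapositively, an invariant of Conway type distinguishing $L_1$ and $L_2$ witnesses that $L_1\not\sim_S L_2$.

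\emph{The skein class is an invariant of Conway type.} For the converse I would check directly that $w_L:=[L]_{\sim_S}$ (the skein equivalence class of $L$) is an invariant of Conway type; then $L_1\not\sim_S L_2$ means $w_{L_1}\neq w_{L_2}$, giving the required invariant, and this also establishes the ``in particular'' clause. Well-definedness on isotopy classes is automatic from the definition of $\sim_S$. For the first uniqueness condition, take two skein triples $(L_+,L_-,L_\circ)$ and $(M_+,M_-,M_\circ)$ with $L_-\sim_S M_-$ and $L_\circ\sim_S M_\circ$; let $D_1,D_2$ be the $L_+$- and $M_+$-diagrams of these triples, carrying the distinguished crossings $p_1,p_2$ (of equal sign). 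Then $(D_1)^{p_1}_{-\sgn p_1}$, $(D_2)^{p_2}_{-\sgn p_2}$ are diagrams of $L_-,M_-$ and $(D_1)^{p_1}_\circ$, $(D_2)^{p_2}_\circ$ are diagrams of $L_\circ,M_\circ$, so the hypotheses of the defining condition of $\sim_S$ are met and hence $L_+\sim_S M_+$; thus $[L_+]$ depends only on $[L_-]$ and $[L_\circ]$.

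\emph{The second uniqueness condition.} To see that $[L_-]$ depends only on $[L_+]$ and $[L_\circ]$, I would rerun the previous argument based at the \emph{switched} diagrams $\bar D_i:=(D_i)^{p_i}_-$, in which the crossing $p_i$ is now negative: here $(\bar D_i)^{p_i}_{-\sgn p_i}=D_i$ is a diagram of $L_+$ (resp.\ $M_+$) and $(\bar D_i)^{p_i}_\circ$ is a diagram of $L_\circ$ (resp.\ $M_\circ$), so from $L_+\sim_S M_+$ and $L_\circ\sim_S M_\circ$ the defining condition of $\sim_S$ (now with common sign $-$) yields $\bar D_1\sim_S\bar D_2$, i.e.\ $L_-\sim_S M_-$. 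Hence $w_L=[L]_{\sim_S}$ is of Conway type and the proof is complete. The only delicate point is the bookkeeping of the $-\sgn p$ notation together with the observation that \emph{both} halves of the ``uniquely determined'' property are genuinely needed — which forces the two almost-symmetric sub-arguments, one based at the crossing as it sits in the triple and one based at that same crossing with its sign reversed; there is no genuine analytic obstacle.
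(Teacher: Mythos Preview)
Your proof is correct and is exactly the unpacking of what the paper means when it writes ``From the definition we get on the spot''; the paper gives no further argument. Your careful verification that the skein class map satisfies \emph{both} uniqueness conditions---by basing the defining property of $\sim_S$ once at the positive crossing and once at the switched negative crossing---is precisely the content the paper leaves implicit.
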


Skein equivalence can also be described as a ``limit'' of the
sequence of relations.  Namely,

\begin{itemize}

\item[$\sim_0$] ~$L_1 \sim_0 L_2 \textrm{ iff } L_1 \textrm{ is
isotopic to }L_2,$ and

\item[$\sim_i$] ~is the smallest equivalence relation on
oriented links which satisfies the condition:  Let $L'_1$
(respectively $L'_2$ be a diagram of a link $L_1$ (respectively
$L_2$ with a given crossing $p_1$ (respectively $p_2$), $\sgn p_1 =
\sgn p_2$ and $(L'_1)^{p_1}_{-\sgn p_1} \sim_{i-1}
(L'_2)^{p_2}_{-\sgn p_2}$ and $(L'_1)^{p_1}_\circ \sim_{i-1}
(L'_2)^{p_2}_\circ$ then $L_1 \sim_i L_2$.

\end{itemize}
Now it is easy to show that the smallest relation which contains all
$\sim_i$ relations is the skein equivalence relation.  We can weaken
the relations $\sim_i$ not assuming that they are equivalence
relations.  Namely, we introduce $\approx_0, \approx_1, \ldots,
\approx_i,\ldots, \approx_\infty$ as follows:

\begin{itemize}

\item[$\approx_0$] $=\sim_0$, and

\item[$\approx_i$] $L_1 \approx_i L_2$ iff there exist diagrams
$L'_1$ for $L_1$ and $L'_2$ for $L_2$ with crossings $p_1$ and $p_2$
respectively such that $\sgn p_1 = \sgn p_2$ and
$(L'_1)^{p_1}_{-\sgn p_1} \approx_{i-1} (L'_2)^{p_2}_{-\sgn p_2}$
and $(L'_1)^{p_1}_{\circ} \approx_{i-1} (L'_2)^{p_2}_{\circ}$, and

\item[$\approx_\infty$]  is the smallest equivalence relation on
oriented links which contains all relations $\approx_i$.

\end{itemize}

\begin{prob}\begin{enumerate}
\item[(a)] Are there links which are skein ($\sim_S$) equivalent
but not $\approx_\infty$ equivalent?

\item[(b)] Are there links which are $\approx_\infty$ equivalent
but are not $\approx_i$ equivalent for any finite $i$?

\item[(c)] For which $i>0$ do there exist links which are $\sim_i$
equivalent but are not $\approx_i$ equivalent?

\end{enumerate}\end{prob}

Let us come back now to invariants of Conway type and to the skein
equivalence.  We start from examples of links which are not isotopic
but which are skein equivalent.

\begin{lem} If $-L$ denotes the link we get from the link $L$ by
changing orientation of each component of $L$ then $-L \sim_S$.  In
particular, for the Jones-Conway polynomial $P(x,y)$, $P_{-L}(x,y) =
P_L(x,y)$.\end{lem}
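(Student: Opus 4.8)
\medskip

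\noindent\emph{Proof plan.}

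The plan is to prove the geometric assertion $-L\sim_S L$ first, and then read off the equality of polynomials from the preceding Lemma (skein equivalent links are not distinguished by any invariant of Conway type) together with the fact that the Jones--Conway polynomial is of Conway type. Everything rests on three elementary remarks about reversing the orientation of every component of a diagram $D$ (call the result $-D$): (i) at any crossing, reversing the orientation of \emph{both} strands leaves the sign unchanged (the local sign is read off by rotating one arrow a right angle and comparing with the other; reversing both arrows preserves the comparison), so $D$ and $-D$ have ``the same'' crossings with the same signs; (ii) the oriented smoothing is unaffected by simultaneously reversing the two strands at a crossing --- the two arcs that get joined are the same, only traversed oppositely --- so $(-D)^p_\circ=-(D^p_\circ)$; and (iii) a crossing change commutes with reversing all orientations. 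Also $-T_n$ is isotopic to $T_n$, so the statement is immediate when $D$ is a trivial link.

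First I would prove $-D\sim_S D$ for every diagram $D$ by a double induction: the outer induction is on $\crs(D)$, and, for a fixed number of crossings, the inner induction is on the number $b(D)$ of bad crossings with respect to a chosen system of base points. If $b(D)=0$ then $D$ is descending, hence isotopic to the trivial link $T_n$ ($n$ = number of components); then $-D\approx -T_n\approx T_n\approx D$, and since $\sim_S$ is a relation on isotopy classes this gives $-D\sim_S D$ (this also covers the outer base case $\crs(D)=0$). For the inductive step, let $p$ be the first bad crossing of $D$ and write $D^{sw}=D^p_{-\sgn p}$ for $D$ with the crossing at $p$ switched (a crossing change reverses the sign). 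A crossing change at $p$ does not affect whether any other crossing is bad and makes $p$ itself good, so $\crs(D^{sw})=\crs(D)$ while $b(D^{sw})=b(D)-1$; hence the inner inductive hypothesis, together with remark (iii) ($(-D)^{sw}=-(D^{sw})$), gives $D^{sw}\sim_S(-D)^{sw}$. Also $\crs(D^p_\circ)=\crs(D)-1$, so the outer inductive hypothesis and remark (ii) give $D^p_\circ\sim_S(-D)^p_\circ$. Since $p$ has the same sign in $D$ and in $-D$ by remark (i), the defining clause of $\sim_S$ applied to the pair $D$, $-D$ at the crossing $p$ now yields $-D\sim_S D$, closing the induction.

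For the final sentence, the Jones--Conway polynomial is an invariant of Conway type: it is the invariant attached by Theorem~\ref{Theorem 2.1.1} to the Conway algebra of Example~\ref{Example 2.1.8}, and the skein identity $xP_{L_+}+yP_{L_-}=P_{L_\circ}$ (with $x,y$ invertible) shows that each of $P_{L_+},P_{L_-}$ is determined by the other two values. Hence, by the preceding Lemma, $-L\sim_S L$ forces $P_{-L}(x,y)=P_L(x,y)$.

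I expect the delicate points to be not deep but easy to get wrong: verifying remark (ii) about the behaviour of the oriented smoothing under global orientation reversal, and organising the induction correctly --- since a crossing change does not lower the crossing number, one must descend on the pair $(\crs(D),b(D))$, mirroring exactly the standard resolving-tree construction of Section~2.2.
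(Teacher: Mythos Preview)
Your proof is correct and follows essentially the same route as the paper: the key observation is that reversing the orientation of every component leaves the sign of each crossing unchanged, so the same resolving tree serves for both $L$ and $-L$. The paper states this in one sentence (``we can build the resolving tree the same for $L$ and $-L$'') and notes the slightly sharper conclusion $L\approx_{\crs(L)-1}-L$, whereas you unwind the resolving-tree construction into an explicit double induction on $(\crs(D),b(D))$ --- but the content is identical.
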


\begin{proof}The proof is immediate if one notices that the sign of a
crossing is not changed when we change $L$ to $-L$.  So we can build
the resolving tree (the same for $L$ and $-L$) proving that $L
\approx_{\crs{L}-1} -L$ where $\crs(L)$ is the minimal number of
crossings of diagrams of $L$. \end{proof}

\begin{example}\label{Example 3.5}  The links $L_1$ and $L_2$ from Figure \ref{Fig 3.1}
are skein equivalent (if we build a resolving tree starting from the
marked crossings then we even show that $L_1 \approx_1 L_2$).  $L_1$
can be distinguished from $L_2$ by considering global linking
numbers of its sublinks.

\begin{figure}[htbp]
\centering
\includegraphics{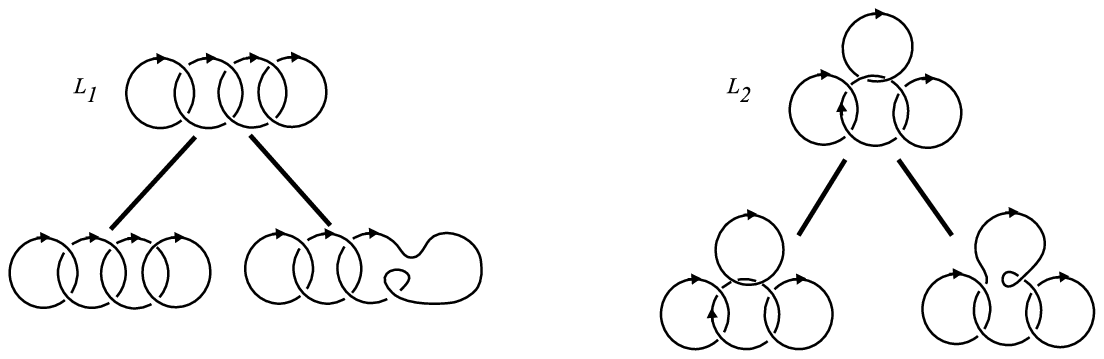}
\caption{}\label{Fig 3.1}
\end{figure}

\end{example}

For further examples we need the definition of a tangle and a
mutation.

\begin{defn}[\cite{Li_M_1}] \begin{enumerate}
\item[(a)] A tangle is a part of a diagram of a link with two
inputs and two outputs (Figure \ref{Fig 3.2}(a)).  It depends on
an orientation of the diagram which arcs are inputs and which are
outputs.  We distinguish tangles with neighboring inputs (Figure
\ref{Fig 3.2}(b)) and alternated tangles (Figure \ref{Fig
3.2}(c)).

\begin{figure}[htbp]
\centering
\includegraphics[scale= 0.8]{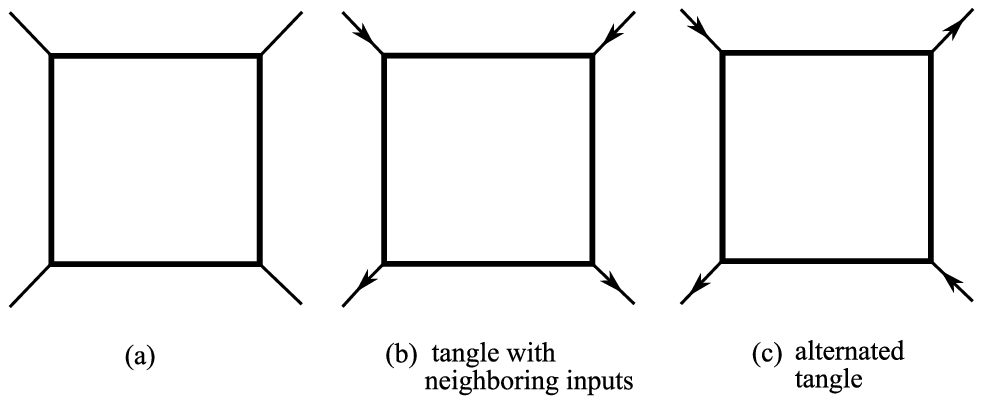}
\caption{}\label{Fig 3.2}
\end{figure}

\item[(b)] Let $L_1$ and $L_2$ be oriented diagrams of links.
Then $L_2$ is a mutation of $L_1$ if $L_2$ can be obtained from
$L_1$ by the following process:  \begin{enumerate}

\item[(i)] Remove from $L_1$ an inhabitant $T$ of a tangle $B$.

\item[(ii)] Rotate $T$ through angle $\pi$ about the ventral axis
(perpendicular to the plane of the diagram) or about the
horizontal or vertical axis of the tangle and iff necessary change
the orientation of $T$ (so that inputs and outputs are preserved).

\item[(iii)] place the new inhabitant into the tangle $B$ to get
$L_2$.
\end{enumerate}
\end{enumerate}
\end{defn}

\begin{lem}[\cite{Li_M_1}, \cite{Hos-1}, \cite{Gi}]\label{Lemma 3.7}
If $L_1$ and
$L_2$ are links whose some diagrams differs by a mutation then $L_1
\sim_S L_2$.  In fact we have $L_1\approx_{\crs -1}L_2$ where $\crs$
is the number of crossings in the mutated tangle of the diagram
(here $\approx_{-1} = \approx_{0}$).\end{lem}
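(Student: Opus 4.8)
The plan is as follows. First I would reduce to the case of two diagrams $D_1$ and $D_2$ that agree outside a fixed tangle region $B$ and inside $B$ carry inhabitants $T$ and $T'$, where $T'$ is obtained from $T$ by a rotation through $\pi$ about one of the three axes, followed if necessary by reversing the orientation of $T$. Each such operation is realized by a self-homeomorphism $\rho$ of the disk $B$ with two properties: (a) $\rho$ carries crossings of $T$ bijectively to crossings of $T'$, carries crossing changes to crossing changes and smoothings to smoothings, and carries a local configuration supporting a Reidemeister move to one supporting a move of the same type; and (b) for corresponding crossings $p$ of $T$ and $\rho(p)$ of $T'$ one has $\sgn p = \sgn \rho(p)$. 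Property (b) is a local check: the rotation about the axis perpendicular to the plane fixes the over/under information and the planar orientation, so it preserves signs outright, while each in-plane rotation reverses both the over/under datum and the planar orientation, and the orientation reversal (when used) reverses both strand directions, so again the sign is preserved. Consequently, if $D_1^\flat$ is built from $D_1$ by switching or smoothing some crossings of $T$ and by Reidemeister moves carried out inside $B$, and $D_2^\flat$ is the correspondingly modified $D_2$, then $D_1^\flat$ and $D_2^\flat$ again differ by a mutation, of a tangle with at most $\crs(T)$ crossings, and with strictly fewer if some crossing of $T$ was smoothed or removed.

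I would then induct on $c = \crs(T)$. For the base case $c \le 1$: a crossingless or one-crossing inhabitant of $B$ is, after the rotation and the orientation fix, isotopic rel $\partial B$ to the original --- there are only finitely many such inhabitants, and $\rho$ sends each non-crossing matching of the four endpoints (together with its disjoint circles, or its single crossing) to an equivalent one. Hence $D_1$ and $D_2$ are isotopic, which gives $L_1 \approx_0 L_2$, i.e.\ $L_1 \approx_{c-1} L_2$ since $\approx_{-1} = \approx_0$; in particular $L_1 \sim_S L_2$.

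For the inductive step ($c \ge 2$) I would invoke the defining clause of $\sim_S$. Pick a crossing $p$ of $T$ and let $p' = \rho(p)$; by (b) these have the same sign. The smoothings $(D_1)^p_\circ$ and $(D_2)^{p'}_\circ$ differ by a mutation of a tangle with $c-1$ crossings, so the inductive hypothesis gives $(D_1)^p_\circ \sim_S (D_2)^{p'}_\circ$ (and, for the refined statement, $(D_1)^p_\circ \approx_{c-2} (D_2)^{p'}_\circ$): this supplies the smoothing side of the defining clause. For the crossing-change side one must show $(D_1)^p_{-\sgn p} \sim_S (D_2)^{p'}_{-\sgn p'}$, where both diagrams still contain a $c$-crossing tangle, so the inductive hypothesis does not apply directly. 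Here I would take $p$, and then successive further crossings, to be the first bad crossing relative to a base-pointing of the open and closed strands of $T$, exactly as in the proof of Theorem \ref{Theorem 2.1.1}: switching all the bad crossings turns $T$ into a descending tangle; then, by the tangle analogue of Lemma \ref{Lemma 2.2.14}, the descending tangle is carried by Reidemeister moves performed inside $B$ and not increasing the crossing number to a tangle with fewer than $c$ crossings; by observation (a) these moves act simultaneously on the $T'$-side, so the two diagrams stay mutation-related throughout and the inductive hypothesis finally applies at the bottom of the chain. Climbing back up --- each crossing switch being an instance of the defining clause whose smoothing side is furnished by the inductive hypothesis and whose crossing-change side is the next diagram down --- one obtains $(D_1)^p_{-\sgn p} \sim_S (D_2)^{p'}_{-\sgn p'}$, and hence $L_1 \sim_S L_2$.

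I expect the genuine obstacle to be this crossing-change branch: transplanting the inductive bookkeeping of Theorem \ref{Theorem 2.1.1} and Lemma \ref{Lemma 2.2.14} into the region $B$ while keeping it compatible with $\rho$. One must check in particular that the Reidemeister reductions of a descending tangle can be chosen to lie entirely inside $B$, so that $\rho$ carries them to legitimate moves on $T'$, and --- for the sharper conclusion $L_1 \approx_{\crs - 1} L_2$ --- that the resolving tree thus produced has depth $\crs(T) - 1$ rather than merely a linear multiple of $\crs(T)$; this is exactly where the drop in crossing number in Lemma \ref{Lemma 2.2.14} is needed. The sign computation in (b) and the base-case isotopies, by contrast, are routine local verifications.
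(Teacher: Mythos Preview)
Your approach is essentially the same as the paper's. The paper's proof is extremely terse: it observes that for $\crs\le 1$ the rotated tangle is isotopic rel boundary to the original (so mutation does not change the isotopy class), and then says only that one proceeds ``by the standard induction on $\crs$ and the minimal number of bad crossings in the tangle (similarly as in the proof of Theorem \ref{Theorem 2.1.1}).'' You have correctly unpacked this sketch --- the sign-preservation check (your property (b)), the parallel resolution of $T$ and $T'$ under $\rho$, and the double induction on $(\crs(T), b(T))$ that reduces the crossing-change branch to a descending tangle handled by the analogue of Lemma \ref{Lemma 2.2.14} --- and you have honestly flagged the one point that needs genuine care, namely keeping the Reidemeister reductions inside $B$ so that $\rho$ transports them. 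That is precisely the content the paper suppresses behind ``similarly as in the proof of Theorem \ref{Theorem 2.1.1}.''
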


\begin{proof} For $\crs \leq 1$ we rotate one of the tangles from
Figure \ref{Fig 3.3} (up to trivial circles in the tangle) and such
a mutation does not change the isotopy class of a link.  Then we use
in the proof the standard induction on $\crs$ and the minimal number
of bad crossings in the tangle (similarly as in the proof of Theorem
\ref{Theorem 2.1.1}).

\begin{figure}[htbp]
\centering
\includegraphics[scale = 0.8]{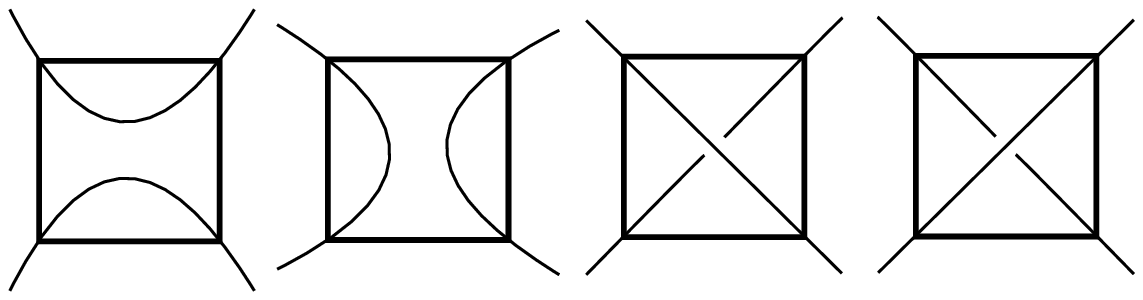}
\caption{}\label{Fig 3.3}
\end{figure}

\end{proof}

\begin{example} The Conway knot (Figure \ref{Fig 3.4}) and the
Kinoshita-Terasaka knot (Figure \ref{Fig 3.5}) are mutants of one
another (the rotated tangle is shown on Figures \ref{Fig 3.4} and
\ref{Fig 3.5}).  Therefore, these knots are skein equivalent (even
$\approx_1$ equivalent; just start to build the resolving tree from
the marked crossings).  D.~Gabai \cite{Ga} has shown that these
knots have different genera so they are not isotopic (R.~Riley
\cite{Ri} was the first to distinguish these knots).\end{example}

\begin{figure}[htbp]
\centering

\subfigure[Conway knot.]{\includegraphics[scale =
0.9]{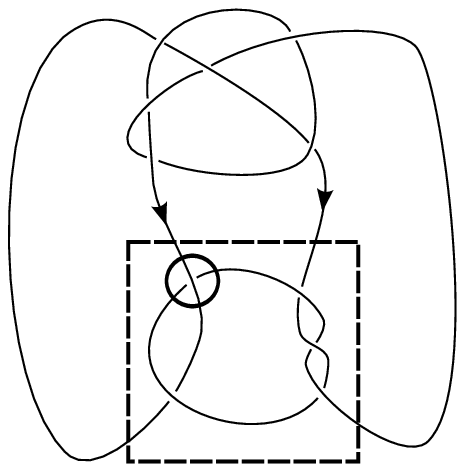}\label{Fig 3.4}} \hspace{2cm}
\subfigure[Kinshina-Terasaka knot.]{\includegraphics[scale =
0.9]{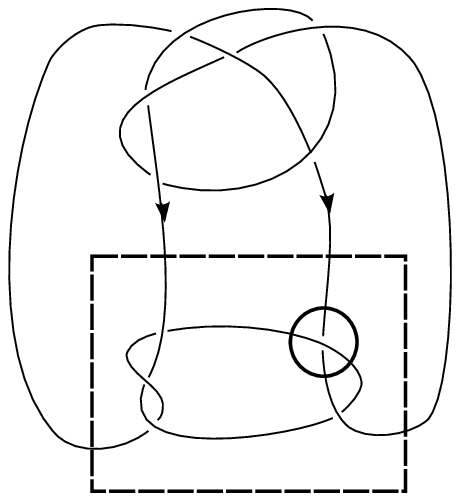}\label{Fig 3.5}}

\caption{}

\end{figure}

\begin{example}[\cite{Li_M_1}, \cite{Hos-1}] In Figure \ref{Fig 3.6}
is shown a pretzel link that will be denoted $L(p_1^{\eps(1)},
p_2^{\eps(2)}, \ldots , p_n^{\eps(n)})$.

\setcounter{figure}{5}

\begin{figure}[htbp]
\centering
\includegraphics[scale = 0.8]{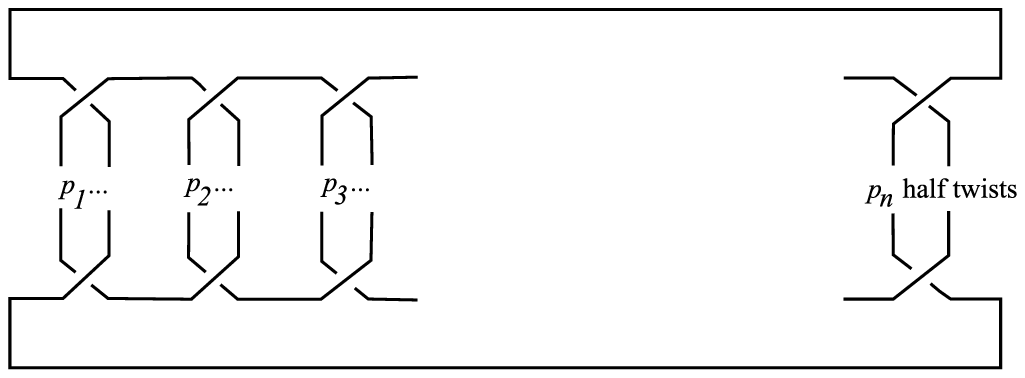}
\caption{}\label{Fig 3.6}
\end{figure}

The $i$th vertical strip has $p_i$ half twists.  the superscript
$\eps(i)$ is 1 if all the crossings on the $i$th strip are positive
and $-1$ if they are negative.  Note that $\eps(i)$ depends on the
choice of orientation of the various components; for a given $(p_1,
p_2, \ldots, p_n)$ an arbitrary choice of $\eps$ may not be
possible.  To have a pretzel link oriented assume that the upper arc
is oriented from the right into the left (compare Figure \ref{Fig
3.7}).  It follows from Lemma \ref{Lemma 3.7} that for any
permutation $\partial \in S_n$, $$L(p_1^{\eps(1)}, p_2^{\eps(2)},
\ldots, p_n^{\eps(n)})$$ is skein equivalent to
$$L(p_{\partial(1)}^{\eps({\partial(1)})},
p_{\partial(2)}^{\eps({\partial(2)})}, \ldots,
p_{\partial(n)}^{\eps({\partial(n)})}),$$ because we achieve the
second link from the first by a finite sequence of mutations.  In
particular, we can travel from the pretzel link of two components,
$L(3,5,3,-5^{-1}, -3^{-1}, -3^{-1})$, (see Figure \ref{Fig 3.7}), to
its mirror image, $L(-3^{-1},-5^{-1}, -3^{-1}, 5,3,3)$, using a
finite number of mutations, however, these links are not isotopic
(see \cite{B_Z}).\end{example}

\begin{figure}[h]
\centering
\includegraphics[scale = 0.8]{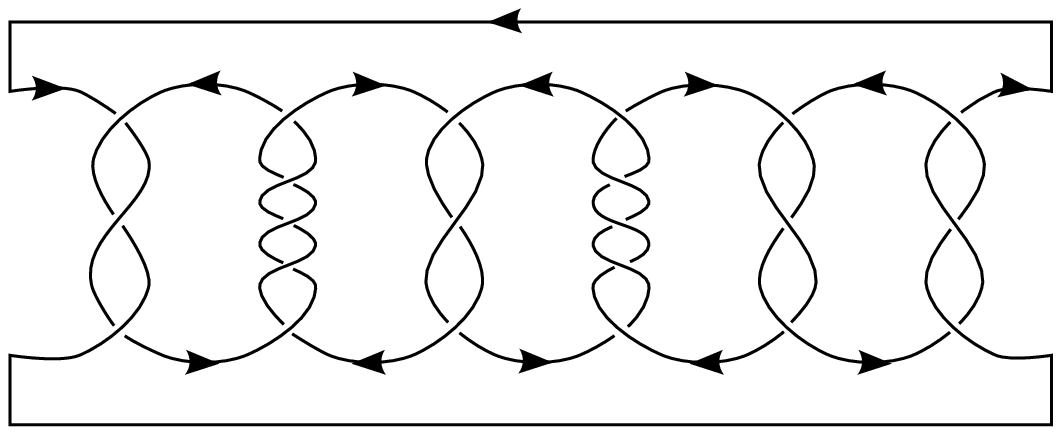}
\caption{}\label{Fig 3.7}
\end{figure}

\begin{example}\label{Example 3.10}Consider a diagram, $D$, of a link with two
alternating tangles.  We assume the following convention:\medskip

\includegraphics[scale = .6]{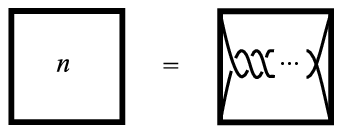}, with $n$ half-twists in the second box,
and

\medskip
\includegraphics[scale = .6]{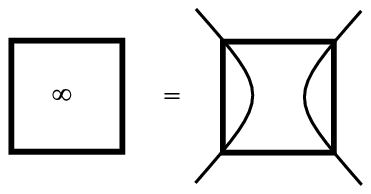}.

Let $D(n,m)$ denote the diagram obtained from $D$ by putting $m$ in
the first tangle and $n$ into the second.  Assume  that $D(\infty,
n)$ is skein equivalent to $D(m,\infty)$ for every $m$ and $n$. Then
for $m+n = m'+n'$ and $m \equiv m' (\mod 2)$, $D(m,n) \sim_S D(m',
n')$.

Examples of diagrams which satisfy the above conditions were found
by T.~Kanenobu \cite{Ka_1, Ka_2, Ka_3} (Figure \ref{Fig 3.8}).

\begin{figure}[htbp]
\centering
\includegraphics{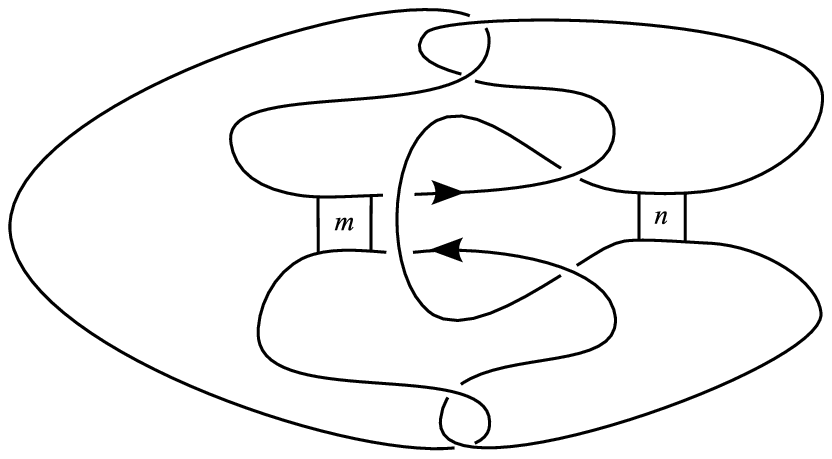}
\caption{}\label{Fig 3.8}
\end{figure}

In this example, $D(\infty, m)$ and $D(n, \infty)$ are trivial links
of 2 components.  Kanenobu \cite{Ka_1} has shown (using Jones-Conway
polynomial and the structure of the Alexander module) that $D(2m,
2n)$ is isotopic to $D(2m', 2n')$ iff $(m,n) = (m', n')$ or $(m,n) =
(n',m')$.
\end{example}

To show the statement from Example \ref{Example 3.10} one should use
the standard induction on $|m-m'|$.

The next example and its story are taken from the Lickorish and
Millett paper \cite{Li_M_1}.

Using a computer, M.B.~Thistlethwaite has shown that amongst the
12966 knots with at most 13 crossings, there are thirty with the
Conway polynomial $\nabla_L(z) = 1+2z^2+2z^4$.  Examination of these
failed to find a pair of knots distinguished by the Jones-Conway
polynomial, but not by the Jones polynomial.  However, an outcome of
that search produced the following example.

\begin{example}\label{Example 3.11}\cite{Li_M_1}
Consider the knots in Figure \ref{Fig 3.9}.

\begin{figure}[htbp]
\centering

\subfigure[$8_8$]{\includegraphics{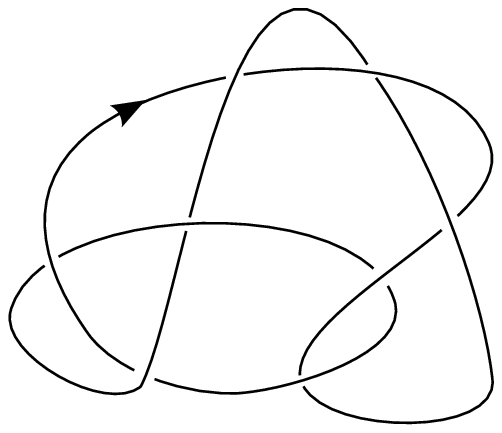}} \hspace{.8cm}
\subfigure[$10_{129}$]{\includegraphics{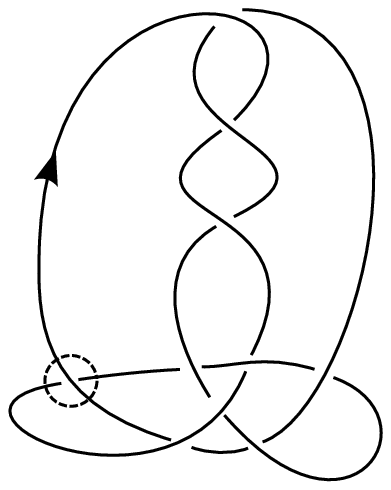}}\hspace{0.8cm}
\subfigure[$13_{6714}$]{\includegraphics{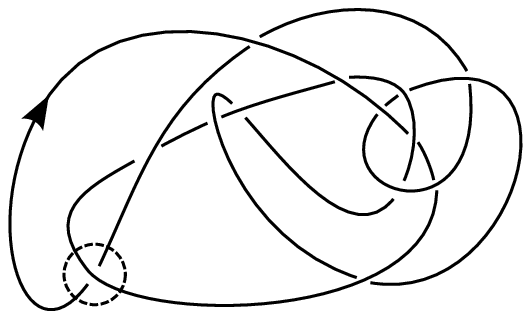}}
\caption{}\label{Fig 3.9}
\end{figure}

Now changing the encircled crossing of $13_{6714}$ produces
$10_{129}$, and nullifying (smoothing) that crossing produces
$T_2$, the trivial link of 2 components.  Similarly, changing the
encircled crossing in $10_{129}$ give $8_8$ and smoothing it gives
$T_2$. Hence, we have triples $(13_{6714}, 10_{129}, T_2)$ and
$(8_8, 10_{129}, T_2)$, both of the form $(L_+, L_-, L_\circ)$.
Therefore $8_8$ and $13_{6714}$ are skein equivalent.  The knots
of Figure \ref{Fig 3.9} are slice knots and so have zero
signature. Furthermore, $8_8$ is the only knot with $L(25, 11)$ as
its double branched cover ($8_8$ is the 2-bridge knot of type
$K_{11/25}$); see \cite{Hod}.  Because $8_8$ is not isotopic to
$13_{6714}$ (first shown by Thistlethwaite); (also Kauffman
polynomial, Chapter 5, distinguishes these knots) therefore $8_8$
and $13_{6714}$ have different double branched covers.  From this
it can be gained that $8_8$ cannot be obtained from $13_{6714}$ by
a finite sequence of mutations (because a mutation does not change
the double cover of a knot).  the same observation can be gained
using the Kauffman polynomial (Lemma 5.9(e)).

Lickorish and Millett found in \cite{Li_M_1} that $8_8$ and
$\overline{10}_{129}$ (the mirror image of $10_{129}$) have the same
Jones-Conway polynomial and they asked whether they are skein
equivalent.  Kanenobu has given the positive answer to this question
showing that the knots $8_8$, $10_{129}$, and $13_{6714}$ are
special cases of his $D(m, n)$ knots \cite{Ka_2}.
\end{example}

\begin{prop}\label{Prop 3.12} $8_8 \approx D(0, -1)$, $10_{129}
\approx D(2, -1)$, and $13_{6714} \approx D(2, -3)$, where $\approx$
denotes isotopy.\end{prop}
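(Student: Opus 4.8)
The plan is to exhibit, for each of the three knots, an explicit diagram of Kanenobu's family $D(m,n)$ (from Example \ref{Example 3.11}) with the indicated parameters, and then to reduce that diagram to the standard diagram of the named knot by a sequence of Reidemeister moves. Concretely, I would first fix precisely the two alternating tangles and the closure conventions defining $D(m,n)$, reading them off Figure \ref{Fig 3.8}; this pins down $D(0,-1)$, $D(2,-1)$, and $D(2,-3)$ as honest link diagrams with a definite number of crossings. The diagram $D(m,n)$ has $|m|+|n|$ crossings coming from the two twist regions, plus a fixed number coming from the rest of $D$; for the small parameter values here these are all very small diagrams, so the isotopies can be verified by hand.

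The key steps, in order: (1) Draw $D(2,-3)$ from the convention; this has the two twist boxes filled with $2$ and $-3$ half-twists respectively, sitting inside the fixed skeleton of $D$. (2) Simplify by Reidemeister moves — absorbing the twist regions into the rest of the diagram, cancelling opposite crossings, and using $\Omega_1$, $\Omega_2$, $\Omega_3$ — until one reaches a diagram one recognizes as $13_{6714}$ (for instance by matching it against the standard $13$-crossing diagram, or by computing a classifying invariant such as the Jones-Conway or Alexander polynomial and appealing to the fact, already used in Example \ref{Example 3.11}, that these knots are determined among the relevant candidates). (3) Repeat for $D(2,-1)$: the skein/crossing-change relation built into Kanenobu's construction already tells us that changing the $n=-3$ box down to $n=-1$ (two fewer negative half-twists) carries $D(2,-3)$ to $D(2,-1)$ by a single crossing change, which on the level of the recognized knots is exactly the crossing change carrying $13_{6714}$ to $10_{129}$ described in Example \ref{Example 3.11}; so $D(2,-1) \approx 10_{129}$. (4) Likewise, passing from $D(2,-1)$ to $D(0,-1)$ removes two half-twists from the first box, realizing the crossing change that takes $10_{129}$ to $8_8$; hence $D(0,-1)\approx 8_8$. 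Throughout, one also checks the smoothings give $T_2$, which matches the already-noted fact that $D(\infty,m)$ and $D(n,\infty)$ are trivial $2$-component links, confirming the identification of the parameter slots.

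The main obstacle I expect is step (2): actually recognizing the simplified diagram as the \emph{named} knot $13_{6714}$ (and $10_{129}$, $8_8$) rather than merely as \emph{some} knot with the right polynomial. The cleanest route is to avoid hunting for an explicit Reidemeister sequence to a tabulated diagram and instead to use the classification already invoked in Example \ref{Example 3.11}: Kanenobu showed (via the Jones-Conway polynomial and the Alexander module) exactly which isotopy classes his $D(m,n)$ realize, and one need only match invariants. So the real content is bookkeeping: fixing the conventions so that the parameter pairs $(0,-1)$, $(2,-1)$, $(2,-3)$ line up with the correct twist boxes, and checking that the three crossing changes and two smoothings in Example \ref{Example 3.11} are exactly the moves $n\mapsto n-2$, $m\mapsto m-2$, $m\mapsto\infty$, $n\mapsto\infty$ on the $D(m,n)$ diagram. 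Once the conventions are pinned down, the isotopies are forced and the proposition follows.
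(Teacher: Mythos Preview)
Your proposal is correct and takes essentially the same approach as the paper: the paper's proof is the single line ``Just by checking the needed equalities,'' i.e.\ a direct diagrammatic verification, and your plan is simply an explicit elaboration of how one would carry that out. If anything, your outline is considerably more detailed than what the paper provides.
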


\begin{proof} Just by checking the needed equalities. \end{proof}

This allows us to answer the first part of Question 10
\cite{Li_M_1}:

\begin{cor}[\cite{Ka_3}] The knots $8_4$ and $\overline{10}_{129}$ are
skein equivalent but they have different unknotting
numbers.\end{cor}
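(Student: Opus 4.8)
The plan is to prove the two assertions by entirely separate means, since they rest on different principles: the skein equivalence is \emph{formal}, flowing from the machinery already in place, whereas the inequality of unknotting numbers must be extracted from a finer, geometric invariant that skein equivalence cannot detect.

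First I would settle the skein equivalence, which is due to Kanenobu \cite{Ka_3}. Following the template of Proposition \ref{Prop 3.12}, one realizes both knots inside Kanenobu's family, say $8_4 \approx D(m,n)$ and $\overline{10}_{129} \approx D(m',n')$, and then verifies the two numerical conditions $m+n = m'+n'$ and $m \equiv m' \pmod 2$ (passing to the mirror image negates the twist parameters, which is what brings $\overline{10}_{129}$ into range). For this family the hypothesis of Example \ref{Example 3.10} is satisfied, because the relevant closures $D(\infty,n)$ and $D(m,\infty)$ are trivial two-component links and hence skein equivalent; the conclusion of Example \ref{Example 3.10} then delivers $8_4 \sim_S \overline{10}_{129}$ directly. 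Note that the simpler route via mutation (Lemma \ref{Lemma 3.7}) is unavailable here, since these knots are not mutants; the twist-parameter criterion of Example \ref{Example 3.10} is exactly the tool that applies.

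Next I would show that the unknotting numbers differ. For $\overline{10}_{129}$ the claim is $u(\overline{10}_{129}) = 1$; since mirroring does not affect the unknotting number it suffices to unknot $10_{129}$, and this is achieved by a single crossing change in a suitable diagram (classically, $10_{129}$ is an unknotting-number-one knot). For $8_4$ I would establish the lower bound $u(8_4) \geq 2$. The signature estimate $u \geq |\sigma|/2$ is too weak to exclude unknotting number one here, so the plan is to invoke the Montesinos trick: were $u(8_4)=1$, the double branched cover $\Sigma_2(8_4)$ would be obtained by a half-integral surgery on a knot in $S^3$, and the resulting restriction on the linking form of $H_1(\Sigma_2(8_4))$ (Lickorish's obstruction) would then be contradicted. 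Comparing $u(8_4)=2$ with $u(\overline{10}_{129})=1$ finishes the proof and answers the first part of Question 10 of \cite{Li_M_1}: these skein-equivalent knots are separated by the unknotting number, so no invariant of Conway type can tell them apart.

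The hard part will be the lower bound $u(8_4)\geq 2$. On the skein side everything is formal once the $D(m,n)$ realization is granted, and $u(\overline{10}_{129})=1$ is witnessed by an explicit crossing change; but ruling out unknotting number one is a genuine obstruction-theoretic step. Concretely, the obstacle is to compute the linking form on the double branched cover of $8_4$ and to check that no half-integral surgery description is compatible with it --- the one place where a covering-space invariant, rather than the Conway-type framework, does the essential work.
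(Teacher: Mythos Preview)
Your overall strategy matches the paper's, but note first that the corollary as printed contains a typo: the knot in question is $8_8$, not $8_4$ (compare Example~\ref{Example 3.11}, Proposition~\ref{Prop 3.12}, and the paper's own proof, all of which treat $8_8$). This matters for your argument, since your skein-equivalence step presupposes a realization $8_4 \approx D(m,n)$ in Kanenobu's family, and no such realization is available; it is $8_8 \approx D(0,-1)$ that sits in the family, and then $\overline{10}_{129} \approx D(-2,1)$ gives $m+n=m'+n'=-1$ with $m\equiv m'\pmod 2$, so Example~\ref{Example 3.10} applies exactly as you outline.

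With that correction, your plan coincides with the paper's. The paper takes the skein equivalence as already established by the discussion preceding the corollary (Kanenobu's identification plus Example~\ref{Example 3.10}), observes that $u(10_{129})=1$ by an explicit crossing change, and for the lower bound $u(8_8)\ge 2$ simply cites \cite{Li_M_1} (and \cite{Ka_M}). The Lickorish obstruction you sketch---the linking form on the double branched cover ruling out a half-integral surgery description---is precisely the argument used in \cite{Li_M_1}, so your proposal is not a different route but an unpacking of the reference the paper defers to.
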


\begin{proof} It can be easily shown that $10_{129}$ has unknotting
number 1.  For the proof that $8_8$ has unknotting number 2 we refer
to \cite{Li_M_1} (see also \cite{Ka_M}).\end{proof}

Examples which we have described so far have shown limitations of
invariants of Conway type.  However, the fact is that, for example,
the Jones-Conway polynomial is better than the Jones polynomial and
the Conway polynomial.  In fact, the Jones-Conway polynomial is the
stronger invariant.  This is confirmed by the following example
which comes from the Thistlethwaite tabulations (see \cite{Li_M_1}).

\begin{example} Consider the knot shown in Figure \ref{Fig 3.10} ($11_{388}$ in
\cite{Pe}).  We have $P_{11_{388}}(x,y)\neq
P_{\overline{11}_{388}}(x,y)$, but $V_{11_{388}}(t) =
V_{\overline{11}_{388}}(t)$ and $\nabla_{11_{388}}(z) =
\nabla_{\overline{11}_{388}}(z)$.

\begin{figure}[htbp]
\centering

$11_{388}$ \includegraphics{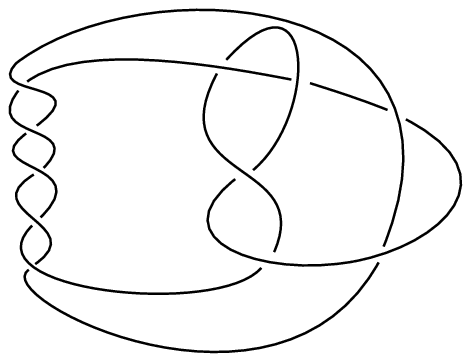}

\caption{}\label{Fig 3.10}
\end{figure}
\end{example}
\begin{proof} Check the values of the invariants for $11_{388}$ in
the table and use the following Lemma:

\begin{lem}\label{Lemma 3.15}If the link $\overline L$ is the mirror image of the link
$L$, then the Jones-Conway polynomial satisfies $$P_{\overline
L}(x,y) = P_L(y,x).$$

In particular, for the Jones polynomial we have $V_{\overline{L}(t)}
= V_{L}(\frac{1}{t})$, and for the Conway polynomial we have
$\nabla_{\overline L}(z)= \nabla_L(-z)$.\end{lem}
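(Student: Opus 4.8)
The plan is to argue directly from the defining properties of the Jones-Conway polynomial as the invariant associated, via Theorem~\ref{Theorem 2.1.1}, to the Conway algebra of Example~\ref{Example 2.1.8}. The key observation is that passing to the mirror image $\overline{L}$ reverses every crossing: each positive crossing \poscrossing\ of a diagram $D$ of $L$ becomes a negative crossing \negcrossing\ in the corresponding mirror diagram $\overline D$ of $\overline L$, and vice versa, while the smoothing $L_\circ$ is unaffected (up to isotopy). Consequently, if $(L_+, L_-, L_\circ)$ is a Conway triple for some diagram of $L$, then $(\overline{L}_-,\ \overline{L}_+,\ \overline{L}_\circ)$ is a Conway triple for the mirrored diagram of $\overline L$; in other words, mirroring interchanges the roles of $L_+$ and $L_-$, hence interchanges the roles of the operations $|$ and $\ast$.

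First I would make this precise at the level of the resolving tree. Given a diagram $D$ of $L$, take its mirror diagram $\overline D$; there is an obvious bijection between crossings of $D$ and crossings of $\overline D$ which reverses signs, and hence a bijection between resolving trees of $D$ and resolving trees of $\overline D$ in which every $+$-vertex becomes a $-$-vertex and conversely, while leaves (descending trivial links with $n$ components, whose value is $a_n = (x+y)^{n-1}$) are matched with leaves carrying the same value. So the claim reduces to the following algebraic statement about the Conway algebra $\A = (\ints[x^{\mp},y^{\mp}]; a_1,a_2,\ldots, |, \ast)$ of Example~\ref{Example 2.1.8}: the map $\sigma:\ints[x^{\mp},y^{\mp}]\to\ints[x^{\mp},y^{\mp}]$ induced by swapping $x\leftrightarrow y$ satisfies $\sigma(a_n)=a_n$ for all $n$ (clear, since $a_n=(x+y)^{n-1}$ is symmetric), and intertwines the two operations with their roles reversed, i.e. $\sigma(a|b)=\sigma(a)\ast\sigma(b)$ and $\sigma(a\ast b)=\sigma(a)|\sigma(b)$. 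For the second point, recall $|$ and $\ast$ are defined from the single linear relation $xw_1+yw_2=w_0$: namely $w_2|w_0=w_1$ means $w_1=\tfrac1x(w_0-yw_2)$, and $w_1\ast w_0=w_2$ means $w_2=\tfrac1y(w_0-xw_1)$. Applying $\sigma$ to the relation $xw_1+yw_2=w_0$ gives $y\,\sigma(w_1)+x\,\sigma(w_2)=\sigma(w_0)$, which is precisely the relation with the roles of the two arguments swapped; reading off the operations then yields $\sigma(w_2|w_0)=\sigma(w_1)$ with $\sigma(w_1)=\sigma(w_2)\ast\sigma(w_0)$, and symmetrically for $\ast$.

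Combining these two ingredients: by induction on the height of a resolving tree for $D$, the value $w_{\overline L}$ computed from the mirrored tree equals $\sigma$ applied to the value $w_L$ computed from the original tree, i.e. $P_{\overline L}(x,y)=P_L(y,x)$. I expect no serious obstacle here; the only point requiring a little care is the bookkeeping at the leaves and the verification that mirroring a descending diagram yields a descending diagram with the same component count (so that initial conditions match), together with the elementary check that $L_\circ$ and its mirror image agree after mirroring. The two stated specializations follow by substitution: for the Jones polynomial, one checks that interchanging $x\leftrightarrow y$ in the substitution $x=\tfrac{-t}{\sqrt t-1/\sqrt t}$, $y=\tfrac1t\tfrac{1}{\sqrt t-1/\sqrt t}$ corresponds to $t\mapsto 1/t$, giving $V_{\overline L}(t)=V_L(1/t)$; for the Conway polynomial, interchanging $x=1/z\leftrightarrow y=-1/z$ corresponds to $z\mapsto -z$, giving $\nabla_{\overline L}(z)=\nabla_L(-z)$.
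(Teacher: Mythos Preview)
Your proposal is correct and takes essentially the same approach as the paper, which simply states that the lemma ``is an easy consequence of the observation that the sign of each crossing is changed if we move from $L$ to $\overline L$.'' You have supplied the details the paper omits; in particular, your algebraic verification that the swap $x\leftrightarrow y$ intertwines $|$ and $\ast$ and fixes the $a_n$ is exactly the content the paper packages in the immediately following Lemma~\ref{Lemma 3.16} (where your $\sigma$ plays the role of the involution $\tau$).
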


The proof of the lemma is an easy consequence of the observation that
the sign of each crossing is changed if we move from $L$ to
$\overline L$.

\end{proof}

The idea of Lemma \ref{Lemma 3.15} can be partially generalized to
other invariants yielded by a Conway algebra.

\begin{lem}\label{Lemma 3.16}Let $\mathcal{A} = \{A; a_1, a_2, \ldots, |, \ast\}$ be
a Conway algebra such that there exists an involution $\tau:A\to A$
which satisfies \begin{enumerate}

\item $\tau(a_i) = a_i$, and
\item $\tau(a|b) = \tau(a) \ast \tau(b)$.\end{enumerate}

Then the invariant, $A_L$, yielded by the algebra satisfies
$$A_{\overline{L}} = \tau(A_L).$$\end{lem}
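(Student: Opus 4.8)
The plan is to mirror the construction of the invariant $w$ from the proof of Theorem~\ref{Theorem 2.1.1} and show that applying $\tau$ to the value of the invariant on $L$ reproduces the recursive definition of the invariant on $\overline{L}$. Since the invariant is uniquely determined by the initial conditions and the Conway relations (together with the existence of resolving trees with descending leaves), it suffices to check that the function $L \mapsto \tau(A_L)$ satisfies the defining properties of the invariant associated to the link $\overline{L}$.

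First I would record the basic observation that passing from $L$ to $\overline{L}$ reverses the sign of every crossing, and interchanges the roles of $L_+$ and $L_-$ while leaving $L_\circ$ essentially unchanged: if $(L_+, L_-, L_\circ)$ is a Conway triple for $L$, then the mirror images form a triple $(\overline{L_-}, \overline{L_+}, \overline{L_\circ})$ in which $\overline{L_-}$ plays the role of $L_+$. Next I would derive from hypothesis (2), $\tau(a|b) = \tau(a)\ast\tau(b)$, the companion identity $\tau(a\ast b) = \tau(a)|\tau(b)$; this follows because $\tau$ is an involution, by applying $\tau$ to both sides of $\tau(a|b)=\tau(a)\ast\tau(b)$ after substituting $\tau(a), \tau(b)$ for $a,b$ and using Lemma~\ref{Lemma 2.1.4} (or more directly: set $c = \tau(a), d = \tau(b)$, so $\tau(c\ast d) = \tau(\tau(\tau(c))\ast\tau(\tau(d))) = \tau(\tau(\tau(c)|\tau(d))) \cdot$, unwinding with $\tau^2 = \mathrm{Id}$).

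Then I would run the induction on $\crs(L)$ exactly as in the proof of Theorem~\ref{Theorem 2.1.1}. For the base case, a descending diagram $U_n$ with no crossings has $\overline{U_n}$ again a (descending) diagram of the trivial $n$-component link, so $A_{\overline{U_n}} = a_n = \tau(a_n) = \tau(A_{U_n})$ by hypothesis (1). For the inductive step, given a bad crossing $p$ of $L$ with, say, $\sgn p = +$, we have $A_L = A_{L_-^p}|A_{L_\circ^p}$; applying $\tau$ and the two multiplicativity identities gives $\tau(A_L) = \tau(A_{L_-^p})\ast\tau(A_{L_\circ^p})$. In the mirror image, $p$ becomes a negative crossing, and $\overline{L} = \overline{L}{}_-^p$ with $\overline{L}{}_+^p = \overline{L_-^p}$ and $\overline{L}{}_\circ^p = \overline{L_\circ^p}$, so the Conway relation for the invariant of $\overline{L}$ reads $A_{\overline{L}} = A_{\overline{L_-^p}}\ast A_{\overline{L_\circ^p}}$. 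Since $\crs(L_-^p), \crs(L_\circ^p) < \crs(L)$ up to the usual resolving-tree bookkeeping (really one inducts on the number of bad crossings as in the original proof), the inductive hypothesis gives $A_{\overline{L_-^p}} = \tau(A_{L_-^p})$ and $A_{\overline{L_\circ^p}} = \tau(A_{L_\circ^p})$, and the two expressions match. The case $\sgn p = -$ is symmetric, using $\tau(a\ast b) = \tau(a)|\tau(b)$.

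The only real subtlety — and the step I expect to require the most care — is making the induction legitimate: one cannot literally induct on $\crs(L)$ since $L_-^p$ has the same number of crossings as $L_+^p$. The clean way is to repeat the scheme of the proof of Theorem~\ref{Theorem 2.1.1}: define things on diagrams, induct on the number of bad crossings with respect to a fixed choice of base points (compatible under mirroring, since taking the mirror image does not move base points or change which crossings are ``bad''), and invoke the already-established fact that $A$ is a well-defined isotopy invariant so that the identity $A_{\overline{L}} = \tau(A_L)$, once checked on one diagram of each link, holds for the link. I would state this explicitly but not re-grind the resolving-tree details, citing the proof of Theorem~\ref{Theorem 2.1.1}.
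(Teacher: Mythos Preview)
Your proposal is correct and follows exactly the approach the paper intends: the paper gives no explicit proof of Lemma~\ref{Lemma 3.16}, but introduces it as the generalization of Lemma~\ref{Lemma 3.15}, whose proof is stated as ``an easy consequence of the observation that the sign of each crossing is changed if we move from $L$ to $\overline L$.'' Your argument is simply a careful write-up of that observation combined with the resolving-tree induction from Theorem~\ref{Theorem 2.1.1}, including the companion identity $\tau(a\ast b)=\tau(a)|\tau(b)$ and the correct handling of the induction on bad crossings rather than on $\crs(L)$.
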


In Examples \ref{Example 2.1.5} and \ref{Example 2.1.6}, $\tau$ is
the identity.  In Example \ref{Example 2.1.8}, which defines the
Jones-Conway polynomial, $\tau(P(x,y)) = P(y,x)$, and in Example
\ref{Example 2.1.10}, $\tau(n,z)=(n, -z)$.  On the other hand, the
algebra from Example \ref{Example 2.1.7} does not have such an
involution.

\begin{rem} We can build a Conway algebra using terms (words over
the alphabet \newline $a_1, a_2, \ldots, |, \ast, (, )$ which are
sensible).

In this algebra $\tau$ exists and is uniquely determined by the
conditions in Lemma \ref{Lemma 3.16}.  To prove this, it is enough
to observe that $\tau$ maps axioms of a Conway algebra into axioms.
On the other hand, this algebra, $\mathcal A_u$, is the universal
Conway algebra, that is, for any other Conway algebra $\mathcal A$
there is a unique homomorphism $\mathcal{A}_u \to
\mathcal{A}$.\end{rem}

\begin{rem} It may happen that for each pair $u,v \in A$ there
exists exactly one $w \in A$ such that $v|w = u$ and $u\ast w = v$.
Then we can introduce a new operation $\circ:A\times A \to A$
putting $u\circ v = w$ (we have such a situation in Examples
\ref{Example 2.1.6}, \ref{Example 2.1.7}, and \ref{Example 2.1.8},
but not in Examples \ref{Example 2.1.5} and \ref{Example 2.1.10}).
Then $a_n = a_{n-1} \circ a_{n-1}$.  We can interpret $\circ$ as
follows:  If $w_1$ is the invariant of $L_+$ and $w_2$ of $L_-$,
then $w_1 \circ w_2$ is the invariant of $L_\circ$.  If the
operation $\circ$ is well-defined we can find an easy formula for
invariants of connected and disjoint sums of links.\end{rem}

\begin{thm}\label{Theorem 3.19}If $L=L_1\sqcup L_2$ (a disjoint sum)
then $P_{L_1\sqcup L_2}(x,y) = (x+y)P_{L_1}(x,y) \cdot
P_{L_2}(x,y)$, where $P_L(x,y)$ denotes the Jones-Conway polynomial
of $L$.\end{thm}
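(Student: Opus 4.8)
The plan is to use the Conway relation for the Jones-Conway polynomial together with an induction on the total number of crossings in a diagram of $L_1 \sqcup L_2$. First I would record the base case: if $L_1 \sqcup L_2$ is a trivial link with $n$ components, then $P_{L_1 \sqcup L_2} = a_n = (x+y)^{n-1}$, while if $L_1$ has $n_1$ components and $L_2$ has $n_2$ components (so $n = n_1 + n_2$), the right side is $(x+y)\,a_{n_1}\,a_{n_2} = (x+y)(x+y)^{n_1-1}(x+y)^{n_2-1} = (x+y)^{n_1+n_2-1} = (x+y)^{n-1}$, which matches. So the formula holds whenever $L_1$ and $L_2$ are both trivial and split from each other.

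For the inductive step I would take a diagram $D$ of $L_1 \sqcup L_2$ drawn so that the two sublinks lie in disjoint disks in the plane; then every crossing of $D$ is a self-crossing of $L_1$ or a self-crossing of $L_2$. Pick any crossing $p$; without loss of generality it lies inside the diagram of $L_1$. Applying the Conway relation (Example \ref{Example 2.1.8}, equation \ref{Equation 2.1.9}), $x P_{D_+} + y P_{D_-} = P_{D_\circ}$, where the $+$, $-$, $\circ$ resolutions all take place at $p$ and hence only alter the $L_1$-part; crucially, $D_+$, $D_-$, $D_\circ$ are all still split diagrams of the form $(\,\cdot\,) \sqcup L_2$. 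By choosing $p$ to be a crossing that one changes while building a resolving tree for the $L_1$-factor, the usual descending-diagram argument (exactly as in the proof of Theorem \ref{Theorem 2.1.1}) lets the induction run: the two "smaller" terms in any resolution either have fewer crossings or are closer to descending, so by the inductive hypothesis $P_{D_\pm} = (x+y) P_{(D_\pm)_1} P_{L_2}$ and $P_{D_\circ} = (x+y) P_{(D_\circ)_1} P_{L_2}$, where $(D_\pm)_1$, $(D_\circ)_1$ denote the modified first factor. Substituting and factoring out $(x+y) P_{L_2}$ reduces the identity $x P_{D_+} + y P_{D_-} = P_{D_\circ}$ to the identity $x P_{(D_+)_1} + y P_{(D_-)_1} = P_{(D_\circ)_1}$, which is just the Conway relation for $P_{L_1}$ itself. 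Hence the formula propagates up the resolving tree from the trivial-link leaves to $L_1 \sqcup L_2$.

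The one point that needs care — and the main obstacle — is making the induction scheme genuinely well-founded: a single crossing change need not decrease the crossing number, so as in Theorem \ref{Theorem 2.1.1} one really inducts on crossing number and, within a fixed crossing number, on the number of bad crossings relative to a chosen system of base points, choosing the base points of the $L_1$-components and the $L_2$-components independently inside their respective disks. With that lexicographic induction in place, the computation above goes through verbatim. An alternative, cleaner route would be to invoke the multiplicativity of $P$ under connected sum together with the identity $P_{L \sqcup T_1} = (x+y) P_L$ for adding a split trivial component (which itself follows from one application of the Conway relation at a crossing formed by stretching $L$ over the new circle), but the direct induction above is self-contained and avoids appealing to a connected-sum formula not yet proved in the text.
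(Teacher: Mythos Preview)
Your proposal is correct and follows essentially the same route as the paper: work with a split diagram, induct lexicographically on $(\crs(L), b(L))$ using base points chosen inside each disk, verify the base case on descending (trivial) diagrams via $a_n = (x+y)^{n-1}$, and for the inductive step resolve at a bad crossing lying in one factor so that the split form is preserved and the Conway relation factors through $(x+y)P_{L_2}$. The paper presents the lexicographic induction up front rather than first sketching a crossing-number induction and then patching it, and it writes the inductive computation by solving for $P_{L_+}$ explicitly, but these are cosmetic differences only.
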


\begin{proof} There is a diagram of $L$ in which $L_1$ is disjoint
from $L_2$.  It is a splittable diagram.  We will show Theorem
\ref{Theorem 3.19} for splittable diagrams.  We use the induction on
pairs $(\crs(L), b(L))$ ordered lexicographically; $\crs{L}$ denotes
the number of crossings and $b(L)$ the minimal number of bad
crossings over all choices of base points.

For $b(L) =0$, the theorem holds because $L$ is a trivial link of
$n(L)$ components and $L_1$ and $L_2$ are trivial links of $n(L_1)$
and $n(L_2)$ components respectively, and by the definition
$$P_L(x,y) = (x+y)^{n(L)-1} = (x+y)(x+y)^{n(L_1)-1} (x+y)^{n(L_2)-1}
= (x+y)P_{L_1}(x,y)\cdot P_{L_2}(x,y).$$

Assume that we have shown the theorem for splittable diagrams which
satisfy $(\crs(L), b(L))<(c,b)$, $b \neq 0$, and consider a diagram
$L$ with $(\crs(L),b(L))=(c,b)$.

Let $p$ be a bad crossing of $L$.   Consider first the case $p \in
L_1$, $\sgn p = +$.  For $L^p_-$ and $L^p_\circ$, the theorem is
true by an inductive hypothesis.  Therefore:
\begin{eqnarray*} & P_L(x,y)=P_{L^p_+}(x,y)=
\frac{1}{x}\left(P_{L_\circ^p}(x,y)-yP_{L^p_-}(x,y)\right) = \\
& \frac{1}{x}\left( (x+y)P_{(L_1)^p_\circ}(x,y)\cdot P_{L_2}(x,y)
-y(x+y)P_{(L_1)^p_-}(x,y)\cdot P_{L_2}(x,y)\right) =
\\ & (x+y) P_{L_2}(x,y)\cdot \left( \frac{1}{x}(P_{(L_1)^p_\circ}(x,y) -y P_{(L_1)^p_-}(x,y))\right)= (x+y)P_{L_2}(x,y)\cdot P_{L_1}(x,y),\end{eqnarray*}
which completes the proof of the theorem in the considered case.  In
other cases, we proceed similarly.
\end{proof}

\begin{cor}\label{Cor 3.20} If $L=L_1 \cs L_2$ (connected sum),
then $$P_L(x,y) = P_{L_1}(x,y)\cdot P_{L_2}(x,y).$$
\end{cor}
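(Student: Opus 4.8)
The plan is to mimic the proof of Theorem~\ref{Theorem 3.19}, but to run the induction only on the $L_2$-part of a cleverly chosen diagram and to invoke Theorem~\ref{Theorem 3.19} itself at the bottom of the induction. First I would fix diagrams $D_1$ of $L_1$ and $D_2$ of $L_2$, choose crossing-free arcs $\gamma_1\subset D_1$ and $\gamma_2\subset D_2$ (on the components along which the sum is taken), and realize $L_1\cs L_2$ by the diagram $D=D_1\cs D_2$ obtained from the disjoint union $D_1\sqcup D_2$ by joining $\gamma_1$ to $\gamma_2$ with a band carrying no crossings; thus $\crs(D)=\crs(D_1)+\crs(D_2)$ and $P_L=P_D$, $P_{L_i}=P_{D_i}$. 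The statement to be proved by induction is: for every diagram $D_1$ of any oriented link and every $D_2$, one has $P_{D_1\cs D_2}=P_{D_1}\cdot P_{D_2}$, the induction being on the pair $(\crs(D_2),b(D_2))$ ordered lexicographically, where $b(D_2)$ is the minimal number of bad crossings of $D_2$ over choices of base points with the base point of the summed component placed on $\gamma_2$. The elementary fact used throughout is that changing or smoothing a crossing $p$ of $D_2$ with $p\notin\gamma_2$ commutes with the band, i.e.\ $(D_1\cs D_2)^p_{\pm}=D_1\cs (D_2)^p_{\pm}$ and $(D_1\cs D_2)^p_{\circ}=D_1\cs (D_2)^p_{\circ}$ as diagrams.

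For the base case $b(D_2)=0$ the diagram $D_2$ is descending, hence isotopic to $T_{n_2}$, and the circle of $T_{n_2}$ carrying $\gamma_2$ is an unknotted component split from everything else, so $D_1\cs D_2\approx L_1\sqcup T_{n_2-1}$. By Theorem~\ref{Theorem 3.19} (the case $n_2=1$ being trivial since then $D_1\cs D_2\approx L_1$) and the value $P_{T_m}=(x+y)^{m-1}$ this gives $P_{D_1\cs D_2}=(x+y)^{n_2-1}P_{L_1}=P_{L_1}\cdot P_{D_2}$. For the inductive step let $p$ be the first bad crossing of $D_2$ and assume, say, $\sgn p=+$, so $D_2=(D_2)^p_{+}$ and $D=(D_1\cs D_2)^p_{+}$. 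Then $(D_2)^p_{\circ}$ has one fewer crossing and $(D_2)^p_{-}$ has strictly fewer bad crossings, so the inductive hypothesis applies to $D_1\cs (D_2)^p_{\circ}$ and $D_1\cs (D_2)^p_{-}$; feeding these into the Jones--Conway skein relation $xP_{D^p_+}+yP_{D^p_-}=P_{D^p_\circ}$ from Example~\ref{Example 2.1.8} and factoring out $P_{D_1}$ yields
$$xP_D=P_{D_1}\bigl(P_{(D_2)^p_{\circ}}-yP_{(D_2)^p_{-}}\bigr)=P_{D_1}\cdot xP_{(D_2)^p_{+}}=x\,P_{D_1}\cdot P_{D_2},$$
and dividing by $x$ finishes this case; the case $\sgn p=-$ is identical with the roles of $|$ and $\ast$ (equivalently of $x$ and $y$) interchanged, using the relation $w_{L_-}=w_{L_+}\ast w_{L_\circ}$.

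The conceptual content is already present in the existence of the Jones--Conway polynomial (Example~\ref{Example 2.1.8}, Theorem~\ref{Theorem 2.1.1}) and in Theorem~\ref{Theorem 3.19}; the remaining work is purely combinatorial bookkeeping, and that is where I expect the only real difficulties to lie: (i) making precise that the band realizing $\cs$ can be taken disjoint from every crossing, and that crossing changes and smoothings away from it commute with the band operation; (ii) arranging base points so that the traversal of the summed component runs first through the $D_1$-part and then through the $D_2$-part, so that ``bad crossing of $D_2$'' is unambiguous and so that a descending $D$ really does restrict to a descending, hence trivial, $D_2$; and (iii) checking the identification $D_1\cs T_{n_2}\approx L_1\sqcup T_{n_2-1}$ needed to apply Theorem~\ref{Theorem 3.19}. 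None of these is deep, and each is handled exactly as in the proofs of Theorems~\ref{Theorem 2.1.1} and~\ref{Theorem 3.19}.
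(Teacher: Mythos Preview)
Your argument is correct, but the paper's proof is considerably shorter and uses a different idea. Instead of rerunning the induction of Theorem~\ref{Theorem 3.19} on the $D_2$ side, the paper introduces a \emph{single} extra crossing between the $L_1$ and $L_2$ portions of a connected-sum diagram (by twisting one summand through~$\pi$), so that both $L_+$ and $L_-$ are isotopic to $L_1\cs L_2$ while $L_\circ$ is exactly $L_1\sqcup L_2$. One application of the skein relation then gives $(x+y)P_{L_1\cs L_2}=P_{L_1\sqcup L_2}$, and Theorem~\ref{Theorem 3.19} finishes at once.

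What each approach buys: the paper's trick avoids all the bookkeeping you flag in (i)--(iii) and applies verbatim in any Conway algebra possessing the operation~$\circ$ (this is how the paper obtains the general formula~\ref{Equation 3.21}). Your inductive argument, on the other hand, is self-contained in the sense that the same machinery proves both Theorem~\ref{Theorem 3.19} and Corollary~\ref{Cor 3.20} uniformly, and it does not rely on spotting the geometric trick; it is the method one would use for more general ``room'' or tangle computations (as in Proposition~\ref{Prop 3.25}).
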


\begin{proof}  There is a diagram of $L$ as in Figure \ref{Fig
3.11}.  Rotate $L_2$ to get diagrams $L_+$ and $L_-$ as in Figure
\ref{Fig 3.12}.  Of course $L_+$ and $L_-$ are isotopic to $L$, and
$L_\circ$ (Figure \ref{Fig 3.12} is the disjoint sum of $L_1$ and
$L_2$.

\begin{figure}[htbp]
\centering
\includegraphics{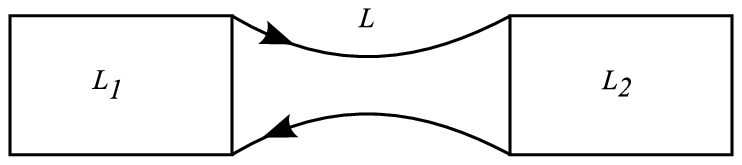}
\caption{}\label{Fig 3.11}
\end{figure}

\begin{figure}[htbp]
\centering
\includegraphics{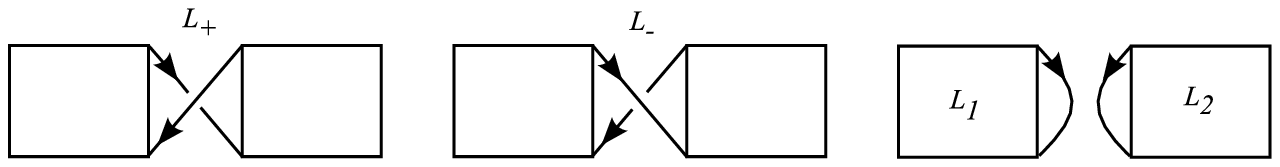}
\caption{}\label{Fig 3.12}
\end{figure}

Therefore $$xP_L(x,y)+yP_L(x,y) = P_{L_1 \sqcup L_2}(x,y)$$ and from
this we have $$(x+y)P_{L_1 \cs L_2}(x,y) = P_{L_1\sqcup L_2}(x,y).$$
This formula and Theorem \ref{Theorem 3.19} give us Corollary
\ref{Cor 3.20}.  \end{proof}

Theorem \ref{Theorem 3.19} and Corollary \ref{Cor 3.20} can be
partially generalized to the case of invariants yielded by an Conway
algebra with the operation $\circ$.  First, observe that if we add
the trivial knot to the given link $L$ then we get instead $A_L$ the
value $A_L \circ A_L$ (or $A_L^2$); Figure \ref{Fig 3.13}.  In
particular, we get known equality $a_i^2 = a_{i+1}$.  More
generally, considering Figure \ref{Fig 3.12}, we get

\setcounter{equation}{20}
\begin{equation}\label{Equation 3.21} A_{L_1 \sqcup L_2} = A^2_{L_1 \cs
L_2}.\end{equation}

\begin{figure}[htbp]
\centering
\includegraphics{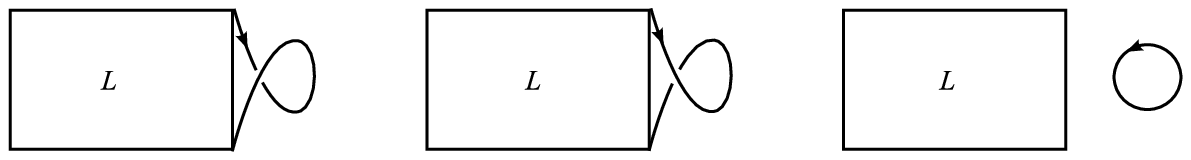}
\caption{}\label{Fig 3.13}
\end{figure}

Using a similar method to that of Theorem \ref{Theorem 3.19} and
Corollary \ref{Cor 3.20}, one can prove the following lemma.

\setcounter{thm}{21}

\begin{lem}\label{Lemma 3.22} Let the Conway algebra $\mathcal A$
have the action $\circ$ and let for each $w\in A$ exist a
homomorphism (operations $|$ and $\ast$ are preserved)
$\varphi_w:A\to A$ such that $\varphi_w(a_1)=w$,
$\varphi_w(a_2)=w^2$, $\varphi_w(a_3)=w^4, \ldots$~.  Then
$$\begin{array}{l}A_{L_1 \cs L_2} =\varphi_{A_{L_1}})(A_{L_2}) =
\varphi_{A_{L_2}}(A_{L_1}) \\ A_{L_1 \sqcup L_2} = (A_{L_1 \cs
L_2})^2.\end{array}$$
\end{lem}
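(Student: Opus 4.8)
The plan is to imitate the proof of Theorem~\ref{Theorem 3.19} and Corollary~\ref{Cor 3.20}, replacing the explicit polynomial manipulations by applications of the homomorphism $\varphi_w$. First I would fix a splittable diagram of $L_1\sqcup L_2$ (with $L_1$ geometrically disjoint from $L_2$) and a connected-sum diagram of $L_1\cs L_2$ as in Figure~\ref{Fig 3.11}, and recall from the discussion around \eqref{Equation 3.21} that $A_{L_1\sqcup L_2}=(A_{L_1\cs L_2})^2$; this is exactly the second displayed equality, so the whole content is the first one, $A_{L_1\cs L_2}=\varphi_{A_{L_1}}(A_{L_2})=\varphi_{A_{L_2}}(A_{L_1})$. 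By the symmetry of the connected sum it suffices to prove $A_{L_1\cs L_2}=\varphi_{A_{L_2}}(A_{L_1})$.

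Next I would run the same lexicographic induction on $(\crs(L_1),b(L_1))$ that appears in Theorem~\ref{Theorem 3.19}, but applied to the $L_1$ side of a standard diagram $D$ of $L=L_1\cs L_2$ in which all the crossings of the $L_2$-summand are kept fixed. For the base case $b(L_1)=0$, the diagram of $L_1$ inside $D$ can be made descending, so $L$ becomes (after the usual moves) the connected sum of the $n(L_1)$-component trivial link with $L_2$; here $A_{L_1}=a_{n(L_1)}$ and one checks, using that adding a disjoint trivial circle squares the invariant (the remark preceding the lemma, giving $a_i^2=a_{i+1}$), that $A_{L_1\cs L_2}=\varphi_{a_{n(L_1)}}(A_{L_2})$ — this is where the normalization $\varphi_w(a_i)=w^{2^{i-1}}$ is used. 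For the inductive step, pick a bad crossing $p$ in the $L_1$ part, say with $\sgn p=+$; then $(L_1)^p_-$ and $(L_1)^p_\circ$ have smaller complexity, and $L^p_-=((L_1)^p_-)\cs L_2$ while $L^p_\circ$ is a connected sum of $(L_1)^p_\circ$ with $L_2$ as well (the new circle created by smoothing lies entirely in the $L_1$ region). Applying the Conway relation $A_{L^p_+}=A_{L^p_-}\mid A_{L^p_\circ}$ together with the corresponding relation for $L_1$, namely $A_{L_1}=A_{(L_1)^p_-}\mid A_{(L_1)^p_\circ}$, and the fact that $\varphi_{A_{L_2}}$ is a homomorphism (so it commutes with $\mid$), one gets
\[
A_{L_1\cs L_2}=A_{(L_1)^p_-\cs L_2}\mid A_{(L_1)^p_\circ\cs L_2}
=\varphi_{A_{L_2}}\bigl(A_{(L_1)^p_-}\bigr)\mid\varphi_{A_{L_2}}\bigl(A_{(L_1)^p_\circ}\bigr)
=\varphi_{A_{L_2}}\bigl(A_{(L_1)^p_-}\mid A_{(L_1)^p_\circ}\bigr)=\varphi_{A_{L_2}}(A_{L_1}),
\]
and symmetrically using $\ast$ and C7 when $\sgn p=-$. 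The case of a bad crossing in the $L_2$ part, or of mixed crossings, is handled by the same bookkeeping, exactly as ``in other cases we proceed similarly'' in Theorem~\ref{Theorem 3.19}.

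The main obstacle, such as it is, is purely bookkeeping: one must check that after smoothing or switching a crossing in the $L_1$ region the resulting diagram really is still a connected sum of the modified $L_1$ with the untouched $L_2$ (so that the inductive hypothesis applies), and that the extra trivial component produced by a smoothing is absorbed correctly by the identity $a_i^2=a_{i+1}$ built into $\varphi_w$ — equivalently, that $\varphi_{A_{L_2}}$ being a $(\mid,\ast)$-homomorphism forces $\varphi_{A_{L_2}}(a_i)=(A_{L_2})^{2^{i-1}}$ to be consistent with the number-of-components increments. Both points are immediate once the diagrams are drawn, so no genuinely hard step arises; the lemma is really just Theorem~\ref{Theorem 3.19} and Corollary~\ref{Cor 3.20} with $(x+y)^{n-1}\rightsquigarrow w^{2^{n-1}}$ and multiplication by $P_{L_2}$ replaced by $\varphi_{A_{L_2}}$.
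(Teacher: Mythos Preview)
Your approach is correct and is exactly what the paper does: it simply states that Lemma~\ref{Lemma 3.22} is proved ``using a similar method to that of Theorem~\ref{Theorem 3.19} and Corollary~\ref{Cor 3.20}'' and gives no further details, so your write-up is in fact more explicit than the original. One small slip: in your base case you wrote $\varphi_{a_{n(L_1)}}(A_{L_2})$ where, to match the inductive step you set up (proving $A_{L_1\cs L_2}=\varphi_{A_{L_2}}(A_{L_1})$), you need $\varphi_{A_{L_2}}(a_{n(L_1)})=(A_{L_2})^{2^{\,n(L_1)-1}}$.
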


Conway algebras from Examples \ref{Example 2.1.6}, \ref{Example
2.1.7}, and \ref{Example 2.1.8} satisfy the assumptions of Lemma
\ref{Lemma 3.22}.

\begin{prob}\label{Problem 3.23}\begin{enumerate}
\item[(a)]\label{Problem 3.23 1} Consider the equation $a|x = b$
in the universal Conway algebra.  Can is possess more than one
solution? (The equation $a_1 | x = a_2 $ has no solutions.)

\item[(b)]\label{Problem 3.23 2} Assume that for some diagrams of
links $L$ and $L'$ and for some crossings hold $L_+ \sim_S L'_+$
and $L_- \sim_S L'_-$.  Does the equality $L_\circ \sim_S
L'_\circ$ hold? \end{enumerate}
\end{prob}

The following theorem of S.~Bleiler and M.~Scharlemann \cite{B_S}
can be thought of as the first step to solve Problem \ref{Problem
3.23}(b).

\begin{thm}\label{Theorem 3.24} Let $L_+$, $L_-$, and $L_\circ$ be diagrams of links in
standard notation (Figure \ref{Fig L+ L- L0}).  Then
\begin{enumerate}

\item[(a)] If $L_+$ and $L_-$ represent trivial links then
$L_\circ$ also represents a trivial link.

\item[(b)] If $L_-$ and $L_\circ$ represent trivial links and we
consider a self-crossing of some component of $L_-$, then $L_+$ is
a trivial link.

\item[(c)] If $L_-$ and $L_\circ$ represent trivial links and we
consider a crossing of different components of $L_-$ then $L_+$ is
isotopic to the link which consists of Hopf link and a trivial
link (Figure \ref{Fig 3.14}).
\end{enumerate}
\end{thm}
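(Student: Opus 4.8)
The plan is to prove each of the three parts of Theorem \ref{Theorem 3.24} essentially by hand, using the classification of trivial links and a careful analysis of the local picture at the distinguished crossing. First I would recall that a link $L$ in $S^3$ is trivial (unlink of $k$ components) exactly when each component bounds a disk and the disks are disjoint; equivalently, the complement is a connected sum of solid tori, i.e. $\pi_1(S^3 \setminus L)$ is free of rank $k$. The key observation tying the three diagrams together is that $L_+$, $L_-$, $L_\circ$ are identical outside a small ball $B$, so we can isolate the two strands passing through $B$: in $L_+$ and $L_-$ these two strands are joined one way (a single undercrossing/overcrossing arc), and in $L_\circ$ they are reconnected the other way (the oriented smoothing). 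Thus $L_\circ$ is obtained from either of $L_\pm$ by a ``band move''/oriented resolution at the crossing, and conversely $L_\pm$ differ from each other by a crossing change at the same spot.

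For part (a): if both $L_+$ and $L_-$ are trivial, I would argue that the two strands through $B$ can, after the crossing change, both be slid apart; the point is that $L_+$ and $L_-$ are related by a single crossing change, and if both are unlinks then the arc of $L_\circ$ obtained by smoothing can be isotoped off. Concretely I would use that a crossing change between two unlinks means the two local strands belong to (possibly the same) components which bound disjoint disks before and after; one then checks directly that the smoothing, which either merges two such components into one or splits one into two, again bounds disjoint disks. This is the content of the Bleiler--Scharlemann argument and reduces to showing that an unknotting crossing change on an unlink is ``trivial'' in a strong sense — this is where I expect to lean on results about Property R / the disk--band surgery description, or on a direct innermost-disk argument.

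For parts (b) and (c): here $L_-$ and $L_\circ$ are trivial, and I would first observe that $L_+$ is then obtained from the unlink $L_-$ by changing one crossing, so $L_+$ has unknotting number at most $1$; moreover it is obtained from the unlink $L_\circ$ by the inverse band move. The sign/component bookkeeping at the crossing then dictates the answer: if the crossing in $L_-$ is a self-crossing of one component (case (b)), the smoothing $L_\circ$ has one more component than $L_-$, and since both are unlinks, the band attaching $L_\circ$ back to $L_+$ is an unknotted, unlinked band, so $L_+$ is again an unlink — I would make this precise by observing that the band lies in the complement of the obvious disks and can be isotoped to a standard position. If instead the crossing joins two distinct components (case (c)), then $L_\circ$ has one fewer component; the band move creating $L_+$ from the unlink $L_\circ$ then introduces exactly one clasp between two of the strands, and a direct isotopy shows $L_+$ is a Hopf link together with the remaining trivial components (Figure \ref{Fig 3.14}). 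The main obstacle throughout is ruling out ``exotic'' configurations where the local arc, though the ambient link is trivial, is knotted or linked with the rest of the diagram in a way that survives the resolution; handling this requires the genuine $3$-manifold input (the structure of unknotting-number-one links and Scharlemann's band-sum techniques) rather than diagrammatic manipulation alone, so I would cite \cite{B_S} for that core lemma and only sketch the reduction.
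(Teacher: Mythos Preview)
The paper does not actually prove Theorem \ref{Theorem 3.24}; immediately after the statement it simply says ``For the proof, we refer to \cite{B_S}.'' Your proposal is consistent with this: you correctly identify that the genuine content (ruling out ``exotic'' band/crossing configurations) requires the Bleiler--Scharlemann machinery and end up citing \cite{B_S} for the core step, so in effect you take the same approach as the paper, just with more expository scaffolding around the citation.
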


\begin{figure}[htbp]
\centering
\includegraphics{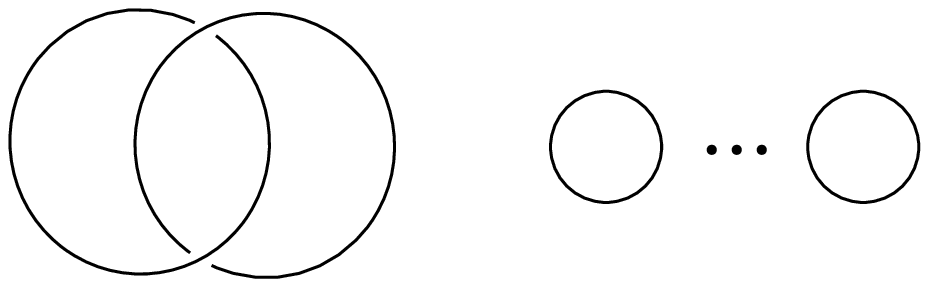}
\caption{}\label{Fig 3.14}
\end{figure}

For the proof, we refer to \cite{B_S}.

Lickorish and Millett \cite{Li_M_1} have generalized Corollary
\ref{Cor 3.20} into the case of the sum of (alternating) tangles
(see \cite{Li_1}).  This is the two-variable analogue of the
numerator-denominator formula of Conway \cite{Co} for the Conway
polynomial.

\begin{prop}\label{Prop 3.25}Let $A$ and $B$ be two alternating tangles.  Let $A+B$
denote the tangle of Figure \ref{Fig 3.15}.

\begin{figure}[htbp]
\centering
\includegraphics{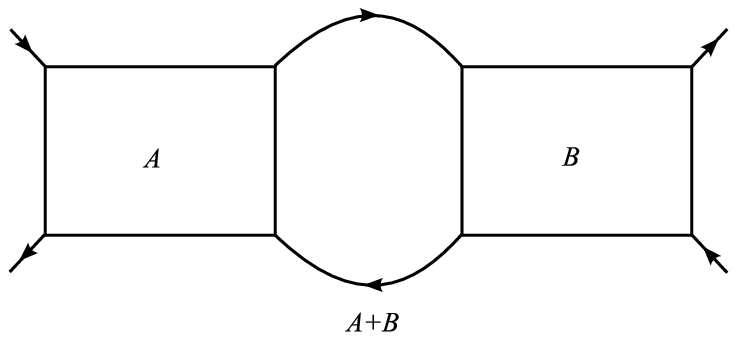}
\caption{}\label{Fig 3.15}
\end{figure}

The numerator of $A$, $N(A)$, is the link shown in Figure \ref{Fig
3.16}(a), and the denominator of $A$, $D(A)$, is shown in Figure
\ref{Fig 3.17}(b).

\begin{figure}[htbp]
\centering

\subfigure[]{\includegraphics{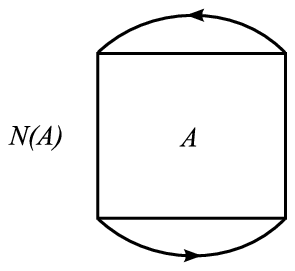}} \hspace{2.4cm}
\subfigure[]{\includegraphics{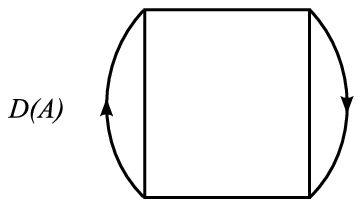}}

\caption{}\label{Fig 3.16}\label{Fig 3.17}
\end{figure}

Finally, $A^N$ and $A^D$ denote the values of the Jones-Conway
polynomial for $N(A)$ and $D(A)$ respectively.

Then
\begin{enumerate}
\item[(a)] $(1-(x+y)^2)(A+B)^N =
(A^NB^D+A^DB^N)-(x+y)(A^NB^N+A^DB^D)$

\item[(b)] $(A+B)^D=A^D B^D$.
\end{enumerate}
\end{prop}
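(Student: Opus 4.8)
The plan is to reduce both identities to one structural statement about the operation ``add the tangle $A$ on the left'', and then to read off (a) and (b) by a short linear-algebra computation, essentially as in Lickorish and Millett \cite{Li_M_1}. Write $T_e$ for the crossingless tangle that is the identity for tangle addition (so $T_e+C\approx C$ for every tangle $C$) and $T_\infty$ for the other crossingless tangle, i.e.\ two vertical strands; inspection of the four crossingless closures gives $T_e^N=T_\infty^D=x+y$ and $T_e^D=T_\infty^N=1$. The statement I would establish first, call it the \emph{Key Claim}, is: for every tangle $A$ there are coefficients $\alpha_A,\beta_A,\gamma_A,\delta_A\in\ints[x^{\mp},y^{\mp}]$, depending on $A$ alone, with $(A+C)^N=\alpha_A C^N+\beta_A C^D$ and $(A+C)^D=\gamma_A C^N+\delta_A C^D$ for every tangle $C$. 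I would stress that this must be proved for \emph{all} tangles, not just alternating ones, since switching or smoothing a crossing destroys alternation; the alternating hypothesis on $A,B$ enters only to guarantee that every closure below is an honest oriented link, which holds once one fixes the standard rotationally-symmetric orientation pattern on the four endpoints of a tangle, under which both closures and tangle addition are orientation-coherent.

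Granting the Key Claim, the proposition drops out. Substituting $C=T_e$ and $C=T_\infty$ into the first identity and using $A+T_e\approx A$ together with the obvious isotopy $N(A+T_\infty)\cong D(A)$, I get $A^N=(x+y)\alpha_A+\beta_A$ and $A^D=\alpha_A+(x+y)\beta_A$. Since $1-(x+y)^2\neq 0$ this solves for $\alpha_A,\beta_A$, and plugging back with $C=B$ gives $(1-(x+y)^2)(A+B)^N=(A^D-(x+y)A^N)B^N+(A^N-(x+y)A^D)B^D$, which rearranges to exactly (a). For (b), the same substitutions in the second identity, now using the isotopy $D(A+T_\infty)\cong D(A)\sqcup T_1$ together with Theorem \ref{Theorem 3.19}, give $(x+y)\gamma_A+\delta_A=A^D$ and $\gamma_A+(x+y)\delta_A=(x+y)A^D$, whence $\gamma_A=0$, $\delta_A=A^D$, and $(A+B)^D=A^D B^D$. (Independently, (b) also follows at once from Corollary \ref{Cor 3.20}, since a $2$-sphere separating the two tangles meets $D(A+B)$ in two points and presents it as the connected sum $D(A)\cs D(B)$.)

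To prove the Key Claim I would run the very induction used for Theorem \ref{Theorem 2.1.1}: a primary induction on $\crs(A)$ with a secondary induction on the minimal number $b(A)$ of bad crossings of $A$. If $\crs(A)=0$ then $A$ is $T_e$ or $T_\infty$ up to disjoint trivial circles (each just multiplying all four coefficients by $x+y$), and the identities are the base-case computations above. In the inductive step, if $A$ has a bad crossing $p$, then $\{A,A_{\mathrm{switch}},A_\circ\}$ is, in the order dictated by $\sgn p$, a standard $(L_+,L_-,L_\circ)$ triple; since $p$ is untouched inside $N(A+C)$ and $D(A+C)$, the Jones--Conway skein relation $xP_{L_+}+yP_{L_-}=P_{L_\circ}$ expresses $(A+C)^N$ and $(A+C)^D$ as fixed $\ints[x^{\mp},y^{\mp}]$-combinations of the corresponding quantities for $A_{\mathrm{switch}}$ (fewer bad crossings) and $A_\circ$ (fewer crossings), so both inductive hypotheses apply. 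If $A$ has no bad crossing it is a descending tangle, and the innermost-$f$-gon and $3$-gon analysis of Lemma \ref{Lemma 2.2.14} and Proposition \ref{Prop 2.2.16} --- which concerns only arcs in a disk and so transfers verbatim to a tangle diagram --- lets me shrink $\crs(A)$ by Reidemeister moves performed inside $A$; these are Reidemeister moves inside $A+C$ and its two closures, hence change no polynomial, and the primary hypothesis finishes the case.

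The real obstacle is exactly this last point: switching a crossing does not decrease $\crs(A)$, so a single induction on crossings cannot close, and one is forced into the same nested ``bad crossings'' induction plus descending-tangle simplification that powers Theorem \ref{Theorem 2.1.1}. By contrast, the $2\times 2$ linear algebra, the crossingless base cases, and the two geometric isotopies $N(A+T_\infty)\cong D(A)$ and $D(A+T_\infty)\cong D(A)\sqcup T_1$ are all routine.
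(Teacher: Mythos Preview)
Your proof is correct and rests on the same mechanism as the paper---a skein-relation induction on one of the two tangles, reducing to the two crossingless tangles as base cases---but is organized differently. The paper fixes $A$ and inducts directly on $(\crs(B), b(B))$: it checks formula~(a) by hand for the two crossingless $B$'s (its $B_1,B_2$, your $T_\infty,T_e$), then observes that (a) is preserved under the skein relation applied at a crossing of $B$; part~(b) is simply Corollary~\ref{Cor 3.20}, as you also note parenthetically. You instead induct on $A$ to prove the abstract linearity statement (your Key Claim) and then recover the specific coefficients by substituting $C=T_e,T_\infty$ and solving the resulting $2\times 2$ system. Your route makes transparent \emph{why} the two crossingless tangles suffice---they span the relevant skein module---at the price of a small extra linear-algebra step; the paper's route is more direct but leaves that reason implicit. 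One terminological point: in this paper ``alternating tangle'' means that inputs and outputs alternate around the boundary (see the definition of tangle just before Lemma~\ref{Lemma 3.7}), not that the diagram is alternating, so smoothing or switching a crossing does \emph{not} destroy it; your remarks about orientation coherence are on target, but the stated reason is off.
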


\begin{proof}Part (b) is exactly Corollary \ref{Cor 3.20}.  To prove
part (a) of Proposition \ref{Prop 3.25} we use the induction on
$(\crs(B), b(B))$, (number of crossings in $B$, minimal number of
bad crossings in $B$) similarly as in Theorem \ref{Theorem 3.19}.
We can find, for the tangle $B$, a resolving tree the leaves of
which are the tangles shown on Figure \ref{Fig 3.18} possibly with
some trivial circles.  The same trivial circles appear in $A+B$ so
they can be omitted in further considerations.
\setcounter{figure}{17}

\begin{figure}
\centering
\includegraphics{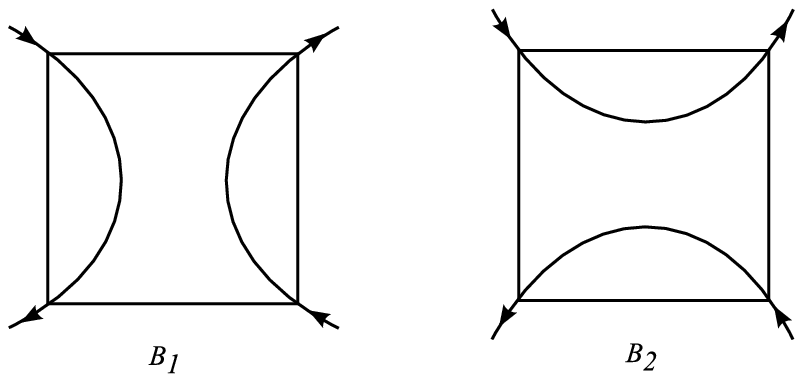}
\caption{}\label{Fig 3.18}
\end{figure}

$N(B_1)$ and $D(B_2)$ are trivial knots and $D(B_1)$ and $N(B_2)$
are trivial links of two components.  Furthermore $N(A+B_1) = D(A)$
and $N(A+B_2) = N(A)$.  Therefore $B^N_1 = B^D_2 = 1$, $B^D_1 =
B^N_2 = (x+y)$, $(A+B_1)^N = A^D$, and $(A+B_2)^N = A^N$.

From these it follows that $$(1-(x+y)^2)(A+B_1)^N = (1-(x+y)^2 A^D =
(A^N(x+y)+A^D)-(x+y)(A^N +A^D(x+y)).$$  Similarly,
$$(1-(x+y)^2)(A+B_2)^N = (1-(x+y)^2 A^N = (A^N+(x+y)A^D)-(x+y)((x+y)A^N
+A^D).$$

Thus we have proved Proposition \ref{Prop 3.25}(1) in the case of
$B=B_1$ or $B_2$.  Now immediate verification shows that if the
formula holds for $B_-$ and $B_\circ$ then it holds for $B_+$ and
similarly, if it holds for $B_+$ and $B_\circ$ then it holds for
$B_-$.  This allows us to perform the inductive step and complete
the proof of Proposition \ref{Prop 3.25}.\end{proof}

\begin{cor}[\cite{Co}]\label{Corollary 3.26} Let us define the
ration $F(A)$ of the tangle $A$ as follows:  $$F(A) =
\frac{\nabla_{N(A)}(z)}{\nabla_{D(A)}(z)},$$ where $\nabla(z)$ is
the Conway polynomial and the common factor of the numerator and the
denominator is not reduced.  Then $F(A+B) = F(A)+F(B)$.\end{cor}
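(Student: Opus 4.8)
The plan is to derive Corollary \ref{Corollary 3.26} from Proposition \ref{Prop 3.25} by passing to the Conway polynomial via the substitution that connects the two invariants. Recall from Example \ref{Example 2.1.8} that the Conway polynomial $\nabla_L(z)$ is obtained from the Jones-Conway polynomial $P_L(x,y)$ by setting $x = 1/z$ and $y = -1/z$, so that $x+y = 0$. The key observation is that under this substitution the mysterious factor $1-(x+y)^2$ in Proposition \ref{Prop 3.25}(a) becomes simply $1$, and the term $(x+y)(A^NB^N+A^DB^D)$ vanishes. Thus Proposition \ref{Prop 3.25}(a) collapses to the clean identity $N(A+B) = A^N B^D + A^D B^N$ for the Conway polynomial, where I write $A^N, A^D, B^N, B^D$ now for the \emph{Conway} polynomials of the respective numerator and denominator links, and Proposition \ref{Prop 3.25}(b) becomes $(A+B)^D = A^D B^D$.

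First I would carefully restate both parts of Proposition \ref{Prop 3.25} after the substitution $x=1/z$, $y=-1/z$, being careful that the substitution is legitimate: since $P$ is a Laurent polynomial in $x$ and $y$ and the substitution sends it to a Laurent polynomial in $z$ agreeing with $\nabla$ (as asserted in Example \ref{Example 2.1.8}), all the polynomial identities of Proposition \ref{Prop 3.25} remain valid under it. This gives the two identities
\[
\nabla_{N(A+B)}(z) = \nabla_{N(A)}(z)\nabla_{D(B)}(z) + \nabla_{D(A)}(z)\nabla_{N(B)}(z),
\qquad
\nabla_{D(A+B)}(z) = \nabla_{D(A)}(z)\nabla_{D(B)}(z).
\]

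Next I would simply divide the first identity by the second. Writing $F(A) = \nabla_{N(A)}(z)/\nabla_{D(A)}(z)$ with the convention that common factors are \emph{not} cancelled (so the identities above are genuine equalities of polynomials and the quotients are honest ratios of these specific representatives), we get
\[
F(A+B) = \frac{\nabla_{N(A)}\nabla_{D(B)} + \nabla_{D(A)}\nabla_{N(B)}}{\nabla_{D(A)}\nabla_{D(B)}}
= \frac{\nabla_{N(A)}}{\nabla_{D(A)}} + \frac{\nabla_{N(B)}}{\nabla_{D(B)}} = F(A) + F(B),
\]
which is exactly the claim.

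The only genuine subtlety — and the point I would be most careful about — is the bookkeeping around the phrase ``the common factor of the numerator and the denominator is not reduced'': one must check that the representatives of $F(A)$, $F(B)$, and $F(A+B)$ coming from the numerator/denominator \emph{links} are exactly the polynomials appearing in the substituted Proposition \ref{Prop 3.25}, so that the division step is formal and requires no cancellation. A secondary point worth a sentence is that Proposition \ref{Prop 3.25} is stated for \emph{alternating} tangles $A$ and $B$, so Corollary \ref{Corollary 3.26} as proved here inherits that hypothesis; Conway's original numerator-denominator formula holds more generally, but that extension is not needed here. Everything else is the routine substitution and division just described.
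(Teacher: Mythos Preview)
Your proof is correct and is exactly the intended derivation: the paper states Corollary \ref{Corollary 3.26} immediately after Proposition \ref{Prop 3.25} without an explicit proof, so the implied argument is precisely the specialization $x=1/z$, $y=-1/z$ (hence $x+y=0$) that you carry out, followed by the division of (a) by (b). One small terminological remark: in this paper ``alternating tangle'' refers to the input/output pattern of Figure \ref{Fig 3.2}(c), not to an alternating diagram, so that hypothesis is about orientation conventions rather than a restriction on crossing types.
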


\begin{example}\label{Example 3.27} Let $A$ be the tangle from
Figure \ref{Fig 3.19}.  The $F(A) = \frac{z}{1}$.\end{example}

\begin{figure}[htbp]
\centering
\includegraphics{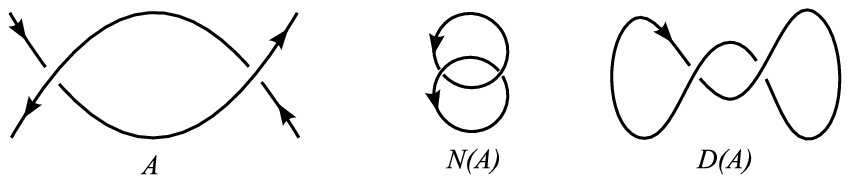}
\caption{}\label{Fig 3.19}
\end{figure}

\begin{prob}\label{Problem 3.28} Let $\mathcal A$ be a Conway
algebra for which there are the operation $\circ$ and the
homeomorphism $\varphi_w$.  Find the value of the invariant yielded
by the algebra for the numerator of a sum of two tangles.\end{prob}

J.~Birman \cite{Bi_2} (and independently M.~Lozano and H.~Morton)
found examples of knots which are not isotopic but which have the
same Jones-Conway polynomial.  Lickorish and Millett \cite{Li_M_1}
observed that these knots are not skein equivalent because they
have different signature.

Signature and its generalizations will be considered in the next
chapter, there we will show that the examples mentioned above are
algebraically equivalent (i.e. cannot be distinguished by any
invariant yielded by a Conway algebra).  We follow the paper
\cite{P_T_2}.

We will consider oriented links in the form of closed braids.  We
will use notation and terminology of Murasugi \cite{Mu_1} (see
also \cite{Bi_1}).  In particular for 3-braids $\Delta =
\sigma_1\sigma_2\sigma_1$ ($\sigma_1$ and $\sigma_2$ are shown on
Figure \ref{Fig 3.20}; the notations reflect the actual fashion
that positive braids have all crossings positive).

\begin{figure}[htbp]
\centering
\includegraphics{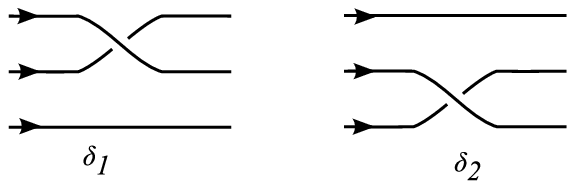}
\caption{}\label{Fig 3.20}
\end{figure}

We start from the first family of Birman examples.

\begin{thm}\label{Theorem 3.29} Let $\gamma$ be a 3-string braid
$\sigma^{a_1}_1 \sigma^{a_2}_2 \ldots \sigma^{a_{2k-1}}_1
\sigma_2^{a_{2k}}$ such that the sum of the exponents of $\gamma$,
$\displaystyle e(\gamma) = \sum^{2k}_{i=1} a_i$, is equal to 0. Then
the closed braid $\hat{\gamma}$ cannot be distinguished from its
mirror image $\hat{\overline{\gamma}}$ by the invariant yielded by
any Conway algebra.
\end{thm}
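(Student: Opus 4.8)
The plan is to show that $\hat\gamma$ and $\hat{\overline\gamma}$ are \emph{skein equivalent}, and in fact $\approx_\infty$ equivalent, so that by Lemma \ref{Lemma 2.1.3}-type reasoning (more precisely by the characterization of skein equivalence as the finest relation not distinguished by any Conway-type invariant, hence by any Conway algebra) they cannot be told apart. The key geometric fact I would exploit is that the mirror image $\hat{\overline\gamma}$ is obtained from $\hat\gamma = \widehat{\sigma_1^{a_1}\sigma_2^{a_2}\cdots\sigma_1^{a_{2k-1}}\sigma_2^{a_{2k}}}$ by reversing every exponent: $\hat{\overline\gamma} = \widehat{\sigma_1^{-a_1}\sigma_2^{-a_2}\cdots\sigma_1^{-a_{2k-1}}\sigma_2^{-a_{2k}}}$. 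Since $e(\gamma)=0$, the total algebraic crossing number of $\hat\gamma$ and of $\hat{\overline\gamma}$ agree (both are $0$), which is the numerical invariant one expects to obstruct such an equivalence; the hypothesis $e(\gamma)=0$ is exactly what removes that obstruction.

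First I would set up the induction. Order pairs $(\sum_i |a_i|, \ m)$ lexicographically, where $m$ counts how far the word is from being ``balanced'' in a suitable sense (e.g. the number of indices $i$ at which the partial exponent sum changes sign, or simply $|a_1|$ when $a_1$ is the first nonzero exponent of the wrong sign). In the base case all $a_i=0$, so $\hat\gamma$ is the trivial $3$-component link (the closure of the identity braid on $3$ strands), which equals its own mirror image, and we are done. For the inductive step, pick a crossing of $\hat\gamma$ — say one of the crossings in a block $\sigma_1^{a_1}$ with $a_1>0$ — and form the triple $(L_+,L_-,L_\circ)$ by switching and smoothing that crossing. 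Switching it replaces $\sigma_1^{a_1}$ by $\sigma_1^{a_1-2}$ (a conjugate/Markov-equivalent word with smaller $|a_1|$ and still total exponent sum decreased by $2$), and smoothing it at a braid crossing either reduces to a $2$-braid closure or introduces a trivial circle, in either case producing a link with strictly fewer crossings in the relevant block. Doing the mirror-symmetric operation to $\hat{\overline\gamma}$ at the mirror crossing gives a triple $(L'_+,L'_-,L'_\circ)$ with $\sgn$ matched; by the inductive hypothesis $L_-\sim_S L'_-$ (one fewer crossing in that block, exponent sum still $0$ after the obvious rebalancing — here one uses that switching one crossing in $\hat\gamma$ and the mirror crossing in $\hat{\overline\gamma}$ keeps the two words mirror to each other) and $L_\circ\sim_S L'_\circ$ (fewer crossings). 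Then the defining clause of $\sim_S$ in Definition~3.1 yields $\hat\gamma = L_+ \sim_S L'_+ = \hat{\overline\gamma}$.

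The step I expect to be the main obstacle is bookkeeping the exponent-sum condition through the smoothing operation: smoothing a crossing in $\sigma_1^{a_1}$ does not obviously yield another $3$-braid of the allowed form $\sigma_1^{b_1}\sigma_2^{b_2}\cdots$, so one cannot blindly invoke the statement of the theorem for $L_\circ$. The fix is to smooth \emph{at the right crossing}: smoothing the crossing joining strands $1$ and $2$ inside $\sigma_1^{a_1}$ merges two strands, so $L_\circ$ is a $2$-braid closure (plus possibly a disjoint trivial circle coming from whichever strand of the pair bounds an innermost region), and $2$-braid closures are torus links $T(2,n)$ whose mirror images $T(2,-n)$ are handled by a direct sub-induction (or simply by the fact that $\nabla$, and more relevantly all Conway-type invariants, satisfy the relation tying $T(2,n)$ to $T(2,n-2)$ and $T(2,n-1)$, exactly the computation underlying Example~\ref{Example 2.1.8} and Lemma~\ref{Lemma 3.15}). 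So the real content is: (i) a clean normal-form lemma saying that a single crossing-switch-plus-smooth inside a $\sigma_i$-block lands you in a strictly smaller instance of a class of links already known to be mirror-skein-equivalent, and (ii) verifying that the mirror involution on $3$-braid words commutes with ``switch the $j$-th crossing of the $i$-th block.'' Both are routine once stated, and the delicate point — that the argument never needs $e(\gamma)\neq 0$ and in fact breaks without $e(\gamma)=0$ because then the two $2$-braid closures appearing at the leaves would be $T(2,n)$ and $T(2,n')$ with $n\neq -n'$ — is precisely where the hypothesis is consumed.
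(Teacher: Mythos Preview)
Your approach has a fundamental gap: you are trying to prove that $\hat\gamma \sim_S \hat{\overline\gamma}$, but this is in general \emph{false} for the braids in the theorem, and indeed the whole point of Theorem~\ref{Theorem 3.29} in the paper is to exhibit pairs of links that are \emph{not} skein equivalent yet take the same value in every Conway algebra. The signature is a skein invariant (it arises from a geometrically sufficient \emph{partial} Conway algebra, not from a Conway algebra), and for many $\gamma$ with $e(\gamma)=0$ the closures $\hat\gamma$ and $\hat{\overline\gamma}$ have different signatures; Birman's examples are of exactly this type. So ``same value in every Conway algebra'' is strictly weaker than ``skein equivalent,'' and your plan proves too much.

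Concretely, your inductive step breaks at the point you flagged yourself: after switching a crossing in a block $\sigma_1^{a_1}$ you get a braid with exponent sum $\pm 2$, not $0$, so the inductive hypothesis does not apply to $L_-$; there is no ``obvious rebalancing'' that restores $e=0$ while keeping $L_-$ and its mirror paired up. The paper's proof avoids geometry entirely and works purely inside an arbitrary Conway algebra: it isolates an algebraic identity (Lemma~\ref{Lemma 3.30}) derived solely from axioms C3--C7, which says roughly that in an expression of the form $(w_2\ast A_{a-2,b+1})\,|\,h_2 = (w_1\,|\,A_{c-1,d+2})\ast h_1$ one may shift the indices $(a,b)$ and $(c,d)$ in a controlled way provided certain lower-complexity values coincide. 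One then inducts on a complexity $(\crs(\gamma),p(\gamma))$ and uses the lemma repeatedly to reduce the desired equality $A_{a,b}=A_{-b,-a}$ to the trivial identity $(w\ast A)|A = w = (w|A')\ast A'$ coming from C6 and C7. No skein-equivalence claim is ever made, and none could be.
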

\begin{proof}Consider a Conway algebra $\mathcal{A}= (A, a_1, a_2,
\ldots,|,\ast)$.  First we will formulate a lemma which is crucial
to our proof of the theorem, then we will show how the theorem
follows from the lemma, and then we will prove the lemma.

We will use the following notation:  if $\gamma = \gamma_1
\sigma^a_i \gamma_2 \sigma^b_j \gamma_3$ is a 3-braid, then
$A_{a+p,b+q}$ denotes the value of the invariant of the closed
braid $\hat{\gamma}_{a+p, b+q}$, where $\gamma_{a+p,b+q} =
\gamma_1 \sigma^{a+p}_i \gamma_2 \sigma^{b+q}_j \gamma_3$.
Strictly speaking, $\gamma_1$, $\gamma_2$, $\gamma_3$, $i$, and
$j$ should be explicitly given in the notation, but we adopt a
rather informal convention of treating $a$ and $b$ as recognizing
signs for them. We will also use a natural convention of writing
$\gamma_{-b, -a}$ for the mirror image of $\gamma_{a,b}$.

We have the following obvious equalities: $$A_{a,b}=
A_{a-2,b}|A_{a-1,b} = (A_{a-2,b+2} \ast A_{a-2, b+1})|A_{a-1,b}$$
$$A_{c,d}=A_{c,d+2}\ast A_{c,d+1} = (A_{c-2,d+2}|A_{c-1,d+2})\ast
A_{c,d+1}.$$

Let us formulate our lemma.

\begin{lem}\label{Lemma 3.30}\begin{enumerate}
\item[(a)] If $A_{a-2, b+2} = A_{c-2,d+2}$ then we have the
following equivalence: $$(w_2 \ast A_{a-2, b+1})|h_2 =
(w_1|A_{c-1,d+2})\ast h_1 \iff (w_2 \ast A_{c-3, d+2})|h_2 =
(w_1|A_{a-2,b+3})\ast h_1,$$ where $w_1,w_2,h_1, h_2 \in A$.

\item[(b)] If $A_{a-2, b+2} = A_{c-2,d+2}$ and $A_{a-3,b+3} =
A_{c-3,d+3}$ then we have the following equivalence: $$(w_2 \ast
A_{a-2, b+1})|h_2 = (w_1|A_{c-1,d+2})\ast h_1 \iff (w_2 \ast
A_{a-4, b+3})|h_2 = (w_1|A_{c-3,d+4})\ast
h_1.$$\end{enumerate}\end{lem}

First, we show how to prove Theorem \ref{Theorem 3.29} using Lemma
\ref{Lemma 3.30}.  Let $\gamma$ be a cyclically reduced word,
$\gamma = \sigma^{a_1}_1 \sigma^{a_2}_2 \ldots \sigma^{a_{2k-1}}_1
\sigma_2^{a_{2k}}$ with $|a_i|>0$ and $e(\gamma)=0$ (the sum of
the exponents).  We define the complexity of $\gamma$ to be the
pair $(\crs(\gamma), p(\gamma))$, where $\crs(\gamma)$ is the sum
of absolute values of exponents of $\gamma$ (i.e. the number of
crossing points of the closed braid $\hat{\gamma}$) and
$p(\gamma)$ is the number of pairs of exponents $a_i,a_{i+1}$ in
$\gamma$ having the same sign (in the cyclic word, we consider
also the pair $a_{2k}, a_1$).  We will prove the theorem by
induction first on $\crs(\gamma)$, then on $p(\gamma)$.  The
theorem is obviously true for $\crs(\gamma)=0$.  Also for
$\crs(\gamma) = 2k$ and $p(\gamma) =0$, we have $\gamma$ isotopic
to its mirror image (we have a cyclic word of the form $1, -1, 1,
-1, \ldots$ in this case).

It is easy to see that if $\crs(\gamma)-2k+p(\gamma)>0$, then we
can choose $a$ and $b$ ($a>0$, $b<0$), two of the exponents of
$\gamma$, in such a way that either \begin{enumerate} \item[(a)]
$\crs(\gamma_{a-2,b+2})<\crs(\gamma)$

or

\item[(b)] $\crs(\gamma_{a-2, b+2}) = \crs(\gamma) \textrm{ and
}p(\gamma_{a-2,b+2})<p(\gamma).$\end{enumerate} In both cases we
have by inductive assumption that $A_{a-2, b+2} = A_{-b-2, -a+2}$
(according to the adopted notation, $A_{-b-2, -a+2}$ is the value
of the invariant for the mirror of $\hat{\gamma}_{a-2, b+2})$.  We
have also $$A_{a,b} = (A_{a-2,b+2}\ast A_{a-2, b+1})|A_{a-1, b}$$
$$A_{-b,-a} = (A_{-b-2,-a+2}\ast A_{-b-1, -a+2})|A_{-b,-a+1},$$
and we want to prove $A_{a,b}=A_{-b, -a}$.

\setcounter{equation}{30}

Let us consider the cyclic word $\gamma_{a-2, b+1}$.  We have
either that (a) it consists of one letter, and then $A_{a-2,
b+1}=A_{-b-1, -a+2}=a_2 \in A$, and we need to prove the equality
\begin{equation}\label{Equation 3.31} (w\ast a_2)|A_{a-1,b} = (w|a_2)\ast A_{-b,
-a+1},\end{equation} where $w=A_{a-2, b+2} =A_{-b-2, -a+2}$, or

(b) it has exponents $p$ and $q$ such that $|p|\geq 2$, $p-q<0$. We
will consider the case $p\geq 2$, $q\leq -1$.

We will now use symbols like $A_{p+x, q+y}$ for values of the
invariant for closed braids obtained from $\gamma_{a-2, b+1}$ (not
from $\gamma$!) by changing the exponents $p$ and $q$, we also use
$\gamma_{p+x, q+y}$ for the related braids.  Using this notation we
obtain $$A_{a-2, b+1}=A_{p,q},$$ and by the inductive assumption
$$A_{p, q+1}= A_{-q-1,-p},$$ $$A_{p-1, q+2}=A_{-q-2, -p+1}, $$
because $\crs(\gamma_{p, q+1})$, $\crs(\gamma_{p-1,q+2})$ <
$\crs(\gamma)$ and $e(\gamma_{p,q+1})=e(\gamma_{p-1, q+2})=0$.

We are now in a position to apply Lemma \ref{Lemma 3.30}(2) taking
$a=p+2$, $b=q-1$, $c=-q+1$, $d=-p-2$, $w_1=w_2=A_{a-1, b+2}=A_{-b-2,
-a+2}$, $h_2=A_{a-1, b}$, and $h_1 = A_{-b, -a+1}$.  We obtain the
following equivalence: $$ (w_2 \times A_{p,q})|h_2 =  (w_1|A_{-q,
-p}\ast h_1 \iff (w_2 \times A_{p-2, q+2})|h_2 = (w_1 |A_{-q-2,
-p+2})\ast h_1.$$

We can repeat the procedure until we are reduced to proving the
equality \ref{Equation 3.31}.  The same argument works for $p \geq
1$, $q \leq -2$, the only change is that we are able in this case to
diminish $|q|$, not $|p|$.  In order to prove \ref{Equation 3.31}
let us consider equalities $(w\times A_{a-1,b})|A_{a-1,b} = w =
(w|A_{-b, -a+1}) \ast A_{-b,-a+1}$, which are true by C6 and C7.
Applying Lemma \ref{Lemma 3.30} in a manner similar to the above
used we obtain a sequence of equivalent equalities ending with
\ref{Equation 3.31}, thus \ref{Equation 3.31} is true, which
completes the proof of Theorem \ref{Theorem 3.29}.

It remains only to prove Lemma \ref{Lemma 3.30}.

Consider the equality \begin{equation}\label{Equation 3.32} (w_2
\ast A_{a-2, b+1}|h_2 = (w_1|A_{c-1, d+2})\ast h_1.\end{equation}

Multiplying both sides of \ref{Equation 3.32} by $|h_2)\ast A_{c-1,
d+2}$, we get an equality equivalent to \ref{Equation 3.32}:
\begin{equation}\label{Equation 3.33} (((w_2 \ast A_{a-2,
b+1})|h_2)|h_1) \ast A_{c-1, d+2} = w_1\end{equation} (we applied C7
and C6 to the right side).

We will now consider a series of equalities obtained by transforming
the formula for the left side of \ref{Equation 3.33}.  This will be
done by applying transposition properties, C6 and C7.  For the
reader's convenience the axiom applied will always be marked. In
case of transpositions, we will also mark the elements to be
transposed by setting them in boldface.

The left side of \ref{Equation 3.33} is equal to (we begin by
replacing $A_{c-1, d+2}$ with $A_{c-3, d+2}|A_{c-2, d+2}$).  Then we
have \begin{eqnarray*} & [((w_2*(A_{a-2, b+3} \ast A_{a-2,
b+2}))|h_2)|\mathbf{h_1}]\ast [\mathbf{A_{c-3, d+2}}|A_{c-2, d+2}] \\
\overset{C3}{=} & (((w_2\ast (A_{a-2,b+3} \ast A_{a-2, b+2}))|h_2)
\ast A_{c-3, d+2})|(h_1 \ast A_{c-2, d+2}) \\ \overset{C7}{=} &
([(w_2 \ast (A_{a-2, b+3} \ast A_{a-2, b+2}))|\mathbf{h_2}]\ast
[\mathbf{(A_{c-3,
d+2}\ast A_{c-2, d+2})}|A_{c-2, d+2}])|(h_1\ast A_{c-2,d+2}) \\
\overset{C4}{=} & (([w_2\ast \mathbf{(A_{a-2,b+3}\ast A_{a-2,
b+2})}]\ast [\mathbf{A_{c-3, d+2}} \ast A_{c-2,d+2}])|(h_2\ast
A_{c-2, d+2}))|(h_1 \ast A_{c-2, d+2}) \\ \overset{C5}{=} & ([(w_2
\ast A_{c-3, d+2}) \ast \mathbf{((A_{a-2,b+3}\ast A_{a-2, b+2}) \ast
A_{c-2,d+2})}]|(\mathbf{h_2}\ast A_{c-2, d+2}))|(h_1 \ast A_{c-2, d+2}) \\
\overset{C4}{=} & ([(w_2 \ast A_{c-3, d+2})
|h_2]\ast(((A_{a-2,b+3}\ast A_{a-2, b+2}) \ast A_{c-2,d+2})| A_{c-2,
d+2}))|(h_1 \ast A_{c-2, d+2})\\ \overset{C7}{=} & [((w_2 \ast
A_{c-3, d+2})|h_2)\ast \mathbf{(A_{a-2, b+3}\ast A_{a-2,
b+2})}]|[\mathbf{h_1}\ast A_{c-2, d+2}] \\ \overset{C4}{=} & (((w_2
\ast A_{c-3, d+2})|h_2) |h_1 )\ast ((A_{a-2, b+3}\ast A_{a-2,
b+2})|A_{c-2, d+2})
\end{eqnarray*}

But we have assumed $A_{a-2, b+2} = A_{c-2, d+2}$, so applying C7 we
obtain $(((w_2 \ast A_{c-3, d+2})|h_2)|h_1)\ast A_{a-2, b+3}$ equal
to the left side of \ref{Equation 3.33}.

Thus \ref{Equation 3.32} is equivalent to $$(w_2\ast A_{c-3,
d+2})|h_2 = (w_1 |A_{a-2, b+3})\ast h_1$$ which completes the proof
of Lemma \ref{Lemma 3.30}(1).  If we repeat the above argument once
more we will get Lemma \ref{Lemma 3.30}(2).

\end{proof}

If $\gamma$ from Theorem \ref{Theorem 3.29} has normal form
$\Delta^{2n}\gamma_\circ$, $n\neq 0$. and $\gamma \in \Omega_6$
(i.e. $\gamma = \Delta^{2n}\sigma^{-p_1}_1 \sigma^{q_1}_2 \ldots
\sigma^{p_k}_1 \sigma_2^{q_k}$, $p_i, q_i, k > 0$ \cite{Mu_1}),
then $\hat{\gamma}$ has non-zero signature (see \cite{Mu_1, Bi_2};
it can be shown that $\epsilon$ from Proposition 11.1 of
\cite{Mu_1} is equal to 0).

Furthermore, the determinants of links from $\Omega_6$ are not zero
\cite{Mu_1}, so the signature of these links is a skein invariant
(see part 4).  Theorem \ref{Theorem 3.29} gives a class of pairs of
links which are not skein equivalent but have the same invariant in
every Conway algebra.

We can work similarly with links from Proposition 2 and Lemma 4 of
\cite{Bi_2}.

\setcounter{thm}{33}

\begin{thm}\label{Theorem 3.34}Let $\gamma$ be a 3-string braid $\sigma^{a_1}_1
\sigma^{a_2}_2 \ldots \sigma^{a_{2k-1}}_1 \sigma^{a_{2k}}_2$ such
that $e(\gamma)=6r$.  Let $\mathcal{B} = \Delta^{4r}\gamma^{-1}$.
Then the closed 3-braid $\hat{\gamma}$ cannot be distinguished
from $\hat{\mathcal B}$ by the invariant yielded by any Conway
algebra.
\end{thm}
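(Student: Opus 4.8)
The plan is to follow closely the proof of Theorem \ref{Theorem 3.29}, replacing the mirror image throughout by the ``twisted mirror'' $\gamma\mapsto\mathcal B=\Delta^{4r}\gamma^{-1}$. Fix a Conway algebra $\mathcal A=(A;a_1,a_2,\ldots,|,\ast)$ and let $w$ denote the induced invariant. I would start with the preliminary remark that, since $\Delta^{4r}=(\Delta^2)^{2r}$ is central in the group of $3$-braids and the closure of a braid word is unchanged by reversing the word, one has $\hat{\mathcal B}=\widehat{\Delta^{4r}\overline{\gamma}}$, where $\overline{\gamma}$ is obtained from $\gamma$ by replacing each $\sigma_i$ by $\sigma_i^{-1}$ (so that $\hat{\overline{\gamma}}$ is the ordinary mirror image of $\hat{\gamma}$). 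Because $\overline{\Delta}=\Delta^{-1}$, the map $\gamma\mapsto\Delta^{4r}\overline{\gamma}$ is an involution on the set of $3$-braids with $e(\gamma)=6r$, so $\hat{\gamma}$ and $\hat{\mathcal B}$ occupy symmetric positions just as a link and its mirror do; and for $r=0$ this map is simply $\gamma\mapsto\overline{\gamma}$, so Theorem \ref{Theorem 3.34} is precisely the extension of Theorem \ref{Theorem 3.29} obtained by inserting $2r$ full twists.

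With $\gamma=\gamma_1\sigma_i^{a}\gamma_2\sigma_j^{b}\gamma_3$ I would then use exactly the notation of the proof of Theorem \ref{Theorem 3.29}: write $A_{a+p,\,b+q}$ for the value of $w$ on the closure of the braid obtained by changing the two exponents, and $A_{-b-q,\,-a-p}$ for the value of $w$ on the closure of $\Delta^{4r}\overline{\gamma_{a+p,b+q}}$, in parallel with the convention there by which $A_{-b,-a}$ denoted the mirror invariant (the index swap $(a,b)\leftrightarrow(-b,-a)$ comes from word reversal). The fact that makes the induction run is that the basic modification $a\mapsto a-2,\ b\mapsto b+2$ preserves $e(\gamma)=6r$, hence preserves the number $2r$ of full twists carried by $\mathcal B$; consequently the Conway relations $A_{c,d}=A_{c-2,d}\,|\,A_{c-1,d}$ and $A_{c,d}=A_{c,d+2}\ast A_{c,d+1}$ hold for both families verbatim, and the key algebraic Lemma \ref{Lemma 3.30} may be quoted unchanged --- its proof merely rewrites elements of $A$ using the transposition axioms C3--C5 and the cancellation axioms C6, C7 at four resolutions of two crossings, and never sees the global full twists.

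The argument then runs by the same double induction, first on $\crs(\gamma)$ (the sum of the absolute values of the exponents), then, for fixed $\crs$, on the number $p(\gamma)$ of cyclically adjacent same-sign exponent pairs. Whenever $\gamma$ has an exponent of each sign, one chooses, exactly as in Theorem \ref{Theorem 3.29}, exponents $a>0$ and $b<0$ so that replacing them by $a-2$ and $b+2$ strictly lowers the complexity; the inductive hypothesis then yields $A_{a-2,b+2}=A_{-b-2,-a+2}$, and iterated use of Lemma \ref{Lemma 3.30} together with C6 and C7 reduces the desired identity $A_{a,b}=A_{-b,-a}$ to an equality that holds trivially by C6 and C7, just as in the proof of Theorem \ref{Theorem 3.29} with ``$\widehat{\Delta^{4r}\overline{\gamma}}$'' now playing the role formerly played by ``$\hat{\overline{\gamma}}$''.

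The main obstacle is the base of the induction, namely the case in which no complexity-reducing trade is available. By the analysis above this forces $\gamma$ to have no exponent of sign opposite to its others, so that, after possibly replacing everything by its mirror, $\gamma$ is a positive braid up to conjugacy. In that situation I would pass to the Murasugi normal form $\gamma=\Delta^{2m}\gamma_0$ for $3$-braids and use Markov moves either to exhibit an outright isotopy $\hat{\gamma}\approx\widehat{\Delta^{4r}\gamma^{-1}}$ or to reduce to a smaller or already settled case; this is exactly the content of the analysis of Birman in \cite{Bi_2} (Proposition 2 and Lemma 4), and carrying it out carefully in the present setting is the one substantive step that is not a direct transcription of the proof of Theorem \ref{Theorem 3.29}. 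Everything else --- the notational setup, the Conway relations for the two families, and the long C3--C7 computation inside Lemma \ref{Lemma 3.30} --- transfers mechanically.
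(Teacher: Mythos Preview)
Your overall architecture is right and matches the paper's: the notation, the role of Lemma \ref{Lemma 3.30}, and the reduction via trading two exponents $a>0$, $b<0$ all carry over from Theorem \ref{Theorem 3.29} essentially verbatim, and the paper says so. You have also correctly located the one genuine new difficulty: when $e(\gamma)=6r\neq 0$ the induction bottoms out at positive (or negative) words, where no pair $a>0$, $b<0$ is available, so the $(\crs,p)$-complexity of Theorem \ref{Theorem 3.29} has no inductive step at all.

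Where your proposal falls short is precisely at this point. Deferring to ``the analysis of Birman in \cite{Bi_2}'' does not close the gap: Birman's results are about the Jones--Conway polynomial, whereas the claim here is for \emph{every} Conway algebra, so you cannot import her conclusion, only her braid-theoretic moves, and those moves must be fed into an induction that actually terminates. The paper does this by replacing $(\crs,p)$ with a finer complexity $(\crs(\gamma),\,s(\gamma),\,d(\gamma))$, where $s(\gamma)=e(\gamma)-p$ ($p$ the number of syllables in the cyclically reduced word) and $d(\gamma)$ counts exponents equal to $2$. The explicit base cases are $\gamma=\Delta^{2r}$ and $\gamma=\Delta^{2(r-1)}\sigma_1^{2}\sigma_2^{2}\sigma_1\sigma_2$, for which $\hat\gamma\approx\hat{\mathcal B}$ outright. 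For a positive word not of this shape the paper manufactures an exponent pair to trade by inserting a fictitious $\sigma_{j}^{0}$ inside a long syllable (e.g.\ rewriting $\sigma_i^{a_i}$ as $\sigma_i^{2}\,\sigma_i\,\sigma_{j}^{0}\,\sigma_i^{a_i-3}$ and taking $a=2$, $b=0$); the trade $a\mapsto a-2$, $b\mapsto b+2$ then leaves $\crs$ fixed but strictly lowers $s$ or $d$, and a short case analysis (plus the $B_3$ identities $\sigma_1^{2}\sigma_2\sigma_1^{2}=\sigma_1\sigma_2\sigma_1\sigma_2\sigma_1$ and $\sigma_1^{2}\sigma_2\sigma_1=\sigma_1\sigma_2\sigma_1^{2}$ to handle the residual all-$1$-or-$2$ words) shows this is always possible. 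That device --- a complexity tailored to positive words and the ``insert $\sigma_j^{0}$'' trick to create a tradable pair --- is the substantive content you are missing; once it is in place, Lemma \ref{Lemma 3.30} is invoked exactly as in Theorem \ref{Theorem 3.29}, as you anticipated.
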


\begin{proof} For $e(\gamma)=0$ it is Theorem \ref{Theorem 3.29}.
Assume $e(\gamma) > 0$ (the case $e(\gamma)<0$ is quite
analogous). Theorem \ref{Theorem 3.34} holds for the link of three
components $\gamma = \Delta^{2r}$ and for the knot
$\Delta^{2(r-1)}\sigma^2_1\sigma^2_2\sigma_1\sigma_2$
($\hat{\gamma}$ is isotopic to $\hat{\mathcal{B}}$ in both cases).
Now we will proceed by induction on some ``complication'' which
measures the distance between given $\gamma$ and $\Delta^{2r}$ or
$\Delta^{2(r-1)}\sigma^2_1\sigma^2_2\sigma_1\sigma_2$.  Namely,
our complication associated to a word $\gamma$ is the triplet
$(\crs(\gamma), s(\gamma), d(\gamma))$ where $s(\gamma)$ is equal
to $e(\gamma)-p$ ($p$ denotes the number of monomials in the
cyclically reduced word $\gamma$), and $d(\gamma)$ is the number
of exponents $a_i$ equal to 2.

The rest of the proof reminds that of Theorem \ref{Theorem 3.29} but
differs in details.  We will give a sketch of the proof.

If $\crs(\overline{\gamma}) = e(\gamma)$ and $s(\gamma) \leq 2$.  We
have done (the theorem holds for $\Delta^{2r}$,
$\Delta^2(r-1)\sigma^3_1\sigma_2\sigma_1\sigma_2$ and its conjugates
in $B_3$, $\Delta^2(r-1)\sigma^2_1\sigma^2_2\sigma_1\sigma_2$).
Consider $\gamma$ with $\crs(\gamma)>e(\gamma)$ or $\crs(\gamma)=
e(\gamma)$ with $s(\gamma)> 2$, and assume Theorem \ref{Theorem
3.34} holds for $\gamma$'s with a smaller complication.  Now either
$\gamma$ is conjugate in $B_3$ to a word with a smaller complication
or we can write $\gamma =
\gamma_1\sigma^a_i\gamma_2\sigma^b_j\gamma_3$ in such a way that
$\gamma^{a-2,b+2}$ has a smaller complication that $\gamma$. Namely,
let the cyclically reduced $\gamma = \sigma^{a_1}_{\epsilon(1)}
\sigma^{a_2}_{\epsilon(2)} \ldots \sigma^{a_p}_{\epsilon(p)}$.  Then
either:
\begin{enumerate}

\item[(i)] $\crs(\gamma) > e(\gamma)$. Then there exist $i$ and
$j$ such that $a_i \geq 2$, $a_j < -1$ and we put $a=a_i$,
$b=a_j$. Then $\crs(\gamma_{a-2, b+2} \leq \crs(\gamma)-2$.

\item[(ii)] All $a_t >0$ and there exists $i$ with $a_i \geq 4$.
Then we write $\gamma $ in the form\\
$\gamma =
\sigma^{a_1}_{\epsilon(1)} \ldots \sigma^{2}_{\epsilon(i)}
\sigma_{\epsilon(i)} \sigma_{\epsilon '(i)}^0
\sigma_{\epsilon(i)}^{a_i-3} \ldots \sigma_{\epsilon(p)}^{a_p}$
where $$\epsilon '(i) = \left\{\begin{array}{ll} 1 & \textrm{iff }
\epsilon(i)=2 \\ 2 & \textrm{iff } \epsilon(i)=1
\end{array}\right. ,$$ and we put $a=2$, $b=0$.  Then
$\crs(\gamma_{a-2,b+2}) = \crs(\gamma)$ and $s(\gamma_{a-2, b+2})
= s(\gamma)-2$.

\item[(iii)] All $a_t>0$ and there exist $i,j$, $i\neq j$, such
that $a_i \geq 3$, $a_j \geq 2$.  Then we write $\gamma$ in the
form $\gamma = \sigma^{a_1}_{\epsilon(1)} \ldots
\sigma^{a_i}_{\epsilon(i)}\ldots \sigma_{\epsilon(j)}
\sigma_{\epsilon '(j)}^0 \sigma_{\epsilon(j)}^{a_i-1} \ldots
\sigma_{\epsilon(p)}^{a_p},$ and we put $a = a_i$, $b=0$.  Then
$\crs(\gamma_{a-2,b+2}) = \crs(\gamma)$ and $s(\gamma_{a-2, b+2})
= s(\gamma)-2$.

\item[(iv)] All $a_t$ are equal to 2 or 1, $p>2$ and in the
cyclically reduced word $\gamma$ there exist $i$ and $j$, $i, i+1
\neq j$, such that $a_i = a_{i+1} = a_j = 2$.  Then $$\gamma =
\sigma^{a_1}_{\epsilon(1)} \ldots \sigma^{a_i}_{\epsilon(i)}
\sigma^{a_{i+1}}_{\epsilon(i+1)} \sigma^{a_{i+2}}_{\epsilon(i+2)}
\ldots \sigma_{\epsilon(j)} \sigma^{\circ}_{\epsilon(j)}
\sigma_{\epsilon(j)} \ldots \sigma^{a_p}_{\epsilon(p)}$$ and we
put$a=a_{i+1} = 2$, $b=0$.  Then $\crs(\gamma_{a-2, b+2}) =
\crs(\gamma)$, $s(\gamma_{a-2, b+2}) = s(\gamma)$ and
$d(\gamma_{a-2, b+2}) = d(\gamma)-2$.

\item[(v)] All $a_t$ are equal to 1 or 2 and in the cyclically
reduced word $\gamma$ there is no $i$ such that $a_i = a_i+1 =2$.
Then $\gamma$ is conjugated in $B_3$ to a word with a smaller
complication or to a word with the same complication but which
satisfies (3) above.  (We use the following equalities in $B_3$:
$$\sigma^2_1\sigma_2 \sigma^2_1 =
\sigma_1\sigma_2\sigma_1\sigma_2\sigma_1,~ \sigma^2_1\sigma_2
\sigma_1 = \sigma_1\sigma_2\sigma_1^2.)$$

It excludes all possibilities of $\gamma$ with $\crs(\gamma) >
e(\gamma)$ or $\crs(\gamma) = e(\gamma)$ and $s(\gamma)>2$.
\end{enumerate}

Now we can use Lemma \ref{Lemma 3.30} exactly in the same way as in
the proof of Theorem \ref{Theorem 3.29}.

\end{proof}

There is a reasonable hope that Lemma \ref{Lemma 3.30} can be used
to show that many pairs of links (not necessarily closed 3-braids)
cannot be distinguished by the invariant yielded by any Conway
algebra (for example the closures of four string braids $K_a =
\sigma^{-a}_2 \overline{\sigma}_3 \sigma_1 \overline{\sigma}_2
\sigma^{a-1}_3 \sigma_1 \overline{\sigma}_2 \sigma_3$ and their
mirror images; $K_3 = \overline{9}_{42}$ (in the Rolfsen notation
\cite{Ro}), $K_4 = \overline{11}_{449}$ (in the Thistlethwaite
notation \cite{Thist_2})).

\begin{prob}\begin{enumerate}\item[(a)] Do there exist two links $L_1$
and $L_2$ which have the same values of the Jones-Conway
polynomial but are not algebraically equivalent (i.e. can be
distinguished by some invariant yielded by a Conway algebra).

\item[(b)] Do there exist two links which are not algebraically
equivalent but which have the same value of the invariant yielded
by any finite Conway algebra.

\item[(c)] Let $\gamma = \sigma^{-p_1}_1 \sigma^{q_2}_2 \ldots
\sigma^{-p_k}_1 \sigma^{q_k}_2$ ($p_i, q_i, k>0$) be an
alternating 3-braid with $e(\gamma) =0$.  Whether $\hat{\gamma}$
is skein equivalent to its mirror image?
\end{enumerate}\end{prob}

There is known many algebraic properties of the Jones-Conway
polynomial.  They relate, mainly, special substitutions in
polynomial with old invariants of links (\cite{Li_M_1},
\cite{Li_M_2}, \cite{Mur-1}, \cite{Mo_2}, \cite{F_W}).  Here we will
state two elementary properties of the Jones-Conway polynomial which
will be useful later.

\begin{lem}\label{Lemma 3.36}\begin{enumerate} \item[(a)] If $L$ is a link of
odd number of components then all monomials in the Jones-Conway
polynomial $P_L(x,y)$ are of even degree.  If $L$ has an even
number of components then these monomials are of odd degree.

\item[(b)] For every link $L$, $x+y-1$ divides $P_L(x,y) -1$.  In
particular, $P_L(x,y)$ cannot be always equal to
0.\end{enumerate}\end{lem}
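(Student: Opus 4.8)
The plan is to prove both parts by the same device used throughout this section: induction along a resolving tree, as in the proof of Theorem~\ref{Theorem 2.1.1}. I will use three ingredients. First, every link diagram admits a resolving tree whose leaves are trivial links. Second, $P_{T_n}(x,y)=(x+y)^{n-1}$. Third, at each internal node the relation of Example~\ref{Example 2.1.8} reads $xP_{L_+}+yP_{L_-}=P_{L_\circ}$, i.e. $P_{L_+}=x^{-1}(P_{L_\circ}-yP_{L_-})$ and $P_{L_-}=y^{-1}(P_{L_\circ}-xP_{L_+})$. I also use freely that $x,y$ are units of $\ints[x^{\mp},y^{\mp}]$ and that $P_L$ is a genuine element of that ring.

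For (a) I will prove the sharper statement: if $K$ has $c$ components, then every monomial $x^ay^b$ appearing in $P_K(x,y)$ has $a+b\equiv c-1\pmod 2$. At a leaf this is clear, since $(x+y)^{c-1}$ is homogeneous of total degree $c-1$. For the inductive step at a node whose top vertex is $L_+$ (the $L_-$ case is symmetric), observe that $L_-$ has the same number $c$ of components as $L_+$, while the oriented smoothing $L_\circ$ changes the component number by exactly $\pm1$ (a crossing between distinct components merges them into one, a self-crossing splits one into two), so $L_\circ$ has $c\pm1$ components, of parity opposite to $c$. By the inductive hypothesis $P_{L_-}$ is supported in total degrees $\equiv c-1$ and $P_{L_\circ}$ in total degrees $\equiv c \pmod 2$; hence $P_{L_\circ}-yP_{L_-}$ is supported in degrees $\equiv c$, and dividing by $x$ lowers each total degree by one, so $P_{L_+}$ is supported in degrees $\equiv c-1\pmod 2$. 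This is precisely assertion (a).

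For (b) I will prove by the same tree induction that $x+y-1$ divides $P_K(x,y)-1$ in $\ints[x^{\mp},y^{\mp}]$. At a leaf, $(x+y)^{c-1}-1=\big((x+y)-1\big)\sum_{j=0}^{c-2}(x+y)^j$. At a node with top vertex $L_+$, write $P_{L_+}-1 = x^{-1}\big((P_{L_\circ}-1)-y(P_{L_-}-1)-(x+y-1)\big)$; the three bracketed summands are each divisible by $x+y-1$ (the first two by the inductive hypothesis, the last trivially), and multiplying by the unit $x^{-1}$ preserves divisibility. The top-vertex-$L_-$ case is identical with the roles of $x^{-1},y$ played by $y^{-1},x$. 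This yields the divisibility. Finally, $P_L$ cannot be identically $0$: otherwise $x+y-1$ would divide $-1$ and hence be a unit of $\ints[x^{\mp},y^{\mp}]$, but the units there are only the monomials $\pm x^ay^b$, and $x+y-1$ is not of that form.

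I expect no genuine obstacle; the argument is routine once the setup is fixed. The two points that deserve a moment's care are: in (a), confirming that the oriented smoothing really changes the number of components by $\pm1$ (rather than leaving it unchanged), since this parity flip is exactly what makes the degree bookkeeping close up; and in (b), noting that the division by $x$ or $y$ at each inductive step is legitimate because these are units of the coefficient ring, so no appeal to unique factorization or a gcd argument is required.
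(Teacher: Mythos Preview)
Your proof is correct and follows exactly the approach the paper sketches: verify (a) and (b) on the trivial links $T_n$, then propagate through a resolving tree via the Conway relation $xP_{L_+}+yP_{L_-}=P_{L_\circ}$. The paper's own proof consists of just that one sentence, so you have simply filled in the details (the parity flip of the component count under smoothing for (a), and the algebraic identity $P_{L_+}-1=x^{-1}\big((P_{L_\circ}-1)-y(P_{L_-}-1)-(x+y-1)\big)$ for (b)) that the paper leaves implicit.
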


\begin{proof}The conditions (a) and (b) hold easily for trivial
links.  Then it is enough to verify that if they hold for $L_-$ and
$L_\circ$ (respectively, $L_+$ and $L_\circ$) then they hold for
$L_+$ (respectively, $L_-$).

\end{proof}

\setcounter{equation}{36}

It is reasonable to try to generalize the Jones-Conway polynomial
by considering the following equation instead of the equation
\ref{Equation 2.1.9}, \begin{equation}\label{Equation 3.37} xw_1
+yw_2 = w_0 - z.\end{equation}

In fact it leads to 3-variable polynomial invariant of links but
this polynomial does not distinguish anything more than the
original Jones-Conway polynomial (it was observed by the referee
of \cite{P_T_1} and later but independently by O.Ya.~Viro
\cite{Vi}). Namely:

\setcounter{thm}{37}

\begin{prop}\label{Prop 3.38} \begin{enumerate}\item[(a)] The following
$\mathcal{A} = \{A, a_1, a_2, \ldots, |, \ast\}$ is a Conway
algebra.  $A = \ints[x^{\mp}, y^{\mp}, z]$, $a_1=1$, $a_2 = x+y+z$,
$\ldots$, $a_i = (x+y)^{i-1} + z(z+y)^{i-2} + \ldots + z(x+y) + z$,
$\ldots$~.

We define $|$ and $\ast$ as follows:  $w_2|w_0  =w_1$, and $w_1 \ast
w_0 = w_2$ where $xw_1 + yw_2 = w_0-z$, $w_1, w_2, w_3 \in A$.

\item[(b)] The invariant of links $w_L(x,y,z)$ yielded by the
Conway algebra $\mathcal A$ satisfies $$w_L(x,y,z) = w_L(x,y,0) +
z \left(\frac{w_L(x,y,0) -1}{x+y-1}\right), \textrm{ and}$$
$$w_L(x,y,0) = P_L(x,y).$$
\end{enumerate}\end{prop}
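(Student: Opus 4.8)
The plan is to verify the axioms in part (a) directly, leaning on the structure of the original Jones--Conway algebra (Example \ref{Example 2.1.8}) and on Lemma \ref{Lemma 2.1.3}, and then to prove part (b) by the now-standard induction on a resolving tree. For (a), the operations $|$ and $\ast$ are again defined by a single linear equation, namely $xw_1 + yw_2 = w_0 - z$; so, exactly as in the treatment of Example \ref{Example 2.1.8}, conditions C6 and C7 are automatic (one linear relation in three unknowns, any one determined by the other two). For C1 and C2 I would check the closed form $a_i = (x+y)^{i-1} + z(x+y)^{i-2} + \cdots + z(x+y) + z$ satisfies the recursion $x\,a_i + y\,a_i = a_{i+1} - z$, i.e.\ $(x+y)a_i + z = a_{i+1}$; this is immediate from the geometric-series shape of $a_i$ (multiplying by $(x+y)$ shifts each term up one degree and reproduces all but the constant $z$, which is then restored by adding $z$). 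For C3 I would repeat the computation from Example \ref{Example 2.1.8} verbatim: from $xw_1 + yw_2 = w_0 - z$ one gets $(a|b)|(c|d) = \tfrac{1}{x}\bigl((c|d) - y(a|b) - z\bigr)$, and expanding the inner resolutions $a|b = \tfrac{1}{x}(b - ya - z)$, $c|d = \tfrac{1}{x}(d - yc - z)$ gives an expression in which the coefficients of $b$ and of $c$ are both $-y/x^2$; hence swapping $b\leftrightarrow c$ leaves the value unchanged, proving C3. Then C4 and C5 follow from C3, C6, C7 by Lemma \ref{Lemma 2.1.3}(h) and (f) (or (e)), so $\mathcal A$ is a Conway algebra.

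For part (b) let $w_L = w_L(x,y,z)$ denote the invariant yielded by this algebra and set $Q_L := w_L(x,y,0)$; we must show $Q_L = P_L(x,y)$ and $w_L = Q_L + z\bigl(\tfrac{Q_L - 1}{x+y-1}\bigr)$. The first identity is easy: setting $z=0$ in the defining data of $\mathcal A$ recovers precisely the algebra of Example \ref{Example 2.1.8} (the $a_i$ collapse to $(x+y)^{i-1}$ and the relation becomes $xw_1+yw_2=w_0$), so by the uniqueness clause of Theorem \ref{Theorem 2.1.1} the two invariants coincide, $Q_L = P_L(x,y)$. For the second identity I would argue by induction on a resolving tree of a diagram of $L$. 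In the base case $L = T_n$ is trivial with $n$ components, so $w_{T_n} = a_n$ and $Q_{T_n} = P_{T_n} = (x+y)^{n-1}$; one checks directly from the closed form that $a_n = (x+y)^{n-1} + z\cdot\tfrac{(x+y)^{n-1}-1}{x+y-1}$, since $\tfrac{(x+y)^{n-1}-1}{x+y-1} = (x+y)^{n-2} + \cdots + (x+y) + 1$ is exactly the coefficient of $z$ appearing in $a_n$. For the inductive step, suppose the formula holds for $L_-$ and $L_\circ$ at a positive crossing; then $w_{L_+} = \tfrac{1}{x}(w_{L_\circ} - z - y\,w_{L_-})$, and substituting $w_{L_-} = Q_{L_-} + z\,\tfrac{Q_{L_-}-1}{x+y-1}$, $w_{L_\circ} = Q_{L_\circ} + z\,\tfrac{Q_{L_\circ}-1}{x+y-1}$ gives
$$
w_{L_+} = \frac{1}{x}\bigl(Q_{L_\circ} - y\,Q_{L_-}\bigr) + \frac{z}{x}\left(\frac{Q_{L_\circ} - y\,Q_{L_-}}{x+y-1} - 1\right).
$$
The first summand is $Q_{L_+}$ by the recursion for $P$; and since $\tfrac{1}{x}(Q_{L_\circ}-y Q_{L_-}) = Q_{L_+}$, the bracket in the second summand equals $\tfrac{x Q_{L_+} - (x+y-1)}{x+y-1}$, so the $z$-term becomes $z\cdot\tfrac{Q_{L_+}}{1}\cdot\tfrac{1}{x+y-1} - \tfrac{z}{x}$; collecting, one obtains exactly $Q_{L_+} + z\,\tfrac{Q_{L_+}-1}{x+y-1}$ after simplification. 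The case of a negative crossing (using $\ast$) is identical. This closes the induction, since every diagram has a resolving tree with trivial leaves.

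The one point requiring care — and the only place where anything could go wrong — is the algebra of denominators: the expression $\tfrac{Q_L-1}{x+y-1}$ must be a genuine element of $\ints[x^{\mp},y^{\mp},z]$, not merely a formal fraction. This is guaranteed by Lemma \ref{Lemma 3.36}(b), which states that $x+y-1$ divides $P_L(x,y)-1$ for every link $L$; so $\tfrac{Q_L-1}{x+y-1} = \tfrac{P_L(x,y)-1}{x+y-1}$ is well-defined in the ring, and all the manipulations above take place inside $\ints[x^{\mp},y^{\mp},z]$. I expect the routine algebraic simplification in the inductive step to be slightly fiddly but entirely mechanical; the conceptual content is just (i) recognising the $z=0$ specialisation as Example \ref{Example 2.1.8} plus uniqueness, and (ii) the divisibility fact from Lemma \ref{Lemma 3.36}(b). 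As a corollary one reads off that the three-variable polynomial is determined by $P_L(x,y)$ alone and hence distinguishes nothing new.
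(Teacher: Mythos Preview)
Your approach is correct and matches the paper's own proof, which is extremely terse: for (a) it simply says ``check C1--C7 (compare Examples \ref{Example 2.1.8} and \ref{Example 4.5})'', and for (b) it verifies the closed form $a_i = (x+y)^{i-1} + z\bigl(\tfrac{(x+y)^{i-1}-1}{x+y-1}\bigr)$ on trivial links and then says the inductive step is routine. You have supplied exactly the details the paper omits, including the useful observation that Lemma \ref{Lemma 3.36}(b) guarantees the quotient lives in the ring.

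There is one arithmetic slip in your displayed inductive step: when you substitute, the $-1$'s in the numerators $\tfrac{Q_{L_\circ}-1}{x+y-1}$ and $\tfrac{Q_{L_-}-1}{x+y-1}$ do not cancel, so the bracket should read
\[
\frac{Q_{L_\circ} - yQ_{L_-} + y - 1}{x+y-1} - 1 \;=\; \frac{xQ_{L_+} - x}{x+y-1},
\]
using $Q_{L_\circ} - yQ_{L_-} = xQ_{L_+}$; the $z$-term then becomes $\tfrac{z}{x}\cdot\tfrac{x(Q_{L_+}-1)}{x+y-1} = z\,\tfrac{Q_{L_+}-1}{x+y-1}$ cleanly, with no leftover $-\tfrac{z}{x}$. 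With that correction the computation closes exactly as you intended.
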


\begin{proof}\begin{enumerate} \item[(a)] We check conditions C1-C7 of Conway algebra
(compare Examples \ref{Example 2.1.8} and \ref{Example 4.5}).

\item[(b)] $a_i = (x+y)^{i-1} +z
\left(\frac{(x+y)^{i-1}-1}{x+y-1}\right)$, so for trivial links
the equalities (from (b)) hold.  Then, as usual, we can easily
verify that if they hold for $L_-$ and $L_\circ$ (respectively
$L_+$ and $L_\circ$) then they hold for $L_+$ (respectively
$L_-$).
\end{enumerate}\end{proof}

\begin{rem}Each invariant of links can be used to build a better
invariant which will be called weighted simplex of the invariant.
Namely, if $w$ is an invariant and $L$ is a link of $n$ components
$L_1, \ldots, L_n$, then we consider an $n-1$ dimensional simplex
$\Delta^{n-1}=(q_1, \ldots, q_n)$. We associate with each face
$(q_{i_1}, \ldots, q_{i_k})$ of $\Delta^{n-1}$ the value $w_{L'}$,
where $L'= L_{i_1} \cup \ldots \cup L_{i_k}$.  We say that two
weighted simplicies are equivalent if there exists a bijection of
their vertices which preserves weights of faces.  Of course, the
weighted simplex of an invariant of isotopy classes of oriented
links is also an invariant of isotopy classes of oriented
links.\end{rem}

\begin{example}\label{Example 3.40}\begin{enumerate}\item[(a)] Two links
shown in Figure \ref{Fig 3.1} are skein equivalent but they can be
distinguished by weighted simplices of the global linking numbers
(see Example \ref{Example 3.5}).

\item[(b)] The link (closed 3-braid) $\hat{\gamma}$ (see Figure
\ref{Fig 3.21}) where $$\gamma = \sigma^{-2}_1 \sigma^{3}_2
\sigma^{-2}_1 \sigma_2 $$ ($8^3_2$ in Rolfsen \cite{Ro} notation) is
algebraically equivalent to its mirror image
$\hat{\overline{\gamma}}$ (see Theorem \ref{Theorem 3.29}) and has
the same signature as $\hat{\overline{\gamma}}$.  However,
$\hat{\gamma}$ and $\hat{\overline{\gamma}}$ can be distinguished by
weighted simplices of the global linking numbers.

\begin{figure}
\centering
\includegraphics[width=0.6\textwidth]{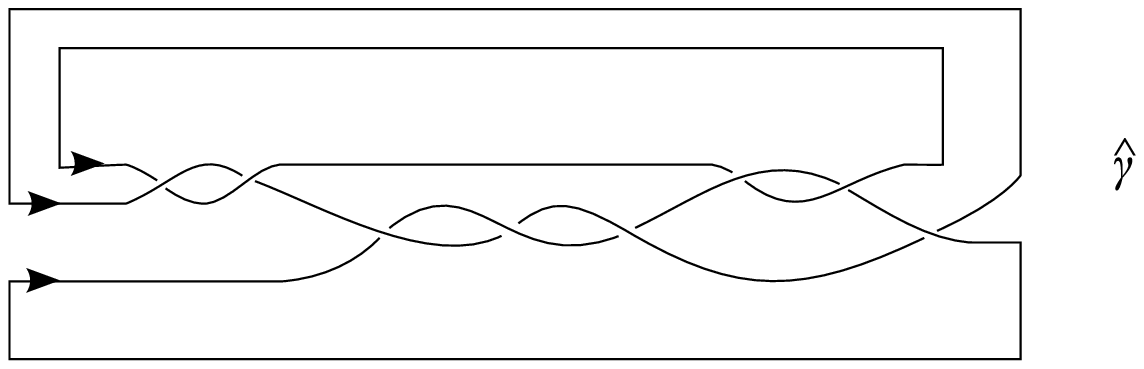}
\caption{}\label{Fig 3.21}
\end{figure}

\item[(c)] J.~Birman \cite{Bi_2} has found three-braids $$\gamma_1
= \sigma^{-2}_1 \sigma^{3}_2 \sigma^{-1}_1 \sigma^4_2
\sigma^{-2}_1\sigma^4_2 \sigma^{-1}_1 \sigma_2$$
$$\gamma_2 = \sigma^{-2}_1 \sigma^{3}_2 \sigma^{-1}_1 \sigma^4_2
\sigma^{-1}_1\sigma_2 \sigma^{-2}_1 \sigma^4_2$$ which closures
are algebraically equivalent and have the same signature but which
can be distinguished by weighted simplices of the global linking
numbers.

\end{enumerate}\end{example}

Another method of distinguishing knots was analyzed by Morton and
Short \cite{Mo_S}.  They considered the Jones-Conway polynomial of
$(2,k)$-cables along knots (2 was chosen because of limited
possibility of computers).  They made many calculations and got very
interesting experimental material.  In particular, they found that
using their method they were able to distinguish some Birman and
Lozano-Morton examples (all which they tried) and the $9_{42}$ knot
from its mirror image.  On the other hand, they were unable to
distinguish the Conway knot and the Kinoshita-Teresaka knot.  Other
pairs of mutants were tried with similar result.  The above finding
of Morton and Short was the motivation for the author of the survey
to prove the following theorem.

\begin{thm}\label{Theorem 3.41}  Consider the tangles (a)~~\tanglea,
and (b)~~\tangleb.  Let the diagram $L'$ of a knot be any mutation
along the tangle (a) or by a mutation along the tangle (b) such that
it consists of a rotation of angle $180^{\circ}$ about the central
axis (perpendicular to the plane of the diagram).  Then the
$(2,k)$-cable along $L$ is skein equivalent to the $(2,k)$-cable
along $L'$ for any $k$.
\end{thm}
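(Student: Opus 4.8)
The plan is to imitate the proof of Lemma~\ref{Lemma 3.7}, carried out one level up, on the cabled diagrams. Fix a diagram of $L$, let $B$ be the ball containing the mutated tangle, let $T$ be its inhabitant, and let $\rho$ be the allowed rotation (any of the three coordinate rotations for tangle~(a), the rotation about the axis perpendicular to the plane for tangle~(b)), so that $L'$ is $L$ with $T$ replaced by $\rho T$. Use the $0$--framing and draw the $(2,k)$--cable pattern so that the $k$ twists sit in a small box placed \emph{outside} $B$; that box may be slid freely around the knot, and since $\rho$ is a rigid rotation it preserves the total writhe of the tangle region, so one fixed box serves both $L$ and $L'$ and the cabled diagrams of $L$ and $L'$ differ \emph{only} inside the doubled ball. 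With these conventions forming the $2$--cable commutes with $\rho$: a rigid rotation of $B$ carries two (blackboard) parallel copies of $T$ to two parallel copies of $\rho T$, so $C_{2,k}(L')$ inside the doubled ball is exactly $\rho$ applied to $C_{2,k}(L)$ inside it. Hence it suffices to prove the \emph{local} statement: for every way of completing the picture outside $B$ to a diagram of a knot, $C_{2,k}(L)\sim_S C_{2,k}(L')$ --- in fact we aim for $\approx_N$ with $N$ bounded in terms of $\crs(T)$. (The difference with Lemma~\ref{Lemma 3.7}, which is exactly why this is a theorem and not a corollary, is that the rotated \emph{cabled} tangle has $8$ endpoints and $\rho$ does not extend across a generic completion once $C_{2,k}(T)$ is glued back in; this is what forces the restriction to the tangles (a), (b) and, for (b), to the single perpendicular rotation.)

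I would prove the local statement by induction on the pair $(\crs(T),b(T))$ ordered lexicographically, where $b(T)$ is the least number of bad crossings of $T$ over all base point choices compatible with a descending order of the strands of $T$. For the \textbf{base case}, when $\crs(T)$ is below the small explicit bound one reads off the figures, one checks by hand --- just as in the $\crs\le 1$ step of Lemma~\ref{Lemma 3.7}, but now for the cables --- that $C_{2,k}(T)$ and $\rho\,C_{2,k}(T)$ are ambient isotopic rel boundary (after possibly sliding the twist box around the knot); the tangles (a), (b) are designed precisely so that the prescribed $\rho$ already extends across the cabled ball in these cases. For the \textbf{inductive step}, with $\crs(T)$ above the bound, pick a bad crossing $q$ of $T$; under cabling it becomes a $2\times 2$ braiding brick $B_q$ of four crossings, all of sign $\sgn q$ (the block--swap $4$--braid $\beta^{\pm 1}$). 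The engine of the step is a \emph{brick lemma}: switching $q$ in $T$, i.e. replacing $B_q$ by the opposite brick, can be realized by a finite chain of oriented skein triples whose third (smoothing) terms are, up to split trivial circles, the $(2,k)$--cables of tangles with strictly fewer crossings than $T$ --- concretely, the partially smoothed bricks reduce after isotopy to cables of $T_{\circ q}$ plus a trivial circle, together with lower--crossing leftovers. Granting this, the inductive hypothesis applies to every such smoothing term and to $C_{2,k}(L_{\circ q})$ (smaller $\crs$) and $C_{2,k}(L_{\bar q})$ (same $\crs$, one fewer bad crossing in its tangle); chasing the definition of $\sim_S$ along the chain, and using that $\rho$ commutes with crossing change and oriented smoothing inside $B$ so that $\rho(T_{\bar q})=(\rho T)_{\overline{\rho q}}$ and $\rho(T_{\circ q})=(\rho T)_{\rho q\,\circ}$, yields $C_{2,k}(L)\sim_S C_{2,k}(L')$ and closes the induction.

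The hard part will be the brick lemma. Everything around it is the by--now routine induction--on--crossings machinery of the proofs of Theorem~\ref{Theorem 2.1.1} and Lemma~\ref{Lemma 3.7}; what requires genuine work is the explicit finite analysis of the four--crossing block $B_q$ --- equivalently of the braid $\beta^{\pm 2}$ one must insert to flip it --- showing that switching a \emph{cabled} crossing is controllable by the oriented skein calculus with strictly simpler smoothing terms, so that the double induction actually terminates. It is precisely here, and in the base case, that the special shapes of the tangles (a), (b) and the restriction on the admissible rotations enter: for a rotation incompatible with the $(2,k)$--cable pattern the base case fails and the reduction cannot be started.
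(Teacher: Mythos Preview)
The paper does not actually prove Theorem~\ref{Theorem 3.41}: immediately after the statement it says ``For the proof, we refer to \cite{P_2}'' and moves on. So there is no in-paper argument to compare your proposal against; the published proof lives in the cited preprint (later \emph{Canad.\ J.\ Math.}\ 1989).

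That said, your outline is in the right spirit --- set things up so that the cabled diagrams differ only inside the doubled ball, then run a Lemma~\ref{Lemma 3.7}--style induction on the tangle --- but the ``brick lemma'' as you state it is where the argument is incomplete, and your formulation of it is too optimistic. When you start resolving the four crossings of the brick $B_q$, the intermediate smoothings connect the two parallel strands of the cable; the resulting diagrams are \emph{not} $(2,k)$--cables of anything, so you cannot feed them back into the inductive hypothesis ``apply $\rho$ to the cable of a simpler tangle''. The claim that the smoothing terms are ``up to split trivial circles, the $(2,k)$--cables of tangles with strictly fewer crossings'' is exactly the point that needs a real argument, and in general it is false as stated. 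What one needs instead is to show that these non-cable intermediate terms coincide \emph{for $L$ and for $L'$} directly (typically because, with the particular tangles (a), (b) and the allowed rotations, the rotated non-cable pieces are isotopic rel boundary), not via the induction. Identifying which smoothings stay in the cable world and handling the ones that don't is the genuine content of the theorem and the reason for the restrictions on the tangle shape and the rotation; your sketch names the difficulty but does not resolve it.
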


For the proof, we refer to \cite{P_2}.

Despite the above theorem we still feel confident to propose the
following conjecture.

\begin{conj}\label{Conjecture 3.42} For any two non-isotopic
prime\footnote{Added for e-print: it should be ``simple". Already
in the final version of \cite{P_2} we proved that cables of $K_1\#K_2$,
and $K_1\#-K_2$, which can be different prime knots, have the same
 Jones-Conway polynomial. With the substitution of simple in place of
prime, Conjecture 3.42 remains open.}
knots there exist numbers $p$ and $q$ such that the $(p,q)$-cables
along these knots can be distinguished by the Jones-Conway
polynomial.\end{conj}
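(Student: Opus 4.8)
Since Conjecture \ref{Conjecture 3.42} is still open (already with ``simple'' in place of ``prime''), what follows is a program rather than a proof. The first step would be a reformulation. The $(p,q)$-cable of $K$ is $K$ decorated by the $(p,q)$-torus-knot pattern drawn in the annulus, and, with the normalization of Example \ref{Example 2.1.8}, its Jones--Conway polynomial is the value of the $K$-satellite functional on that pattern. Since the torus-knot patterns, together with the framing twists that connect patterns of the same winding number, span the HOMFLY skein algebra of the annulus, knowing $P_{K_{p,q}}$ for all coprime $(p,q)$ is essentially equivalent to knowing the Jones--Conway polynomial of $K$ colored by every Young diagram, equivalently to knowing a large part of the quantum $sl_N$ invariants of $K$ for all $N$. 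So the conjecture becomes: the colored Jones--Conway polynomials separate simple knots. I would then peel off the non-hyperbolic case. A simple knot is a torus knot or hyperbolic; if $K=T(r,s)$ then $K_{p,q}$ is again a torus knot, so $P_{K_{p,q}}$ is computable from a positive braid word through the Ocneanu trace, and one checks --- this already needs a short argument --- that these polynomials pin down the pair $\{r,s\}$ and are never shared with a hyperbolic knot. That reduces everything to the case in which $K$ and $K'$ are both hyperbolic.

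For hyperbolic $K$ the plan would be to recover the exterior $S^3\setminus K$ up to homeomorphism from the family $\{P_{K_{p,q}}\}$ and then invoke the later theorem of Gordon and Luecke that a knot is determined by its complement. The topological input is the JSJ decomposition: for $p\ge 2$ and $|q|$ large the exterior of $K_{p,q}$ splits along a single essential torus into a Seifert-fibered cable space depending only on $(p,q)$ and a copy of $S^3\setminus K$, so by uniqueness of the JSJ decomposition two non-isotopic hyperbolic companions have non-homeomorphic cable exteriors for all such $(p,q)$. Hence the conjecture would follow once one shows that the Jones--Conway polynomial of a cable knot, together with the way it sits in its exterior, detects that exterior up to homeomorphism --- or, at the very least, distinguishes the cable space lying over $S^3\setminus K$ from the one lying over $S^3\setminus K'$.

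This last step is where the real difficulty lies. One might instead hope to argue through finite-type invariants, since the coefficients of the cable polynomials generate many of them; but that route rests on the open conjecture that finite-type invariants separate knots and is in any case not enough, because the Lie-algebra weight systems underlying the HOMFLY polynomial are known not to span all weight systems. Directly, there is no known procedure for extracting the homeomorphism type of a knot exterior from its quantum polynomial invariants --- it is not even known whether the Jones polynomial or the HOMFLY polynomial detects the unknot, nor whether the colored invariants separate knots in general --- so the clean reduction above does not by itself close the argument. Moreover, a correct proof must genuinely use that $K$ is simple, not merely prime: as the footnote to \ref{Conjecture 3.42} records, cabling is compatible with connected sum (Corollary \ref{Cor 3.20}, Example \ref{Example 3.40}), so the $(p,q)$-cables of $K_1 \cs K_2$ and of $K_1 \cs (-K_2)$ --- possibly non-isotopic prime knots --- already carry identical Jones--Conway polynomials, and Theorem \ref{Theorem 3.41} exhibits tangle mutations that preserve every $(2,k)$-cable polynomial. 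Any proof must therefore convert the absence of essential and companion tori in a simple knot exterior into an effective statement ruling out exactly these satellite- and mutation-type coincidences at the level of the polynomial --- and making that conversion precise is, to my mind, the crux, and the reason the statement is still only a conjecture.
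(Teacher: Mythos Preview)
The paper does not prove Conjecture \ref{Conjecture 3.42}; it states it as an open problem, and the footnote confirms it remains open (with ``simple'' replacing ``prime''). You correctly recognize this and offer a program rather than a proof, which is the honest response.

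A few remarks on the program itself. Your citation of Corollary \ref{Cor 3.20} and Example \ref{Example 3.40} for the compatibility of cabling with connected sum is off: Corollary \ref{Cor 3.20} concerns the multiplicativity of $P_L$ under connected sum of links, not of their cables, and Example \ref{Example 3.40} is about weighted simplices of linking numbers. The relevant fact --- that cables of $K_1\cs K_2$ and $K_1\cs(-K_2)$ share Jones--Conway polynomials --- is what the footnote attributes to the final version of \cite{P_2}, not to anything in the present survey. Also, several ingredients you invoke (Gordon--Luecke, the annulus skein algebra in its modern form, colored HOMFLY and quantum $sl_N$) postdate this 1986 survey, so while they are reasonable tools for an attack today, they are not part of the paper's framework. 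Your identification of the crux --- that one must convert simplicity of the knot exterior into a polynomial-level obstruction, and that no such mechanism is known --- is accurate and is precisely why the conjecture stands.
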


\section{Partial Conway algebras}

It can be observed that in order to get a link invariant it is not
necessary to have the operations $|$ and $\ast$ defined on the whole
product $A\times A$ and relations C3--C5 need not be satisfied by
all elements of $A \times A\times A\times A$.  We refer here results
from \cite{P_T_1} and \cite{P_T_2}.

Let us begin with the following definition:

\begin{defn}\label{Definition 4.1} A partial Conway algebra
$\mathcal A$ is a quadruple $(A, B_|, B_\ast, D)$, $B_|$ and
$B_\ast$ being subsets of $A\times A$, and $D$ of $A \times A\times
A\times A$ together with $0$-argument operations $a_1, a_2, \ldots$,
and two 2-argument operations $|$ and $\ast$ defined on $B_|$ and
$B_\ast$ respectively, satisfying conditions C1--C7 whenever both
sides of equations are defined and $(a,b,c,d)\in D$ in case of
relations C3--C5.\end{defn}

We would like to construct invariants of links using such partial
algebras.

\begin{defn} \label{Definition 4.2} We say that a partial Conway algebra $\mathcal{A}=(A,
B_|, B_\ast, D; a_1, a_2, \ldots, |, \ast)$ is geometrically
sufficient iff the following two conditions are satisfied:
\begin{enumerate} \item[(i)] for every resolving tree of a link all the
operations that are necessary to compute the root value are defined,

\item[(ii)] let $p_1$ and $p_2$ be two crossings of a diagram $L$;
consider the diagrams $$L^{p_1, p_2}_{\eps_1, \eps_2}, L^{p_1,
p_2}_{\eps_1, 0}, L^{p_1, p_2}_{0, \eps_2}, \textrm{ and }L^{p_1,
p_2}_{0,0},$$ where $\eps_i = -\sgn(p_i)$ ($\sgn(p_i)$ denotes the
sign of the crossing $p_i$ in the original diagram $L$), and
choose for them resolving trees $T_{p_1,p_2}$, $T_{p_1,0}$,
$T_{0,p_2}$, and $T_{0,0}$, respectively.  Denote the root values
of these trees by $w_{p_1, p_2}$, $w_{p_1, 0}$, $w_{0,p_2}$, and
$w_{0,0}$, respectively. Then $(w_{p_1, p_2},w_{p_1,
0},w_{0,p_2},w_{0,0}) \in D$. (The condition (ii) means that the
resolving trees of $L$ from Figure \ref{Fig 4.1} give the same
value at the root of the tree.)
\end{enumerate}\end{defn}
\begin{figure}
\centering

\subfigure[]{\xymatrix@C=0pt{ & & & \sgn p_1 \ar@{-}[dll] \ar@{-}[drr] & & & \\
 & \sgn p_2 \ar@{-}[dl] \ar@{-}[dr] & & & & \sgn p_2 \ar@{-}[dl] \ar@{-}[dr] & \\
 T_{p_1, p_2} & & T_{p_1,0} & & T_{0,p_2} & & T_{0,0} }}
\vspace{1cm}
 \subfigure[]{\xymatrix@C=0pt{ & & & \sgn p_2 \ar@{-}[dll] \ar@{-}[drr] & & & \\
 & \sgn p_1 \ar@{-}[dl] \ar@{-}[dr] & & & & \sgn p_1 \ar@{-}[dl] \ar@{-}[dr] & \\
 T_{p_1, p_2} & & T_{0,p_2} & & T_{p_1,0} & & T_{0,0} }}

\caption{}\label{Fig 4.1}
\end{figure}

The proof of Theorem \ref{Theorem 2.1.1} can be used, without
changes, in the case of a geometrically sufficient partial Conway
algebra.

\begin{thm} Let $\mathcal A$ be a geometrically sufficient partial
Conway algebra.  There exists a unique invariant $w$ attaching to
each skein equivalence class of links an element of $A$ and
satisfying the following conditions: \begin{enumerate}
\item $w_{T_n} = a_n$

\item $w_{L_+} = w_{L_-}|w_{L_\circ}$

\item $w_{L_-} = w_{L_+}\ast w_{L_\circ}$
\end{enumerate}\end{thm}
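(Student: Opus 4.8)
The plan is to re-run the proof of Theorem \ref{Theorem 2.1.1} essentially verbatim, keeping track of the only two ways in which partiality could obstruct an argument: an operation $|$ or $\ast$ might be undefined at a pair we need, or one of the transposition axioms C3--C5 might fail because the relevant quadruple is not in $D$. First I would set up the same scaffolding: define $w_0(L)=a_n$ for a crossingless diagram of $n$ components, and carry the Main Inductive Hypothesis that $w_k$ is defined on all diagrams with at most $k$ crossings, assigns $a_n$ to descending diagrams, satisfies the Conway relations \ref{Eqn 2.2.4}--\ref{Eqn 2.2.5}, and is invariant under Reidemeister moves that keep the crossing number $\le k$. Then, for a fixed diagram $D$ with $\crs(D)\le k+1$ and a choice of base points $b$, I would define $w_b$ on the set $\mathcal D$ of diagrams obtained from $D$ by switching and smoothing crossings, by induction on the number $b(L)$ of bad crossings, resolving the first bad crossing via the defining equalities \ref{Eqn 2.2.9}.

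Each step of the original verification then goes through unchanged, \emph{provided} the partial operations cooperate, and this is exactly what geometric sufficiency (Definition \ref{Definition 4.2}) guarantees. Clause (i) says that along any resolving tree every operation needed to reach the root is defined; since $w_b$, $w^\circ$, and ultimately $w_k$ are computed precisely by evaluating resolving trees, every application of $|$ or $\ast$ occurring in the argument is legitimate. Clause (ii) is what replaces the unrestricted use of C3--C5: in the ``Conway relations for $w_b$'' step the transposition axiom is applied at the passage to \ref{Eqn 2.2.10} to the four values obtained from a diagram in $\mathcal D$ by rearranging at the two crossings $p$ and $q$ in all four ways, which is (up to the order of the two crossings) the quadruple $(w_{p_1,p_2},w_{p_1,0},w_{0,p_2},w_{0,0})$ of Figure \ref{Fig 4.1}; clause (ii) puts it in $D$, so C3 (or C4, when $\sgn q=-$) applies. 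Every other use of the axioms, namely C1, C2, C6, C7, involves only two elements and a single crossing, so membership in $D$ is not at issue; the ``changing base points'' cases CBP 1--3 (invoking C1) and the three Reidemeister-move cases then go through word for word, using the already-established Conway relations for $w_b$ together with C4, C6, C7 wherever the original does.

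The independence of $w^\circ$ of the order of components rests on the purely geometric Lemma \ref{Lemma 2.2.14} (innermost $f$-gons and the $3$-gon argument of Propositions \ref{Prop 2.2.15}--\ref{Prop 2.2.16}), which contains no algebra and is therefore unaffected. Hence $w_{k+1}=w^\circ$ exists with all the M.I.H.\ properties, the functions $w_k$ are compatible and define a function $w$ on diagrams; since any Reidemeister move changes the crossing number by at most $2$ one reads off $w_L=w_{\crs(L)+2}(L)$ and gets an isotopy invariant, and the initial conditions hold because $T_n$ has a crossingless (hence descending) diagram. Uniqueness follows as before: the initial conditions together with the Conway relations determine $w$ on any diagram, because every diagram has a resolving tree with descending leaves. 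Finally, since the invariant $w$ satisfies both Conway relations it is an invariant of Conway type, so by the lemma characterizing skein equivalence it is constant on skein-equivalence classes and therefore descends to a well-defined invariant of those classes.

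I expect the only real work, beyond transcribing the earlier proof, to be the bookkeeping in the second paragraph: one must check that \emph{every} application of C3, C4, C5 in the proof of Theorem \ref{Theorem 2.1.1} occurs at a quadruple of the special ``two crossings, two orders'' shape covered by clause (ii), and that \emph{every} invocation of $|$ or $\ast$ occurs along a resolving tree covered by clause (i). There is no new idea here, but the matching must be done carefully, since a single stray application of a transposition axiom to a quadruple outside $D$ would break the argument.
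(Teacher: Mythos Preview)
Your proposal is correct and takes essentially the same approach as the paper: the paper's entire proof is the single sentence ``The proof of Theorem \ref{Theorem 2.1.1} can be used, without changes, in the case of a geometrically sufficient partial Conway algebra.'' You have simply carried out the bookkeeping the paper leaves implicit, verifying that every use of $|$ or $\ast$ falls under clause (i) and that the sole place C3--C5 are invoked (the transposition at \ref{Eqn 2.2.10}) produces exactly the quadruple covered by clause (ii).
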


The conditions C1--C7 in a partial Conway algebra are not totally
independent of one another.  We can prove, similarly as Lemma
\ref{Lemma 2.1.3} the following fact.

\begin{lem} Let $(A, B_|, B_\ast, a_1, a_2, \ldots, |, \ast)$ be a
partial algebra, such that:

\begin{enumerate}
\item[(i)] The property (i) of Definition \ref{Definition 4.2} is
satisfied.

\item[(ii)] The property (ii) of Definition \ref{Definition 4.2}
is satisfied for each pair of crossings of positive sign; i.e. the
resolving trees of the diagram $L$ (Figure \ref{Fig 4.1}) gives
the same value $w$ at the roots if $\sgn p_1 = \sgn p_2 = +$.

\item[(iii)] The conditions C1, C6, and C7 are satisfied if both
sides of the equations are defined. \end{enumerate}

Define $D$ to be the subset of $A\times A\times A\times A$ for which
the conditions C3--C5 are satisfied.  Then $\mathcal A = (A, B_|,
B_\ast, D, a_1, a_2, \ldots, |, \ast)$ is a geometrically sufficient
partial Conway algebra.\end{lem}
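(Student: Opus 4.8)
The plan is to recycle the proof of Theorem~\ref{Theorem 2.1.1}, checking that the three listed hypotheses are precisely the ingredients that proof consumes. Start with the purely algebraic assertion that $\mathcal A=(A,B_|,B_\ast,D;a_1,a_2,\dots,|,\ast)$ is a partial Conway algebra in the sense of Definition~\ref{Definition 4.1}. Axioms C1, C6, C7 hold wherever both sides are defined, by hypothesis~(iii); axioms C3, C4, C5 hold on $D$ by the very definition of $D$; and C2 is forced by C1 and C6 exactly as in Lemma~\ref{Lemma 2.1.3}(a) --- whenever $a_n\ast a_{n+1}$ is defined one has $a_n\ast a_{n+1}=(a_n|a_{n+1})\ast a_{n+1}=a_n$ --- property~(i) of Definition~\ref{Definition 4.2} (our hypothesis~(i)) guaranteeing that these operations, and more generally all operations occurring in resolving trees, are defined. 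Note also that, apart from the transposition axioms, the proof of Theorem~\ref{Theorem 2.1.1} appeals only to C1, C6, C7, all of which are now available.

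It remains to verify geometric sufficiency. Condition~(i) of Definition~\ref{Definition 4.2} is hypothesis~(i) verbatim, so only condition~(ii) needs work. Here I would recall that the transposition axioms enter the proof of Theorem~\ref{Theorem 2.1.1} in exactly one place, the step ``Conway relations for $w_b$'': equation~(\ref{Eqn 2.2.10}) and its analogues invoke C3 when the two relevant crossings are positive, C4 when they have opposite signs, and C5 (in the symmetric situation) when both are negative, applied to the four root values attached to the diagrams $L^{p_1,p_2}_{\eps_1,\eps_2}$, $L^{p_1,p_2}_{\eps_1,0}$, $L^{p_1,p_2}_{0,\eps_2}$, $L^{p_1,p_2}_{0,0}$ of Definition~\ref{Definition 4.2}(ii) (with $\eps_i=-\sgn p_i$). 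Thus condition~(ii) --- equivalently, the statement that the two resolving trees of $L$ in Figure~\ref{Fig 4.1} have the same root value --- is exactly what legitimizes these steps, and it is enough to establish it for every pair of crossings.

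To do so, fix a pair $p_1,p_2$ of crossings of a diagram $L$. If both are positive, the desired equality of the two tree values is, after unwinding the Conway relation at $p_1$ and at $p_2$, precisely C3 on the tuple $(w_{p_1,p_2},w_{p_1,0},w_{0,p_2},w_{0,0})$, which is hypothesis~(ii). If instead, say, $\sgn p_2=-$, then $L^{p_1,p_2}_{\eps_1,\eps_2}$ and $L^{p_1,p_2}_{0,\eps_2}$ carry the ``$+$''-configuration at $p_2$; I would eliminate it by the Conway relation $w_{L_-}=w_{L_+}\ast w_{L_\circ}$ at $p_2$ together with C7, which expresses the ``$+$''-configuration root value as the ``$-$''-configuration root value composed under $|$ with the smoothed root value. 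Performing this at each ``$+$''-configured crossing among $p_1,p_2$, the whole $4$-tuple becomes expressible through the four root values of the diagrams in which $p_1,p_2$ carry only ``$-$''-configurations or smoothings; these four diagrams are exactly the ones appearing in Definition~\ref{Definition 4.2}(ii) for the pair $p_1,p_2$ inside the diagram obtained by giving both crossings the ``$+$''-configuration (hence both positive), so hypothesis~(ii) supplies C3 on that ``change/smooth'' tuple $(a',c,b',d)$. In the mixed case the rewriting reads $w_{p_1,p_2}=a'|c$, $w_{p_1,0}=c$, $w_{0,p_2}=b'|d$, $w_{0,0}=d$, and the equality of the two tree values becomes $\big((a'|c)\ast c\big)\,|\,\big((b'|d)\ast d\big)=\big((a'|c)|(b'|d)\big)\ast(c|d)$, which follows from C3 on $(a',c,b',d)$ and C6, both sides reducing to $a'|b'$. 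The doubly negative case is the same computation with one further layer of nesting; again it collapses, by C3 (hypothesis~(ii)) and C6, to an identity of the form $\alpha=\alpha$. Hence condition~(ii) holds for every pair, and $\mathcal A$ is geometrically sufficient.

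The main obstacle is the bookkeeping of the previous paragraph: hypothesis~(ii) is deliberately weak, yielding only C3, and only on the very particular ``change-both / change-one-smooth-the-other / smooth-both'' tuples of genuinely positive pairs, so one must check that in each of the finitely many sign configurations --- and in both the ``$|$-step'' and the ``$\ast$-step'' of the recursion defining $w_b$ --- the instance of C3, C4 or C5 that is forced really does collapse onto a tuple of that restricted form. The device making this possible, and which must be verified to cover every case, is the systematic elimination of ``$+$''-configurations via the Conway relation and C7; once that is in place, what remains are the short manipulations above, essentially those of Lemma~\ref{Lemma 2.1.3}.
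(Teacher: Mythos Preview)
Your proposal is correct and follows exactly the route the paper indicates (the paper gives no explicit proof, only the remark ``We can prove, similarly as Lemma~\ref{Lemma 2.1.3}, the following fact''). The one point worth making explicit, which you leave slightly implicit in the phrase ``via the Conway relation,'' is that the elements $a'=w_{p_1,p_2}\ast c$ and $b'=w_{0,p_2}\ast d$ are genuinely root values of resolving trees for the diagrams $L^{p_1,p_2}_{\epsilon_1,-}$ and $L^{p_1,p_2}_{0,-}$ (namely the trees that branch first at $p_2$ and then continue with the given $T$'s), so that hypothesis~(ii) --- which is stated for \emph{arbitrary} resolving trees --- legitimately applies to the tuple $(a',c,b',d)$; once this is said, no circularity is possible and the Lemma~\ref{Lemma 2.1.3}-style reduction goes through exactly as you describe.
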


Now we will describe three examples of geometrically sufficient
partial Conway algebras and we will discuss the knot invariant s
yielded by them.

Let us start with the example which gives a strict generalization
of the Jones-Conway polynomial.  The idea is to use instead of the
equations \ref{Equation 2.1.9} or \ref{Equation 3.37} the family of
equations (it depends on the number of components of $L_+$, $L_-$,
$L_\circ$ which an equation is used).

\begin{example}\label{Example 4.5}  The following partial algebra $\mathcal A$ is a
geometrically sufficient partial Conway algebra.

$$A = \nats \times \ints[x^{\mp 1}_1, z_1, x^{\mp 1}_2, z_2, x^{\mp
1}_3, z_3,\ldots, y^{\mp 1}_1, x'^{\mp 1}_2, z'_2],$$

$$B_| = B_\ast = \left\{ ((n_1,w_1), (n_2, w_2)) \in A\times A ~:~
|n_1 - n_2|=1 \right\},$$

$$D = A\times A \times A \times A; ~~ a_1 = (1,1), a_2 = (2, x_1
+y_1 +z_1), \ldots,$$

$$a_n=\left(n, \prod^{n-1}_{i=1}(x_i+y_i) + z_1 \prod^{n-1}_{i=2}(x_i
+y_i)+\ldots + z_{n-2}(x_{n-1}+y_{n-1}) + z_{n-1}\right)$$

where $y_i = x_i \frac{y_1}{x_1}$. To define the operations $|$
and $\ast$ consider the following system of equations:
\begin{enumerate}
\item[($1$)] $x_1w_1 + y_1w_2 = w_0 -z_1$

\item[($2$)] $x_2w_1 +y_2w_2=w_0-z_2$

\item[($2'$)] $x'_2w_1+y'_2w_2 = w_0 -z'_2$

\item[($3$)] $x_3w_1 + y_3w_2 = w_0 -z_3$

\item[($3'$)] $x'_3w_1 + y'_3w_2 = w_0 - z'_3$

\end{enumerate}

\hspace{1.5cm}$\ldots\ldots\ldots\ldots$

\begin{enumerate}

\item[($i$)]  $x_iw_1 +y_iw_2 = w_0 - z_i$

\item[($i'$)] $x'_iw_1 + y'_iw_2 = w_0-z'_i$

\end{enumerate}

\hspace{1.5cm}$\ldots\ldots\ldots\ldots$\medskip

where $y'_i = \frac{x'_iy_1}{x_1}$, $x'_i =
\frac{x'_2x_1}{x_{i-1}}$, and $z'_i$ are defined inductively to
satisfy
$$\frac{z'_{i+1}-z_{i-1}}{x_1x'_2} =
\left(1+\frac{y_1}{x_1}\right)\left(\frac{z'_i}{x'_i}
-\frac{z_i}{x_i}\right).$$

we define $(n,w) = (n_1, w_1)|(n_2, w_2)$ (respectively,
$(n,w)=(n_1,w_1)\ast (n_2, w_2)$) as follows: $n=n_1$ and if $n_1
= n_2-1$ then we use equation ($n$) to get $w$; namely $x_nw +
y_nw_1 = w_2 - z_n$ (respectively, $x_nw_1 + y_nw = w_2 - z_n$).
If $n_1 = n_2 +1$ then we use the equation ($n'$)  to get $w$,
namely $x'_n w +y'_nw_1 = w_2 - z'_n$ (respectively, $x'_n w_1 +
y'_n w = w_2-z'_n$).

Now we will show that $\mathcal A$ is a geometrically sufficient
partial Conway algebra.

It is an easy task to check that the first coordinate of elements
from $A$ satisfies C1--C7 (compare Example \ref{Example 2.1.5})
and to check the relations C1, C2, C6, and C7 so we will
concentrate our attention on the relations C3, C4, and C5.

It is convenient to use the following notation:  if $w\in A$ then
$w = (|w|, F_w)$ and for $$w_1|w_2 = (|w_1|, F_{w_1})|(|w_2|,
F_{w_2}) = (|w|, F_w) = w$$ to use the notation $$F=\left\{
\begin{array}{ll} F_{w_1}|_n ~ F_{w_2} & \textrm{ if }
n=|w_1|=|w_2|-1 \\ F_{w_1} |_{n'} ~ F_{w_2} & \textrm{ if } n =
|w_1| = |w_2|+1 .\end{array}\right. $$

Similar notation we use for the operation $\ast$.

In order to verify relations C3--C5 we have to consider three main
cases: \begin{enumerate}

\item $|a|=|c|-1 = |b|+1 =n$.

Relations C3--C5 make sense iff $|d| = n$.  The relation C3 has the
form: $$ (F_a|_{n'} F_b)|_n (F_x|_{(n+1)'} F-d) = (F_a |_n F_c)
|_{n'} (F_b |_{n-1} F_d).$$  From this we get
\begin{eqnarray*}\frac{1}{x_n x'_{n+1}}F_d - \frac{y'_{n+1}}{x_n
x'_{n+1}}F_c -\frac{y_n}{x_n x'_n}F_b + \frac{y_n y'_n}{x_n x'_n}
F_a - \frac{z'_{n+1}}{x_n x'_{x+1}} - \frac{z_n}{x_n} + \frac{y_n
z'_n}{x_n x'_n} \\ = \frac{1}{x'_n x_{n-1}}F_d - \frac{y_{n-1}}{x'_n
x_{n-1}}F_b - \frac{y'_n}{x_n x'_n}F_c + \frac{y_n y'_n }{x_n
x'_n}F_a - \frac{z_{n-1}}{x'_n x_{n-1}}- \frac{z'_n}{x'_n} +
\frac{y'_n z_n}{x_n x'_n}\end{eqnarray*}

Therefore, \begin{enumerate}

\item[(i)] $x_{n-1}x'_n = x_n x'_{n+1}$,

\item[(ii)] $\displaystyle \frac{y'_{n+1}}{x'_{n+1}} =
\frac{y'_n}{x'_n}$,

\item[(iii)] $\displaystyle \frac{y_n}{x_n} =
\frac{y_{n-1}}{x_{n-1}}$, and

\item[(iv)] $\displaystyle \frac{z'_{n+1}}{x_n
x'_{n+1}}+\frac{z_n}{x_n} - \frac{y_n z'_n}{x_n x'_n} =
\frac{z_{n-1}}{x'_n x_{n-1}} +\frac{z'_n}{x'_n} -\frac{y'_n
z_n}{x_n x'_n}.$

\end{enumerate}

When checking conditions C4 and C5 we get exactly the same
conditions (i)--(iv).

\item  $|a| = |b|-1 = |c|-1 =n$.

\begin{enumerate}

\item[(I)] $|d| = n.$ \end{enumerate}

\noindent The relation C3 has the following form:  $$(F_a |_n F_b)
|_n (F_c |_{(n+1)'} F_d) = (F_a |_n F_c ) |_n (F_b |_{(n+1)'}
F_d).$$ We get after some calculations that it is equivalent to

\item[(v)] $\displaystyle \frac{y_n}{x_n} =
\frac{y'_{n+1}}{x'_{n+1}}.$

\noindent The relations C4 and C5 reduce to the same condition,
(v).
\begin{enumerate}
\item[(II)] $|d| = n+2$.
\end{enumerate}
Then the relations C3-C5 reduce to the condition (iii).

\item $|a| = |b| +1 = |c| +1 = n$
\begin{enumerate}
\item[(I)] $|d| = n-2$
\item[(II)] $|d| = n$.
\end{enumerate}
\end{enumerate}
We get, after some computations, that the relations 3(I) and 3(II)
follow from the conditions (iii) and (v).

Conditions (i) -- (v) are equivalent to the conditions on $x'_i,
y_i, y'_i,$ and $z'_i$ described in Example \ref{Example 4.5}.
Therefore the partial algebra $\mathcal A$ from Example
\ref{Example 4.5} satisfies the relations C1--C7.  Furthermore, if
$L$ is a diagram and $p$ --- its crossing, then the number of
components of $L^p_\circ$ is always equal to the number of
components of $L$ plus or minus one, so the sets $B_|$, $B_\ast
\subset A\times A$ are sufficient to define the link invariant
associated with $\mathcal A$.

Therefore $\mathcal A$ is a geometrically sufficient partial Conway
algebra.  It yields the invariant of links second coordinate of
which is a polynomial in an infinite number of variables.

\end{example}

\begin{prob}\label{Problem 4.6} \begin{enumerate}\item[(a)] Do there
exist two oriented links which have the same Jones-Conway polynomial
but which can be distinguished by the polynomial of infinitely many
variables?\footnote{Added for e-print: Adam Sikora proved in his
Warsaw master degree thesis written under direction of P.Traczyk, that
the answer to Problem 4.6 is negative.}

\item[(b)] Do there exist two oriented links which are
algebraically equivalent (i.e. the value of the invariant yielded
by any Conway algebra is the same for both links) but which can be
distinguished by the polynomial of infinitely many
variables?\end{enumerate}\end{prob}

We were unable to solve the above problem, partially due to the lack
of many candidates to be tested.  In particular, the examples of
Birman, which are algebraically equivalent but not skein equivalent,
are not helpful.

\begin{prop}\label{Proposition 4.7} Consider a geometrically
sufficient partial Conway algebra $\mathcal A$ such that $D=A\times
A\times A\times A$ and $B_|$ and $B_\ast$ includes all pairs $(u,v)$
such that the first letter of $u$ is $a_i$ and the first letter of
$v$ is $a_{i \mp 1}$ (in particular the partial algebra of Example
\ref{Example 4.5} satisfies these conditions) then Lemma \ref{Lemma
3.30} and Theorem \ref{Theorem 3.29} and \ref{Theorem 3.34} are
valid for $\mathcal A$.\end{prop}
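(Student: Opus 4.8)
The plan is to show that the proofs already given for Lemma~\ref{Lemma 3.30}, Theorem~\ref{Theorem 3.29}, and Theorem~\ref{Theorem 3.34} transfer verbatim to $\mathcal A$, so that only two things have to be checked: that every application of the transposition axioms C3--C5 is legitimate, and that every application of the operations $|$ and $\ast$ occurring in those proofs --- both in the statement of Lemma~\ref{Lemma 3.30} and in the long chain of C3--C7 manipulations inside its proof --- actually lands inside $B_|$ or $B_\ast$ (the statement of Lemma~\ref{Lemma 3.30} being understood, in the partial setting, for those $w_i,h_i$ for which the operations involved are defined, which is the case in the applications). The first point is free: since $D = A\times A\times A\times A$, the relations C3, C4, C5 hold for \emph{all} quadruples whenever both sides are defined, exactly as in a genuine Conway algebra, so no quadruple ever has to be checked against $D$.

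For the second point I would use the homomorphism $\nu$ from $\mathcal A$ onto the number-of-components algebra of Example~\ref{Example 2.1.5}: $\nu(u)$ is the index $i$ of the first letter $a_i$ of $u$ (in Example~\ref{Example 4.5} this is simply the first coordinate), so $\nu(a_n)=n$ and $\nu(u|v)=\nu(u\ast v)=\nu(u)$. By the hypothesis on $B_|$ and $B_\ast$, the products $u|v$ and $u\ast v$ are defined whenever $|\nu(u)-\nu(v)|=1$. The geometric input is this: every diagram occurring in a resolving tree of a closed $3$-braid is again (the closure of) a $3$-braid, since switching a $\sigma_i$ leaves a braid word and oriented-smoothing a $\sigma_i$ deletes it; and deleting a generator from a braid word multiplies the underlying permutation by a transposition, which changes the number of cycles --- hence the number of components of the closure --- by exactly $\pm 1$. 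Thus for all such diagrams $\nu(w_{L_\circ})=\nu(w_{L_\pm})\pm 1$, while $\nu(w_{L_+})=\nu(w_{L_-})$ (a crossing switch changes no component count). Consequently every Conway relation invoked in the proofs pairs elements whose $\nu$-values differ by $1$ and is therefore defined, and every use of C1, C2, C6, C7 on such pairs is legitimate as well; for instance in C6 one has $\nu((a|b)\ast b)=\nu(a|b)=\nu(a)$, so $(a,b)\in B_|$ forces $(a|b,b)\in B_\ast$.

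It then remains to walk through the displayed computation proving Lemma~\ref{Lemma 3.30} and verify that each nested subexpression --- $w_2\ast A_{a-2,b+1}$, then $(w_2\ast A_{a-2,b+1})|h_2$, and so on down to $(((w_2\ast A_{c-3,d+2})|h_2)|h_1)\ast A_{a-2,b+3}$ --- is defined. For this one uses that in the applications inside the proofs of Theorems~\ref{Theorem 3.29} and~\ref{Theorem 3.34} the elements $w_1,w_2$ are invariant values on diagrams carrying the full component count, while the $h_i$ and the $A$-symbols with a shifted second index are smoothing values with $\nu$ off by $1$; since $\nu$ is fixed by $|$ and $\ast$, at every stage the two arguments differ by $1$ in $\nu$, and the hypothesis on $B_|,B_\ast$ applies. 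Once this bookkeeping is done, the algebraic identities of Lemma~\ref{Lemma 3.30} and the two complexity inductions of Theorems~\ref{Theorem 3.29} and~\ref{Theorem 3.34} run without a single change (the underlying invariant $w$ itself being supplied by the existence theorem for geometrically sufficient partial Conway algebras). I expect the only real work --- and the only place anything could go wrong --- to be precisely this bookkeeping: one must confirm that none of the roughly ten intermediate expressions in the C3--C7 chain ever forms $u|v$ or $u\ast v$ with $\nu(u)=\nu(v)$, since for such a pair the operations need not be defined in a partial algebra.
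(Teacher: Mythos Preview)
The paper states Proposition~\ref{Proposition 4.7} without proof, so there is no argument to compare against; your proposal is exactly the intended justification. The two hypotheses are tailored to make the earlier proofs go through unchanged: $D=A\times A\times A\times A$ removes any constraint on the transposition axioms, and the condition on $B_|,B_\ast$ guarantees that every operation appearing in the proof of Lemma~\ref{Lemma 3.30} and in the inductions of Theorems~\ref{Theorem 3.29} and~\ref{Theorem 3.34} is defined, since --- as you correctly observe --- every element involved is the invariant of an actual closed $3$-braid diagram and adjacent operands always differ by one in component count. Your bookkeeping via the component-count homomorphism $\nu$ is the right way to organise this, and the parity observation (smoothing a braid generator changes the cycle count of the underlying permutation by exactly one) is precisely what is needed; nothing more is required.
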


Still the knots $9_{42}$, $10_{71}$ (in the Rolfsen notation),
$11_{394}$ and $11_{449}$ in the Thistlethwaite notation) and their
mirror images should be tested.

The next example of a geometrically sufficient partial Conway
algebra is related to the classical (Murasugi) signature of links.

It was (more or less) shown by Conway \cite{Co} (also Giller
\cite{Gi}) that the signature of knots is a skein equivalence
invariant.  We will show it in a more general context.  Our approach
is based on an observation that the Tristram-Levine signature is
related to Conway polynomial in just the same way as classical
signature to determinant invariant.  One can hope for an analogous
invariant (supersignature) related to the Jones-Conway polynomial.

\begin{defn}The following partial algebra $\mathcal A_{u,v}$ will be
called the supersignature algebra ($u$, $v$ real numbers, $u\cdot v
> 0$):  $$\begin{array}{rl} A =& (R\cup iR) \times (\ints \cup \infty)
\\ B_| = B_\ast = & \left\{ ((r_1, z_1), (r_2, z_2)) \in A \times A
~:~\textrm{if } 0 \neq r_1 \in R \textrm{, then } r_2 \in iR;\right.
\\ & \textrm{ and if } 0\neq r_1 \in iR \textrm{, then } r_2 \in R;
\textrm{if }z_1, z_2 \neq \infty\textrm{, then } |z_1 -z_2| = 1;\\ &
\left. r_i = 0 \textrm{ if and only if }z_i = \infty
\right\}.\end{array}$$

\noindent $|$ and $\ast$ are defined as follows:

\setcounter{equation}{8}

\noindent The first coordinates $r_1$, $r_2$, $r_0$ of elements
$w_1$, $w_2$, $w_0$ such that $w_1 = w_2 | w _0$, $w_2 = w_1 \ast
w_0$ are related as in the case of the Jones-Conway polynomial by
the equation
\begin{equation}\label{Equation 4.9} -ur_1 +v r_2 = i r_0.
\end{equation}

In particular, the first coordinate of the result depends only on
the first coordinates, so we write simply $r_1=r_2 | r_0$ and $r_2 =
r_1 \ast r_0$.  The second coordinate of the result is defined by
the equalities \begin{enumerate}

\item $i^z = \frac{r}{|r|}$ if $r\neq 0$,
\item $|z_i - z_\circ| = 1$ if $r_i \neq 0$, $r_0 \neq 0$, $i= 1,2$,
\item $z_1 = z_2$ if $r_0 = 0$,
\item $z= \infty$ if $r=0$.

\end{enumerate}

\noindent The 0-argument operations are defined as follows: $a_1 =
(1,0), \ldots, $ $$a_k = \left( \left(\frac{v-u}{i}\right)^{k-1},
\left\{
\begin{array}{ll} -(k-1) & \textrm{if } u<v \\  & \textrm{if } u=v
\\k-1 & \textrm{if } u>v \end{array}\right.\right), \ldots ~.$$

$D$ is defined to be the subset of $A\times A \times A \times A$
consisting of these elements for which the relations C3 -- C5 are
satisfied.
\end{defn}
We conjecture that $\mathcal A_{u,v}$ is a geometrically sufficient
partial Conway algebra.  If it is so it defines an invariant second
coordinate of which will be called the supersignature
$(\sigma_{u,v})$.  In fact the conjecture is true for $u=v \in
\left(\infty, -\frac{1}{2}\right] \cup \left[\frac{1}{2},
\infty\right)$ giving the Tristram-Levine signature.

\setcounter{thm}{9}

\begin{thm}\label{Theorem 4.10} For $u=v \in
\left(\infty, -\frac{1}{2}\right] \cup \left[\frac{1}{2},
\infty\right)$, $\mathcal A_{u,v}$ is a geometrically sufficient
partial Conway algebra.
\end{thm}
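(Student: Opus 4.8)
\emph{Strategy.} By the criterion Lemma stated just before Example~\ref{Example 4.5}, it suffices to verify axioms C1, C6, C7, condition (i) of Definition~\ref{Definition 4.2}, and condition (ii) of that definition for pairs of positive crossings; the set $D$ is then automatically the locus of C3--C5. Axioms C1, C6, C7 are local statements about the defining equation \ref{Equation 4.9} and the list $a_1,a_2,\dots$, checked by inspection exactly as for the Jones-Conway algebra of Example~\ref{Example 2.1.8}. All the substance is in conditions (i) and (ii), and the plan is to identify the first coordinate of the invariant with a single evaluation of the Conway polynomial (immediate) and the second coordinate with the Tristram-Levine signature (the substantive step); both are genuine isotopy invariants, so no resolving tree can produce a root value depending on the order of resolutions, which gives (ii).

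\emph{First coordinate.} Put $z_0=-i/u$. With $u=v$, the first-coordinate relation $-u\,r_{L_+}+u\,r_{L_-}=i\,r_{L_\circ}$ reads $r_{L_+}-r_{L_-}=z_0\,r_{L_\circ}$, and the first coordinates of $a_1,a_2,\dots$ are $1,0,0,\dots$; hence $r_L=\nabla_L(z_0)$, the Conway polynomial evaluated at $z_0$. As $\nabla_L$ is a genuine invariant, $r_L$ is well defined on isotopy classes. Since $\nabla_L(z)$ equals $z^{\,n-1}$ times a polynomial in $z^{2}$ for an $n$-component link, and $z_0$ is purely imaginary (so $z_0^{2}$ is real), $r_L$ is real for $n$ odd and purely imaginary for $n$ even, and $r_L=0$ exactly when $\Delta_L(\omega)=0$, where $\omega=e^{i\theta}$ is the point on the unit circle with $t^{1/2}-t^{-1/2}=z_0$; such $\omega$ exists precisely because $|u|\ge\tfrac12$ forces $|z_0|=1/|u|\le2$ (equivalently $\sin(\theta/2)=-1/(2u)$ is solvable). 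A smoothing changes the number of components by exactly $\pm1$ and a crossing change leaves it unchanged, so in every skein triple the pairs $(r_{L_+},r_{L_\circ})$ and $(r_{L_-},r_{L_\circ})$ have opposite ``reality type'' with component counts differing by one --- exactly the membership condition for $B_|$ and $B_\ast$. This proves condition (i).

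\emph{Second coordinate.} For a Seifert matrix $V$ of $L$ set $\sigma_\omega(L)=\mathrm{sign}\bigl((1-\omega)V+(1-\bar\omega)V^{T}\bigr)$, the Tristram-Levine signature at $\omega$; the hypotheses $u=v$ and $|u|\ge\tfrac12$ are exactly what place $\omega$ on the unit circle, so this form is Hermitian and its signature defined. I would then prove, by induction on $\crs(L)$ along resolving trees, that the second coordinate of the invariant equals $\sigma_\omega(L)$ with the normalization recorded in the $a_k$. The inductive step rests on the classical skein behaviour of $\sigma_\omega$ (Conway \cite{Co}, Giller \cite{Gi}; cf.\ Murasugi \cite{Mu_1}): a Seifert matrix of $L_\circ$ occurs as a principal submatrix of Seifert matrices of $L_\pm$, obtained by bordering with one row and column whose corner entries differ by a nonzero real between $L_+$ and $L_-$; eigenvalue interlacing and Sylvester's law then give $|\sigma_\omega(L_\pm)-\sigma_\omega(L_\circ)|=1$ whenever the relevant Alexander values are nonzero, $\sigma_\omega(L_+)-\sigma_\omega(L_-)\in\{-2,0,2\}$ with the exact value pinned down by the sign of $\det\bigl((1-\omega)V+(1-\bar\omega)V^{T}\bigr)$, and $\sigma_\omega(L_+)=\sigma_\omega(L_-)$ when $\Delta_{L_\circ}(\omega)=0$. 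Rewriting ``sign of that determinant'' as the phase of $\Delta_L(\omega)$, i.e.\ as $r_L/|r_L|$, this is exactly the content of the four defining rules for $|$ and $\ast$ on the second coordinate --- $i^{z}=r/|r|$, $|z_i-z_\circ|=1$, $z_1=z_2$ when $r_0=0$, and $z=\infty$ when $r=0$ (the last two recording jumps of nullity) --- and together with the uniqueness of the solution of those rules it identifies the second coordinate with $\sigma_\omega$.

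\emph{Conclusion and main obstacle.} Once both coordinates are known to equal honest invariants of $L$, the root value of any resolving tree of $L$ is forced to be $\bigl(\nabla_L(z_0),\sigma_\omega(L)\bigr)$, independent of the tree; condition (i) guarantees every operation along such a tree is legitimate, and tree-independence is precisely condition (ii), in particular for two positive crossings. With C1, C6, C7 checked directly, the criterion Lemma then yields that $\mathcal A_{u,v}$ is a geometrically sufficient partial Conway algebra, $D$ being the locus of C3--C5. The main obstacle is the inductive step for the second coordinate: carrying out the Seifert-matrix bordering and the determinant-sign bookkeeping in full, and in particular handling the degenerate configurations $\Delta_L(\omega)=0$, where the nullity jumps and a ``$\infty$'' enters, so that one must verify the bookkeeping still closes up. This is also where $|u|\ge\tfrac12$ is indispensable: for $|u|<\tfrac12$ the point $\omega$ leaves the unit circle, the form ceases to be Hermitian, and no signature is available --- which is why only this range is treated.
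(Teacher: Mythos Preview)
Your proposal is correct and follows essentially the same route as the paper's proof: verify C1, C2, C6, C7 directly; identify the first coordinate with an evaluation of the Conway polynomial (the paper phrases this via the Conway potential $\det iA(\zeta)$ and Equation~\ref{Equation 4.11}, you via $\nabla_L(z_0)$ with $z_0=-i/u$, which is the same thing); and identify the second coordinate with the Tristram-Levine signature $\sigma_L(\zeta)$ through the Seifert-matrix bordering argument, so that C3--C5 hold in all geometrically realizable situations because both coordinates are genuine isotopy invariants. The paper packages the Seifert-matrix step as Lemma~\ref{Lemma 4.12}, whose parts (a), (b), (c) are exactly your rules $i^z=r/|r|$, $|z_\pm-z_\circ|=1$, and $z_1=z_2$ when $r_0=0$; your invocation of the criterion Lemma preceding Example~\ref{Example 4.5} is a slight reorganization of the same logic but not a different argument.
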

\setcounter{equation}{10}

\begin{proof}  Relations C1, C2, C6, and C7 follow immediately from
definition.  Concerning C3--C5, we will show that for links with
non-zero value $r_L(u,u)$ the second coefficient $z_L$ --  the
supersignature -- coincides with the Tristram-Levine signature (it
is the classical signature for $u=v= \frac{1}{2}$,
$r_L\left(\frac{1}{2}, \frac{1}{2}\right) \neq 0$).  It will follow
that the relations C3--C5 are satisfied in geometrically realizable
situations.

To prove this let us recall the definition of Tristram-Levine
signature \cite{Tr}, \cite{Le}, \cite{Go}.

Let $A$ be a Seifert matrix of a link $L$.  For each complex number
$\zeta$ ($\zeta \neq -1$) consider the Hermitian matrix $A(\zeta) =
(1-\overline{\zeta})A + (1-\zeta)A^T.$

The signature of this matrix, $\sigma_L(\zeta)$, is called the
Tristram-Levine signature of the link $L$.

Assume that $i(1-\overline{\zeta}) = -\frac{1}{i(1-\zeta)}$ (which
means that $1-\zeta$ lies on the unit circle).  Then $\det
iA(\zeta)$ is equal to the Conway potential $\Omega(-i(1-\zeta))$
(using Kauffman notation \cite{K_1}) and therefore we have an
equality
\begin{equation}\label{Equation 4.11} \det iA_{L_+}(\zeta) + \det
iA_{L_-}(\zeta) = (2-\zeta -\overline{\zeta})i\det
iA_{L_\circ}(\zeta)\end{equation} where $A_{L_+}$, $A_{L_-}$, and
$A_{L_\circ}$ are Seifert matrices of $L_+$, $L_-$, and $L_{\circ}$
respectively and $|1-\zeta| = 1$.

To complete the proof of Theorem \ref{Theorem 4.10} we need the
following lemma.

\setcounter{thm}{11}

\begin{lem}\label{Lemma 4.12}For $|i-\zeta| =1$ we have \begin{enumerate}

\item[(a)] $\displaystyle i^{\sigma(A(\zeta))} = \frac{\det
iA(\zeta)}{|\det iA(\zeta)|}$ if $\det A(\zeta) \neq 0$,\medskip

\item[(b)] $|\sigma_{L_+}(\zeta) - \sigma_{L_\circ}(\zeta)|=1$
(respectively, $|\sigma_{L_-}(\zeta) - \sigma_{L_\circ}(\zeta)|=1$)
if $\det A_{L_\circ}(\zeta) \neq 0$ and $\det A_{L_+}(\zeta)\neq 0$
(respectively $\det A_{L_-}(\zeta) \neq 0$),\medskip

\item[(c)] $\sigma_{L_+}(\zeta) = \sigma_{L_-}(\zeta)$ if $\det
A_{L_\circ}(\zeta)\neq 0$ and $\det A_{L_+}(\zeta), \det
A_{L_-}(\zeta) \neq 0$.\end{enumerate}

\end{lem}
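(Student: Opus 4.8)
The plan is to reduce everything to Hermitian linear algebra: part (a) is a direct spectral computation, part (b) follows from Cauchy interlacing together with a parity count, and part (c) is obtained by feeding (a) and (b) into the determinantal skein identity \ref{Equation 4.11}. First, for (a): the matrix $A(\zeta)=(1-\bar\zeta)A+(1-\zeta)A^{T}$ is Hermitian (the Seifert matrix $A$ is real), hence unitarily diagonalizable with real eigenvalues $\lambda_{1},\dots,\lambda_{N}$, all nonzero by hypothesis. The eigenvalues of $iA(\zeta)$ are $i\lambda_{1},\dots,i\lambda_{N}$, so
\[
\frac{\det iA(\zeta)}{|\det iA(\zeta)|}=\prod_{j=1}^{N}\frac{i\lambda_{j}}{|i\lambda_{j}|}=\prod_{j=1}^{N}\frac{\lambda_{j}}{|\lambda_{j}|}\,i=i^{\,\#\{\,j:\lambda_{j}>0\,\}}\cdot i^{-\#\{\,j:\lambda_{j}<0\,\}}=i^{\sigma(A(\zeta))}.
\]
In particular the phase of the nonzero number $\det iA(\zeta)$ (which is real or purely imaginary) determines $\sigma(A(\zeta))$ modulo $4$.

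For (b), I would use the standard Seifert-surface model of a skein triple, which furnishes Seifert matrices for $L_{+},L_{-},L_{\circ}$ such that $A_{L_{\circ}}(\zeta)$ occurs as a codimension-one principal submatrix of $A_{L_{+}}(\zeta)$ (and similarly for $L_{-}$). Cauchy's interlacing theorem for Hermitian matrices then gives $|\sigma_{L_{+}}(\zeta)-\sigma_{L_{\circ}}(\zeta)|\leq 1$ as soon as both matrices are nonsingular, which is exactly the hypothesis of (b); and since a nonsingular Hermitian matrix has signature congruent to its size modulo $2$ while the two sizes differ by $1$, the difference $\sigma_{L_{+}}(\zeta)-\sigma_{L_{\circ}}(\zeta)$ is odd, hence equal to $\pm1$. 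The $L_{-}$ statement is proved identically.

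For (c): part (b), which applies under the hypotheses of (c), gives $\sigma_{L_{+}}(\zeta),\sigma_{L_{-}}(\zeta)\in\{\sigma_{L_{\circ}}(\zeta)-1,\sigma_{L_{\circ}}(\zeta)+1\}$, so $\sigma_{L_{+}}(\zeta)-\sigma_{L_{-}}(\zeta)\in\{-2,0,2\}$, and it remains to exclude $\pm2$. Applying (a) to all three matrices in \ref{Equation 4.11}: the right-hand side is a nonzero real multiple of $i^{\sigma_{L_{\circ}}(\zeta)\pm1}$ (the sign being that of $2-\zeta-\bar\zeta$), while the left-hand side is $|\det iA_{L_{+}}(\zeta)|\,i^{\sigma_{L_{+}}(\zeta)}+|\det iA_{L_{-}}(\zeta)|\,i^{\sigma_{L_{-}}(\zeta)}$. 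Matching the two phases already kills the possibility that $\sigma_{L_{+}}(\zeta)=\sigma_{L_{-}}(\zeta)$ differs from $\sigma_{L_{\circ}}(\zeta)$ on the wrong side, leaving only the case where $\sigma_{L_{+}}(\zeta)$ and $\sigma_{L_{-}}(\zeta)$ equal $\sigma_{L_{\circ}}(\zeta)\pm1$ in opposite orders. To rule this out one uses that in the Seifert model $A_{L_{+}}(\zeta)$ and $A_{L_{-}}(\zeta)$ arise from a common Hermitian form by altering a single twisting parameter, so that $\det A_{L_{+}}(\zeta)$ and $\det A_{L_{-}}(\zeta)$ are affine-linear in that parameter with slope a nonzero multiple of $\det A_{L_{\circ}}(\zeta)\neq0$; its single zero is then avoided, and one checks that the hypotheses $\det A_{L_{\pm}}(\zeta)\neq0$ keep $L_{+}$ and $L_{-}$ on the same side of it, so $\det A_{L_{+}}(\zeta)$ and $\det A_{L_{-}}(\zeta)$ have equal sign; by (a) this forces $i^{\sigma_{L_{+}}(\zeta)}=i^{\sigma_{L_{-}}(\zeta)}$, whence with $\sigma_{L_{+}}(\zeta)-\sigma_{L_{-}}(\zeta)\in\{-2,0,2\}$ we get $\sigma_{L_{+}}(\zeta)=\sigma_{L_{-}}(\zeta)$.

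The main obstacle is precisely this Seifert-matrix bookkeeping: one must choose Seifert surfaces for $L_{+},L_{-},L_{\circ}$ simultaneously so that $A_{L_{\circ}}(\zeta)$ is genuinely a codimension-one principal submatrix of $A_{L_{\pm}}(\zeta)$ \emph{and} so that \ref{Equation 4.11} holds with matrix sizes compatible with the powers of $i$ it contains (and, for the last case of (c), so that the ``same side'' check above goes through). This is where Conway's and Giller's band-addition description of an oriented smoothing --- equivalently, the defining properties of Kauffman's potential function --- must be invoked. Once that model is fixed, (a) turns the determinant recursion \ref{Equation 4.11} into a recursion for signatures modulo $4$, and the interlacing/parity input of (b) confines the jumps to exactly the values asserted in (b) and (c).
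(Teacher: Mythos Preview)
Your treatment of (a) is exactly the paper's argument (diagonalize the Hermitian matrix and count phases), and your (b) via Cauchy interlacing plus the size/parity observation is equivalent to what the paper does: the paper writes down the explicit block form
\[
A_{L_\pm}(\zeta)=\begin{pmatrix}A_{L_\circ}(\zeta)&a\\ \bar a^{T}&m_\pm\end{pmatrix},\qquad m_-=m_+ +(2-\zeta-\bar\zeta),
\]
and then simply says ``diagonalize $A_{L_\circ}(\zeta)$ first'', which is precisely the Schur-complement/interlacing step you describe.

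The gap is in (c). Your phrase ``one checks that the hypotheses $\det A_{L_\pm}(\zeta)\neq 0$ keep $L_+$ and $L_-$ on the same side'' is doing all the work, and it is not true. After the Schur reduction above one has $A_{L_\pm}(\zeta)$ congruent to $\mathrm{diag}(A_{L_\circ}(\zeta),\,m'_\pm)$ with $m'_-=m'_+ +(2-\zeta-\bar\zeta)$, and nothing prevents $m'_+$ and $m'_-$ from having opposite sign. Concretely, take the skein triple with $L_+$ the right trefoil, $L_-$ the unknot, $L_\circ$ the Hopf link, all with the compatible $2\times2$, $2\times2$, $1\times1$ Seifert matrices prescribed by the block form; at $\zeta=-1$ every determinant is nonzero while $\sigma_{L_+}=-2\neq 0=\sigma_{L_-}$. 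So the statement you are trying to prove is actually false as written: the hypothesis in~(c) should be $\det A_{L_\circ}(\zeta)=0$, not $\neq 0$ (compare condition~(3) in the definition of the supersignature algebra, which is what~(c) is meant to verify). With the corrected hypothesis, the paper's block form gives (c) directly: when $A_{L_\circ}(\zeta)$ is singular a null vector of it, padded by a zero, shows that passing from $m_+$ to $m_-$ changes the form only on a vector already in the kernel direction, so the signature is unchanged. Your equation~\ref{Equation 4.11}/phase-matching route cannot reach this, since under the correct hypothesis the right-hand side of~\ref{Equation 4.11} vanishes and gives no mod-$4$ information; you really do need the explicit Seifert-matrix model the paper writes down.
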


\begin{proof}
To prove Lemma \ref{Lemma 4.12}(a) let us diagonalize $A(\zeta)$ to
get $A'(\zeta)$.  The matrix $iA'(\zeta)$ has $\pm 1$ on the
diagonal.  Now, $\sigma(A(\zeta)) = \sigma(A'(\zeta))$ is the number
of $i$'s in $iA'(\zeta)$ minus the number of $-i$'s, while
$\frac{\det iA(\zeta)}{|\det iA(\zeta)|}= \frac{\det
iA'(\zeta)}{|\det iA'(\zeta)|}$ is equal to the product of $i$'s and
$-i$'s, which implies (a).  To prove (b) and (c) let us recall the
Seifert matrices of $L_+$,  $L_-$, and $L_\circ$ may be chosen to
be $$A_{L_+} = \left[
                 \begin{array}{cc}
                   A_{L_\circ} & \alpha \\
                   \beta & \mu \\
                 \end{array}
               \right], ~ A_{L_-} = \left[
                 \begin{array}{cc}
                   A_{L_\circ} & \alpha \\
                   \beta & \mu+1 \\
                 \end{array}
               \right],$$ and $A_{L_\circ}$ respectively, where
               $\alpha$ is a column, and $\mathcal B$ is a row
               \cite{K_1}.  Then we get

$$A_{L_+}(\zeta) = \left[
                 \begin{array}{cc}
                   A_{L_\circ}(\zeta) & a \\
                   \overline{a}^T & m \\
                 \end{array}
               \right], ~ A_{L_-}(\zeta) = \left[
                 \begin{array}{cc}
                   A_{L_\circ}(\zeta) & a \\
                   \overline{a}^T & m+2-\zeta-\overline{\zeta} \\
                 \end{array}
               \right],$$ where $a = (1-\overline{\zeta})\alpha+
               (1-\zeta)\beta^T$ and $m =
               ((1-\overline{\zeta})+(1+\zeta))\mu$.  Now the properties
               (a) and (b) follow immediately:  just diagonalize
               $A_{L_\circ}(\zeta)$ first. Finally we have $z(u,u) =
               \sigma(\zeta)$ for $u=(2-\zeta -\overline{\zeta})^{-1}$
               and $r(u,u)=\det iA(\zeta)\neq 0$. \end{proof}

\end{proof}

We were unable to extend Theorem \ref{Theorem 4.10} for another $u$
and $v$ (however we are convinced that it is possible).  The main
obstacle is that the conditions C3--C5 are not always satisfied
(but probably they are satisfied where needed).  It is not difficult
to find an example when C3--C5 are not satisfied even for the
signature ($u=v=1/2$) (see Example 4.10 of \cite{P_1}).

On the other hand the existence of this example follows from
Proposition \ref{Proposition 4.7} because if the supersignature
algebra satisfies always the conditions C3--C5 then it satisfies
the assumptions of Proposition \ref{Proposition 4.7} but it
contradicts the fact that the signature distinguishes the Birman
links.

\begin{lem}\label{Lemma 4.13}The supersignature $\sigma_{u,v}$ (if exists for a given
$u,v$) satisfies the following conditions:  \begin{enumerate}
\item[(a)] $\sigma_{u,v}(L) = - \sigma_{v,u}(\overline{L})$

\item[(b)] $\sigma_{u,v}(L_1 \sharp ~ L_2) = \sigma_{u,v}(L_1) +
\sigma_{u,v}(L_2)$

\item[(c)] $\sigma_{u,v}(L_1 \sqcup L_2) = \sigma_{u,v}(L_1)
+\sigma_{u,v}(L_2) + \epsilon(u,v)$, where $$\epsilon(u,v) =
\left\{\begin{array}{ll} 1 & \textrm{if } u>v \\ \infty & \textrm{if
}u=v \\ -1 & \textrm{if } u<v \end{array}\right.$$

\item[(d)] $\sigma_{u,v}(L_+) \leq \sigma_{u,v}(L_-)$ if
$\sigma_{u,v}(L_+)\neq \infty$ and $u,v >0$.
\end{enumerate}\end{lem}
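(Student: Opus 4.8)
The plan is to handle (a) by an isomorphism argument and (b)--(d) by the same skein induction used for Theorems \ref{Theorem 3.19} and \ref{Theorem 4.10}, exploiting one observation: the first coordinate of the invariant produced by $\mathcal A_{u,v}$ is exactly the Jones--Conway polynomial specialised at $x=iu$, $y=-iv$. Indeed, dividing \ref{Equation 4.9} by $i$ turns it into $xw_1+yw_2=w_0$ with these values, and the first coordinate of $a_n$ is $\bigl(\tfrac{v-u}{i}\bigr)^{n-1}=(x+y)^{n-1}$; hence every identity known for $P_L$ --- Lemma \ref{Lemma 3.15}, Theorem \ref{Theorem 3.19}, Corollary \ref{Cor 3.20} --- applies verbatim to the first coordinate $r_L$. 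All four parts will then follow by tracking the second (integer, or $\infty$) coordinate along the standard inductions.

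For (a) I would introduce the bijection $\psi\colon A\to A$, $\psi(r,z)=(\overline r,-z)$, and check that it carries $\mathcal A_{u,v}$ onto $\mathcal A_{v,u}$ with the roles of $|$ and $\ast$ interchanged, i.e. $\psi(a|_{u,v}b)=\psi(a)\ast_{v,u}\psi(b)$, $\psi(a\ast_{u,v}b)=\psi(a)|_{v,u}\psi(b)$, and $\psi(a^{u,v}_n)=a^{v,u}_n$. On first coordinates these reduce to conjugating \ref{Equation 4.9} (which swaps $u\leftrightarrow v$ and $i\leftrightarrow -i$) and to $\overline{(\tfrac{v-u}{i})^{n-1}}=(\tfrac{u-v}{i})^{n-1}$; on second coordinates to $i^{-z}=\overline{i^{\,z}}$ and to the sign table in the definition of $a_n$. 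Since passing to the mirror image of a diagram interchanges $L_+$ with $L_-$, fixes $L_\circ$, and fixes each $T_n$, the assignment $L\mapsto\overline L$ composed with $\psi$ sends the defining relations of the $\mathcal A_{u,v}$-invariant to those of the $\mathcal A_{v,u}$-invariant; by uniqueness of the invariant of a geometrically sufficient partial Conway algebra, $\psi(w^{u,v}_L)=w^{v,u}_{\overline L}$, and reading off the second coordinate gives $\sigma_{v,u}(\overline L)=-\sigma_{u,v}(L)$. The same $\psi$ also shows $\mathcal A_{v,u}$ is geometrically sufficient precisely when $\mathcal A_{u,v}$ is.

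For (c), and then (b), I would run the induction on $(\crs(L),b(L))$ from the proof of Theorem \ref{Theorem 3.19} on a splittable diagram $L_1\sqcup L_2$. The base case is a trivial link, where the second coordinate of $a_n$ is $\mp(n-1)$ or $\infty$, matching $\sigma(T_{n_1})+\sigma(T_{n_2})+\epsilon(u,v)$. In the inductive step one changes a bad crossing, say inside $L_1$; the first coordinates satisfy $r_{L_1\sqcup L_2}=(x+y)\,r_{L_1}r_{L_2}$ by Theorem \ref{Theorem 3.19} applied to $P_L(iu,-iv)$, and because the second-coordinate rules of $\mathcal A_{u,v}$ determine $z_{L_+}$ from $z_{L_-}$, $z_{L_\circ}$ and the relevant first coordinates, the inductive hypotheses for $(L_1)_-\sqcup L_2$ and $(L_1)_\circ\sqcup L_2$ force the required value for $(L_1)_+\sqcup L_2$; the cases where some $r$ vanishes (so some $z=\infty$) are consistent since then $r_{L_1\sqcup L_2}=0$ too, and when $u=v$ one has $x+y=0$, so every disjoint sum has $z=\infty=\epsilon(u,v)$. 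Part (b) then follows as Corollary \ref{Cor 3.20} follows from Theorem \ref{Theorem 3.19}: in a diagram of $L_1\cs L_2$ a single crossing change (rotating $L_2$, Figure \ref{Fig 3.12}) gives $L_+\approx L_-\approx L_1\cs L_2$ and $L_\circ=L_1\sqcup L_2$, so $w_{L_1\cs L_2}=w_{L_1\cs L_2}|w_{L_1\sqcup L_2}$; on first coordinates this is Corollary \ref{Cor 3.20}, and on second coordinates, together with (c) and $r_{L_1\cs L_2}=r_{L_1}r_{L_2}$, it pins $z_{L_1\cs L_2}=z_{L_1}+z_{L_2}$.

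For (d) I expect the genuine difficulty, as it is a one-sided (``signature jump'') estimate with a definite direction. When $u=v$ it is contained in Lemma \ref{Lemma 4.12}: the Seifert presentations used there give $A_{L_-}(\zeta)=A_{L_+}(\zeta)+(2-\zeta-\overline\zeta)E_{nn}$ with $2-\zeta-\overline\zeta=|1-\zeta|^2\ge 0$, so by eigenvalue interlacing $\sigma(A_{L_-}(\zeta))\ge\sigma(A_{L_+}(\zeta))$, i.e. $\sigma_{u,u}(L_+)\le\sigma_{u,u}(L_-)$, and the positivity of this rank-one perturbation is exactly what $u>0$ (via $u=(2-\zeta-\overline\zeta)^{-1}$) guarantees. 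For general $u,v>0$ with $\mathcal A_{u,v}$ still assumed to yield an invariant, I would argue inside the algebra by induction on $(\crs,b)$: at the crossing in question, \ref{Equation 4.9} and the parity rule $i^{z_{L_\pm}}=r_{L_\pm}/|r_{L_\pm}|$ force $z_{L_+}$ and $z_{L_-}$ to differ by an odd integer, hence by exactly $1$ when $r_{L_\circ}\ne 0$ (and to be equal when $r_{L_\circ}=0$), and one must check that the sign conventions in \ref{Equation 4.9} and in the $a_n$ uniformly select the branch $z_{L_+}\le z_{L_-}$. Establishing this uniform selection --- i.e. that the definite direction known classically from positive semidefiniteness is faithfully built into the algebraic axioms --- is the substantive point, and is exactly the sort of sign bookkeeping the proof of Theorem \ref{Theorem 4.10} carries out for $u=v$; for other $u,v$ the inequality should therefore be understood as conditional on the standing existence hypothesis, the $u=v$ case being the one genuinely secured.
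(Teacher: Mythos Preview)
Your approach is sound and close in spirit to the paper's, but organised differently in two places.

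For (a), the paper runs the standard $(\crs,b)$-induction directly, invoking Lemma \ref{Lemma 3.15} to obtain the key first-coordinate formula \ref{Equation 4.14}, whereas you package the same content as an explicit algebra isomorphism $\psi$ in the style of Lemma \ref{Lemma 3.16}. Both are correct; yours is a bit more conceptual, the paper's a bit more direct.

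For (b) and (c) the paper reverses your order: it proves (b) first by the $(\crs,b)$-induction together with Corollary \ref{Cor 3.20} for the first coordinate, and then derives (c) from (b) in one line via the identity $L_1\sqcup L_2=(L_1\,\sharp\,T_2)\,\sharp\,L_2$ (Figure \ref{Fig 4.2}) and the observation $\sigma_{u,v}(T_2)=\epsilon(u,v)$. Your (c)-then-(b) route works too, but your extraction of (b) from (c) via the Figure \ref{Fig 3.12} crossing yields no information when $u=v$: then $x+y=0$, so $r_{L_1\sqcup L_2}=0$, $z_\circ=\infty$, and the second-coordinate rules only say $z_+=z_-$, which is vacuous. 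In that case you would have to run the direct induction for (b) anyway, so the paper's order is the more economical one.

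For (d) the paper is as terse as you feared: it simply lists (d) among the parts handled by ``the standard induction on the number of crossings and on the number of bad crossings'' and gives no further detail. Your discussion of the $u=v$ case via the positive semidefinite rank-one perturbation of Seifert forms actually goes beyond what the paper writes out.
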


\begin{proof}

In (a), (b), and (d) we use the standard induction on the number
of crossings in a diagram of a link and on the number of bad
crossings (for some choice of base points).  In the proof of (a)
we use additionally Lemma \ref{Lemma 3.15} which gives us the
formula \setcounter{equation}{13}

\begin{equation}\label{Equation 4.14} r_L(u,v) = r_{\overline{L}}(-v, -u)
= \left\{\begin{array}{ll}r_{\overline{L}}(v,u) & \textrm{if
}L\textrm{ has an odd number of components} \\
-r_{\overline{L}}(v,u) & \textrm{if }L\textrm{ has an even number of
components.}\end{array}\right.\end{equation}

To prove (b) we use Corollary \ref{Cor 3.20} which gives us the
formula $$r_{L_1 \sharp ~ L_2}(u,v) = r_{L_1}(u,v)\cdot
r_{L_2}(u,v).$$

(c) follows from (b) if one observes that $L_1 \sqcup L_2$ can be
obtained as a connected sum of $L_1 \sharp~ T_2$ and $L_2$, where
$T_2$ is a trivial link of two components (Figure \ref{Fig 4.2}) and
that $\epsilon(u,v) = \sigma_{u,v}(T_2)$.

\end{proof}

\begin{figure}[htbp]
\centering
\includegraphics{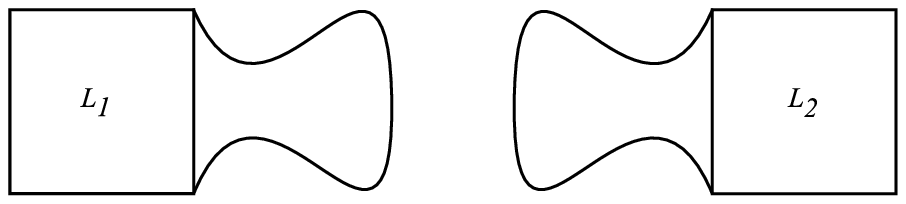}
\caption{$(L_1 \sharp~ T_2)\sharp~ L_2 = L_1 \sqcup L_2$}\label{Fig
4.2}
\end{figure}

The supersignature, if it exists, is a stronger link invariant than
the signature or the Tristram-Levine signature.

\setcounter{thm}{14}
\begin{example}\label{Example 4.15} The slice knots $8_8$ and $\overline{8}_8$ (Figure \ref{Fig
3.9}) can be distinguished using the supersignature $\sigma_{u,v}$
for some $u$ and $v$ however for $u=v$ the supersignature of both
knots is always equal to 0.

To see this, we find first that $$r_{8_8} = -uv^{-1}+2+v^{-2} +
u^{-1}v-2u^{-1}v^{-1}-u^{-2}v^2 -2u^{-2}+u^{-2}v^{-2}+u^{-3}v +
u^{-3}v^{-1}.$$

It follows from the above formula that $$r_{8_8}(u,u)>0\textrm{, so
}\sigma_{8_8}(u,u) \equiv 0 (\mod 4)$$
$$r_{8_8}(u,2u)<0 \textrm{ for } u>>0 \textrm{, so
}\sigma_{u,2u}(8_8) \equiv 2 (\mod 4) \textrm{, for } u>>0$$
$$r_{8_8}(2u,u) >0\textrm{ for } u>>0\textrm{, so
}\sigma_{2u,u}(8_8) \equiv 0 (\mod 4) \textrm{, for } u>>0.$$

Therefore, by Lemma \ref{Lemma 4.13}(a), $\sigma_{u,2u}(8_8) \neq
\sigma_{u,2u}(\overline{8}_8)$ for $u>>0$.  The equality
$\sigma_{u,u}(8_8) = 0$ follows for $u \in \left(-\infty,
-\frac{1}{2}\right] \cup \left[\frac{1}{2},\infty\right)$ from the
fact that $\sigma_{u,u}$ is the Tristram-Levine signature and $8_8$
is a slice knot.  Generally the equality $\sigma_{u,u}(8_8) = 0$
follows by considering properly chosen $L_\circ$ (for $L=8_8$);
compare Example \ref{Example 3.11}.

We refer to \cite{P_1} for detailed analysis of $\sigma_{u,v}(8_8)$.

\end{example}

\begin{cor}\label{Corollary 4.16} Assume that for a given pair $u,v$ the supersignature
$\sigma_{u,v}$ exists and that for a given $L$, $\sigma_{u,v}(L)
\neq \infty$.  Then the minimal height of a resolving tree of $L$ is
not less than $\displaystyle \frac{|\sigma_{u,v}(L)|}{2} -
\epsilon{L}$ where $$\epsilon(L) = \left\{\begin{array}{ll} 0 &
\textrm{if } u=v \\ n(L) -1 & \textrm{if } u \neq
v\end{array}\right.$$ where $n(L)$ denotes the number of components
of $L$.\end{cor}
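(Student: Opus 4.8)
The plan is to bound $|\sigma_{u,v}(L)|$ in terms of the height of an \emph{arbitrary} resolving tree of $L$ and then take the minimum over all such trees. Precisely, I would prove by induction on the height $h$ of a resolving tree of $L$ that, whenever $\sigma_{u,v}(L)\neq\infty$,
\[
|\sigma_{u,v}(L)|\ \le\ 2h+\epsilon(L),
\]
with $\epsilon(L)$ as in the statement; this is already a touch sharper than the Corollary (it gives $h\ge |\sigma_{u,v}(L)|/2-\epsilon(L)/2$), and the Corollary follows by rearranging and minimizing over resolving trees of $L$. The base case $h=0$ means $L$ is a trivial link $T_n$, and is handled by Lemma \ref{Lemma 4.13}(c): writing $T_n$ as an iterated disjoint sum of $n$ copies of $T_1$ gives $\sigma_{u,v}(T_n)=(n-1)\,\epsilon(u,v)$; if $u\neq v$ this has absolute value $n-1=\epsilon(T_n)$, while if $u=v$ it is $\infty$ unless $n=1$, and then $\sigma_{u,v}(L)=0=\epsilon(L)$.

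For the inductive step, let the root resolve a crossing $p$ of $L$, with crossing-change child $L'$ (so $n(L')=n(L)$ and $\epsilon(L')=\epsilon(L)$) and smoothing child $L_\circ=L_\circ^{p}$, each carrying a resolving subtree of height $\le h-1$. Recall that $n(L_\circ)=n(L)-1$ when $p$ joins two components and $n(L_\circ)=n(L)+1$ when $p$ is a self-crossing, so in every case $\epsilon(L_\circ)\le\epsilon(L)+1$ (with $\epsilon(L_\circ)=\epsilon(L)=0$ when $u=v$). If $r_{L_\circ}(u,v)=0$, the rule ``$z_1=z_2$ when $r_0=0$'' of the supersignature algebra forces $\sigma_{u,v}(L)=\sigma_{u,v}(L')$, and the inductive hypothesis applied to $L'$ (finite supersignature, $\epsilon(L')=\epsilon(L)$, height $\le h-1$) gives $|\sigma_{u,v}(L)|\le 2(h-1)+\epsilon(L)\le 2h+\epsilon(L)$. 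If $r_{L_\circ}(u,v)\neq 0$, then $\sigma_{u,v}(L_\circ)$ is finite and, since also $r_L(u,v)\neq 0$ (as $\sigma_{u,v}(L)\neq\infty$), the rule ``$|z_i-z_\circ|=1$ when $r_i,r_0\neq 0$'' gives $|\sigma_{u,v}(L)-\sigma_{u,v}(L_\circ)|=1$; combining with the inductive hypothesis for $L_\circ$ and $\epsilon(L_\circ)\le\epsilon(L)+1$,
\[
|\sigma_{u,v}(L)|\ \le\ |\sigma_{u,v}(L_\circ)|+1\ \le\ \bigl(2(h-1)+\epsilon(L)+1\bigr)+1\ =\ 2h+\epsilon(L),
\]
which closes the induction.

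The one genuinely delicate point is the bookkeeping of the number of components: a self-crossing smoothing \emph{raises} the component count by one, and this is exactly what offsets the unit lost in $|\sigma_{u,v}(L)-\sigma_{u,v}(L_\circ)|=1$. Consequently the induction has to be run with the sharp estimate $|\sigma_{u,v}(L)|\le 2h+\epsilon(L)$ rather than with the weaker inequality $|\sigma_{u,v}(L)|\le 2h+2\epsilon(L)$ of the Corollary itself --- the weaker statement, used as an inductive hypothesis, does not propagate through the self-crossing case. The secondary nuisance, namely that some links occurring inside the tree may have $\sigma_{u,v}=\infty$, is harmless: such a link is always a smoothing child, and the rule ``$z_1=z_2$ when $r_0=0$'' lets one pass to its crossing-change sibling, which has finite supersignature and the same number of components. (Lemma \ref{Lemma 4.13}(d), $\sigma_{u,v}(L_+)\le\sigma_{u,v}(L_-)$, provides an alternative route through the crossing-change step, but is not required; the argument above uses only Lemma \ref{Lemma 4.13}(c) and the defining rules of the supersignature algebra, and is uniform in $u$ and $v$.)
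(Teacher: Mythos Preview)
Your argument is correct and follows precisely the approach the paper indicates (induction on the height of a resolving tree, using the defining rules of the supersignature algebra); the paper's proof is a single sentence, and you have supplied the details it omits. Your observation that the induction must be run with the sharper inequality $|\sigma_{u,v}(L)|\le 2h+\epsilon(L)$ rather than the stated bound $|\sigma_{u,v}(L)|\le 2h+2\epsilon(L)$, because the latter fails to propagate through the self-crossing smoothing, is a genuine point that the paper does not make explicit.
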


\begin{proof} It follows from the definition of the supersignature
(by using induction on the minimal height of a resolving tree of a
link). \end{proof}

\begin{rem}\label{Remark 4.17} The signature (and the Tristram-Levine signature) is a
good tool for studying the unknotting number $(u(L))$ of knots and
links.  Namely $u(L) \geq \frac{|\sigma_3(L)}{2}$.  We hope for
similar formula for any supersignature because $|\sigma_{u,v}(L_+) 0
\sigma_{u,v}(L_-)| \leq 2$.  Unfortunately the last inequality holds
only if $r_{L_+}(u,v)$, $r_{L_-}(u,v) \neq 0$ (i.e.
$\sigma_{u,v}(L_+), \sigma_{u,v}(L_-) \neq \infty$).  So the problem
is to put for $\sigma_u,v(L)$ different value than $\infty$ (in the
case $r_L(u,v) =0$).  One possible solution uses the fact that the
Jones-Conway polynomial (and also $r_L(u,v)$) is not identically
equal to 0 (Lemma \ref{Lemma 3.36}(b)).  Namely if $r_L(u_0, v_0) =
0$ then for each neighborhood of $u_0, v_0$, $r_(u,v)$ is different
than 0 almost everywhere.  Now we put for $\sigma_L(u_0, v_0)$ the
``average'' value of $\sigma_L(u,v)$ from the neighborhood.  Of
course this idea is far from being complete.  In particular the
meaning of the ``average'' should be made more precise.\end{rem}

\begin{conj}\label{Conjecture 4.18} Assume that for a given $u,v,L :
\sigma_{u,v}(L)\neq \infty$.  Then $$u(L) \geq
\left\{\begin{array}{ll}\frac{|\sigma_{u,v}(L) -
\sigma_{u,v}(T_n)|}{2} & \textrm{if } u\neq v \\ &  \\
\frac{|\sigma_{u,v}(L)|}{2} & \textrm{if } u=v
\end{array}\right.$$  $T_n$ denotes the trivial link of $n$
components.\end{conj}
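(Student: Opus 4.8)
The plan is to run the classical Murasugi-type argument on a minimal unknotting sequence, using the behaviour of $\sigma_{u,v}$ under a single crossing change recorded in Lemma \ref{Lemma 4.13}, in the same spirit as the induction behind Corollary \ref{Corollary 4.16}. Fix a diagram of $L$ together with a sequence of $u(L)$ crossing changes realizing the unknotting number, producing links $L = L_0, L_1, \ldots, L_{u(L)} = T_n$, where each $L_{i+1}$ is obtained from $L_i$ by switching one crossing; thus $L_i$ and $L_{i+1}$ form an $(L_+, L_-)$ pair at that crossing, with a common smoothing $L_i^\circ$. First I would record the \emph{one-step estimate}: if $\sigma_{u,v}(L_i) \neq \infty$ and $\sigma_{u,v}(L_{i+1}) \neq \infty$, then $|\sigma_{u,v}(L_i) - \sigma_{u,v}(L_{i+1})| \le 2$. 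When $r_{L_i^\circ}(u,v) \neq 0$ this is immediate from the defining relations of the second coordinate of $\mathcal A_{u,v}$ (from $|z_i - z_\circ| = 1$ for $i=1,2$ one gets $|z_{L_i} - z_{L_{i+1}}| \in \{0,2\}$), and when $r_{L_i^\circ}(u,v) = 0$ one has even $\sigma_{u,v}(L_i) = \sigma_{u,v}(L_{i+1})$ by the relation ``$z_1 = z_2$ if $r_0 = 0$''. For $u,v > 0$, Lemma \ref{Lemma 4.13}(d) refines this to the monotone statement $\sigma_{u,v}(L_+) \le \sigma_{u,v}(L_-)$, which is not needed for the bound but pins down the signs of the jumps.

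Granting that every intermediate link $L_i$ satisfies $\sigma_{u,v}(L_i) \neq \infty$, the proof finishes by telescoping: $|\sigma_{u,v}(L) - \sigma_{u,v}(T_n)| \le \sum_{i=0}^{u(L)-1} |\sigma_{u,v}(L_i) - \sigma_{u,v}(L_{i+1})| \le 2u(L)$, and dividing by $2$ yields the claimed inequality; in the case $u = v$ one additionally uses $\sigma_{u,v}(T_n) = 0$, which is exactly why the $u = v$ clause of the conjecture carries no $\sigma_{u,v}(T_n)$ term. In the range $u = v \in (-\infty, -1/2] \cup [1/2, \infty)$ this is unconditional: by Theorem \ref{Theorem 4.10} the supersignature exists and coincides with the Tristram--Levine signature, for which the one-step estimate is the classical fact that $\sigma_\omega$ changes by at most $2$ under a crossing change; so in this range the conjecture is precisely Remark \ref{Remark 4.17} made precise.

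The main obstacle — and the reason this remains a conjecture in general — is exactly the degeneracy flagged in Remark \ref{Remark 4.17}: some intermediate link $L_i$ (or an auxiliary smoothing $L_i^\circ$) may have $r_{L_i}(u,v) = 0$, so that $\sigma_{u,v}(L_i) = \infty$ and the telescoping collapses; moreover for general $u,v$ one does not yet know that $\mathcal A_{u,v}$ is geometrically sufficient, i.e.\ that $\sigma_{u,v}$ is even defined. I would attack the degeneracy through the ``averaging'' idea of Remark \ref{Remark 4.17}: by Lemma \ref{Lemma 3.36}(b) the Jones--Conway polynomial of each $L_i$, hence $r_{L_i}(u,v)$, is not identically zero, so $r_{L_i}(u',v') \neq 0$ for all $(u',v')$ in a dense open subset of any neighbourhood of $(u,v)$; on such a neighbourhood every one-step estimate holds, giving $u(L) \ge |\sigma_{u',v'}(L) - \sigma_{u',v'}(T_n)|/2$ wherever $\sigma_{u',v'}$ is defined, and one would then set the value of $\sigma_{u,v}$ at a bad point to be a suitable one-sided or averaged limit of $\sigma_{u',v'}$ and pass to the limit in the inequality. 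Making ``averaged limit'' precise, and first establishing that $\mathcal A_{u,v}$ is geometrically sufficient for the relevant $(u,v)$, are the two points where real work remains; everything else is the routine induction already used in Lemma \ref{Lemma 4.13} and Corollary \ref{Corollary 4.16}.
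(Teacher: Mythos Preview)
The statement is labeled a \emph{conjecture} in the paper and is not given a proof there; what precedes it is Remark~\ref{Remark 4.17}, which lays out exactly the heuristic you reproduce (the one-step estimate $|\sigma_{u,v}(L_+)-\sigma_{u,v}(L_-)|\le 2$ leading to a telescoping bound) together with the obstruction (intermediate links with $r=0$ force $\sigma_{u,v}=\infty$ and break the chain). So your proposal is not a proof but an accurate reconstruction of why the author believes the inequality and why it is left open; in that sense it matches the paper's discussion essentially verbatim, including the averaging idea and the appeal to Lemma~\ref{Lemma 3.36}(b).

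Two points deserve correction or sharpening. First, your claim that ``in the case $u=v$ one additionally uses $\sigma_{u,v}(T_n)=0$'' is only right for $n=1$: by the definition of the $0$-argument operations $a_k$ one has $r_{T_n}(u,u)=\bigl(\tfrac{u-u}{i}\bigr)^{n-1}=0$ for $n\ge 2$, hence $\sigma_{u,u}(T_n)=\infty$, not $0$. The reason the $u=v$ clause of the conjecture drops the $\sigma_{u,v}(T_n)$ term is not that it vanishes but that it is undefined in this framework; for the Tristram--Levine range one instead invokes the classical fact (outside the supersignature formalism) that the Hermitian-form signature of $T_n$ is zero. Second, the existence issue you flag is not merely hypothetical: the footnote after Theorem~\ref{Theorem 4.20} records that T.~Przytycka showed the supersignature fails to exist for $u=2v>2$, so for general $(u,v)$ the very invariant in the inequality may be undefined. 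Your proposal correctly isolates these as the two genuine gaps; the telescoping itself is routine.
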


Consider the equation which defines the Jones polynomial
$$-tV_{L_+}(t) +\frac{1}{t}V_{L_-}(t) = \left(\sqrt(t) -
\frac{1}{\sqrt{t}}\right)V_{L_\circ}(t).$$  If we substitute
$\sqrt{t} = -iw$ then we get the equation
\setcounter{equation}{18}
\begin{equation}\label{Equation 4.19} -w^2V_{L_+}(w) + \frac{1}{w^2}
V_{L_-}(w) = i\left(w + \frac{1}{w}\right)
V_{L_\circ}(w).\end{equation}  This equation can be used, for $w\in
\reals - \{0\}$, to define the supersignature associated with the
Jones polynomial: $$\sigma_w(L) = \sigma_{u,v}(L)\textrm{, where }
u=\frac{w^2}{w+\frac{1}{w}} ~
v=\frac{1}{w^2}\frac{1}{w+\frac{1}{w}}.$$  In particular we get the
classical signature for $w=1$ ($V_L(w) \neq 0$).\footnote{T.
Przytycka has recently shown that the supersignature does not always
exist.  In particular it does not exist if $u=2v>2$.  However the
existence of the supersignature associated with the Jones polynomial
is the open problem.}

\setcounter{thm}{19}

Now we can generalize into $\sigma_w(L)$ the classical result of
Murasugi \cite{Mu_4}.

\begin{thm}\label{Theorem 4.20} Assume that for a given $w$,
$\sigma_w$ exists.  Then $\sigma_w(L) + lk(L)$ does not depend on
the orientation of $L$.  \end{thm}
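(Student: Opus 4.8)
The plan is to reduce to reversing a single component and then to exploit that the first coordinate $r_L(u,v)$ of the supersignature algebra is an evaluation of the Jones polynomial. First I would reduce: every orientation of $L$ is obtained from the given one by reversing the components of some sublink, a composition of single‑component reversals, so it suffices to compare $L$ with the link $L^{*}$ obtained by reversing one component $L_1$. Put $\lambda:=\lk(L_1,L\setminus L_1)=\sum_j\lk(L_1,L_j)$. Reversing $L_1$ negates each $\lk(L_1,L_j)$ and changes nothing else, so $\lk(L^{*})=\lk(L)-2\lambda$; hence the theorem is equivalent to $\sigma_w(L^{*})=\sigma_w(L)+2\lambda$, both sides being read as $\infty$ when $\sigma_w(L)=\infty$ — harmless, since by the next step $r_L(u,v)=0\iff r_{L^{*}}(u,v)=0$, so we may assume $r_L(u,v)\neq0$.

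Next I would fix the residue of $\sigma_w$ modulo $4$. Comparing \eqref{Equation 4.9} with the substitution producing \eqref{Equation 4.19}, the first coordinate obeys the same skein relation as the Jones polynomial and agrees with it on $T_1$, so $r_L(u,v)$ is $V_L$ evaluated at $\sqrt t=-iw$. It is classical that reversing a component multiplies the Jones polynomial by $t^{-3\lambda}$ (the Kauffman bracket ignores orientation, while the writhe of any diagram drops by $4\lambda$ under the reversal). Since $t=-w^{2}$ this gives $r_{L^{*}}(u,v)=(-1)^{\lambda}w^{-6\lambda}r_L(u,v)$, a \emph{positive} real multiple of $(-1)^{\lambda}r_L(u,v)$; hence $i^{\sigma_w(L^{*})}=r_{L^{*}}/|r_{L^{*}}|=(-1)^{\lambda}r_L/|r_L|=i^{2\lambda}i^{\sigma_w(L)}$. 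Thus $\sigma_w(M^{*})\equiv\sigma_w(M)+2\lambda(M)\pmod{4}$ for every link $M$; together with the formula for $\lk$ this already proves the theorem modulo $4$.

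To pin the integer I would argue directly. We may assume $u,v>0$ (if $w<0$, replace it by $-w$, using the evident relation between $\mathcal A_{u,v}$ and $\mathcal A_{-u,-v}$), so that Lemma~\ref{Lemma 4.13}(d) applies. Fix a diagram $D$ of $L$ and let $Q$ be the set of crossings between $L_1$ and $L\setminus L_1$ at which the strand of $L_1$ passes underneath. Switching the crossings of $Q$ one at a time gives a chain $L=M_0\to M_1\to\cdots\to M_s$ in which $L_1$ finally lies entirely above the rest and so splits off: $M_s\cong L_1\sqcup L''$. Since over/under does not see orientation, the same switches carry $D^{*}$ to $M_0^{*}=L^{*},\dots,M_s^{*}=(-L_1)\sqcup L''$, and by Lemma~\ref{Lemma 4.13}(c) together with Lemma~3.4 applied to the knot $L_1$ (so $\sigma_w(-L_1)=\sigma_w(L_1)$) we have $\sigma_w(M_s^{*})=\sigma_w(M_s)$ and $\lambda(M_s)=0$. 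Now consider a single switch $M_l\to M_{l+1}$, at a crossing $p$ between $L_1$ and some $L_k$, and set $\epsilon:=\mathrm{sgn}_{M_l}(p)$, so $\lambda(M_l)-\lambda(M_{l+1})=\epsilon$. The axioms of the supersignature algebra at $p$ confine $\sigma_w(M_l)-\sigma_w(M_{l+1})$ to $\{-2,0,2\}$, and Lemma~\ref{Lemma 4.13}(d) singles out the admissible sign, giving $\sigma_w(M_l)-\sigma_w(M_{l+1})\in\{0,-2\epsilon\}$; in the starred world $p$ has sign $-\epsilon$ before the switch and $+\epsilon$ after, and the same reasoning yields $\sigma_w(M_l^{*})-\sigma_w(M_{l+1}^{*})\in\{0,2\epsilon\}$. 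Hence
$$D_l:=\bigl(\sigma_w(M_l^{*})-\sigma_w(M_{l+1}^{*})\bigr)-\bigl(\sigma_w(M_l)-\sigma_w(M_{l+1})\bigr)\in\{0,\,2\epsilon,\,4\epsilon\}.$$
But the congruence of the previous paragraph, applied to $M_l$ and to $M_{l+1}$, forces $D_l\equiv2\pmod{4}$, and $2\epsilon$ is the only member of $\{0,2\epsilon,4\epsilon\}$ not divisible by $4$; so $D_l=2\epsilon=2(\lambda(M_l)-\lambda(M_{l+1}))$. Telescoping, $\sigma_w(L^{*})-\sigma_w(L)=\sum_lD_l=2\lambda$, whence $\sigma_w(L^{*})+\lk(L^{*})=\sigma_w(L)+\lk(L)$. (Should some intermediate $M_l$ have $\sigma_w(M_l)=\infty$, a little extra care is needed: one avoids it by a suitable choice of $D$ and of the order of switches, or skips such $M_l$ in the telescoping.)

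The hard part is precisely what forces this detour instead of the skein induction of Theorem~\ref{Theorem 2.1.1}: when a crossing between $L_1$ and another component is \emph{smoothed} it merges the two components, and reversing $L_1$ interchanges the two planar smoothings of such a crossing (it reverses exactly one of the two strands there), so ``reverse $L_1$'' does not commute with the skein resolution, and the $L_\infty$‑type resolution one would need to repair this is not part of a Conway algebra. The argument above sidesteps this by never smoothing such a crossing, only switching it along a path that frees $L_1$, bounding the jump of $\sigma_w$ via Lemma~\ref{Lemma 4.13}(d), and upgrading the resulting $\{0,\pm2,\pm4\}$ ambiguity to an equality by the mod‑$4$ rigidity coming from the Jones polynomial. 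That transformation law of $V_L$ under component reversal is the one ingredient not internal to the algebra; everything else uses only the axioms C1--C7 and Lemmas 3.4, \ref{Lemma 4.13}.
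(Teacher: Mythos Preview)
Your reduction to a single component reversal and the formulation as Lemma~\ref{Lemma 4.21}, $\sigma_w(L^{*})=\sigma_w(L)+2\lambda$, is exactly how the paper proceeds, and the paper likewise names the Jones reversing result (Lemma~\ref{Lemma 5.15}) as the key external ingredient. From there, however, the two arguments diverge. The paper does not use the Jones formula as a black box; it says one should transport the Lickorish--Millett \emph{method of proof} of Lemma~\ref{Lemma 5.15} directly to $\sigma_w$: unknot $L_i$ by skein moves at its self-crossings, then induct on the number of points in which a spanning disc for $L_i$ meets the rest of $L$, using a skein triple in which $L_i$ plays the role of $L_\circ$ so that the same reversal applies to all three terms. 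This never mixes a smoothing with a crossing between $L_i$ and another component, so the non-commutativity you correctly identify is avoided by a different mechanism. Your route instead extracts only the mod~$4$ information from the Jones formula and then pins the integer by a crossing-change path that splits off $L_1$, bounding each step via Lemma~\ref{Lemma 4.13}(d) and killing the residual $\{0,2\epsilon,4\epsilon\}$ ambiguity with the congruence. This is a genuinely different and rather elegant argument; it trades the inductive bookkeeping of the Lickorish--Millett scheme for a short telescoping, at the cost of needing Lemma~\ref{Lemma 4.13}(d) and hence the sign hypothesis $u,v>0$.

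Two points to tighten. First, your Jones factor $t^{-3\lambda}$ has the opposite sign from the paper's Lemma~\ref{Lemma 5.15} ($t^{3\lambda}$); this is a convention issue and, as you note, is harmless for the mod~$4$ step since $t^{\pm3\lambda}=(-1)^{\lambda}w^{\pm6\lambda}$ with $w^{\pm6\lambda}>0$. Second, the parenthetical about intermediate $M_l$ with $\sigma_w(M_l)=\infty$ is not yet a proof: ``skipping'' such $M_l$ breaks both the bound on $D_l$ and the congruence, and it is not clear that a choice of diagram and switching order always avoids them. A clean fix, if you want to keep your approach, is to note that $r_{M_l}$ is a Laurent polynomial in $w$, hence nonzero for all but finitely many $w$; so for generic $w'$ near $w$ with $\sigma_{w'}$ defined the telescope runs, and then argue that the integer $\sigma_{w'}(L^{*})-\sigma_{w'}(L)$ is constant on the set of such $w'$ and equals $2\lambda$, which forces the same at $w$ via the mod~$4$ identity and the $\{-2,0,2\}$ constraint at $w$ itself. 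Alternatively, the paper's Lickorish--Millett transport avoids this issue entirely because at each inductive step one can arrange the relevant $r$-values to be nonzero by the structure of the induction.
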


\begin{proof}To prove the theorem, the following lemma is needed.

\begin{lem}[Reversing result]\label{Lemma 4.21}Suppose that a component $L_i$ of an
oriented link $L$ has a linking number $\lambda$ with the union of
the other components.  Let $L'$ be $L$ with the direction of $L_i$
reversed.  Then $\sigma_w(L') = \sigma_w(L) + 2\lambda$. \end{lem}

The theorem follows from the lemma because the equality
$$\sigma_w(L) + lk(L) = \sigma_w(L')  + lk(L') $$ is equivalent to
$$\sigma_w(L') = \sigma_w(L) + lk(L) - lk(L') \textrm{ and }lk(L) - lk(L')
= 2\lambda.$$

To prove Lemma \ref{Lemma 4.21}, we need the Jones reversing result
($V_{L'}(w) = (-1)^\lambda w V_L(w)$) or rather the
Lickorish-Millett method of its proof (see Lemma \ref{Lemma 5.15}).

We refer to \cite{P_1} for details.
 \end{proof}

 One can try to construct a more general supersignature modeled on
 the polynomial in an infinitely many variables (with $z_i = z'_i =
 0$).  We didn't try to pursue this concept any further.

 The next example describes a universal geometrically sufficient
 partial Conway algebra.

 \begin{example}\label{Example 4.22} Skein equivalence classes of
 oriented links form a geometrically sufficient partial Conway
 algebra $\mathcal{A}_u$.  $a_n$ is a skein equivalence class of a
 trivial link of $n$ components.  The operation $|$ (respectively,
 $\ast$) is defined on a pair of classes of links if they have
 representatives of the form $L_-$ and $L_\circ$ (respectively $L_+$
 and $L_\circ$).  The result is the class of $L_+$ (respectively
 $L_-$).  The definition of the skein equivalence is chosen in such
 a way that the conditions C1--C7 are satisfied when needed.  Notice
 that $\mathcal{A}_u$ is the universal geometrically sufficient
 partial Conway algebra, that is, for any geometrically sufficient
 partial Conway algebra $\mathcal A$, there is a unique homomorphism
 $\mathcal{A}_u \to \mathcal{A}$.

 \end{example}
 \newpage

\newpage \thispagestyle{empty}


AMS 57M25  \hrulefill[0pt]{} ISSN 0239-8885

{\center{ \vspace{1cm}

UNIWERSYTET WARSZAWSKI

 INSTYTUT MATEMATYKI

\vspace{2in}

J\'{o}zef H. Przytycki

Survey on recent invariants on classical knot theory

III. Kauffman approach

\vspace{4in}

%


Preprint 9/86

``na prawach r{\c e}kopisu''

\vspace{1cm}

Warszawa 1986

}}
\newpage

\section{Kauffman approach}

It is the natural question whether the three diagrams $L_+$, $L_-$,
and $L_\circ$ which have been used to build Conway type invariants
could be replaced by another diagram.  On the edge of December 1984
and January 1985, K.~Nowinski has suggested consideration of the
fourth diagram obtained by smoothing $L_+$ in the way which does not
agree with the orientation of $L_+$ (Figure \ref{Fig 5.1}) but we
didn't make any effort to get an invariant of links.

\begin{figure}[htbp]
\centering
\includegraphics{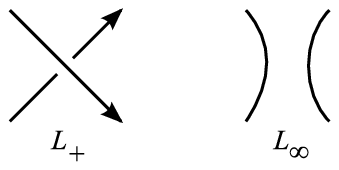}
\caption{}\label{Fig 5.1}
\end{figure}

At the early spring of 1985, R.~Brandt, W.B.R.~Lickorish and
K.C.~Millett \cite{B_L_M} and independently C.F.~Ho \cite{Ho} have
proven that if one considers non-oriented links then four
non-oriented diagrams from Figure \ref{Fig 5.2} (of course we do
not distinguish $L_+$ from $L_-$ and $L_\circ$ from $L_\infty$)
can lead to the construction of a new invariant of isotopy of
non-orientable links.

\begin{figure}[htbp]
\centering
\includegraphics{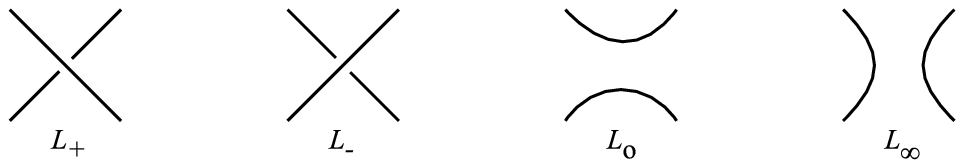}
\caption{}\label{Fig 5.2}
\end{figure}

\begin{thm}\label{Theorem 5.1} There exists a uniquely determined invariant $Q$ which
attaches an element of $\ints[x^{\mp 1}]$ to every isotopy class of
unoriented links and satisfies the following conditions:

\begin{enumerate}

\item $Q_{T_1}(x)=1$ where $T_1$ is a trivial knot,

\item $Q_{L_+}(x) + Q_{L_-}(x) = x (Q_{L_\circ}(x) +
Q_{L_\infty}(x))$ where $L_+$, $L_-$, $L_\circ$, and $L_\infty$ are
diagrams of links which are identical, except near one crossing
point, where they look like on Figure \ref{Fig 5.2}.

\end{enumerate}\end{thm}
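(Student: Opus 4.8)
The plan is to follow the proof of Theorem~\ref{Theorem 2.1.1} almost verbatim, with the quadruple $(L_+,L_-,L_\circ,L_\infty)$ in place of $(L_+,L_-,L_\circ)$ and with ordinary arithmetic in $\ints[x^{\mp 1}]$ in place of the Conway-algebra operations. First I would define $Q$ on diagrams. For a crossing-free diagram $D$ with $c$ components set $Q_0(D)=(2x^{-1}-1)^{c-1}$, and then build, by induction on $k$, functions $Q_k$ on diagrams with at most $k$ crossings, carrying along a Main Inductive Hypothesis asserting that $Q_k$ (i)~takes the value $(2x^{-1}-1)^{c-1}$ on every diagram that is descending with respect to some choice of base points, ordering of components, and \emph{auxiliary orientations} of the components --- this last datum being the only new ingredient, since the relation for $Q$ is orientation-blind but one still needs a direction in which to walk along each component; (ii)~satisfies $Q_k(L_+)+Q_k(L_-)=x(Q_k(L_\circ)+Q_k(L_\infty))$ whenever all four diagrams have at most $k$ crossings; and (iii)~is unchanged by a Reidemeister move that keeps the crossing number $\le k$.

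For the Main Inductive Step, given a diagram $D$ with $k+1$ crossings together with base points $b$, an ordering, and auxiliary orientations, I would walk along $D$, call a crossing \emph{bad} if it is first met as an undercrossing, and define $Q_b(D)$ by induction on the number $b(D)$ of bad crossings: if $D$ is descending put $Q_b(D)=(2x^{-1}-1)^{c-1}$; otherwise, if $p$ is the first bad crossing, put
\[
Q_b(D)=-\,Q_b(D^p)+x\bigl(Q_k(D^p_\circ)+Q_k(D^p_\infty)\bigr),
\]
where $D^p$ is $D$ with the crossing $p$ switched (so $p$ becomes good and $b(D^p)=b(D)-1$) and $D^p_\circ,D^p_\infty$ are the two smoothings of $D$ at $p$, each having $k$ crossings so that $Q_k$ applies. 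Then, in the same order as in the proof of Theorem~\ref{Theorem 2.1.1}, one checks: (1)~$Q_b$ obeys the skein relation; (2)~$Q_b$ is independent of the base points; (3)~$Q_b$ is independent of the auxiliary orientations; (4)~the resulting function $Q^\circ$ is independent of the ordering of components; (5)~$Q^\circ$ is preserved by the three Reidemeister moves. Each of (1)--(5) is a secondary induction on the number of bad crossings in which one peels off the first bad crossing $q$ not involved in the situation at hand and pushes the hypotheses through; the places where the proof of Theorem~\ref{Theorem 2.1.1} invoked C3--C7 now only require that $\ints[x^{\mp 1}]$ is a commutative ring, so these verifications are if anything lighter than their Conway-algebra analogues. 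One then sets $Q_{k+1}=Q^\circ$ and defines $Q_L=Q_k(D)$ for any diagram $D$ of $L$ and any $k\ge\crs(D)$; condition~(1) of the theorem holds since $T_1$ has a crossing-free descending diagram with $c=1$, and condition~(2) holds because each $Q_k$ satisfies it. Uniqueness is then immediate: relation~(2) together with a crossing-switching procedure reduces the value of any candidate invariant on any diagram to its values on unlinks, and those are forced by~(1) and the identity mentioned below.

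Three points genuinely differ from the oriented case. The smoothing $L_\infty$ does not inherit the auxiliary orientation of $L_+$, so whenever an $L_\infty$-type smoothing is produced in steps~(1) and~(5) one must re-choose orientations for it and appeal to~(3); this is the reason to carry out~(3) before the Reidemeister part of~(5). The number of components of $L_\circ$ and $L_\infty$ at a crossing depends on whether the crossing is a self-crossing and on the global combinatorics of the diagram, but this causes no difficulty: it enters only through the base case and the definition of $Q_0$, and in particular the Reidemeister~I case of~(5) is where the precise value $Q_0(D)=(2x^{-1}-1)^{c-1}$ is used, via the arithmetic identity $2=x\bigl(1+(2x^{-1}-1)\bigr)$, which is what pins the value of $Q$ on the trivial links to $Q_{T_n}=(2x^{-1}-1)^{n-1}$; as in Theorem~\ref{Theorem 2.1.1} the base points for a type~one move are chosen so that the kink crossing is good, now using also the freedom in the auxiliary orientation. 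Finally, step~(4) again rests on the purely planar statement of Lemma~\ref{Lemma 2.2.14} --- its innermost $f$-gon argument makes no mention of orientations --- that a descending diagram either has a trivial circle as a component or can be simplified by Reidemeister moves that do not increase the crossing number.

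The step I expect to be the main obstacle is~(5) for the Reidemeister move of the third type in the case where every bad crossing lies inside the move: there one must show that \emph{both} smoothings $L_\circ$ and $L_\infty$ at the chosen crossing of the triangle are related to the pre-move diagrams by crossing-free moves or by moves of type~two, which means repeating, now for unoriented pictures, the case analysis behind the observation preceding Figure~\ref{Fig 2.2.2}, and doing it in tandem with the re-orientation bookkeeping for the $L_\infty$ pieces. The independence-of-orientation step~(3) is the other point that requires a genuine argument rather than a transcription: reversing the auxiliary orientation of one component flips the bad/good status of that component's self-crossings (it does not affect mixed crossings), so one cannot argue as for base points that nothing changes; instead one reduces, through the induction on the number of bad crossings, to the descending case, where switching the offending self-crossings and re-running the recursion returns the value $(2x^{-1}-1)^{c-1}$, using that the skein relation does not see orientations.
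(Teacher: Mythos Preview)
Your overall scheme is exactly the one the paper uses (the paper defers the argument to the more general Theorem~\ref{Theorem 5.21}, which transcribes Theorem~\ref{Theorem 2.1.1} with the extra $L_\infty$ term and auxiliary orientations), and most of your bookkeeping is right. But you have mislocated the one genuinely new difficulty: it is not the Reidemeister~III analysis, and it is not step~(3) --- it is the base case of step~(2), the analogue of CBP~2(a) in the proof of Theorem~\ref{Theorem 2.1.1}.

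In that situation $p$ is the unique bad crossing with respect to $b$, it is a self-crossing of some component $L_i$, and your recursion gives
\[
Q_b(L)=-\,Q_b(L^p_-)+x\bigl(Q_k(L^p_\circ)+Q_k(L^p_\infty)\bigr).
\]
Here $L^p_-$ and $L^p_\circ$ are descending, just as before, and your arithmetic identity $2=x\bigl(1+(2x^{-1}-1)\bigr)$ --- which belongs \emph{here}, not in the Reidemeister~I step --- would finish the computation, \emph{provided} $Q_k(L^p_\infty)=(2x^{-1}-1)^{n-1}$. The trouble is that $L^p_\infty$ is in general \emph{not} descending for any choice of base points and auxiliary orientations: the component produced from $L_i$ is a descending arc glued to an ascending arc (see Figure~\ref{Fig 5.13}). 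It does represent a trivial link, but the M.I.H.\ only guarantees Reidemeister invariance for moves keeping the crossing number at most $k$, so ``it is isotopic to $T_n$'' is not enough. The paper closes this gap with a specific trick: rotate the ascending half of $L^p_\infty$ through $180^\circ$ about a vertical axis to obtain a genuinely descending diagram $\widetilde L$, and observe that $L^p_\infty$ and $\widetilde L$ are mutants and hence admit identical resolving trees of height $\le k$, forcing $Q_k(L^p_\infty)=Q_k(\widetilde L)=(2x^{-1}-1)^{n-1}$.

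This same gap infects your proposed treatment of step~(3). Reversing the auxiliary orientation of $L_i$ flips \emph{all} of its self-crossings from good to bad at once; unwinding the recursion from the formerly-descending diagram produces, at every stage, an $L_\infty$-smoothing of exactly the non-descending type above. The paper avoids this by handling orientation-independence the same way it handles order-of-components independence: reduce the descending diagram via Lemma~\ref{Lemma 2.2.14} to one with fewer crossings and invoke the M.I.H. Your sketch of~(3) as a separate inductive argument would need the mutation trick (or an equivalent) at its base.
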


The proof of Theorem \ref{Theorem 5.1}\footnote{In the original,
the Theorem was referenced as Theorem 5.2} is very similar to that
of Theorem \ref{Theorem 2.1.1} (compare \cite{B_L_M}).  We will
show it later in more general context.

The polynomial $Q_L(x)$ shares many properties analogous to that of
Jones-Conway ($P_L(x,y)$).  In particular we have:

\begin{prop}\label{Proposition 5.2}\begin{enumerate}
\item[(a)] $Q_{L_1\sharp ~L_2}(x) = Q_{L_1}(x) \cdot Q_{L_2}(x)$

\item[(b)] $Q_{L_1 \sqcup L_2}(x) = \mu Q_{L_1}(x) \cdot
Q_{L_2}(x)$ where $\mu = 2x^{-1}-1$ is the value of the invariant
for a trivial link of 2 components.

\item[(c)] $Q_L(x) = Q_{\overline{L}}(x)$ where $\overline{L}$ is the mirror
image of $L$

\item[(d)] $Q_{L}(x) = Q_{m(L)}(x)$ where $m(L)$ is a mutant of
$L$. \end{enumerate}\end{prop}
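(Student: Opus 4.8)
The plan is to follow the inductive template already used for the Jones--Conway polynomial (Theorem \ref{Theorem 3.19}, Corollary \ref{Cor 3.20}, Lemmas \ref{Lemma 3.7} and \ref{Lemma 3.15}), but now built on the four--diagram relation of Theorem \ref{Theorem 5.1}. The first thing I would establish is a normalization lemma: $Q_{T_n}(x) = \mu^{n-1}$ and, more generally, $Q_{D\sqcup\bigcirc} = \mu\,Q_D$ for any diagram $D$, where $\bigcirc$ is a trivial circle placed disjointly from $D$. For the former, apply relation (2) of Theorem \ref{Theorem 5.1} at a small Reidemeister--I curl on one component of a diagram of $T_n$: both crossing changes return $T_n$, one smoothing returns $T_n$ and the other $T_{n+1}$, so $2Q_{T_n} = x(Q_{T_n}+Q_{T_{n+1}})$, hence $Q_{T_{n+1}} = (2x^{-1}-1)Q_{T_n} = \mu\,Q_{T_n}$; together with $Q_{T_1}=1$ this gives the closed form. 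The statement $Q_{D\sqcup\bigcirc} = \mu\,Q_D$ then follows by induction on $\crs(D)$: the base case $\crs(D)=0$ is the computation just made, and in the inductive step one resolves any crossing $p$ of $D$, observes that $(D\sqcup\bigcirc)_\star = D_\star\sqcup\bigcirc$ for each $\star\in\{+,-,\circ,\infty\}$, and applies the inductive hypothesis to the three diagrams of smaller crossing number.

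Part (b) is then immediate by the same pattern: start from a split diagram $D_1\sqcup D_2$ and induct on $\crs(D_2)$. When $\crs(D_2)=0$ the factor $D_2$ is a union of trivial circles, so one is reduced to iterating the normalization lemma; when $D_2$ carries a crossing $p$, each of $(D_1\sqcup D_2)_+,(D_1\sqcup D_2)_-,(D_1\sqcup D_2)_\circ,(D_1\sqcup D_2)_\infty$ is again split, with $D_1$ untouched and the $D_2$--side of smaller crossing number, so relation (2) plus the inductive hypothesis yield the product formula. Part (a) runs the same way from a connected--sum diagram $D_1\cs D_2$: induct on $\crs(D_2)$, the base case ($D_2$ a trivial diagram, so $L_1\cs L_2$ is $L_1$ with a split trivial link adjoined) being handled by part (b), and in the inductive step resolve a crossing lying in $D_2$, which leaves the connected--sum structure on the $D_1$--side intact.

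Part (c) is a uniqueness argument. Set $Q'_L := Q_{\overline L}(x)$. Then $Q'_{T_1}=Q_{T_1}=1$, and if one represents $\overline L$ by the diagram obtained from a diagram of $L$ by switching all crossings, then at the distinguished crossing the roles of the over-- and under--strand are interchanged, so $\overline{L_+}=(\overline L)_-$, $\overline{L_-}=(\overline L)_+$, while $\overline{L_\circ}=(\overline L)_\circ$ and $\overline{L_\infty}=(\overline L)_\infty$. Writing relation (2) for $\overline L$ therefore gives $Q'_{L_-}+Q'_{L_+} = x(Q'_{L_\circ}+Q'_{L_\infty})$, which is relation (2) for $Q'$; since both defining conditions of Theorem \ref{Theorem 5.1} hold for $Q'$, uniqueness forces $Q'=Q$. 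Part (d) repeats the proof of Lemma \ref{Lemma 3.7} with the $Q$--relation in place of the Jones--Conway relation: induct on the number of crossings inside the mutated tangle, with a secondary induction (as in the proofs of Lemma \ref{Lemma 3.7} and Theorem \ref{Theorem 5.1}) that brings the tangle closer to a trivial tangle; the base case, a tangle with at most one crossing, is rotated by an ambient isotopy, and in the inductive step resolving a crossing inside the tangle by (2) commutes with the $\pi$--rotation defining the mutation, with the symmetry of relation (2) in the pair $\{L_\circ,L_\infty\}$ absorbing the only ambiguity.

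I expect part (d) to be the one requiring care: one must replay the double induction of Lemma \ref{Lemma 3.7} so that crossing changes and both smoothings performed inside the tangle each strictly decrease the chosen complexity of the tangle, and one must verify the geometric base cases in which the rotation is realized by an isotopy. Parts (a)--(c), by contrast, are routine once the normalization lemma and the uniqueness half of Theorem \ref{Theorem 5.1} are in hand.
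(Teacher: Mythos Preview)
Your proposal is correct and is exactly the kind of argument the paper has in mind: the paper's own ``proof'' is the single sentence ``Proof is easy and we omit it,'' so there is nothing to compare against beyond the template of Theorem \ref{Theorem 3.19}, Corollary \ref{Cor 3.20}, and Lemmas \ref{Lemma 3.7} and \ref{Lemma 3.15}, which you have faithfully adapted to the four--term relation of Theorem \ref{Theorem 5.1}. Your normalization lemma, the inductions for (a) and (b), the uniqueness argument for (c), and the double induction for (d) are all sound.
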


Proof is easy and we omit it.

The polynomial $Q_L(x)$ can sometimes distinguish knots which are
skein equivalent.

\begin{prop}\label{Proposition 5.3}The knots $8_8$,
$\overline{10}_{129}$, and $13_{6714}$ have different $Q_L(x)$
polynomials however they are skein equivalent.\end{prop}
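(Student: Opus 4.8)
The plan is to treat the proposition as two separate assertions and to dispatch the skein-equivalence part by citing what has already been assembled. By Example~\ref{Example 3.11} the triples $(13_{6714},10_{129},T_2)$ and $(8_8,10_{129},T_2)$ are both of the form $(L_+,L_-,L_\circ)$, so $8_8\sim_S 13_{6714}$; Kanenobu's identification of these knots with members of the $D(m,n)$ family (Proposition~\ref{Prop 3.12}, together with the criterion of Example~\ref{Example 3.10}) shows moreover that $8_8\sim_S \overline{10}_{129}$, and transitivity of $\sim_S$ then gives that all three are skein equivalent. So the real content to be proved is that $Q_{8_8}(x)$, $Q_{\overline{10}_{129}}(x)$, and $Q_{13_{6714}}(x)$ are pairwise distinct.

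Since $Q$ is a mirror-image invariant (Proposition~\ref{Proposition 5.2}(c)), $Q_{\overline{10}_{129}}(x)=Q_{10_{129}}(x)$, so it is enough to compute $Q$ on the three $D(m,n)$-knots $D(0,-1)$, $D(2,-1)$, $D(2,-3)$ furnished by Proposition~\ref{Prop 3.12}. I would do this from the defining relation of Theorem~\ref{Theorem 5.1}, $Q_{L_+}+Q_{L_-}=x\,(Q_{L_\circ}+Q_{L_\infty})$, applied at a single crossing lying inside one of the two twist regions of $D(m,n)$. One of the two smoothings of such a crossing removes it and leaves $D(m\mp1,n)$ (or $D(m,n\mp1)$), while the other caps the twist region off and, by a Reidemeister~I move, replaces the whole box by a cap--cup tangle, i.e.\ produces $D(\infty,n)$ (or $D(m,\infty)$), possibly together with a split unknotted circle contributing a factor $\mu=2x^{-1}-1$ by Proposition~\ref{Proposition 5.2}(b). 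This yields a three-term linear recursion in each twist parameter, with base data $Q_{D(\infty,n)}=Q_{D(n,\infty)}=\mu$ (two-component trivial links) plus the $Q$-polynomials of the finitely many small diagrams $D(0,0)$, $D(0,\pm1)$, $D(\pm1,0)$, which one computes by hand. Solving the recursion gives closed forms for $Q_{D(m,n)}(x)$; plugging in the three relevant pairs and comparing, say, the lowest-degree coefficients shows the three Laurent polynomials are genuinely different.

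The only obstacle I expect is organizational rather than conceptual: unlike the oriented relation used for $P_L$, the $Q$-relation has no orientation to guide resolutions, so one must track carefully how the $L_\infty$ smoothing cuts a twist region and how many split circles it creates before the recursion can be written down correctly. Existence and well-definedness of $Q$ (Theorem~\ref{Theorem 5.1}) and its behaviour under connected and disjoint sums (Proposition~\ref{Proposition 5.2}) are already in hand, so there is nothing to justify beyond the arithmetic; alternatively one may simply quote Thistlethwaite's tabulated values of $Q$ for these three knots.
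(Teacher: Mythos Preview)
Your proposal is correct and follows essentially the same approach as the paper: the paper's proof reads in its entirety ``Just perform the calculations and use Example~\ref{Example 3.11},'' and you have supplied exactly that---citing Example~\ref{Example 3.11} (and the Kanenobu material it invokes) for the skein equivalences, and outlining a concrete strategy for the $Q$-computation via a recursion on the twist parameters of the $D(m,n)$ diagrams (or, as you note, by quoting tabulated values). Your elaboration of the recursion is more detailed than anything the paper provides, but the underlying plan is identical.
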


\begin{proof} Just perform the calculations and use Example \ref{Example
3.11}.\end{proof}

Now we will describe the Kauffman approach which allows in
particular to generalize the $Q_L{(x)}$ polynomial to polynomial
which often distinguishes a link from its mirror image.  The
Kauffman approach to invariants of links bases on the idea of
considering diagrams up to the relation which does not use the first
Reidemeister move.  In this way we will not get an invariant of a
link but often after some correction an invariant of links can be
achieved.

\begin{defn}\label{Definition 5.4} Two diagrams are regularly isotopic iff one can be
obtained from the other by a sequence of Reidemeister moves of
type $\Omega^{\mp 1}_2$ or $\Omega^{\mp 1}_3$.  This definition
makes sense for orientable and non-orientable diagrams as
well.\end{defn}

Working with regular isotopy we are able to take into account some
properties of a diagram which are eliminated by the Reidemeister of
the first type.

\begin{lem}\label{Lemma 5.5} Let $\tw(L) = \sum_i p_i$ where the sum is taken over
all the crossings of $L$ (we call it twist or writhe number). Then
$\tw(L)$ is an invariant of the regular isotopy of diagrams, and
$\tw(-L)=\tw(L)$.  \end{lem}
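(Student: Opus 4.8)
The plan is to verify directly that $\tw(L) = \sum_i \sgn p_i$ is unchanged under each of the Reidemeister moves of type $\Omega^{\pm 1}_2$ and $\Omega^{\pm 1}_3$, since regular isotopy is generated by exactly these moves (Definition \ref{Definition 5.4}). This is a finite case check. First I would recall that the sign of a crossing of an \emph{oriented} diagram is a local quantity: it depends only on the orientations of the two strands at that crossing, not on anything global. So to compute the change in $\tw$ under a local move it suffices to look at the crossings inside the disk where the move takes place, since all other crossings (and their signs) are literally untouched.

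For the $\Omega_2$ move: the two diagrams differ by a pair of crossings that is created (or destroyed). One checks that, whatever the orientations of the two participating arcs, these two crossings have opposite signs — one is $\poscrossing$ and the other is $\negcrossing$ — so their contributions to $\sum_i \sgn p_i$ cancel, and $\tw$ is unaffected. (There are two sub-cases according to whether the two arcs are coherently or oppositely oriented, and in both the two new crossings have opposite signs.) For the $\Omega_3$ move: no crossings are created or destroyed; the move merely slides one strand across the crossing of the other two. Each of the three crossings persists through the move with the same pair of strands meeting in the same over/under relationship and the same orientations, hence with the same sign. Therefore $\sum_i \sgn p_i$ is literally term-by-term unchanged. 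This establishes that $\tw(L)$ is a regular-isotopy invariant.

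For the second assertion, $\tw(-L) = \tw(L)$: reversing the orientation of \emph{every} component of $L$ reverses the orientation of both strands at each crossing. Reversing both strands at a crossing leaves its sign unchanged — this is exactly the observation already used in the proof of Lemma \ref{Lemma 3.15} and in Example \ref{Example 3.5}. Hence every term $\sgn p_i$ is preserved and so is the sum.

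I do not expect a genuine obstacle here; the only thing requiring care is being systematic about the orientation sub-cases in the $\Omega_2$ check, and making sure one has in mind the right local pictures for $\Omega_3$ (which of the three arcs is on top, and the various coherent/incoherent orientation patterns). Since the sign of a crossing is determined locally and the moves are supported in a disk, even these sub-cases reduce to a short enumeration, which I would summarize rather than draw out in full. The writhe is the standard example of a regular-isotopy invariant that fails to be an isotopy invariant precisely because $\Omega_1$ changes it by $\pm 1$, and the proof is exactly this local bookkeeping.
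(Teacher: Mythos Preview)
Your proposal is correct and follows essentially the same approach as the paper's proof: the paper simply observes that $\Omega_2$ creates or kills two crossings of opposite signs, that $\Omega_3$ does not change the signs of crossings, and that passing from $L$ to $-L$ does not change the sign of any crossing. Your write-up is more detailed in enumerating orientation sub-cases, but the underlying argument is the same local bookkeeping.
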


\begin{proof}The Reidemeister move of the second type creates or
kills two crossings of the opposite signs, and the move of the third
type does not change the signs of crossings.  Furthermore, the
change of $L$ to $-L$ does not change the signs of
crossings.\end{proof}

Now the idea of Kauffman uses the fact that the trivial knot (up to
isotopy) has many representants in the regular isotopy category.
Therefore each of these representants can have different value of an
invariant. Kauffman associates with a diagram $T_1$ representing a
trivial knot the monomial $a^{\tw(T_1)}$.  Then the Kauffman
definition of invariants reminds that of Conway, Jones, $P_L(x,y)$,
or $Q(x)$.  When one wants to go from invariants of regular isotopy
to invariants of isotopy, the following lemma is useful.

\begin{lem}\label{Lemma 5.6}Consider the following elementary move
on a diagram of a link (denoted $\Omega_{0.5}^{\mp 1}$ and called
the weakened first Reidemeister move); i.e. the move which allows us
to create or to kill the pair of curls of the opposite signs (Figure
\ref{Fig 5.3}).  Observe that the sign of the crossing in the curl
does not depend on the orientation of a diagram.

\begin{figure}[htbp]
\centering
\includegraphics{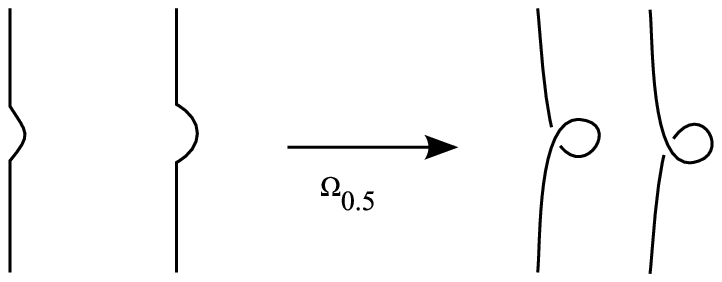}
\caption{}\label{Fig 5.3}
\end{figure}

Then we can obtain the diagram $L_1$ from $L_2$ by regular isotopy
and $\Omega^{\mp 1}_{0.5}$ moves iff $\tw(L_1) = \tw(L_2)$ and $L_1$
is isotopic to $L_2$.\end{lem}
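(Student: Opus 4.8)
The plan is to prove both implications. The forward direction is essentially immediate: if $L_1$ is obtained from $L_2$ by regular isotopy together with $\Omega_{0.5}^{\mp 1}$ moves, then $L_1$ is isotopic to $L_2$ (each $\Omega_{0.5}^{\pm 1}$ move is a composition of ordinary Reidemeister moves, in particular two $\Omega_1$ moves of opposite sign), and $\tw$ is unchanged: regular isotopy preserves $\tw$ by Lemma \ref{Lemma 5.5}, while each application of $\Omega_{0.5}^{\pm 1}$ creates or destroys a pair of curls of opposite signs, contributing $(+1)+(-1)=0$ to the writhe. So $\tw(L_1)=\tw(L_2)$.

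For the converse, suppose $L_1 \approx L_2$ and $\tw(L_1)=\tw(L_2)$. By Reidemeister's theorem there is a finite sequence of moves $\Omega_i^{\pm 1}$ ($i=1,2,3$) taking $L_1$ to $L_2$. The key observation is that each individual $\Omega_1^{\pm 1}$ move (adding or removing a single curl) can be simulated, \emph{up to regular isotopy and $\Omega_{0.5}^{\pm 1}$ moves}, by an operation that changes the writhe by $\pm 1$ — specifically, adding a positive curl is regularly-isotopic-plus-$\Omega_{0.5}$-equivalent to \emph{removing} a negative curl (create a canceling pair of opposite curls with $\Omega_{0.5}$, then slide the new negative curl along the strand past the positive one we wanted to add, absorbing it). More cleanly: the moves $\Omega_2,\Omega_3$ are already regular isotopy, and any two diagrams related by a single $\Omega_1^{+}$ differ in writhe by $+1$, while $\Omega_{0.5}$ lets us freely trade a "$+1$ curl here" for a "$+1$ curl anywhere on the same component" — indeed for a curl of the opposite sign at a different place, via the canceling pair. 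Thus, grouping the sequence $L_1 \to \cdots \to L_2$, the net effect of all the $\Omega_1^{\pm1}$ moves is controlled by the total writhe change, which is $0$; so the $\Omega_1$ moves occur in a way whose cumulative effect can be realized by regular isotopy together with $\Omega_{0.5}^{\pm1}$ moves alone. Concretely I would induct on the number of $\Omega_1^{\pm1}$ moves in the sequence: pair up an $\Omega_1^{+}$ with an $\Omega_1^{-}$ (such a pair must exist if there is more than one $\Omega_1$ move, since the writhe contributions must cancel), use $\Omega_2$/$\Omega_3$ to bring the two curls adjacent on a common arc, and observe that creating one curl and deleting an oppositely-signed one nearby is exactly an $\Omega_{0.5}^{\pm1}$ move followed by regular isotopy; this reduces the count.

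The main obstacle is the bookkeeping in the inductive step: one must check that the two curls involved in a $+/-$ pair can always be maneuvered onto the same strand and made adjacent using only $\Omega_2,\Omega_3$ (regular isotopy), and that when the two $\Omega_1$ moves in the pair are widely separated in the move sequence — with arbitrary $\Omega_2,\Omega_3$ moves in between — one can commute things so that the curl created early survives (unaltered up to regular isotopy) until it meets its partner. This is the standard "curls slide freely under regular isotopy" fact; it is geometrically clear but needs a careful case analysis of how a curl interacts with each Reidemeister move of type $2$ and $3$. I would handle it by the remark, already used implicitly in the paper, that a small curl behaves like a trivial kink that can be pushed along any arc of the diagram, so its position is immaterial up to regular isotopy, and only its sign and the component it sits on matter — and the sign is precisely what $\Omega_{0.5}^{\pm1}$ is designed to neutralize in canceling pairs.
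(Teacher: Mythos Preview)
Your proposal is correct and follows essentially the same idea as the paper, which gives only the one-sentence hint ``the move $\Omega_{0.5}^{\mp 1}$ allows us to carry a curl to any place in the diagram.'' You have correctly identified and made explicit the key technical ingredient behind that hint---that curls slide freely along the diagram under regular isotopy---and your inductive pairing of writhe-increasing with writhe-decreasing $\Omega_1$ moves is a valid (if slightly more elaborate than necessary) way to organize the argument.
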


\begin{proof}It becomes clear if one observes that the move
$\Omega^{\mp 1}_{0.5}$ allows us to carry a curl to any place in the
diagram.\end{proof}

Now we will show how, using the Kauffman approach, the Conway
polynomial can be generalized into Jones-Conway polynomial and
$Q_L(x)$ into polynomial which we will call the Kauffman polynomial.

\begin{thm}[\cite{K_4}]\label{Theorem 5.7}\begin{enumerate}\item[(a)]
There exists a uniquely determined invariant of regular isotopy of
oriented diagrams ($R_L(a,z) \in \ints[a^{\mp 1}, z^{\mp 1}]$) which
satisfies the following conditions:
\begin{enumerate}\item[(1)] $R_{T_1}(a,z) = a^{\tw(T_1)}$, where
$T_1$ is isotopic to the trivial knot, \item[(2)] $R_{L_+}(a,z) -
R_{L_-}(a,z) = z R_{L_\circ}(a,z)$ \end{enumerate}

\item[(b)] Let us define for a given diagram $L$: $G_L(a,z) =
a^{-\tw(L)}R_L(a,z)$ then $G_L(a,z)$ is an invariant of isotopy of
oriented links, equivalent to the Jones-Conway polynomial; i.e.
$G_L(a,z) = P_L(x,y)$ for $x=\frac{a}{z}$ and $y= -\frac{1}{az}$.
\end{enumerate}\end{thm}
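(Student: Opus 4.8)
The plan is to imitate the structure of the proof of Theorem \ref{Theorem 2.1.1} as closely as possible, replacing the abstract Conway algebra by the concrete target ring $\ints[a^{\mp 1}, z^{\mp 1}]$ together with the single linear relation $R_{L_+} - R_{L_-} = zR_{L_\circ}$. First I would fix the correct normalization on diagrams with no crossings: a split diagram of $n$ unknotted circles in the plane with writhe $w$ should receive the value $a^w\delta^{n-1}$, where $\delta = (a+a^{-1})/z - 1$ is the forced value of a single extra disjoint circle (this is the value dictated by resolving a single curl-free extra component against a crossing, exactly as $a_n$ is dictated by C1--C2). Then I would build a \emph{resolving tree}: by induction on $\crs(L)$, pick base points and walk along the diagram until the first ``bad'' crossing, apply the skein relation there to trade it for a diagram with fewer bad crossings plus a diagram $L_\circ$ with one fewer crossing, and recurse. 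Every descending diagram is regularly isotopic to a standard split collection of curled circles (one straightens each component in order, only using $\Omega_2$, $\Omega_3$ moves), so the leaves of the tree are assigned well-defined values, and $R_L$ is thereby \emph{defined} on diagrams.

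The heart of the argument is showing this $R_L$ is well-defined independently of all choices and invariant under $\Omega_2$ and $\Omega_3$ (but \emph{not} $\Omega_1$). I would set up the same nested induction as in the proof of Theorem \ref{Theorem 2.1.1}: a Main Inductive Hypothesis on $\crs(L) \le k$, and within it inductions on the number of bad crossings $b(L)$ and on which base point is moved. The skein relation $R_{L_+} = R_{L_-} + zR_{L_\circ}$ must be verified to hold for the function as defined (this replaces ``Conway relations for $w_b$''); then independence of base-point choice; then invariance under $\Omega_2$ and $\Omega_3$ (``I.R.M.''), where the $\Omega_3$ case again reduces, via the observation that the top--bottom crossing cannot be the unique bad crossing, to applying the skein relation at a middle--outer crossing and using M.I.H.\ on the pieces. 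The counterpart of Lemma \ref{Lemma 2.2.14} (innermost $f$-gons, Proposition \ref{Prop 2.2.16}) goes through verbatim to guarantee a descending diagram can be simplified by non-increasing $\Omega_2,\Omega_3$ moves. One new bookkeeping point, absent from the Conway-algebra case: because $\Omega_1$ is \emph{not} imposed, the writhe is tracked throughout, and one checks at each leaf that the $a^{(\cdot)}$ exponent matches $\tw$ of that leaf diagram; this is automatic since each elementary move used preserves writhe. That $R_L$ is thus an invariant of regular isotopy proves part (a); uniqueness is clear since the resolving tree computes the value from the axioms.

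For part (b), set $G_L(a,z) = a^{-\tw(L)}R_L(a,z)$. By Lemma \ref{Lemma 5.5}, $\tw$ is a regular-isotopy invariant, so $G_L$ is too; and under $\Omega_1^{\pm 1}$ the writhe changes by $\pm 1$ while, using the skein relation on the curl (the other resolution is an $\Omega_0$-trivial circle union, i.e.\ a disjoint unknot, times a curl), $R_L$ also changes by the factor $a^{\pm 1}$, so $G_L$ is unchanged by $\Omega_1$ as well and hence is a genuine isotopy invariant of oriented links. Finally I would identify $G_L$ with $P_L(x,y)$ by checking the two defining properties of the Jones--Conway polynomial under the substitution $x = a/z$, $y = -1/(az)$: the initial condition $G_{T_1} = a^{0}\cdot 1 = 1 = P_{T_1}$, and the skein identity. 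Writing $w = \tw$ and using $w(L_+) = w(L_\circ)+1$, $w(L_-) = w(L_\circ)-1$ (with the standard crossing conventions), the relation $R_{L_+} - R_{L_-} = zR_{L_\circ}$ becomes $a\,G_{L_+} - a^{-1}G_{L_-} = z\,G_{L_\circ}$ after dividing by $a^{w(L_\circ)}$, which is precisely $xG_{L_+} + yG_{L_-} = G_{L_\circ}$ for the stated $x,y$; by the uniqueness half of Theorem \ref{Theorem 2.1.1} (Example \ref{Example 2.1.8}), $G_L = P_L$.

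The main obstacle is exactly the one in Theorem \ref{Theorem 2.1.1}: proving well-definedness and $\Omega_2$/$\Omega_3$-invariance of the tree value simultaneously, where the $\Omega_3$ move forces the delicate innermost-$f$-gon combinatorics of Lemma \ref{Lemma 2.2.14} and Proposition \ref{Prop 2.2.16}. Here it is slightly easier than in the Conway-algebra setting, since we no longer need the transposition axioms C3--C5 — the single linear relation over a commutative ring makes the analogue of the C3 computation trivial (just as in the last paragraph of the proof that Example \ref{Example 2.1.8} is a Conway algebra) — but the geometric reduction of a descending diagram by non-increasing Reidemeister moves, and the writhe-bookkeeping under $\Omega_1$ in part (b), are where care is required.
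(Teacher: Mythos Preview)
Your approach is valid but runs in the opposite direction to the paper's. You rebuild $R_L$ from scratch by re-running the entire machinery of Theorem~\ref{Theorem 2.1.1} (resolving trees, M.I.H., base-point independence, Lemma~\ref{Lemma 2.2.14}) in the regular-isotopy category, and only at the end identify $G_L = a^{-\tw(L)}R_L$ with $P_L$. The paper goes the other way: since $P_L$ already exists, it substitutes $x=a/z$, $y=-1/(az)$ to obtain $\widetilde G_L$, sets $\widetilde R_L = a^{\tw(L)}\widetilde G_L$, and verifies in a few lines that $\widetilde R_L$ is a regular-isotopy invariant satisfying (1) and (2); uniqueness via resolving trees then forces $R_L=\widetilde R_L$. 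The paper explicitly remarks that your direct method ``can be used'' but opts for the shortcut. Your route is self-contained; the paper's is an order of magnitude shorter because the hard combinatorics has already been paid for in Theorem~\ref{Theorem 2.1.1}.

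Two small corrections. First, your $\delta$ is wrong: smoothing a curl gives $a-a^{-1}=z\delta$, so $\delta=(a-a^{-1})/z$; the expression $(a+a^{-1})/z-1$ is the Kauffman-polynomial constant $\mu$ of Lemma~\ref{Lemma 5.9}(b), not the $R$-value of a split circle. Second, your argument that $R_L$ picks up a factor $a^{\pm1}$ under $\Omega_1$ cannot come ``from the skein relation on the curl'' alone --- that relation only yields $R_{L'_+}-R_{L'_-}=(a-a^{-1})R_L$, which is consistent with but does not force $R_{L'_\pm}=a^{\pm1}R_L$. You must either trace it through the tree construction (a curl placed near the base point is always good, so it contributes only to the leaf writhe), or, more cleanly, first deduce $G_L=P_L$ on all diagrams from matching values on descending diagrams plus the common recursion, and then read off isotopy invariance of $G_L$ from that of $P_L$.
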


\begin{proof}The method of proof of Theorem \ref{Theorem 2.1.1}
can be used, but because we have already proven the existence of
the Jones-Conway polynomial so we use it instead.  If we
substitute in $P_L(x,y)$, $x=\frac{a}{z}$, $y = -\frac{1}{az}$ we
will get the polynomial invariant of isotopy classes of oriented
links, $\widetilde{G}_L(a,z)$ which satisfies:

\begin{enumerate}
\item $\widetilde{G}_{T_1}(a,z)= 1$

\item $a \widetilde{G}_{L_+}(a,z)-\frac{1}{a}\widetilde{G}_{L_-}(a,z)=
a\widetilde{G}_{L_\circ}(a,z)$.
\end{enumerate}

Now let us define
$\widetilde{R}_L(a,z)=a^{\tw(L)}\widetilde{G}_L(a,z)$ for an
oriented diagram $L$.

It is easy to see that the first Reidemeister move changes the
value of $\widetilde{R}_L(a,z)$ depending on the sign of the curl
as follows:
$$\begin{array}{l} \widetilde{R}_{[\includegraphics{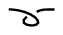}]}(a,z) = a
\widetilde{R}_{[\includegraphics{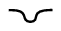}]}(a,z) \\
\widetilde{R}_{[\includegraphics{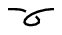}]}(a,z) = a^{-1}
\widetilde{R}_{[\includegraphics{Bump.eps}]}(a,z).\end{array}$$

The second and the third type of Reidemeister moves do not change
$\widetilde{R}_L(a,z)$. Therefore $\widetilde{R}_L(a,z)$ is an
invariant of regular isotopy and it satisfies
$\widetilde{R}_{T_1}(a,z) =a^{\tw(T_1)}$.  Now we should only check
that $\widetilde{R}_L(a,z)$ satisfies the equality (2) from Theorem
\ref{Theorem 5.7}.

But from the equality (2) of the proof we get
$$a\widetilde{R}_{L_+}(a,z)a^{-\tw(L_+)} -
\frac{1}{a}\widetilde{R}_{L_-}(a,z) a^{-\tw(L_-)} =
z\widetilde{R}_{L_\circ}(a,z)a^{-\tw(L_\circ)}$$ and it reduces to
$$\widetilde{R}_{L_+}(a,z) - \widetilde{R}_{L_-}(a,z) = z
\widetilde{R}_{L_\circ}(a,z).$$

Because every diagram possesses a resolving tree so polynomial
$R_L(a,z)$ if exists, is unique, therefore we can put $R_L(a,z) =
\widetilde{R}_L(a,z)$ and $G_L(a,z) = \widetilde{G}_L(a,z)$ which
completes the proof of Theorem \ref{Theorem 5.7}. \end{proof}

\begin{thm}\cite{K_5}\begin{enumerate}\item[(a)] There exists a
uniquely determined invariant $L$, which attaches an element of
$\ints[a^{\mp 1}, z^{\mp 1}]$ to every regular isotopy class of
oriented diagrams and satisfies the following conditions:

\begin{enumerate}

\item[(1)] $L_{T_1}(a,z) = a^{\tw(T_1)}$

\item[(2)] $L_{L_+}(a,z) +
L_{L_-}(a,z)=z(L_{L_\circ}(a,z)+L_{L_\infty}(a,z)).$
\end{enumerate}

\item[(b)] Let us define for a given diagram $D$, $F_D(a,z) =
a{-\tw(D)}L_D(a,z)$.  Then $F_D(a,z)$ is an invariant of isotopy
classes of oriented links and it generalizes the polynomial $Q$
($Q_L(x) = F_L(1,x)$).\end{enumerate}\end{thm}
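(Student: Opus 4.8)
The plan is to rerun the inductive machinery of the proof of Theorem~\ref{Theorem 2.1.1}, now in the unoriented, regular-isotopy setting. The tidiest packaging would first isolate an abstract ``Kauffman algebra'' --- a carrier set with distinguished elements $a_1,a_2,\dots$ together with the data needed to perform the four-term skein step, subject to transposition axioms playing the role of C1--C7 and a rule governing a curl --- prove a general existence theorem by copying the argument below, and then check that $\ints[a^{\mp1},z^{\mp1}]$ with the single linear relation (2) is such an algebra. What I would actually write out, though, is the direct construction of $L$ on diagrams, which differs from the proof of Theorem~\ref{Theorem 2.1.1} in two structural points. First, at each crossing there are four local pictures; since (2) is symmetric under $L_+\leftrightarrow L_-$, one solves it as $L_{L_+}=z(L_{L_\circ}+L_{L_\infty})-L_{L_-}$ in order to switch the first bad crossing, while the two smoothings strictly drop the crossing number --- so every diagram still has a resolving tree with descending leaves (the tree is now ternary). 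Second, the value attached to a descending diagram $U$ of $n$ components is $a^{\tw(U)}\delta^{\,n-1}$ with $\delta=(a+a^{-1}-z)/z$; both $\delta$ (the value of the $2$-component unlink) and the exponent $\tw(U)$ are forced by feeding a single curl into (2), which produces exactly the constraint $z(1+\delta)=a+a^{-1}$ and the rule that a $\pm$ curl multiplies the value by $a^{\pm1}$. Only $\Omega_2^{\pm1}$ and $\Omega_3^{\pm1}$ are to be verified; $\Omega_1$ is deliberately \emph{not}, which is exactly why $L_{T_1}=a^{\tw(T_1)}$ instead of $1$.

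Uniqueness is immediate: resolving trees with descending leaves exist (walk along $D$ from chosen base points, switch the first bad crossing, recurse on the number of bad crossings and then on $\crs(D)$), so (1) and (2) determine $L$. For existence I would induct on $\crs(D)$, building a function $L_{k+1}$ on diagrams with at most $k+1$ crossings from $L_k$ on those with at most $k$, precisely as in the Main Inductive Step of Theorem~\ref{Theorem 2.1.1}: define $L_{k+1}(D)$ by the resolving recursion above, then verify, in order, (1$'$) that relation (2) holds for $L_{k+1}$ at every crossing, (2$'$) independence of the base-point choice, (3$'$) independence of the order of components, and (4$'$) invariance under $\Omega_2^{\pm1}$ and $\Omega_3^{\pm1}$; finally put $L_D:=L_k(D)$ for any $k\ge\crs(D)$. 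Step (1$'$) is where the proof of Theorem~\ref{Theorem 2.1.1} invoked the transposition axioms C3--C5, and I expect it to be painless here: resolving two distinct crossings $p$ and $q$ in either order yields the same linear combination of the values at the doubly-resolved diagrams, because the two ``resolve-at-$p$'' and ``resolve-at-$q$'' operators on diagrams commute --- (2) being a single linear relation with constant coefficients $z,z,-1$ --- so this is the same elementary check that settled C3 for the Jones--Conway algebra in Example~\ref{Example 2.1.8}. Step (2$'$) runs by induction on the maximum bad-crossing count, with cases paralleling CBP~1--3 of Theorem~\ref{Theorem 2.1.1}; the one new feature is that pushing a base point past a self-crossing creates a curl, and a short computation using the curl relation $z(1+\delta)=a+a^{-1}$ shows the resulting $a^{\pm2}$ is absorbed so that one recovers $a^{\tw(L)}\delta^{\,n-1}$.

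The hard part will be step (4$'$) for the third Reidemeister move, exactly as in Theorem~\ref{Theorem 2.1.1}. I would reduce, by induction on the number of bad crossings, to the case in which the only bad crossings lie inside the triangle of the move, and then invoke verbatim the innermost-$f$-gon / $2$-gon / $3$-gon argument of Lemma~\ref{Lemma 2.2.14} and Propositions~\ref{Prop 2.2.15}--\ref{Prop 2.2.16} to transform the relevant descending diagram into one with strictly fewer crossings by a sequence of $\Omega_2$ and $\Omega_3$ moves, after which the M.I.H.\ (which already includes invariance under Reidemeister moves for diagrams of smaller crossing number) closes the induction. What has to be added to the bookkeeping, relative to the Conway-algebra case, is carrying the four local pictures at each branching and keeping track of the writhe --- each curl produced by an $\Omega_2$-reduction contributes a compensating power of $a$. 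This, and not any new algebra, is where the real work of part (a) lies.

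Part (b) should be short. Since $\tw$ is a regular-isotopy invariant (Lemma~\ref{Lemma 5.5}), $F_D(a,z)=a^{-\tw(D)}L_D(a,z)$ is again a regular-isotopy invariant of oriented diagrams; a $\pm$ curl multiplies $L_D$ by $a^{\pm1}$ (apply (2) to the curl, using $z(1+\delta)=a+a^{-1}$) while changing $\tw(D)$ by $\pm1$, so $F_D$ is unchanged by $\Omega_1^{\pm1}$ as well, and hence by the Reidemeister theorem of Section~1 it descends to an isotopy invariant of oriented links, which generalizes $Q$ by the following observation: setting $a=1$ annihilates the writhe correction, so $F_L(1,x)=L_L(1,x)$, and specializing (2) at $a=1$ gives $F_{L_+}(1,x)+F_{L_-}(1,x)=x\bigl(F_{L_\circ}(1,x)+F_{L_\infty}(1,x)\bigr)$ together with $F_{T_1}(1,x)=1$, which are exactly the two conditions characterizing $Q$ in Theorem~\ref{Theorem 5.1}; by the uniqueness asserted there, $F_L(1,x)=Q_L(x)$.
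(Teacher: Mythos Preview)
Your overall plan is the paper's plan: part~(a) is deferred to the general Kauffman-algebra existence theorem (Theorem~\ref{Theorem 5.21}), whose proof reruns the Main Inductive Step of Theorem~\ref{Theorem 2.1.1} with a ternary resolving tree, and part~(b) is exactly the short curl computation you give. Your leaf values $a^{\tw(U)}\delta^{\,n-1}$ with $\delta=(a+a^{-1}-z)/z$ are correct, and your handling of~(b) and of the specialization to $Q$ is fine.

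There is, however, a genuine gap in where you locate the new difficulty. You flag step~(4$'$), the $\Omega_3$ case, as ``the hard part'' and dismiss step~(2$'$), the base-point change, as ``a short computation using the curl relation.'' In fact the $\Omega_3$ argument goes through essentially unchanged from Theorem~\ref{Theorem 2.1.1}, while the base-point step acquires the only genuinely new obstacle. In the analogue of CBP~2(a)---$L$ descending for $b'$, with a single bad self-crossing $p$ for $b$---you must evaluate
\[
w_b(L)=z\bigl(w_b(L^p_\circ)+w_b(L^p_\infty)\bigr)-w_b(L^p_-).
\]
Here $L^p_-$ and $L^p_\circ$ are descending, but $L^p_\infty$ is \emph{not}: the $\infty$-smoothing of a self-crossing in a descending diagram produces a component that is descending on one arc and \emph{ascending} on the other (see the paper's Figure~\ref{Fig 5.13}). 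This is not a curl, and no curl identity reduces it. You need $w_k(L^p_\infty)=a^{\tw(L)-1}\delta^{\,n-1}$, but the M.I.H.\ only gives that value for descending diagrams, and you have no a~priori regular-isotopy (within $\le k$ crossings) from $L^p_\infty$ to one. The paper's fix is a mutation-style trick: rotate the ascending half of $L^p_\infty$ by $180^\circ$ about a vertical axis and reverse its orientation to obtain a genuinely descending diagram $\widetilde L$; then observe that $L^p_\infty$ and $\widetilde L$ admit the \emph{same} resolving tree (all vertices having $\le k$ crossings), so $w_k(L^p_\infty)=w_k(\widetilde L)=a^{\tw(L)-1}\delta^{\,n-1}$. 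This is also why the paper's abstract Kauffman algebra carries a second, unoriented universe $A'$ with a forgetful map $\varphi:A\to A'$: the $\infty$-smoothing does not inherit a coherent orientation, and the axiom $a\ast'(b,c)=a\ast'(c,b)$ (K6) is exactly what makes the unoriented branch insensitive to which smoothing is called $\circ$ and which $\infty$. Your direct-polynomial approach can absorb this (the relation~(2) is symmetric in $L_\circ,L_\infty$), but you still need the rotation argument for $L^p_\infty$; without it step~(2$'$) does not close.
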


\begin{proof} Part (a) will be proved later in more general
context. Part (b) follows from (a) if we notice that the first
Reidemeister move changes $F_L(a,z)$ as follows:
$$\begin{array}{l}L_{[\includegraphics{Curl1.eps}]}(a,z) = a L_{[\includegraphics{Bump.eps}]}(a,z)
\textrm{and} \\ \, \\
L_{[\includegraphics{Curl2.eps}]}(a,z)=a^{-1}L_{[\includegraphics{Bump.eps}]}(a,z).\end{array}$$\end{proof}

The polynomial $F_L(a,z)$ is called the Kauffman polynomial. Now we
will state some elementary properties of the Kauffman polynomial.

\begin{lem}\label{Lemma 5.9}\begin{enumerate}\item[(a)] $F_{L_1
\cs L_2}(a,z) = F_{L_1}(a,z)\cdot F_{L_2}(a,z)$

\item[(b)] $F_{L_1\sqcup L_2}(a,z) = \mu F_{L_1}(a,z) \cdot
F_{L_2}(a,z)$ where $\mu = \frac{a+ a^{-1}}{z} -1$ is the value of
the invariant of a trivial link of 2 components,

\item[(c)] $F_L(a,z) = F_{-L}(a,z)$,

\item[(d)] $F_{\overline{L}}(a,z) = F_L(a^{-1},z)$,

\item[(e)] $F_L(a,z)=F_{m(L)}(a,z)$ where $m(L)$ is a mutant of
$L$.\end{enumerate}\end{lem}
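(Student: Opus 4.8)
The plan is to reduce every part of the lemma to the regular--isotopy invariant $L_D(a,z)$ of the preceding theorem together with the normalisation $F_D(a,z)=a^{-\tw(D)}L_D(a,z)$, and to exploit three elementary facts about the writhe: it is additive when two diagrams are joined by a band introducing no new crossings, it satisfies $\tw(-D)=\tw(D)$ (Lemma \ref{Lemma 5.5}) and $\tw(\overline D)=-\tw(D)$ (mirroring reverses every crossing sign), and it is unchanged by a mutation, which rotates a tangle by $\pi$, creating or destroying no crossing and altering no sign. I will also use that a crossingless circle has $L=1$, so $F_{T_1}=1$.

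For part (a) I would represent $L_1\cs L_2$ by a diagram $D_1\#D_2$ obtained by placing diagrams $D_1,D_2$ of $L_1,L_2$ in disjoint discs and joining them by a band introducing no crossings; then $\crs(D_1\#D_2)=\crs D_1+\crs D_2$ and $\tw(D_1\#D_2)=\tw D_1+\tw D_2$, so (a) is equivalent to $L_{D_1\#D_2}=L_{D_1}L_{D_2}$. This I would prove by the double induction on $(\crs D_2,\,b(D_2))$, ordered lexicographically with $b$ the minimal number of bad crossings, used throughout Section 3: when $b(D_2)=0$ the component $D_2$ is descending and, by the regular--isotopy analogue of Lemma \ref{Lemma 2.2.14} together with the weakened first move of Lemma \ref{Lemma 5.6} (which multiplies $L$ by $a\cdot a^{-1}=1$), $D_1\#D_2$ is regularly isotopic to $D_1$ with a disjoint trivial circle joined by a band, which reduces away; when $b(D_2)>0$ one picks a bad crossing $p$ of $D_2$ and applies the four--term relation $L_{(D_1\#D_2)_+}+L_{(D_1\#D_2)_-}=z\bigl(L_{(D_1\#D_2)_\circ}+L_{(D_1\#D_2)_\infty}\bigr)$, noting that each resolution is $D_1\#(D_2)_\bullet$, that the induction applies to three of the four terms, and that $L_{D_1}$ factors out of the identity --- exactly as in the proof of Theorem \ref{Theorem 3.19}. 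For part (b) I would first compute $F_{T_2}=\mu$ by applying the skein relation at the crossing of a single kink on a circle, which gives $a+a^{-1}=z(1+L_{T_2'})$ for a crossingless two--circle diagram $T_2'$, whence $F_{T_2}=L_{T_2'}=(a+a^{-1})/z-1=\mu$; then, as in the proof of Lemma \ref{Lemma 4.13}(c), $L_1\sqcup L_2=(L_1\cs T_2)\cs L_2$, so (b) follows from two applications of (a).

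Parts (c), (d), (e) are the transformation laws and follow by inspecting $L$ and $\tw$. For (c): the resolving--tree computation of $L_D$ never refers to the orientation of $D$, and $\tw(-D)=\tw(D)$, so $L_{-L}=L_L$ and hence $F_{-L}=F_L$. For (d): a diagram of $\overline L$ is obtained from $D$ by switching every crossing; the mirror operation interchanges $L_+\leftrightarrow L_-$ at each crossing, fixes the two smoothings, and sends the leaf value $a^{\tw}$ of each descending leaf diagram to $a^{-\tw}$, while the skein relation itself contains no $a$, so an induction on the resolving tree gives $L_{\overline D}(a,z)=L_D(a^{-1},z)$; combining with $\tw(\overline D)=-\tw(D)$ yields $F_{\overline L}(a,z)=a^{\tw(D)}L_D(a^{-1},z)=F_L(a^{-1},z)$. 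For (e): a mutation changes $D$ only inside one tangle and preserves $\tw$; that it preserves $L$ is proved, as in Lemma \ref{Lemma 3.7}, by the double induction on the number of crossings of the link and the minimal number of bad crossings inside the mutated tangle, the base case being a tangle with at most one crossing, where the defining rotation is a regular isotopy up to disjoint trivial circles.

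The main obstacle is the regular--isotopy bookkeeping in (a): one must establish the regular--isotopy version of ``a descending diagram is trivial'' --- only $\Omega_2,\Omega_3$ and the writhe--preserving move $\Omega_{0.5}^{\pm1}$ of Lemma \ref{Lemma 5.6} are allowed, so the reduced diagram is a disjoint union of circles each carrying $\tw$ curls --- and must check that the band joining $D_1$ to $D_2$ can be kept out of the way during all these moves and that the $a^{-\tw}$ corrections match on both sides of each skein identity. Once that is in place every inductive step is a direct transcription of the Conway--algebra arguments of Section 3, and parts (b)--(e) are short.
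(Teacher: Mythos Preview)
Your argument is essentially correct and uses the same inductive machinery the paper intends (its proof is the single sentence ``very similar to that for the Jones--Conway polynomial'', i.e.\ Theorem~\ref{Theorem 3.19}, Corollary~\ref{Cor 3.20}, Lemmas~\ref{Lemma 3.7} and~\ref{Lemma 3.15}). The one substantive difference is the order of (a) and (b): the paper's model proves the \emph{disjoint} sum formula first by induction on $(\crs,b)$ and then derives the \emph{connected} sum formula from it via the trick $L_+\approx L_-\approx L_1\cs L_2$, $L_\circ=L_1\sqcup L_2$ of Corollary~\ref{Cor 3.20}; you reverse this, proving $L_{D_1\#D_2}=L_{D_1}L_{D_2}$ directly and deducing (b) from $L_1\sqcup L_2=(L_1\cs T_2)\cs L_2$ as in Lemma~\ref{Lemma 4.13}(c).

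This reversal is legitimate but leaves a small gap in your base case for (a). When $b(D_2)=0$ and $D_2$ is descending with $n>1$ components, reducing $D_1\#D_2$ by regular isotopy and $\Omega_{0.5}^{\pm1}$ does not yield $D_1$ alone: you are left with $D_1$ (decorated with $\tw(D_2)$ curls) together with $n-1$ disjoint trivial circles, and to finish you need $L_{D\sqcup T_1}=\mu\,L_D$ for an arbitrary $D$. That is precisely the simplest instance of (b), so as written the two inductions lean on each other. The fix is one line and should precede the induction for (a): apply the Kauffman relation at a kink attached to $D$ to get $aL_D+a^{-1}L_D=z(L_D+L_{D\sqcup T_1})$, hence $L_{D\sqcup T_1}=\mu L_D$. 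The paper's order (disjoint sum first) avoids this bookkeeping, which is its small advantage; your order has the virtue that the connected--sum induction is the cleaner of the two once the base case is patched. Parts (c), (d), (e) are handled correctly and match the intended arguments; in particular your remark that mirroring acts on $L_D$ only through the leaf values $a^{\tw}\mapsto a^{-\tw}$, the four--term relation itself being $a$--free, is exactly the right way to see (d).
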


The proof is very similar to that for the Jones-Conway polynomial.

The polynomial $L(a,z)$ does not depend on an orientation of
components of a diagram $D$.  Therefore the Kauffman polynomial
$F(a,z)$ depends on an orientation of components of $D$ in a simple
manner (because $F_D(a,z)$ differs from $L_D(a,z)$ only by a power
of $a$).

\begin{lem}\label{Lemma 5.10} Let $D=\{D_1, D_2, \ldots, D_i,
\ldots, D_n\}$ be a diagram of an oriented link of $n$ components
and let $D'=\{D_1, D_2, \ldots, -D_i, \dots, D_n\}$.  Let $\lambda =
lk(D_i, D-D_i) = \frac{1}{2} \sum \sgn p_j$ where the sum is taken
over all crossings between $D_i$ and $D-D_i$.  Then $F_{D'}(a,z) =
a^{4\lambda}F_D(a,z)$.\end{lem}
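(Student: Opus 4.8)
The plan is to reduce the statement to the definition $F_D(a,z) = a^{-\tw(D)}L_D(a,z)$, together with two facts already available: that the regular isotopy invariant $L_D(a,z)$ does not depend on the orientation of the components of $D$ (noted just above the statement), and a short bookkeeping computation of how $\tw$ changes when a single component is reoriented.

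First I would analyze the effect of reversing $D_i$ on the signs of crossings. Each crossing of $D$ is of one of three types: a self-crossing of $D_i$; a crossing between two components both different from $D_i$; or a crossing between $D_i$ and $D-D_i$. Reversing the orientation of $D_i$ affects none of the crossings of the second type. It also leaves the self-crossings of $D_i$ unchanged, since reversing \emph{both} strands through a crossing does not change its sign (an elementary local check on the two crossing pictures; this is the same observation used in Lemma \ref{Lemma 5.5} for $\tw(-L)=\tw(L)$). At a crossing between $D_i$ and $D-D_i$, however, exactly one of the two strands is reversed, so $\sgn p$ is negated. Therefore, writing $\sum_j$ for the sum over the crossings between $D_i$ and $D-D_i$,
$$\tw(D') \;=\; \tw(D) \;-\; 2\sum_j \sgn p_j \;=\; \tw(D) \;-\; 2\cdot 2\lambda \;=\; \tw(D) - 4\lambda ,$$
where I used $\lambda = \lk(D_i,D-D_i) = \tfrac12\sum_j \sgn p_j$.

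Next, since $D$ and $D'$ differ only in the chosen orientation of $D_i$ they are the same unoriented diagram, so $L_{D'}(a,z) = L_D(a,z)$. Combining this with the displayed writhe identity and the definition of $F$ gives
$$F_{D'}(a,z) \;=\; a^{-\tw(D')}L_{D'}(a,z) \;=\; a^{-\tw(D)+4\lambda}L_D(a,z) \;=\; a^{4\lambda}\,a^{-\tw(D)}L_D(a,z) \;=\; a^{4\lambda}F_D(a,z),$$
which is the assertion. The only point requiring genuine care is the crossing-sign bookkeeping of the second paragraph — in particular the observation that self-crossings of $D_i$ contribute nothing, so that the correction term is governed precisely by $\lk(D_i,D-D_i)$ — but this is a routine local verification rather than a real obstacle; everything else is formal once one grants the orientation-independence of $L_D(a,z)$ proved earlier.
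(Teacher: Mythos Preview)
Your proof is correct and follows essentially the same approach as the paper: both use the orientation-independence of $L_D(a,z)$ together with the definition $F_D=a^{-\tw(D)}L_D$ and the identity $\tw(D)-\tw(D')=4\lambda$. You have simply spelled out the writhe bookkeeping (the three types of crossings and why only the mixed ones flip sign) more carefully than the paper, which just asserts the final equality $a^{\tw(D)-\tw(D')}=a^{4\lambda}$.
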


\begin{proof}$L_{D'}(a,z) = L_D(a,z)$ so $a^{\tw(D')}F_{D'}(a,z) =
a^{\tw(D)}F_D(a,z)$ therefore, \newline $F_{D'}(a,z) = a^{\tw(D) -
\tw(D')}F_D(a,z) = a^{4\lambda}F_D(a,z)$.\end{proof}

We can comment on the lemma as follows: the Kauffman polynomial says
about different orientations of $D$ as much as linking numbers of
its components.  The Kauffman polynomial is much more useful for
testing amphicheirality of links.  We have however an example of a
link of two components which is a mutant of its mirror image but is
not isotopic (Figure \ref{Fig 3.7}) but Kauffman has conjectured
that for knots such a case is impossible.

\begin{conj}[\cite{K_5}]\label{Conjecture 5.11} If the knot $K$ is
not isotopic to its mirror image ($\overline{K}$ or $-\overline{K}$)
then $F_K(a,z) \neq F_{\overline{K}}(a,z)$.\end{conj}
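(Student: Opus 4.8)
The plan is to reduce the statement to a purely polynomial assertion via part (d) of Lemma \ref{Lemma 5.9} and then to separate the obstruction to amphichirality into a ``writhe part'', visible in the extreme $a$-coefficients of $F_K$, and a ``signature part''.  Since $F_{\overline K}(a,z)=F_K(a^{-1},z)$ and since $F$ is insensitive to the orientation of components by Lemma \ref{Lemma 5.9}(c) (so $F_{\overline K}=F_{-\overline K}$), the conjecture is equivalent to the following: if $F_K(a,z)$ is fixed by the substitution $a\mapsto a^{-1}$, then $K\approx\overline K$.  It is worth recording at the outset why one cannot hope to prove this by passing to a single specialization: putting $a=1$ gives $Q_K(x)=F_K(1,x)$, which by Proposition \ref{Proposition 5.2}(c) is \emph{always} symmetric under mirroring, and the one-variable specialization of $F$ that recovers the Jones polynomial also fails, since there are chiral knots whose Jones polynomial is symmetric under $t\mapsto t^{-1}$ (the $9_{42}$-type knots arising around Theorem \ref{Theorem 3.41} and Conjecture \ref{Conjecture 3.42}).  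Thus one is forced to use the full bivariate polynomial and, in particular, the interaction of the $a$- and $z$-gradings.

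The first step is the case of a knot $K$ carrying a \emph{reduced alternating} diagram $D$ (ideally, more generally, an adequate one).  For such $D$ the $a$-span of the regular-isotopy invariant $L_D(a,z)$ is controlled by $\crs(D)$, and its two extreme $a$-coefficients can be read off combinatorially from the all-$A$ and all-$B$ state graphs of $D$; dividing by $a^{\tw(D)}$ to pass from $L_D$ to $F_K$ then shows that the failure of $F_K$ to be symmetric in $a$ records the writhe $\tw(D)$.  Because $\tw(\overline D)=-\tw(D)$ by Lemma \ref{Lemma 5.5} and the writhe of a reduced alternating diagram is an invariant of $K$ (Tait), symmetry of $F_K$ in $a$ forces $\tw(D)=0$, after which one would invoke the classification of alternating knots to upgrade $\tw(D)=0$ to $K\approx\overline K$.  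The second step is to attack arbitrary $K$ by extracting a genuine chirality obstruction from $F_K$: the natural candidate is the Murasugi signature $\sigma(K)$, which changes sign under mirroring (Lemma \ref{Lemma 4.13}(a) with $u=v$, or equivalently the $\nabla_{\overline L}(z)=\nabla_L(-z)$ part of Lemma \ref{Lemma 3.15}).  If $\sigma(K)$ --- or its residue modulo $4$, or the jump data of the Tristram--Levine signatures --- could be shown to be a function of $F_K$, then symmetry of $F_K$ would give $\sigma(K)=-\sigma(K)$, reducing everything to the case $\sigma(K)=0$ with $K$ non-adequate.  That residual case I would try to settle by a resolving-tree induction in the spirit of the proof of Theorem \ref{Theorem 2.1.1}, tracking how the $a\mapsto a^{-1}$ symmetry does or does not propagate through the Kauffman relation $L_{L_+}+L_{L_-}=z(L_{L_\circ}+L_{L_\infty})$.

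The main obstacle is precisely the extraction of $\sigma(K)$ --- or of \emph{any} honest chirality obstruction --- from $F_K$ for an arbitrary knot, and this is also why the statement remains only a conjecture: no such extraction is known, the Jones specialization of $F$ already misses chirality for infinitely many knots, and $F$ is constant on mutation classes (Lemma \ref{Lemma 5.9}(e)) whereas chirality is not, so the polynomial may simply not see enough.  One should therefore be prepared for the plan to yield only a conditional result --- the conjecture for alternating (or adequate) knots, together with a reduction of the general case to non-adequate knots of zero signature --- and, in that residual class, to \emph{search for} rather than exclude counterexamples, the natural first candidates being knots that are simultaneously Jones-amphichiral, algebraically amphichiral, and of trivial signature, in close analogy with the Birman links of Theorems \ref{Theorem 3.29} and \ref{Theorem 3.34}.
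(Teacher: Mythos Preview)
The statement is a \emph{conjecture}, and the paper does not prove it; on the contrary, immediately after stating it the paper records that the knots $9_{42}$ and $10_{71}$ \emph{contradict} it. In other words, the paper's ``treatment'' of Conjecture~\ref{Conjecture 5.11} is a disproof by explicit counterexample: $9_{42}$ is chiral (it has signature $\pm 2$, as confirmed in the table at the end of the paper), yet $F_{9_{42}}(a,z)=F_{9_{42}}(a^{-1},z)$, so $F_{9_{42}}=F_{\overline{9}_{42}}$. Hence any attempt to prove the conjecture must fail.

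Your plan contains the seed of its own refutation but does not recognize it. You correctly isolate the residual case of non-adequate knots with $\sigma=0$ and Jones-amphichiral polynomial, and you even name $9_{42}$ as a knot whose Jones polynomial is symmetric under $t\mapsto t^{-1}$; but $9_{42}$ is more than that --- its \emph{full} Kauffman polynomial is symmetric under $a\mapsto a^{-1}$ while the knot is chiral. This kills both halves of your second step simultaneously: the signature of $9_{42}$ is nonzero, so $\sigma(K)$ is provably \emph{not} a function of $F_K$ (your hoped-for extraction is impossible, not merely unknown), and the resolving-tree induction on the $a\mapsto a^{-1}$ symmetry cannot force amphichirality either. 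The conditional result you anticipate for alternating/adequate knots is a reasonable salvage (and is in the spirit of the weakened versions the paper points to in Problem~\ref{Problem 5.29}(d) and Conjecture~\ref{Conjecture 5.30}), but the general statement is simply false.
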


The knots $9_{42}$ and $10_{71}$ (in the Rolfsen \cite{Ro} notation,
see Figure \ref{Fig 5.4}) contradict the conjecture, however weak
version of the conjecture can still be true (see Problem
\ref{Problem 5.29}(d) and Conjecture \ref{Conjecture 5.30}).

\begin{prob}\label{Problem 5.12} \begin{enumerate}\item[(a)] Is it
possible to distinguish $9_{42}$ from $\overline{9}_{42}$ using any
invariant yield by a Conway algebra ($9_{42}$ and
$\overline{9}_{42}$ are not skein equivalent because they have
different signatures)? (Figure \ref{Fig 5.4}.)

\item[(b)] Is the knot $10_{71}$ skein equivalent to its mirror
image? (Figure \ref{Fig 5.4}.)\end{enumerate}\end{prob}

\begin{figure}[htbp]
\centering
\includegraphics{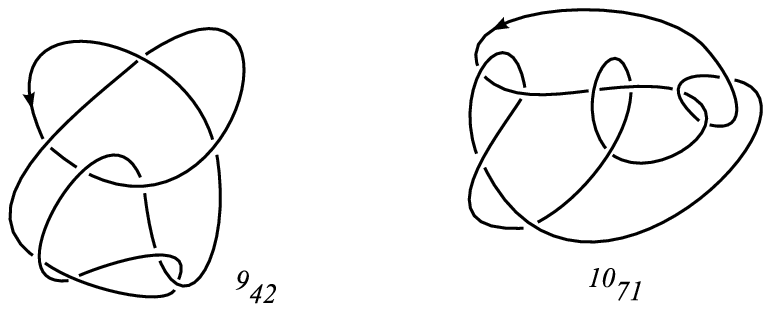}
\caption{}\label{Fig 5.4}
\end{figure}

The Kauffman polynomial is also the generalization of the Jones
polynomial.

\begin{thm}[\cite{Li_2}]\label{Theorem 5.13} $V_L(t)= F_L\left(t^{3/4},
-\left(t^{-1/4}+t^{1/4}\right)\right)$.\end{thm}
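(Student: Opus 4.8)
The plan is to deduce the identity from the uniqueness of the Jones polynomial. By Theorem~\ref{Theorem 2.1.1} applied to the Conway algebra of Example~\ref{Example 2.1.8} (the one that gives the Jones polynomial), $V$ is the unique invariant of oriented links with $V_{T_1}(t)=1$ and $-tV_{L_+}+t^{-1}V_{L_-}=(t^{1/2}-t^{-1/2})V_{L_\circ}$. Hence it suffices to show that the oriented-link invariant
$$G_L(t):=F_L(t^{3/4},-(t^{-1/4}+t^{1/4}))$$
satisfies these two conditions; then $G_L=V_L$ follows automatically.

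The normalization is immediate: since $F_D(a,z)=a^{-\tw(D)}L_D(a,z)$ and $L_{T_1}(a,z)=a^{\tw(T_1)}$, we get $F_{T_1}(a,z)=1$ for every $a$ and $z$, so $G_{T_1}(t)=1$. As a consistency check that already explains the shape of the substitution, Lemma~\ref{Lemma 5.9}(b) gives $F_{T_n}=\mu^{\,n-1}$ with $\mu=(a+a^{-1})/z-1$, and at $a=t^{3/4}$, $z=-(t^{-1/4}+t^{1/4})$ a short computation yields $\mu=-t^{1/2}-t^{-1/2}$, so $G_{T_n}(t)=(-t^{1/2}-t^{-1/2})^{n-1}=V_{T_n}(t)$.

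The real work is the skein relation for $G$, and this is where the argument departs from the proof of Theorem~\ref{Theorem 5.7}: there the passage from a Kauffman-type relation to an oriented one needed only a writhe correction, because the relation defining $R$ was already three-term, whereas here the Kauffman relation for the regular-isotopy invariant $L_D$ is the \emph{four}-term identity $L_{L_+}+L_{L_-}=z(L_{L_\circ}+L_{L_\infty})$ and the disoriented smoothing $L_\infty$ must be removed. I would fix an oriented diagram with a distinguished positive crossing, write this identity, and --- using $\tw(L_+)=\tw(L_\circ)+1$, $\tw(L_-)=\tw(L_\circ)-1$ and dividing by $a^{\tw(L_\circ)}$ --- rewrite its first three terms, obtaining a relation of the form $aF_{L_+}+a^{-1}F_{L_-}=zF_{L_\circ}+X$, where $X$ is built from the regular-isotopy invariant of $L_\infty$. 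The difficulty is that near the crossing $L_\infty$ consists of two turnbacks and admits no orientation compatible with the rest of the diagram, so $X$ is not a priori a locally controlled quantity.

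To eliminate $L_\infty$ I would use the standard device of inserting an auxiliary clasp next to the distinguished crossing by a Reidemeister move of the second type (which changes neither $L_D$ nor $F$), and then repeatedly applying the four-term Kauffman relation at the two new crossings together with the curl rule used in the definition of $F$ (the first Reidemeister move multiplies $L_D$ by $a$ or $a^{-1}$ according to the sign of the curl, which by Lemma~\ref{Lemma 5.5} does not depend on orientation). Each step removes a crossing or a curl, so after finitely many steps $X$ is expressed through $F_{L_\circ}$ and $F_{L_-}$ with coefficients that are Laurent polynomials in $a$ and $z$, the requisite powers of $a$ being tracked via the behaviour of the writhe under orientation reversal (Lemma~\ref{Lemma 5.10}). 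After the substitution $a=t^{3/4}$, $z=-(t^{-1/4}+t^{1/4})$ the surviving identity is exactly $-tG_{L_+}+t^{-1}G_{L_-}=(t^{1/2}-t^{-1/2})G_{L_\circ}$, the Jones skein relation, so $G_L=V_L$ by the uniqueness above. I expect this elimination of $L_\infty$, and the accompanying bookkeeping of powers of $a$ and of linking numbers, to be the main obstacle; it is precisely this computation that forces the substitution to take the quarter-power form $a=t^{3/4}$, $z=-(t^{-1/4}+t^{1/4})$.
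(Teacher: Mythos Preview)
Your uniqueness strategy and the normalization check are fine, and you have correctly identified the crux: the $L_\infty$ term must be dealt with. But the proposed elimination is a genuine gap. ``Insert a clasp and repeatedly apply the four-term relation'' does not terminate in an expression involving only $F_{L_\pm}$ and $F_{L_\circ}$: every application of the Kauffman relation at a new crossing produces a \emph{new} $\infty$-smoothing. More fundamentally, a universal three-term identity among $F_{L_+},F_{L_-},F_{L_\circ}$ with coefficients in $\ints[a^{\pm1},z^{\pm1}]$ cannot exist --- it would force $F$ to be a specialization of the Jones--Conway polynomial, which it is not. Whatever cancellation occurs must exploit the particular substitution $a=t^{3/4}$, $z=-(t^{1/4}+t^{-1/4})$, and your sketch does not show where or how that happens.

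The paper (following Lickorish) runs the argument in the opposite direction: instead of proving that the specialized $F$ satisfies the Jones skein relation, it proves that $V$ satisfies the oriented form of the Kauffman relation (Lemma~\ref{Lemma 5.14}) at the specialized values, and then invokes the uniqueness of $F$. The missing ingredient --- the one that replaces your vague clasp manipulation --- is Birman's $V_\infty$-formula (Lemma~\ref{Lemma 5.16}):
\[
\sqrt{t}\,V_{L_+}(t)-t^{-1/2}V_{L_-}(t)=(\sqrt{t}-t^{-1/2})\,t^{-3\lambda}V_{L_\infty}(t),
\]
which itself rests on the Jones reversing result $V_{L'}(t)=t^{3\lambda}V_L(t)$ (Lemma~\ref{Lemma 5.15}). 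Adding this $V_\infty$-formula to the defining Jones skein relation produces exactly the relation of Lemma~\ref{Lemma 5.14} at $a=t^{3/4}$, $z=-(t^{1/4}+t^{-1/4})$. So the $L_\infty$ contribution is handled not by local diagram moves on $F$ but by a nontrivial identity for $V$; that identity, and the reversing lemma behind it, are the real content your argument is missing.
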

\begin{proof} First we will describe the Kauffman polynomial not
using regular isotopy.

\begin{lem}\label{Lemma 5.14} The Kauffman polynomial is uniquely
determined by the following conditions:
\begin{enumerate}

\item $F_{T_1}(a,z) = 1$

\item \begin{enumerate}

\item[(i)] for $c(L_+)<c_(L_\circ)$ , where $c(L)$ denotes the
number of components:  $$ aF_{L_+}(a,z) + \frac{1}{a}F_{L_-}(a,z)
=z(F_{L_\circ}(a,z) + a^{-4\lambda} F_{L_\infty}(a,z))$$ where we
give $L_\infty$ one of the two possible orientations and $\lambda =
\lk(L_i, L_\circ, -L_i)$ where $L_i$ is the component of $L_\circ$
which orientation does not agree with the orientation of the
corresponding component of $L_\infty$.

\item[(ii)] for $c(L_+)>c(L_\circ)$ $$ aF_{L_+}(a,z) +
\frac{1}{a}F_{L_-}(a,z) =z(F_{L_\circ}(a,z) + a^{-4\lambda+2}
F_{L_\infty}(a,z))$$ where we give $L_\infty$ one of the two
possible orientations and $\lambda = \lk(L_i, L_+, -L_i)$ where
$L_i$ is the component of $L_+$ which orientation does not agree
with orientation of the corresponding component of $L_\infty$.

\end{enumerate}\end{enumerate}\end{lem}
\begin{proof} It follows from the definitions of $F_L(a,z)$ and
from Lemma \ref{Lemma 5.10}.  We will show as an example how to get
the formula (2)(ii).

By definition we have $$L_{L_+}(a,z) + L_{L_-}(a,z) =
z(L_{L_\circ}(a,z) + L_{L_\infty}(a,z)$$ therefore:
$$a^{\tw(L_+)}F_{L_+}(a,z)+a^{\tw(L_-)}F_{L_-}(a,z) = z
(a^{\tw(L_\circ)}F_{L_\circ}(a,z)+a^{\tw(L_\infty)}F_{L_\infty}(a,z))$$
so $$a F_{L_+}(a,z)+\frac{1}{a}F_{L_-}(a,z) =
z(F_{L_\circ}(a,z)+a^{\tw(L_\infty) -
\tw(L_\circ)}F_{L_\infty}(a,z))$$ what reduces to the formula
(2)(ii).\end{proof}

In the next part of the proof of Theorem \ref{Theorem 5.13} we need
an additional characterization of the Jones polynomial. Remind that
the Jones polynomial was uniquely defined by the following
conditions:

\begin{enumerate}
\item $V_{T_1}(t)=1$

\item $\displaystyle -tV_{L_+}(t)+\frac{1}{t} V_{L_-}(t) =
\left(\sqrt{t} -\frac{1}{\sqrt{t}}\right) V_{L_\circ}(t)$.

\end{enumerate}

\begin{lem}[Jones]\label{Lemma 5.15} Suppose that a component
$L_i$ of an oriented link $L$ has linking number $\lambda$ with the
union of the other components.

Let $L'$ be $L$ with the direction of $L_i$ reversed.  Then
$V_{L'}(t)=t^{3\lambda}V_L(t)$.\end{lem}

\begin{proof} We present the proof of Lickorish and Millett
\cite{Li_M_3}, another elementary proofs have been found by Morton
and Kauffman.

The proof is in five sections.

\begin{enumerate}

\item Lemma \ref{Lemma 5.15} is true for the two links of Figure
\ref{Fig 5.5}.  This is an easy computation.

\begin{figure}[htbp]
\centering
\includegraphics[width=.6\textwidth]{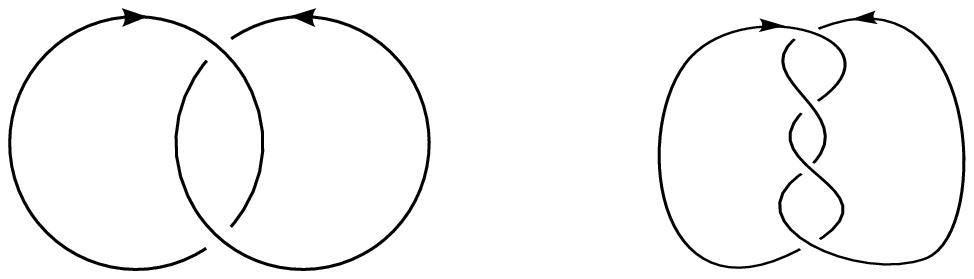}
\caption{}\label{Fig 5.5}
\end{figure}

\item If the orientation of every component of $L$ is reversed
then $V_L(t)$ is unchanged.  Further, $V_{K \cs L}(t) =
V_K(t)V_L(t)$ where $K\cs L$ is any connected sum of oriented links
$K$ and $L$, and also $V_{\overline{L}}(t)=V_L(1/t)$, where
$\overline{L}$ is the mirror image of $L$.  Thus if Lemma \ref{Lemma
5.15} is true for $K$ and $L$ it is true for $\overline{K}$ and for
$K \cs L$.

\item Consider the self-crossings of the component $L_i$ in some
presentation of $L$.  Induction (as repeatedly used in section
three of \cite{Li_M_1} or in \ref{Theorem 2.1.1}) on the number of
these crossings and on the number of them that have to be switched
to unknot $L_i$ shows that $L_i$ may be assumed to be unknotted.

\item Let the unknotted component $L_i$ bound a disc that meets
the remained of $L$ in $n$ points. Proceed by induction on $n$. The
start of the induction will be given in (5); for the moment assume
that $n\geq 4$.  Figure \ref{Fig 5.6} depicts a skein triple in
which $L_i$ is $L_\circ$.  The disc bounded by $L_i$ is shown
meeting the remained of $L$ in $n$ points shown as crosses. In
$L_-$, $L_i$ has become two unlinked curves $\gamma^-_1$ and
$\gamma^-_2$ that bound discs that meet the remainder of $L_-$ with
linking numbers $\lambda_1$ and $\lambda_2$ respectively. The
situation of $L_+$ is exactly similar except that $\gamma^+_1$ and
$\gamma^+_2$ are linked as shown.

\begin{figure}[htbp]
\centering
\includegraphics{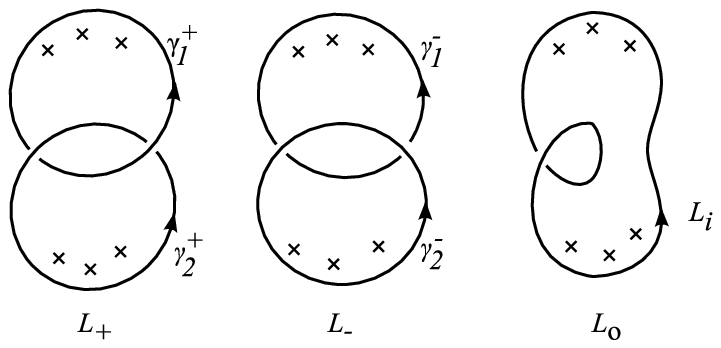}
\caption{}\label{Fig 5.6}
\end{figure}

Thus $n_1+n_2 = n$ and $\lambda_1 + \lambda_2 = \lambda$.  Choose
$n_1$ and $n_2$ so that each is at most $n-2$, (recall $n\geq 4$).
Let $L'_+$, $L'_-$, and $L'_\circ$ be the same links but with the
$\gamma^{\mp}_1$, $\gamma^{\mp}_2$ and $L_i$ all reversed.  Then
$$\begin{array}{l}tV_{L_+}(t) - t^{-1}V_{L_-}(t) + (t^{1/2}-t^{1/2})V_{L_\circ}(t)
= 0 \\ tV_{L'_+}(t) - t^{-1}V_{L'_-}(t) +
(t^{1/2}-t^{1/2})V_{L'_\circ}(t) = 0.\end{array}$$ But, by the
induction on $n$, reversing $\gamma^-_1$ and then $\gamma^-_2$ gives
$$t^{3\lambda_2}t^{3\lambda_1}V_{L_-}(t) = V_{L'_-}(t)$$ and
reversing $\gamma^+_1$ and then $\gamma^+_2$ gives
$$t^{3(\lambda_2-1)}t^{3(\lambda_1+1)}V_{L_+}(t)=V_{L_+}(t).$$  It
follows immediately that $t^3V_L(t) = V_{L'}(t)$.

This argument extends a little further when $n=3$.  If $\lambda$ is
also 3, choose $n_1 =1$ and $n_2=2$, then the above argument holds
if the theorem is known for $n=3$ and $\lambda =1$ and for $n\leq
2$.  Similarly when $n=3$, $\lambda =-3$.

\item Suppose that $n=3$ and $\lambda = \pm 1$.  It is required to
show that whatever tangle is inserted into the room (the rectangle)
of Figure 5.\ref{Fig 5.7} to give $L$, Lemma \ref{Lemma 5.15} holds
true and $t^3 V_L(t) = V_{L'}(t)$.

However, the standard induction on the number of crossings in the
room and on the number of bad crossings in the room for some choice
of base points allows us to consider the room filled on Figure
5.\ref{Fig 5.8}.

\begin{figure}[htbp]
\centering

\subfigure[]{\includegraphics[scale = 0.9]{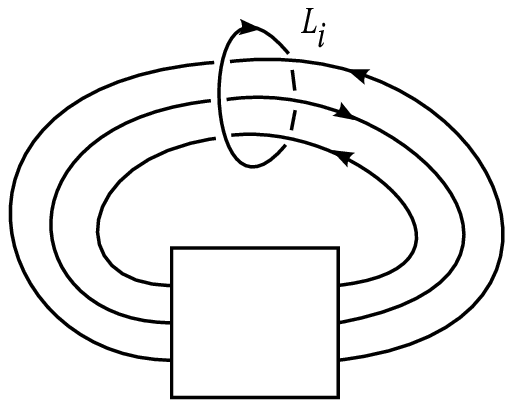}\label{Fig
5.7}} \hspace{2cm} \subfigure[]{\includegraphics[scale =
0.9]{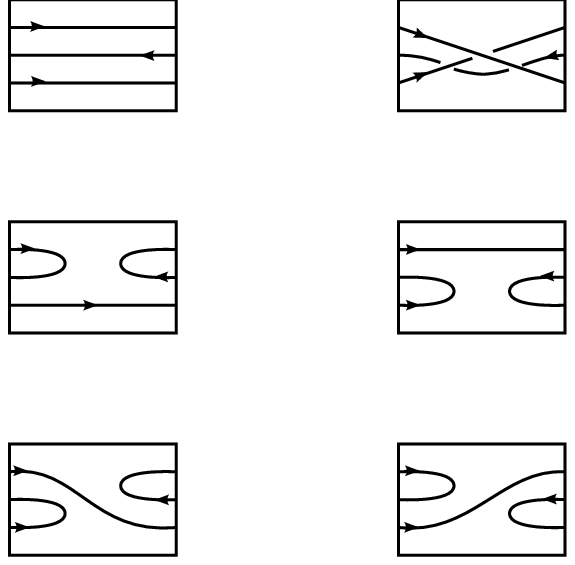}\label{Fig 5.8}}

\caption{}

\end{figure}
\setcounter{figure}{8}

\noindent Thus all that is required is to check that whichever of
these pictures is inserted into the room to give $L$ the theorem
holds. This follows at once from (1) and (2).
\end{enumerate}

A simplified version of this proof works when $n=2$.

The case $n=1$ is immediate from (1) and (2) and $n=0$ is trivial.

This completes the proof of Lemma \ref{Lemma 5.15}.
\end{proof}

We have remarked in Theorem \ref{Theorem 4.20} that Lemma \ref{Lemma
5.15} can be used to extend the result of Murasugi \cite{Mu_4} of
signature into supersignature related to the Jones polynomial.  The
next step to prove Theorem \ref{Theorem 5.13} is so called
$V_\infty$-formula \cite{Bi_3}, \cite{Bi_4}.

\begin{lem}[Birman]\label{Lemma 5.16}\begin{enumerate}

\item[(i)] $c(L_+)<c(L_\circ)$ where $c(L)$ is the number of
components of $L$.  Let us give $L_\infty$ me orientation (which
agrees with that of $L_+$ if possible), and let $\lambda = \lk(L_i,
L_\circ-L_i)$ where $L_i$ is the component of $L_\circ$ which
orientation does not agree with the orientation of the corresponding
component of $L_\infty$.  Then $$\sqrt{t} V_{L_+}(t) -
\frac{1}{\sqrt{t}}V_{L_-}(t) = \left(\sqrt{t} -
\frac{1}{\sqrt{t}}\right) t^{-3\lambda}V_{L_\infty}(t)$$

\item[(ii)] $c(L_+)<c(L_\circ)$

Let us give $L_\infty$ one of the two possible orientations and let
$\lambda = \lk(L_i, L_+-L_i)$, where $L_i$ is the component of $L_+$
which orientation does not agree with the orientation of the
corresponding component of $L_\infty$.  Then $$\sqrt{t} V_{L_+}(t) -
\frac{1}{\sqrt{t}}V_{L_-}(t) = \left(\sqrt{t} -
\frac{1}{\sqrt{t}}\right) t^{-3(\lambda - 1/2)}V_{L_\infty}(t).$$

\end{enumerate}
\end{lem}
\begin{proof}
We will follow Lickorish and Millett \cite{Li_M_3}.

\begin{enumerate}

\item[(i)] $c(L_+) < c(L_\circ)$

Consider the diagram $X$ with 2 crossings $p$ and $q$ as on Figure
\ref{Fig 5.9} such that $$L_\circ = X^{pq}_{-+},~L_+ =
X^{pq}_{+\circ},~ L_- = X^{pq}_{-\circ}.$$

If we consider the crossing $q$ we get:
\begin{enumerate}

\item $-tV_{L_\circ}(t) + \frac{1}{t}V_X(t) =
\left(t-\frac{1}{\sqrt{t}}\right)V_{L_-}(t)$.

\begin{figure}
\centering
\includegraphics{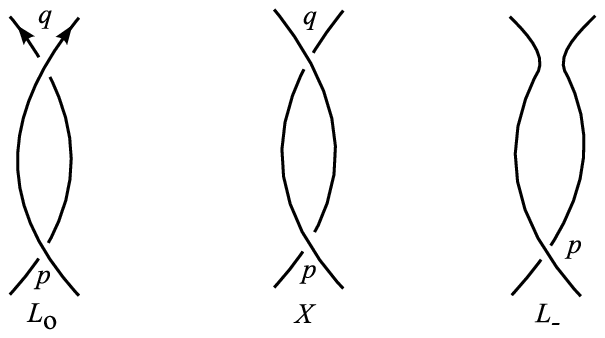}
\caption{}\label{Fig 5.9}
\end{figure}

Now let us change the orientation of the component of $L_\circ$
which contains the upper right corner of the diagram (Figure
\ref{Fig 5.10}).  We get the link $L'_\circ$.  We change similarly
$X$ into $X'$.  Now let us choose the orientation of $L_\infty$ so
it agrees with the orientation of $L'_\circ$ (Figure \ref{Fig 5.10};
$L_\infty = X^{pq}_{-\infty} = X^{pq}_{\infty -}
=X'^{pq}_{+\circ}$).  From the diagrams of Figure \ref{Fig 5.10}
(considering the crossing $q$) one gets:

\begin{figure}
\centering
\includegraphics{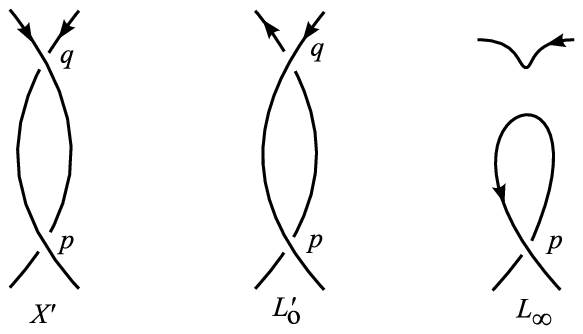}
\caption{}\label{Fig 5.10}
\end{figure}

$$-tV_{X'}(t) +
\frac{1}{t}V_{L'_\circ}(t) =
\left(t-\frac{1}{\sqrt{t}}\right)V_{L_\infty}(t)$$ and because of
Lemma \ref{Lemma 5.15} $V_{L'_\circ}(t) =
t^{3\lambda}V_{L_\circ}(t)$ and $V_{X'}(t) =
t^{3(\lambda-1)}V_X(t)$, therefore we get:

\item $-\frac{1}{t^2} V_X(t) +
\frac{1}{t}V_{L_\circ}(t)=\left(\sqrt{t}-\frac{1}{\sqrt{t}}\right)t^{-3\lambda}V_{L_\infty}(t).$
The triple $L_+$, $L_-$, and $L_\circ$ gives us the equation

\item $-tV_{L_+}(t)+\frac{1}{t}V_{L_-}(t) = \left(t -
\frac{1}{t}\right)V_{L_\circ}(t)$.
\end{enumerate}

The equation (b)$+\frac{1}{t}$(a)$-\frac{1}{t}$(c) gives us the
$V_\infty$-formula

\item[(ii)] $c(L_+) > c(L_\circ)$.

Let $L'_+$ be the link obtained from $L_+$, by changing the
orientation of $L_i$.  Similarly $L'_-$ is get from $L_-$.  Now the
smoothing of $L'_+$ is exactly $L'_\circ = L_\infty$.  Let us use
the defining equation for the Jones polynomial into the triple
$L'_-$, $L'_+$, $L'_\circ$ we get: $-t V_{L'_-}(t) +
\frac{1}{t}V_{L'_+}(t)=\left(\sqrt{t}-\frac{1}{\sqrt{t}}\right)V_{L'_\circ}$,
now from Lemma \ref{Lemma 5.15} we get: $-tV_{L'_+}(t) =
t^{3\lambda} V_{L_+}(t), ~ V_{L_-}(t)= t^{3(\lambda -
1)}V_{L_-}(t)$.  Therefore

\begin{enumerate}

\item[(d)] $-\frac{1}{t^2} V_{L_-}(t) +
\frac{1}{t}V_{L_+}(t)=\left(\sqrt{t}-\frac{1}{\sqrt{t}}\right)t^{-3\lambda}V_{L_\infty}(t)$
\end{enumerate} what ends the proof of (ii) and of Lemma \ref{Lemma 5.16}.
\end{enumerate}
\end{proof}

Now we are ready to finish the proof of Theorem \ref{Theorem 5.13}.
We follow in this the paper of Lickorish \cite{Li_2}.

As usual we consider two cases: \begin{enumerate}

\item[(i)]  $c(L_+)<c(L_\circ)$

Consider the formula which defines the Jones polynomial and
$V_\infty$-formula.  We get $$\begin{array}{l} -tV_{L_+}(t) +
\frac{1}{t}V_{L_-}(t) = \left(\sqrt{t} -
\frac{1}{\sqrt{t}}\right)V_{L_\circ}(t) \\
\sqrt{t}V_{L_+}(t)-\frac{1}{\sqrt{t}} V_{L_-}(t) = \left(\sqrt{t} -
\frac{1}{\sqrt{t}}\right)t^{-3\lambda}V_{L_\infty}(t).\end{array}$$

If we add this two formulas we get the formula (i) from Lemma
\ref{Lemma 5.14}(2) for $a=t^{3/4}$, $z=-(t^{-1/4}+t^{1/4})$.

\item[(ii)]  $c(L_+)>c(L_\circ)$

We proceed in the same manner as in the case (i) to get the formula
(ii) from Lemma \ref{Lemma 5.14}(2).
\end{enumerate}

It completes the proof of Theorem \ref{Theorem 5.13}.

\end{proof}

Kauffman \cite{K_6} has found a nice characterization of the Jones
polynomial which is of a great importance for alternating links
\cite{Mu_2}, \cite{K_6}, \cite{K_7}, \cite{Mu_3}.

This characterization follows easily from the $V_\infty$-formula.

\begin{cor}\label{Corollary 5.17}  Consider the polynomial
invariant of the regular isotopy $\widetilde{V}_L(t) =
t^{\frac{3}{4}\tw(L)}V_L(t)$.  Then $\widetilde{V}_L(t)$ is uniquely
determined by the following conditions:
\begin{enumerate}

\item $\widetilde{V}_{T_1}(t) = t^{\frac{3}{4}\tw(T_1)}$ where
$T_1$ is isotopic to the trivial knot,

\item $\widetilde{V}_{L_+} =
-t^{\frac{1}{4}}\widetilde{V}_{L_\circ}(t) -
t^{-\frac{1}{4}}\widetilde{V}_{L_\infty}(t)$,

\item $\widetilde{V}_{L_-} =
-t^{-\frac{1}{4}}\widetilde{V}_{L_\circ}(t) -
t^{\frac{1}{4}}\widetilde{V}_{L_\infty}(t)$.\end{enumerate}\end{cor}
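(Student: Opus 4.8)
The plan is to deduce the three relations for $\widetilde V_L(t)=t^{\frac{3}{4}\tw(L)}V_L(t)$ from the defining skein relation of the Jones polynomial together with the $V_\infty$-formula (Lemma~\ref{Lemma 5.16}), in the spirit of the derivation of Theorem~\ref{Theorem 5.13}, and then to obtain uniqueness by the usual induction on $\crs(L)$ using resolving trees built from the smoothings $L_\circ$ and $L_\infty$. Condition~(1) is immediate, since $V_{T_1}=1$.

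For the relations, fix a crossing $p$ and record the two identities relating $V_{L_+}$ and $V_{L_-}$: the defining one,
\[
-tV_{L_+}(t)+\frac{1}{t}V_{L_-}(t)=\left(\sqrt t-\frac{1}{\sqrt t}\right)V_{L_\circ}(t),
\]
and the relevant case of Lemma~\ref{Lemma 5.16},
\[
\sqrt t\,V_{L_+}(t)-\frac{1}{\sqrt t}V_{L_-}(t)=\left(\sqrt t-\frac{1}{\sqrt t}\right)t^{-3\lambda}V_{L_\infty}(t)
\]
when $c(L_+)<c(L_\circ)$, with the analogous formula carrying exponent $-3(\lambda-\frac12)$ when $c(L_+)>c(L_\circ)$. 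This is a $2\times2$ linear system in $V_{L_+},V_{L_-}$; solving it (the common factor $t-1$ cancels) gives
\[
V_{L_+}=-\frac{1}{\sqrt t}V_{L_\circ}-\frac{1}{t}\,t^{-3\lambda}V_{L_\infty},\qquad V_{L_-}=-\sqrt t\,V_{L_\circ}-t\,t^{-3\lambda}V_{L_\infty},
\]
and similarly in the other case. It then remains to multiply through by $t^{\frac34\tw(L_+)}$ (resp.\ $t^{\frac34\tw(L_-)}$) and to pass to $\widetilde V$.

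Here two bookkeeping facts about twist numbers do the work. First, the oriented smoothing at $p$ deletes $p$ and leaves the signs of all other crossings unchanged, so $\tw(L_\circ)=\tw(L_+)-1=\tw(L_-)+1$; this turns the $V_{L_\circ}$-terms into $\widetilde V_{L_\circ}$ with precisely the coefficients $-t^{1/4}$ and $-t^{-1/4}$ required by~(2) and~(3). Second, by Lemma~\ref{Lemma 5.15} reversing a component $D_i$ of a diagram $D$ multiplies $V$ by $t^{3\mu}$ with $\mu=\lk(D_i,D-D_i)$, while it changes $\tw$ by $-4\mu$ (self-crossings of $D_i$ keep their sign, and the $2\mu$ crossings of $D_i$ with the rest each flip sign); hence $\widetilde V$ is \emph{unchanged} under orientation reversal, and in particular $\widetilde V_{L_\infty}$ does not depend on the orientation chosen for $L_\infty$. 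Together with the identity $\tw(L_+)-\tw(L_\infty)=1+4\lambda$ (respectively $4\lambda-1$ when $c(L_+)>c(L_\circ)$) --- the $1$ coming from the deletion of $p$ and the $4\lambda$ being the twist correction forced by the orientation reversal built into the wrong-way smoothing of Lemma~\ref{Lemma 5.16} --- the powers $t^{-3\lambda}$ and $t^{-3(\lambda-1/2)}$ are absorbed exactly, so the $V_{L_\infty}$-terms become $-t^{-1/4}\widetilde V_{L_\infty}$ in~(2) and $-t^{1/4}\widetilde V_{L_\infty}$ in~(3). The step I expect to be the real obstacle is exactly this last one: verifying the identities $\tw(L_+)-\tw(L_\infty)=1+4\lambda$ and $\tw(L_+)-\tw(L_\infty)=4\lambda-1$ with the precise orientation conventions of Lemma~\ref{Lemma 5.16}, so that the $\lambda$-dependence cancels and one is left with the uniform, $\lambda$-free relations~(2) and~(3); the remaining manipulations are routine algebra.

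Finally, for uniqueness I would induct on $\crs(L)$: relations~(2) and~(3) express $\widetilde V_{L_+}$ and $\widetilde V_{L_-}$ in terms of $\widetilde V_{L_\circ}$ and $\widetilde V_{L_\infty}$, each with fewer crossings, so every value $\widetilde V_L$ is forced once $\widetilde V$ is known on crossingless diagrams, that is on trivial links. The trivial knot is covered by~(1); the value on the trivial link of $n$ components is then forced by applying~(1)--(3) to a suitable diagram of the unknot whose crossings peel off the extra trivial circles, exactly as in the proof of Theorem~\ref{Theorem 5.1}, where the independence of the resolving tree from the choices made is the Conway-type point. This completes the plan.
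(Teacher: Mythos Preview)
Your proposal is correct and follows essentially the same route as the paper: both use the defining Jones relation together with the $V_\infty$-formula (Lemma~\ref{Lemma 5.16}) as a $2\times 2$ linear system relating $L_+,L_-$ to $L_\circ,L_\infty$, and then eliminate. The only difference is the order of operations. The paper first rewrites both equations in terms of $\widetilde V$---invoking Theorem~\ref{Theorem 5.13} (that $V$ is a specialization of the Kauffman polynomial) to know $\widetilde V$ is an invariant of \emph{unoriented} diagrams, so the $\lambda$-dependence vanishes automatically upon conversion---and then eliminates $\widetilde V_{L_-}$ (resp.\ $\widetilde V_{L_+}$). You solve first in $V$, then multiply by $t^{\frac34\tw}$ and do the explicit twist bookkeeping, deriving orientation-independence of $\widetilde V$ directly from Lemma~\ref{Lemma 5.15} rather than via the Kauffman polynomial. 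Your twist identities $\tw(L_+)-\tw(L_\infty)=1+4\lambda$ (case $c(L_+)<c(L_\circ)$) and $4\lambda-1$ (case $c(L_+)>c(L_\circ)$) are exactly those implicit in Lemma~\ref{Lemma 5.14}, so the cancellation you flag as the ``real obstacle'' goes through cleanly. The paper's order is slightly slicker since the $\lambda$ never appears once you pass to $\widetilde V$; your version is a touch more self-contained since it avoids appealing to Theorem~\ref{Theorem 5.13}.
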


\begin{proof} $\widetilde{V}_L(t)$ is by the definition of the
invariant of unoriented diagrams ($V_L$ is a special case of the
Kauffman polynomial).  To prove the corollary it is enough to show
the properties (2) and (3).

From the formula which defines the Jones polynomial and from
$V_\infty$-formula one gets:

$$\begin{array}{l}-t^{\frac{1}{4}}\widetilde{V}_{L_+}(t) +
t^{-\frac{1}{4}}\widetilde{V}_{L_-}(t)=\left(\sqrt{t} -
\frac{1}{\sqrt{t}}\right)\widetilde{V}_{L_\circ}(t)\textrm{, and}\\
t^{-\frac{1}{4}}\widetilde{V}_{L_+}(t) -
t^{\frac{1}{4}}\widetilde{V}_{L_-}(t)=\left(\sqrt{t} -
\frac{1}{\sqrt{t}}\right)\widetilde{V}_{L_\infty}(t).\end{array}$$

Now one gets, eliminating $\widetilde{V}_{L_-}(t)$:

$$\begin{array}{l}-t^{\frac{3}{4}}\widetilde{V}_{L_+}(t) +
t^{-\frac{1}{4}}\widetilde{V}_{L_+}(t)=\left(\sqrt{t} -
\frac{1}{\sqrt{t}}\right)(\sqrt{t}\widetilde{V}_{L_\circ}(t)+\widetilde{V}_{L_\infty}(t))\textrm{,
so}\\ \widetilde{V}_{L_+}(t) \left(-\sqrt{t} +
\frac{1}{\sqrt{t}}\right)=\left(\sqrt{t} -
\frac{1}{\sqrt{t}}\right)(t^{\frac{1}{4}}\widetilde{V}_{L_\circ}(t)
+ t^{-\frac{1}{4}}\widetilde{V}_{L_\infty}(t))\end{array}$$ what is
equivalent into the formula (2).

Similarly, we get formula (3).

\end{proof}

Kauffman reformulated the condition (2) and (3) into the form
\begin{enumerate}

\item[($2'$)] $\widetilde{V}_{\includegraphics{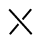}}(t) =
-t^{\frac{1}{4}}\widetilde{V}_{\smilefrownSmall}(t) -
t^{-\frac{1}{4}}\widetilde{V}_{\infinityfig}(t)$ and

\item[($3'$)] $\widetilde{V}_{\includegraphics{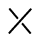}}(t) =
-t^{-\frac{1}{4}}\widetilde{V}_{\smilefrownSmall}(t) -
t^{\frac{1}{4}}\widetilde{V}_{\infinityfig}(t)$.

\end{enumerate}

This approach allowed Kauffman to give different proof of the Jones
reversing result and the Birman $V_\infty$-formula \cite{K_6}.

K.~Murasuqi \cite{Mu_2}, \cite{Mu_3} (see also \cite{K_6} and
\cite{K_7}) has used the above Corollary to prove the classical Tait
\cite{Ta} conjecture about alternating links.  Namely: A link
projection $\widetilde{L}$ is called proper if $\widetilde{L}$ does
not contain ``removable'' double points like
$\includegraphics{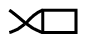}$~. The reduced degree $r$-deg of the
polynomial $ \displaystyle w=\sum_m^n a_it^i$ (where $a_m, a_n \neq
0$) is defined to be $r$-deg $w = n=m$.

\begin{thm}\label{Theorem 5.18} \begin{enumerate}\item[(a)] If
$\widetilde{L}$ is a connected and proper alternating projection of
an alternating link $L$, then $r$-deg $V_L(t) = \crs(\widetilde{L})$
where $\crs(\widetilde{L})$ denotes the number of crossings of $L$.

\item[(b)] If $L$ is a prime link, then for any non-alternating
projection $\widetilde{L}$ of $L$, $r$-deg $V_L(t) <
\crs(\widetilde{L})$...

\item[(c)] Two (connected and proper alternating projections of an
alternating link have the same number of crossings.
\end{enumerate}\end{thm}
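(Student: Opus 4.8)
The plan is to convert Corollary~\ref{Corollary 5.17} into a Kauffman--bracket state sum for $\widetilde V_{\widetilde L}(t)=t^{\frac34\tw(\widetilde L)}V_L(t)$ and then read off its extreme $t$-degrees. Put $\tau=t^{1/4}$ and $\delta=-\tau^{2}-\tau^{-2}$; note that $\delta$ is the value of $\widetilde V$ on a diagram to which a split trivial circle has been added, since $V_{T_2}(t)=-t^{1/2}-t^{-1/2}$. At each crossing of $\widetilde L$ designate one of the two smoothings the \emph{$A$-smoothing}, namely the one carrying the weight $-\tau$ in the relevant one of the local rules $(2')$, $(3')$; because $\widetilde V$ is an invariant of unoriented diagrams this designation depends only on which strand is the over-strand, hence only on the unoriented crossing. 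Iterating $(2')$ and $(3')$ over all $n=\crs(\widetilde L)$ crossings gives
$$\widetilde V_{\widetilde L}(t)=(-1)^{n}\sum_{s}\tau^{\,\alpha(s)-\beta(s)}\,\delta^{\,|s|-1},$$
the sum over the $2^{n}$ states $s$ (a choice of smoothing at each crossing), where $\alpha(s)$ and $\beta(s)=n-\alpha(s)$ count the $A$- and $B$-smoothings of $s$ and $|s|$ is the number of loops of $s$. Since $V_L$ and $\widetilde V_{\widetilde L}$ differ only by a power of $t$, $r$-deg $V_L$ equals $\tfrac14$ of the $\tau$-span of the right-hand side, so everything reduces to estimating that span.

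For the upper bound I would run the standard two-sided estimate. The all-$A$ state $s_A$ contributes the single top monomial $\pm\tau^{\,n+2(|s_A|-1)}$; if two states differ at exactly one crossing, the one with the extra $B$-smoothing has $\alpha-\beta$ smaller by $2$ and $|s|$ larger or smaller by $1$, so the maximal $\tau$-degree of its term changes by $-2\pm2\le0$. Working outward from $s_A$ and from $s_B$ we get $\max\deg_\tau\widetilde V_{\widetilde L}\le n+2(|s_A|-1)$ and $\min\deg_\tau\widetilde V_{\widetilde L}\ge-n-2(|s_B|-1)$, hence $\tau$-span $\le 2n+2(|s_A|+|s_B|)-4$. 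For a connected diagram an Euler-characteristic count bounds the loops: checkerboard-colour the plane; for an alternating diagram the circles of $s_A$ are precisely the boundaries of the regions of one colour and those of $s_B$ the regions of the other, so $|s_A|+|s_B|=(\text{number of regions})=n+2$ (and $|s_A|+|s_B|\le n+2$ for any connected diagram). Thus the $\tau$-span is $\le4n$, i.e.\ $r$-deg $V_L\le\crs(\widetilde L)$ for every connected projection.

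The step I expect to be the main obstacle is proving that for a \emph{proper} (reduced: no nugatory crossing) connected \emph{alternating} projection the extreme terms do not cancel, so both inequalities become equalities. This is the dual-state (adequacy) lemma: a reduced alternating diagram is \emph{$A$-adequate}, meaning that at every crossing the two arcs of the $A$-smoothing lie on \emph{different} loops of $s_A$, and likewise \emph{$B$-adequate}; the hypothesis ``proper'' is exactly what rules out the bad configuration, since on an alternating diagram a colour region touching itself at a crossing is a nugatory crossing. Given $A$-adequacy, flipping any single $A$-smoothing of $s_A$ to a $B$-smoothing \emph{merges} two loops and hence strictly lowers the top $\tau$-degree, so $s_A$ is the only state whose term reaches degree $n+2(|s_A|-1)$; consequently the leading coefficient of $\widetilde V_{\widetilde L}$ is that of $(-1)^{n}\delta^{|s_A|-1}$, namely $\pm1\neq0$. $B$-adequacy handles the bottom coefficient symmetrically. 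Therefore the $\tau$-span equals $4n$ and $r$-deg $V_L=\crs(\widetilde L)$, which is part (a).

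Part (b) uses the same state sum together with the sharper estimate that a connected \emph{prime} non-alternating projection has $|s_A|+|s_B|\le n+1$ (the refinement of the Euler count that exploits primeness to localize the failure of alternation); feeding this into the span bound gives $\tau$-span $\le4n-2<4n$, hence $r$-deg $V_L<\crs(\widetilde L)$. Finally, part (c) is immediate from part (a): if $\widetilde L_{1}$ and $\widetilde L_{2}$ are connected proper alternating projections of the same link $L$, then $\crs(\widetilde L_{1})=r\text{-deg }V_L=\crs(\widetilde L_{2})$, the middle quantity being an invariant of $L$.
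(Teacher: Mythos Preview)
The paper does not give its own proof of this theorem; immediately after the statement it writes ``For the proof we refer to \cite{Mu_3}.'' Your proposal is precisely the standard Kauffman--Murasugi--Thistlethwaite argument via the state-sum model, and is correct as outlined: the state expansion you derive from Corollary~\ref{Corollary 5.17} is exactly Kauffman's bracket (cf.\ \cite{K_6}, \cite{K_7}), the span bound and the identification $|s_A|+|s_B|=n+2$ for connected alternating diagrams are right, and the adequacy argument for reduced alternating diagrams is the correct mechanism preventing cancellation of the extreme terms, yielding (a) and hence (c). For (b) your sketch points at the right lemma; in fact for a connected non-alternating diagram one has $|s_A|+|s_B|\le n$ (equivalently, the Turaev genus is positive), which already gives the strict inequality---primeness in Murasugi's original formulation is not essential to this step.
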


For the proof we refer to \cite{Mu_3}.

We can introduce a relation on diagrams of links which naturally
limits the possible use of the Kauffman method (similarly as skein
equivalence is the limit for Conway type invariants).

\begin{defn}\label{Definition 5.19}Consider the space
$\mathcal{S}$ of partially oriented diagrams (i.e. some components
of a diagram are oriented) up to regular isotopy.  The Kauffman
equivalence relation ($\sim_K$) is the smallest equivalence relation
on $\mathcal{S}$ which satisfies the following condition:

Let $L'_1$ (respectively $L'_2$) be a diagram of a link $L_1$
(respectively $L_2$) with a given crossing $p_1$ (respectively
$p_2$) and \begin{enumerate}

\item[(i)] $(L'_1)^{p_1}_{-\sgn p_1} \sim_K (L'_2)^{p_2}_{-\sgn
p_2}$ where $L^p_{-\sgn p}$ denotes the link obtained from $L$ by
interchanging the bridge and the tunnel at $p$ (it does not depend
an orientation or lack of orientation of $L$).

\item[(ii)]  $(L'_1)^{p_1}_\circ \sim_K (L'_2)^{p_2}_\circ$ and
$(L'_1)^{p_1} \sim_K (L'_2)^{p_2}$ where $p_1$ is a crossing of
oriented components of $L'_1$ or $p_1$ is a self-crossing or some
component of $L'_1$ ( in the case of a self-crossing no orientation
is needed to distinguish $(L'_1)^{p_1}_\circ$ from
$(L'_1)^{p_1}_\infty$).

\item[(iii)]
$\left\{(L'_1)^{p_1}_\circ,(L'_1)^{p_1}_\infty\right\} =
\left\{(L'_2)^{p_2}_\circ,(L'_2)^{p_2}_\infty\right\}$ (equality of
the pairs of Kauffman equivalence classes) if $p_1$ is a crossing of
components of $L'_1$ one of which is not oriented.\end{enumerate}

Then $L_1 \sim_K L_2$. \end{defn}

\begin{cor}\label{Corollary 5.20} \begin{enumerate}\item[(a)] If the
oriented diagram $L_1$ is a mutant of the oriented diagram $L_2$
then $L_1 \sim_K L_2$.

\item[(b)] If $L_1 \sim_K L_2$ ($L_1$, $L2$ oriented) then
$\tw(L_1) = \tw(L_2)$, $P_{L_1}(x,y) = P_{L_2}(x,y)$ and
$F_{L_1}(a,z) = F_{L_2}(a,z)$.\end{enumerate}\end{cor}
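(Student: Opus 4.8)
The plan is to dispatch part (b) first, by an induction along the inductive definition of $\sim_K$, and then to prove part (a) by the same kind of double induction that establishes Lemma \ref{Lemma 3.7}, now carrying the extra regular--isotopy data.

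For part (b): since $\sim_K$ is by definition the smallest equivalence relation on $\mathcal S$ closed under the step of Definition \ref{Definition 5.19}, it suffices to produce an equivalence relation $R$ on $\mathcal S$ that is closed under that step and that, on fully oriented diagrams, refines ``equal $\tw$, equal $P$, equal $F$''; then $\sim_K\subseteq R$ gives the corollary. I would take $M\mathrel R M'$ to mean: $L_M(a,z)=L_{M'}(a,z)$ for the regular--isotopy Kauffman invariant of the underlying \emph{unoriented} diagrams (legitimate since $L_D$ does not depend on orientation), together with $\tw(M)=\tw(M')$ and $P_M=P_{M'}$ whenever both $M$ and $M'$ happen to be fully oriented. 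Closure of $R$ splits into three checks. For the $L$--invariant: the relation $L_{L_+}+L_{L_-}=z(L_{L_\circ}+L_{L_\infty})$ expresses $L_M$ through the crossing change $L_{M^p_{-\sgn p}}$ and the \emph{sum} $L_{M^p_\circ}+L_{M^p_\infty}$; the crossing change is matched by clause (i), and the sum is matched by clause (ii) (each term separately) or by clause (iii) (the unordered pair, hence the sum), so $L_{L_1}=L_{L_2}$. For $\tw$ (needed only when $L_1,L_2$ are fully oriented, in which case clause (ii) is the relevant one): with $s_i$ the sign of $p_i$ in $L_i$ one has $\tw(L_i)=\tw((L_i)^{p_i}_{-\sgn p_i})+2s_i$ and $\tw(L_i)=\tw((L_i)^{p_i}_\circ)+s_i$, and since the diagrams on the right have equal $\tw$ by (i) and (ii), subtracting first forces $s_1=s_2$ and then $\tw(L_1)=\tw(L_2)$ --- so equality of the two signs is a \emph{consequence}, not a hypothesis. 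For $P$: once $s_1=s_2$, the Jones--Conway relation $xP_{L_+}+yP_{L_-}=P_{L_\circ}$, used in the common sign convention, expresses $P_{L_i}$ through $P_{(L_i)^{p_i}_{-\sgn p_i}}$ and $P_{(L_i)^{p_i}_\circ}$, giving $P_{L_1}=P_{L_2}$. Finally $F_D=a^{-\tw(D)}L_D$, so equality of $\tw$ and of $L$ yields $F_{L_1}=F_{L_2}$.

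For part (a): let $T\subset L_1$ be the tangle that is rotated to give $L_2$, and induct on $(\crs(T),\mathrm{bad}(T))$ lexicographically, where $\crs(T)$ counts the crossings of $T$ and $\mathrm{bad}(T)$ is the minimal number of bad crossings of $T$ over admissible base points. If $\crs(T)\le 1$, then (after sliding any disjoint trivial circles out of the box) $T$ is a trivial tangle or a single crossing, and for each of the three mutation axes one checks directly that $L_1$ is regularly isotopic to $L_2$; if $\crs(T)\ge 2$ but $\mathrm{bad}(T)=0$, the descending tangle $T$ is regularly isotopic to a standard model with some curls attached, and again $L_1$ and $L_2$ are directly seen to be regularly isotopic. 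Otherwise pick a bad crossing $p$ of $T$: because $p$ lies inside $T$, switching or smoothing at $p$ commutes with the rotation, so $(L_1)^p_{-\sgn p}$ and $(L_2)^{p'}_{-\sgn p'}$ are mutants along the tangle $T$ with $p$ switched (same $\crs$, one fewer bad crossing), while $(L_1)^p_\circ,(L_2)^{p'}_\circ$ and $(L_1)^p_\infty,(L_2)^{p'}_\infty$ are mutants along the two smoothed tangles (one fewer crossing each); here $p'$ is the image of $p$ under the rotation, and in the $\infty$--case one may have to forget or reverse an orientation, which is permitted in $\mathcal S$. Applying the step of Definition \ref{Definition 5.19} at $p$ and $p'$ --- clause (ii) if $p$ is a self--crossing or joins two oriented components, clause (iii) if it joins a component that has lost its orientation --- the inductive hypothesis supplies the required $\sim_K$--equivalences, so $L_1\sim_K L_2$.

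The hard part is the regular--isotopy bookkeeping inside part (a): one must verify that mutation about \emph{each} of the three axes preserves the writhe, equivalently that in the two degenerate cases (the $\le 1$ crossing tangle and the descending tangle) the rotated diagram is \emph{regularly} isotopic, not merely isotopic --- the reduction of descending diagrams used in Theorem \ref{Theorem 2.1.1} (Lemma \ref{Lemma 2.2.14}) leans on the first Reidemeister move and so is unavailable here. This writhe invariance is, of course, consistent with and in essence equivalent to the mutation invariance of $F$ in Lemma \ref{Lemma 5.9}(e) together with that of $L_D$. In part (b) the only subtlety is the presence of partially oriented intermediate diagrams, which is precisely why $R$ above imposes the orientation--free condition $L_M=L_{M'}$ everywhere but the $\tw$-- and $P$--conditions only on fully oriented pairs.
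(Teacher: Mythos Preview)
Your proof is correct and follows essentially the same approach as the paper's, which is extremely terse: for (a) the paper just says one can build the same resolving tree for $L_1$ and $L_2$ and refers to Lemma~\ref{Lemma 3.7}, and for (b) it cites the definition of $\sim_K$ together with $\tw L_+=\tw L_\circ+1=\tw L_-+2$. Your version is more complete --- notably, deriving $\sgn p_1=\sgn p_2$ from the two twist relations before invoking the Jones--Conway recursion in (b), and honestly flagging the regular-isotopy base case for descending tangles in (a), a point the paper glosses over entirely --- but the underlying strategy is the same.
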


\begin{proof}(a) We can build the same resolving tree for $L_1$
and $L_2$ (compare Lemma \ref{Lemma 3.7}).

(b) It follows from the definition of $\sim_K$ and from the fact
that $\tw L_+ = \tw L_\circ +1 = \tw L_- +2$.
\end{proof}

Now we will show how invariants of links got by the Kauffman method
can be described by an algebraic structure (similarly as Conway
algebra yielded invariants of Conway type).  We will also construct
a polynomial invariant of oriented links which generalizes at once
Jones-Conway and Kauffman polynomials (however it does not give more
information than these two polynomials).  We proceed similarly as in
the case of the Conway algebra but we consider diagrams up to
regular isotopy.  There is no need to distinguish positive crossing
from negative one so we work (in principle) with one 3-argument
operation $\ast$ which allows us to recover the value of invariant
for $L_+$ (respectively $L_-$) from its values for $L_-$, $L_\circ$
and $L_\infty$ (respectively $L_+, L_\circ, L_\infty$).

However we have to solve one important problem:  if we change
$L_+$ into $L_\infty$ then new component of $L_\infty$ does not
have any natural orientation.  One possibility is to consider
partially oriented link $L_\infty$.  We will not follow this way
from practical reasons.  Namely, we would like to use the same
scheme of the proof as in Theorem \ref{Theorem 2.1.1} and to make
it we need the equality $L^{pq}_{\epsilon_1
\epsilon_2}=L^{qp}_{\epsilon_2 \epsilon_1}$, $\epsilon_1,
\epsilon_2 \in \{+,-,\circ, \infty\}$ i.e. if we perform some
surgeries on two crossings $p$ and $q$ the result should not
depend on the order of performing these operations. If we consider
partially oriented links it is not always the case (see Figure
\ref{Fig 5.11}).

\begin{figure}[htbp]
\centering
\includegraphics{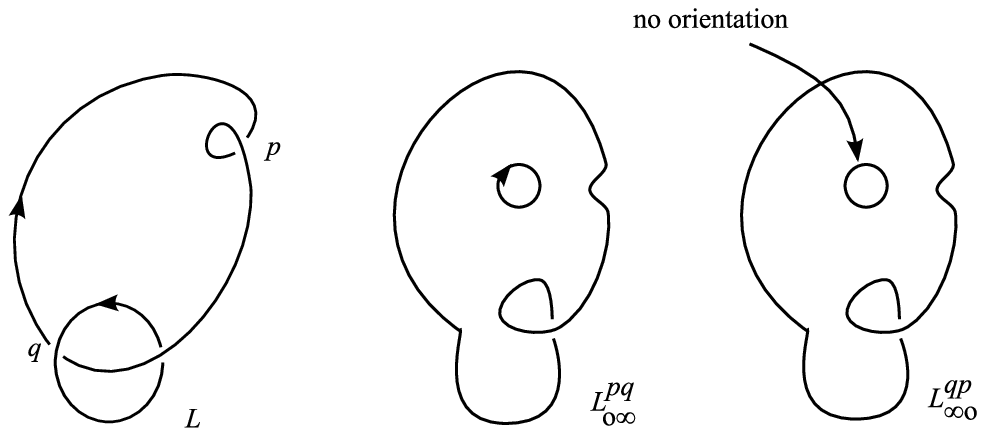}
\caption{}\label{Fig 5.11}
\end{figure}

Therefore we will limit ourself to the case of oriented and
non-oriented links (i.e. all the components are oriented or no
component is oriented).  In the last case we do not distinguish
$L_\circ$ from $L_\infty$.

Consider the following general situation.  Assume we are given an
abstract algebra $\mathcal A$ with two universal (sets) $A$ and
$A'$, a countable number of $0$-argument operations in $A$ and
$A':\{a_{ij}\}_{i\in \nats, j\in \ints}, ~ \{a'_{ij}\}_{i\in
\nats, j\in \ints}$, two 3-argument operations $\ast: A\times A
\times A' \to A$ and $\ast':A' \times A' \times A' \to A'$ and
1-argument operation $\varphi:A\to A'$.  We would like to
construct invariants of classes of regular isotopy of oriented and
non-oriented diagrams which satisfy the following conditions:
\begin{enumerate}

\item[(a)] If $L$ is an oriented link then the value of the
invariant $w_L \in A$, and if $L'$ is non-oriented then $w_{L'} \in
A'$.

\item[(b)] If $L'$ is the non-oriented diagram obtained from an
oriented diagram $L$ by ignoring the orientation the $w_{L'} =
\varphi(w_L)$.

\item[(c)] $w_{T_{i,j}} = a_{i,j}$ where $T_{i,j}$ is an oriented
diagram of the trivial link of $i$ components and $\tw(T_{i,j}) =
j$.

\item[(d)] $w_{T'_{i,j}} = a'_{i,j}$.

\item[(e)] $w_{L^p} = w_{L^p_{-\sgn p}}\ast (w_{L^p_\circ},
w_{L^p_\infty})$ where $L^p_{-\sgn p}$ denotes the diagram obtained
from $L$ by interchanging the bridge and tunnel at $p$.

\end{enumerate}

\setcounter{thm}{19}

\begin{defn}\label{Definition 5.20} We say that $\mathcal{A} =
\{A, A', \{a_{ij}\}, \{a'_{ij}\}, \ast, \ast', \varphi\}$ is a
Kauffman algebra if the following conditions are satisfied:

\begin{enumerate}
\item[K1] $\varphi(a_{i,j})=a'_{i,j}$

\item[K2] $\varphi(a\ast(b,c))=\varphi(a)\ast' (\varphi(b),c)$
where the operation $\ast$ on $(a,b,c)$ is denoted by $a \ast
(b,c)$; similarly for the operation $\ast'$.

\item[K3] $a_{i,j-1}\ast (a_{i+1,j}, a'_{i,j}) = a_{i, j+1}$.

\item[K4] $(a \ast (b,c))\ast(d\ast(e,f), g\ast ' (h,i))= (a\ast
(d,g))\ast(b \ast (e,h), c \ast'(f,i))$ where $a,b,d,e \in A$ and
$c,f,g,h,i \in A'$

\item[K5] $(a\ast (b,c))\ast(b,c)=a$

\item[K6] $a \ast'(b,c) = a \ast'(c,b)$.

\end{enumerate}\end{defn}

\begin{thm}\label{Theorem 5.21} For a given Kauffman algebra
$\mathcal A$ there exists a uniquely determined invariant of regular
isotopy $w$, which attaches an element $w_L$ from $A$ to every
oriented diagram $L$ and an element $w_{L'}$ of $A'$ to every
non-oriented  diagram $L'$ and satisfies the following conditions:

\begin{enumerate}

\item $w_{T_{i,j}} = a_{i,j}$

\item $w_{L'} = \varphi(w_L)$ where $L'$ is obtained from oriented
diagram $L$ by ignoring orientation.

\item $w_{L^p} = w_{L^p_{-\sgn p}} \ast (w_{L^p_\circ},
w_{L^p_\infty})$.

\end{enumerate}\end{thm}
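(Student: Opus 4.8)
The plan is to mimic the proof of Theorem \ref{Theorem 2.1.1} almost line for line, with three systematic changes. First, ``isotopy'' is everywhere replaced by ``regular isotopy'', so only the moves $\Omega_2^{\pm1}$ and $\Omega_3^{\pm1}$ need be checked, and alongside the component count we carry the integer $\tw$, which by Lemma \ref{Lemma 5.5} is a regular-isotopy invariant; the values on trivial diagrams then become $a_{n,\tw}$ (resp.\ $a'_{n,\tw}$). Second, the binary resolving trees of Section 2 become ternary: over a diagram $L$ with a chosen crossing $p$ the three descendants are $L^p_{-\sgn p}$, $L^p_\circ$, $L^p_\infty$, linked by $w_{L^p}=w_{L^p_{-\sgn p}}\ast(w_{L^p_\circ},w_{L^p_\infty})$; since $L^p_\infty$ carries no coherent orientation, the subtree below it lives in the non-oriented universe $A'$, uses $\ast'$ and the constants $a'_{i,j}$, and the map $\varphi$ glues the two worlds. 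Third --- and this is what makes the scheme work --- we only ever handle diagrams that are entirely oriented or entirely unoriented, so that $L^{pq}_{\eps_1\eps_2}=L^{qp}_{\eps_2\eps_1}$ for all $\eps_i\in\{+,-,\circ,\infty\}$; partially oriented links are excluded precisely because there this fails (Figure \ref{Fig 5.11}).

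Under the dictionary K3 $\leftrightarrow$ (C1, C2), K4 $\leftrightarrow$ (C3, C4, C5), K5 $\leftrightarrow$ (C6, C7), together with the new axioms K1, K2, which govern $\varphi$, and K6, which makes $\ast'$ symmetric and hence renders the non-oriented resolution independent of the arbitrary labelling of the two smoothings, one proceeds by induction on $\crs(L)$. Put $w_0(L)=a_{n,0}$ (resp.\ $a'_{n,0}$) on a crossingless $n$-component diagram, and take for the Main Inductive Hypothesis the analogue of Section 2.2: $w_k$ is defined for $\crs\le k$, equals $a_{n,\tw}$ (resp.\ $a'_{n,\tw}$) on descending diagrams, satisfies the Kauffman relation (3) and the $\varphi$-relation (2) whenever all diagrams involved have $\le k$ crossings, and is invariant under those $\Omega_2,\Omega_3$ moves that keep $\crs\le k$. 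For the Main Inductive Step define $w_b$ on $\mathcal D$ by induction on the number $b(L)$ of bad crossings, using $w_k$ when $\crs(L)<k+1$, the value $a_{n,\tw}$ (resp.\ $a'_{n,\tw}$) on descending diagrams, and $w_b(L)=w_b(L^p_{-\sgn p})\ast(w_b(L^p_\circ),w_b(L^p_\infty))$ at the first bad crossing $p$; the consistency of computing $w_{L'}$ directly in $A'$ versus via $\varphi(w_L)$ is kept throughout using K1 and K2.

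The verification then splits exactly as in Section 2.2. (i) \emph{Kauffman relations for $w_b$}: induction on $b(L^p_{-\sgn p})$, with K5 in the base case and K4 to commute two surgeries in the inductive step (when one surgery produces the non-oriented smoothing, K2 and K6 are used to keep the bookkeeping consistent). (ii) \emph{Changing a base point within a component}: the genuinely new case is sliding a base point past a self-crossing $p$, good on one side and bad on the other; resolving $p$ turns $L^p_{-\sgn p}$ into a descending diagram whose twist differs by $\mp 2$ and $L^p_\circ$, $L^p_\infty$ into descending diagrams with $n$ and $n\pm1$ components, and the equality to be proved is literally axiom K3 (with K1 for its $\varphi$-image). (iii) \emph{Independence of $\Omega_2$ and $\Omega_3$}: the same, and in fact shorter, case analysis as for I.R.M., since there is no $\Omega_1$ to treat; one only needs that $\tw$ is unchanged and, for $\Omega_3$, the ``top-and-bottom arc'' observation. (iv) \emph{Independence of the order of components}: reduce as before to descending diagrams and invoke a regular-isotopy version of Lemma \ref{Lemma 2.2.14}, in which the $f$-gon argument for $0$- and $2$-gons is untouched while a $1$-gon (a curl), removed in Section 2.2 by $\Omega_1$, is now absorbed into the twist index via K3. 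Combining (i)--(iv) gives $w_{k+1}:=w^\circ$ with the M.I.H.\ properties; compatibility of the $w_k$ and uniqueness follow, as in Section 2.2, from the existence of ternary resolving trees with descending leaves.

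I expect step (iv) to be the main obstacle: adapting Lemma \ref{Lemma 2.2.14} without $\Omega_1$, i.e.\ showing every curl met in a descending diagram can be peeled off coherently by K3, and checking that the single transposition axiom K4 --- reinforced by K2 and K6 --- really does cover every way of interchanging two surgeries once one of them lands in the non-oriented world. Everything else is the routine task of carrying $\tw$ and $\varphi$ through an argument whose skeleton is identical to that of Theorem \ref{Theorem 2.1.1}.
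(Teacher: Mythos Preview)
Your outline follows the paper's proof closely, but there is a genuine gap in step (ii), the base-point change, and it is precisely the one place where the Kauffman proof departs nontrivially from that of Theorem~\ref{Theorem 2.1.1}. You write that resolving the self-crossing $p$ turns $L^p_\circ$ and $L^p_\infty$ ``into descending diagrams''. For $L^p_\circ$ this is correct, but for $L^p_\infty$ it is false: the $\infty$-smoothing does not respect the orientation, so the new component of $L^p_\infty$ inherits two incompatible orientations on its two arcs. With any choice of base points and orientation, $L^p_\infty$ consists of a descending part and an ascending part (see Figure~\ref{Fig 5.13}), and there is in general no way to make it globally descending. Hence you cannot simply read off $w_b(L^p_\infty)=a'_{n,\tw(L)-1}$ and invoke K3.

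The paper handles this by a mutation-type trick: since $\crs(L^p_\infty)\le k$, one may rotate the ascending half of $L^p_\infty$ through $180^\circ$ about the vertical axis and reverse its orientation to obtain a genuinely descending diagram $\widetilde{L}$; then $L^p_\infty$ and $\widetilde{L}$ admit the \emph{same} resolving tree with all vertices of crossing number $\le k$, so by M.I.H.\ $w_k(L^p_\infty)=w_k(\widetilde{L})=a'_{n,\tw(L)-1}$. Only after this does K3 apply. Your proposal needs this argument (or an equivalent one) inserted at exactly this point. A secondary remark: the paper includes the weakened move $\Omega_{0.5}^{\pm1}$ in the M.I.H.\ and uses it in place of $\Omega_1$ when invoking Lemma~\ref{Lemma 2.2.14}; your alternative of ``absorbing curls via K3'' in step (iv) may work, but as stated it is a sketch, whereas the $\Omega_{0.5}$ device makes Lemma~\ref{Lemma 2.2.14} go through verbatim and also serves to show orientation-independence for non-oriented diagrams.
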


\begin{proof}  The proof of Theorem \ref{Theorem 5.21} is very
similar to that of Theorem \ref{Theorem 2.1.1}.  Therefore we will
give details only in parts in which bigger differences occur.

We can build a resolving tree for each diagram (oriented or not) in
such a way that each vertex represents a diagram and
\begin{enumerate}

\item[(i)] Descending diagrams lie in leaves (descending for some
choice of base points and orientation in the case of a
non-orientable diagram)

\item[(ii)] The situation at each vertex (except leaves) looks
like on Figure \ref{Fig 5.12}

\begin{figure}[htbp]
\centering

$\xymatrix { & & L \ar@{-}[dll] \ar@{-}[d] \ar@{-}[drr] & &
\\  L^p_{-\sgn p} & & L^p_\circ & & L^p_\infty  }$

\caption{}\label{Fig 5.12}
\end{figure}

Such a tree can be used to compute the invariant of the root
diagram.
\end{enumerate}

We start the proof of Theorem \ref{Theorem 5.21} by constructing the
function $w$ on diagrams and then show that it is not changed by the
Reidemeister moves $\Omega^{\mp 1}_{0.5}$, $\Omega^{\mp 1}_2$, and
$\Omega^{\mp 1}_3$.  We use induction on the number $\crs(L)$ of
crossings points in the diagram.  For each $k \geq 0$ we define a
function $w_k$ assigning an element of $A$ (respectively $A'$) to
each oriented (respectively non-oriented) diagram with no more than
$k$ crossings.  Then $w$ will be defined for every diagram by $w_L =
w_k(L)$ where $k \geq \crs(L)$. Similarly as in the proof of Theorem
\ref{Theorem 2.1.1} we define $w_0(L) = a_{n,0}$ if $L$ is a trivial
oriented diagram of $n$ components, and $w_0(L') = a'_{n,0}$ if $L'$
is obtained from $L$ by ignoring the orientation.  Then we formulate
the Main Inductive Hypothesis (M.I.H.):  We assume that we have
already defined $w_k$ attaching an element of $A$ (respectively
$A'$) to each diagram $L$ for which $\crs(L)\leq k$ and that $w_k$
has the following properties:
\begin{enumerate}\item[5.22]\label{Condition 5.22} $w_k(U_{n,j}) =
a_{n,j}$, where $U_{n,j}$ is an oriented descending (for some choice
of base points) diagram of $n$ components and $\crs(U_{n,j})\leq k$,
$\tw(U_{n,j}) = j$

$w_k(U'_{n,j})=a'_{n,j}$ where $U'_{n,j}$ is obtained from $U_{n,j}$
by ignoring the orientation.

\item[5.23]\label{Condition 5.23} $w_k(L)=w_k(L^p_{-\sgn p})\ast
(w_k(L^p_\circ), w_k(L^p_\infty))$ if $L$ is an orientable
diagram, and

$w_k(L) = w_k(L^p_{-\sgn p})\ast' (w_k(L^p_\circ), w_k(L^p_\infty))$
if $L$ is a non-orientable diagram.

\item[5.24]\label{Condition 5.24} $w_k(L) = w_k(R(L))$ where $R$
is a Reidemeister move of type $\Omega^{\mp 1}_{0.5}$,
$\Omega^{\mp 1}_2$, or $\Omega^{\mp 1}_3$ and $\crs(L), \crs(R(L))
\leq k$.

\end{enumerate}

\setcounter{thm}{24}

Then we want to make the Main Inductive Step (M.I.S.) in order to
obtain the existence of a function $w_{k+1}$ with analogous
properties defined on diagrams with at most $k+1$ crossings.  It
will complete the proof of Theorem \ref{Theorem 5.21} analogously
as in the case of Theorem \ref{Theorem 2.1.1}

The proof of M.I.S.~ begins, as in Theorem \ref{Theorem 2.1.1}, by
defining a function $w_b$ which, for diagrams with $\crs(L)=k+1$,
depends on the choice of base points $b=(b_1, \ldots, b_n)$ and on
the choice of the orientation of $L$ in the case $L$ was
non-oriented.  $w_b(L)=w_k(L)$ if $\crs(L) \leq k$.  For
$\crs(L)=k+1$, we define $w_b$ by induction on the number of bad
crossings ($b(L)$) of the diagram $L$ using condition 5.1 or the
formula 5.2 to the first bad crossing.  The we show that formula 5.2
holds for every crossing.  The proof in this point does not differ
from the analogous point in the proof of Theorem \ref{Theorem 2.1.1}
(K4 is used instead of the conditions C3--C5).

The next step of the proof is to show, that $w_b$ does not depend on
the choice of $b$ (for a given orientation and order of components).
We proceed as in \ref{Theorem 2.1.1} choosing base points $b$ and
$b'=(b_1, b_2, \ldots ,b'_i, \ldots ,b_n)$ in such a way that $b'_i$
lies after $b_i$ in the $i$th component $L_i$ of $L$ and there is
exactly one crossing point between $b_i$ ad $b'_i$.

We use induction on $B(L) = \max(b(L), b'(L))$.  If $B(L)=0$ then
$L$ is descending with respect to both choices of base points,
therefore $w_b(L) = w_{b'}(L) = a_{n, \tw(L)}$.  If $B(L)>1$ or
$B(L) = b(L) = b'(L)=1$ then $L$ has a bad crossing with respect to
$b$ and $b'$.  We use then the inductive hypothesis resolving the
diagram along this crossing (i.e. using condition 5.2).  It remains
to consider the case $B(L)=1$, $b(L)\neq b(L')$.  The proof in this
case is little more involved than in analogous place of \ref{Theorem
2.1.1}.  Namely:

Let $p$ be the only bad crossing of $L$ with respect to $b$ or $b'$.
$p$ is a self-crossing of a component $L_i \subset L$.

Assume, for simplicity, that $L$ is oriented and $b(L) =1$,
$b'(L)=0$, $\sgn p=+$. Therefore $L$ is a descending diagram with
respect to $b'$ and $$w_{b'}(L) = a_{n, \tw(L)}.$$  From the
property 5.2, $$w_b(L) = w_b(L^p_{-})\ast (w_b(L^p_\circ),
w_b(L^p_\infty)).$$ $b(L_-^p) = 0$, so $w_b(L_-^p) = a_{n,
\tw(L)-2}$.

$L^p_\circ$ is a descending diagram with respect to a proper choice
of base points therefore $w_b(L^p_\circ) = a_{n+1, \tw(L)-1}$.  We
need the equality $w_b(L^p_\infty) = a'_{n, \tw(L)-1}$ in order to
use K3 and to get $w_b(L) = a_{n,\tw(L)}$. We cannot get it
immediately. In fact $L^p_\infty$ does not need to be a descending
diagram with respect to any choice of base points.  We can use
however the fact that $L^p_\infty$ has only $k$ crossings.
Furthermore $L^p_\infty$ consists of two parts one is descending and
the second ascending (with respect to proper choice of base points
and orientation) and these parts may be put on different levels
(Figure \ref{Fig 5.13}).  In order to show that $w_b(L^p_\infty) =
a'_{n, \tw(L)-1}$ we will use the following trick:

\begin{figure}[htbp]
\centering
\includegraphics{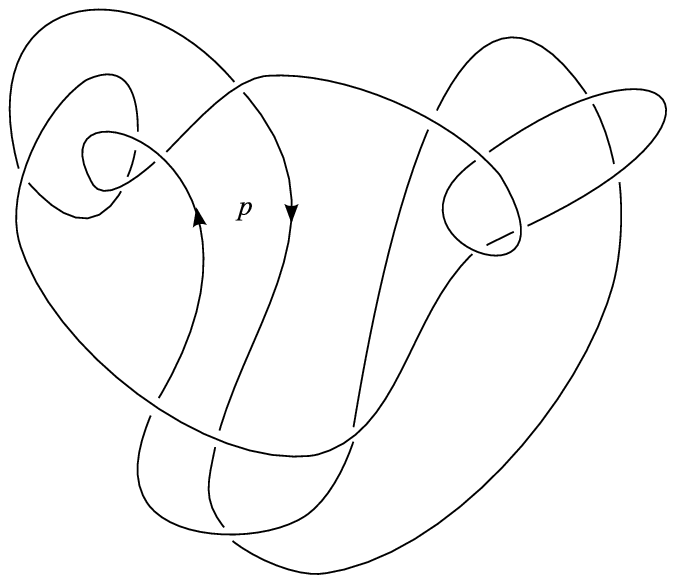}
\caption{}\label{Fig 5.13}
\end{figure}

Rotate the ascending part of the diagram $L^p_\infty$ $180^\circ$
with respect to the vertical (N-S) axis and then change the
orientation of this part of $L^p$ (we make some kind of mutation).
We get the descending diagram $\widetilde{L}$.  Therefore
$w_b(\widetilde{L}) = w_k(\widetilde{L}) = a'_{n, \tw(L)-1}$.  On
the other hand, we can build for $L^p_\infty$ and $\widetilde{L}$
the same resolving tree, each vertex of which corresponds to a
diagram with no more than $k$ crossings (analogy with mutation is
complete). Now we conclude that $ w_b(L^p_\infty) = w_k(L^p_\infty)
= w_k(\widetilde{L}) = a'_{n, \tw(L)-1}$.  This completes the proof
of this part (compare \cite{B_L_M}).

The rest of the proof of Theorem \ref{Theorem 5.21} is almost the
repetition of the analogous part of the proof of Theorem
\ref{Theorem 2.1.1}.  We change Reidemeister move $\Omega^{\mp 1}_1$
by $\Omega^{\mp 1}_{0.5}$.  Then the Lemma \ref{Lemma 2.2.14}
remains valid and it can be additionally used to show that $w_b(L)$
does not depend on the orientation of $L$ (if $L$ is not oriented).
Thus we can complete the proof of Theorem \ref{Theorem 5.21}.

\end{proof}

\begin{example}[Jones-Conway-Kauffman polynomial]\label{Example
5.25} The following $\mathcal A$ is a Kauffman algebra.

$$A= \ints[a^{\mp 1}, t^{\mp 1}, a],~ A' = \ints[a^{\mp 1}, t^{\mp 1}],$$

$$a_{i,j} =\left({\frac{a+a^{-1}}{t}}\right)^{i-1}\left(1-\frac{z}{t}\right)a^j
+ \frac{z}{t}\left(\frac{a+a^{-1}}{t} -1\right)^{i-1}a^j,$$
$$a'_{i,j} = \left(\frac{a+a^{-1}}{t}-1\right)^{i-1}a^j,$$ $b\ast
(c,d)$ is defined by the equation $b\ast(c,d) + b =tc+zd$, $b\ast'
(c,d)$ is defined by the equation $b\ast' (c,d) + b = tc +td$ and
finally $\varphi$ is defined on generators by $\varphi(a) =a$,
$\varphi(t)=t$, $\varphi(z)=t$.

We will check now that $\mathcal A$ is a Kauffman algebra.  The
conditions K1, K2, K5, and K6 follow immediately from the definition
of $\mathcal A$.

The condition K3 follows from the equality $$\begin{array}{l}
\left({\frac{a+a^{-1}}{t}}\right)^{i-1}\left(1-\frac{z}{t}\right)a^{j+1}
+ \frac{z}{t}\left(\frac{a+a^{-1}}{t} -1\right)^{i-1}a^{j+1} + \\
+
\left({\frac{a+a^{-1}}{t}}\right)^{i-1}\left(1-\frac{z}{t}\right)a^{j-1}
+ \frac{z}{t}\left(\frac{a+a^{-1}}{t} -1\right)^{i-1}a^{j-1} =
\\ = t\left(
\left({\frac{a+a^{-1}}{t}}\right)^{i}\left(1-\frac{z}{t}\right)a^j +
\frac{z}{t}\left(\frac{a+a^{-1}}{t} -1\right)^{i}a^j\right) + z
\left(\frac{a+a^{-1}}{t}-1\right)^{i-1}a^j\end{array}$$

It remains to show the condition K4: $$\begin{array}{l} (a
\ast(b,c))\ast(d\ast(e,f), g \ast'(h,i)) = -(a
\ast(b,c))+t(d\ast(e,f))+z(g\ast (h, i))= \\
=-(-a+tb+zc)+t(-d+te+zf)+z(-g+th+ti)= \\ = a-tb-zc-td+t^2e + tzf
-zg+zth+zti = \\ =(a\ast(d,g))\ast(b \ast(e,h),c \ast'
(f,i)).\end{array}$$

The invariant of regular isotopy of oriented diagrams, $J_L(a,t,z)$,
yielded by the algebra $\mathcal A$ is called the
Jones-Conway-Kauffman (or JCK) polynomial.  It can be modified into
invariant of oriented links:
$$\widetilde{J}_L(a,t,z)=J_L(a,t,z)a^{-\tw(L)}.$$

\end{example}

\begin{example}\label{Example 5.26}  We will compute the value of
the Jones-Conway-Kauffman polynomial for the diagram of the right
handed trefoil knot (Figure \ref{Fig 5.14}).  We get, using the
resolving tree from Figure \ref{Fig 5.14}, that in any Kauffman algebra
$w_L = a_{1,1}\ast(a_{11}, a'_{1,-1})a'_{1,-2})$.

\begin{figure}[htbp]
\centering
\includegraphics{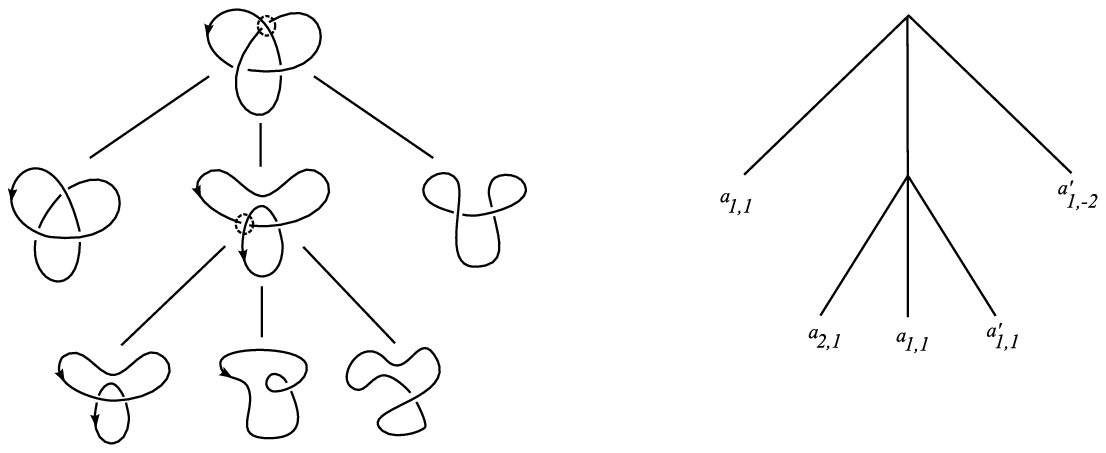}
\caption{}\label{Fig 5.14}
\end{figure}

Therefore we get $$\begin{array}{l}J_L(a,t,z,) =
-a+t\left(-\frac{a+a^{-1}-z}{t}+ta+za^{-1}\right)+za^{-2} = \\=
-a^{-1}-2a+t^2a+z(1+a^{-2}+ta^{-1})\textrm{, and}\end{array}$$

$$\widetilde{J}_L(a,t,z) =
-a^{-4}-2a^{-2}+t^2a^{-2}+z(a^{-3}+a^{-5}+ta^{-4}).$$

\end{example}

\begin{lem}\label{Lemma 5.27} \begin{enumerate}

\item[(a)] $J_L(a,t,z) = J_L(a,t,0) + z\left(\frac{J_L(a,t,t) -
J_L(a,t,0)}{t}\right)$

\item[(b)] $\widetilde{J}_L(a,t,0) = P(\frac{a}{t},
\frac{1}{at})$, it is some variant of the Jones-Conway polynomial.

\item[(c)] $J_L(a,t,t)=G_L(a,t)$, it is Kauffman polynomial for
regular isotopy.\end{enumerate}\end{lem}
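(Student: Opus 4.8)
The plan is to deduce all three identities by specializing $z$ (to $0$ and to $t$) and then invoking the uniqueness of the invariants already constructed, after first recording that $J_L$ is affine in $z$. I would prove, by induction along a resolving tree of $L$ as in the proof of Theorem \ref{Theorem 5.21}, that $J_L(a,t,z)=\alpha_L+z\gamma_L$ with $\alpha_L,\gamma_L\in\ints[a^{\mp 1},t^{\mp 1}]$: the leaf values $a_{i,j}$ are affine in $z$; the values attached to the unoriented diagrams $L^p_\infty$ lie in $A'=\ints[a^{\mp 1},t^{\mp 1}]$, hence are free of $z$; and $b\ast(c,d)=-b+tc+zd$ raises the $z$-degree of its $A$-arguments by at most the $z$-degree of $d$, which is $0$. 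So the $z$-degree stays $\le 1$ everywhere up the tree. Part (a) is then algebra: $\alpha_L=J_L(a,t,0)$ and $J_L(a,t,t)=\alpha_L+t\gamma_L$, so $\gamma_L=\bigl(J_L(a,t,t)-J_L(a,t,0)\bigr)/t$ is an honest Laurent polynomial and substituting back gives the stated formula.

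For part (b) I set $z=0$ and write $\widetilde J_L(a,t,0)=a^{-\tw(L)}J_L(a,t,0)$. At $z=0$ the operation is $b\ast(c,d)=-b+tc$, so the defining relation $w_{L^p}=w_{L^p_{-\sgn p}}\ast(w_{L^p_\circ},w_{L^p_\infty})$ becomes, at a positive crossing, $J_{L_+}(a,t,0)=-J_{L_-}(a,t,0)+tJ_{L_\circ}(a,t,0)$; multiplying by $a^{-\tw(L_+)}$ and using $\tw(L_-)=\tw(L_+)-2$, $\tw(L_\circ)=\tw(L_+)-1$ turns this into $\frac{a}{t}\widetilde J_{L_+}(a,t,0)+\frac{1}{at}\widetilde J_{L_-}(a,t,0)=\widetilde J_{L_\circ}(a,t,0)$. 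Since $a_{n,j}\big|_{z=0}=a^{j}\bigl(\frac{a+a^{-1}}{t}\bigr)^{n-1}$, every descending diagram representing $T_n$ gets $\widetilde J$-value $\bigl(\frac{a+a^{-1}}{t}\bigr)^{n-1}=(x+y)^{n-1}$ with $x=\frac{a}{t}$, $y=\frac{1}{at}$. Thus $\widetilde J_L(a,t,0)$ satisfies exactly the initial condition and the Conway relation characterizing $P_L(x,y)$ at those values; comparing both functions along a resolving tree whose leaves are the descending diagrams just treated, and using the uniqueness in Theorem \ref{Theorem 2.1.1}, gives $\widetilde J_L(a,t,0)=P_L\bigl(\frac{a}{t},\frac{1}{at}\bigr)$.

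For part (c) I set $z=t$. Then $b\ast(c,d)=-b+t(c+d)=b\ast'(c,d)$, so the two $3$-argument operations of the algebra of Example \ref{Example 5.25} coincide and $\varphi$ becomes the identity on $\ints[a^{\mp 1},t^{\mp 1}]$; orientation drops out and $w_{L^p}=w_{L^p_{-\sgn p}}\ast(w_{L^p_\circ},w_{L^p_\infty})$ turns into $w_{L_+}+w_{L_-}=t(w_{L_\circ}+w_{L_\infty})$, while $a_{i,j}\big|_{z=t}=\bigl(\frac{a+a^{-1}}{t}-1\bigr)^{i-1}a^{j}$, in particular $J_{T_{1,j}}(a,t,t)=a^{j}=a^{\tw(T_{1,j})}$. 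These are exactly the defining properties of the regular-isotopy Kauffman polynomial $G_L(a,z)$ (the invariant $L$ of Kauffman \cite{K_5}) with $z$ specialized to $t$; by the uniqueness of that invariant, which again rests on every diagram carrying a resolving tree in the sense of the proof of Theorem \ref{Theorem 5.21}, we get $J_L(a,t,t)=G_L(a,t)$.

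The step needing the most care is the twist-number bookkeeping in (b): one must keep $\tw(L_+),\tw(L_-),\tw(L_\circ)$ straight so that the $a^{-\tw}$ normalization converts the $J$-relation into the correctly weighted Conway relation, and likewise check the value on descending diagrams so the leaves of the resolving tree match $(x+y)^{n-1}$. The other delicate point, underlying all three parts, is the claim that the value at a diagram carrying the extra $L_\infty$-component genuinely lies in $A'$ and hence is free of $z$; this is forced by the structure of the Kauffman algebra (the target $A'$ of $\varphi$, and of the third slot of $\ast$, contains no $z$) rather than by any computation.
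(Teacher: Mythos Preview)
Your proof is correct and follows essentially the same strategy as the paper: part (a) by induction along a resolving tree starting from the trivial-link leaves, and parts (b) and (c) by checking that the specialization satisfies the initial conditions and the defining relation of the target invariant, then invoking uniqueness. Your ``affine in $z$'' formulation for (a) is a clean way to package the induction the paper sketches in one line, and your explicit twist-number bookkeeping in (b) spells out what the paper leaves implicit; note only that in the paper's notation $G_L$ (Theorem \ref{Theorem 5.7}) is the Jones--Conway variant, so the symbol in part (c) of the statement is being used loosely for the regular-isotopy Kauffman invariant $L_L$, which is what you (correctly) compare against.
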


\begin{proof} (a) It is true for diagrams representing trivial
links.  Then we proceed by induction on the height of the resolving
tree of the diagram.

(b) and (c).  It is enough to check the initial conditions and
compare the 2- or 3-argument operations used in the definitions.

Lemma 5.27 shows that the Jones-Conway-Kauffman polynomial is
equivalent to Jones-Conway and Kauffman polynomials.  There is a
remarkable similarity in it with Proposition 3.38.  It
follows immediately that any invariant yielded by a Kauffman
algebra (e.g. Jones-Conway-Kauffman polynomial) is an invariant of
$\sim_K$ equivalence of oriented or non-oriented diagrams.
\end{proof}

\begin{rem}\label{Remark 5.28} The theory of invariants yielded by Kauffman
algebras can be developed similarly as the theory of invariants
yielded by Conway algebras.  In particular:

\begin{enumerate}

\item[(a)] One can look for involutions $\tau$ on $A$ and $\tau'$ on
$A'$ such that $\tau(a_{i,j})=a_{i,-j}$, $\varphi(\tau(w)) =
\tau'(\varphi(w))$ where $w\in A$, $\tau(a\ast(b,c)) =
\tau(a)\ast(\tau(b), \tau'(c))$.  Then $A_{\overline{L}}=\tau(A_L)$
where $A_L$ is the value of the invariant for an oriented diagram
$L$ and $\overline{L}$ denotes the mirror image of $L$ (compare
Lemma \ref{Lemma 3.16}).  For the Kauffman algebra which yields the
Jones-Conway-Kauffman polynomial $\tau:\ints[a,t,z] \to
\ints[a,t,z]$ exists and is given on the generators by $\tau(a) =
a^{-1}$, $\tau(t)=t$, $\tau(z)=z$.

\item[(b)] It is possible to build the universal Kauffman algebra
(using terms) and to show that for such the universal Kauffman
algebra the involutions $\tau$ and $\tau'$ exist.

\item[(c)] It is sensible to look for an operation $o:A\times
A\times A' \to A$ which for orientable diagrams will recover the
value of the invariant for $L_\circ$ from its values for $L_+$,
$L_-$, and $L_\infty$.  The operation $o$ exists for the Kauffman
algebra which yields the [J-C-K] polynomial.

\item[(d)]  One can look for conditions which a Kauffman algebra
should satisfy if we want simple formulas for the value of
invariants of connected and disjoint sums of diagrams.

\item[(e)] One can look for conditions which a Kauffman algebra
should satisfy if we want to modify the invariant of regular
isotopy of diagrams yielded by the algebra into invariant of
isotopy of links (e.g. if there exist two bijections $\beta : A
\to A$ and $\beta':A'\to A'$ such that $\beta(a_{i,j}) = a_{i,
j-1}$, $\varphi(\beta(a)) = \beta'(\varphi(a))$ and $\beta(a\ast
(b,c)) = \beta(a) \ast (\beta(b), \beta(c))$ then
$\beta(\beta(\ldots \beta(A_L)\ldots))$ (where $\beta$ is applied
$\tw(L)$-times) is an invariant of isotopy of $L$).

\item[(f)] We can consider geometrically sufficient partial Kauffman
algebras (we modify Kauffman algebras in the same way as Conway
algebras -- Definition \ref{Definition 4.2}) which will yield
regular isotopy invariants of oriented or nonoriented diagrams.

\item[(g)] We can build a polynomial of infinitely many variables
which will generalize the J-C-K polynomial (similarly as in the case
of Jones-Conway polynomial; Example \ref{Example 4.5}).

\item[(h)] One can show that the invariant yielded by a
geometrically sufficient partial Kauffman algebra is invariant under
mutation of oriented or non-oriented diagrams (see Corollary
\ref{Corollary 5.20}).

\end{enumerate} \end{rem}

Many of which we formulated before for invariants of Conway type may
be considered also for invariants got by the Kauffman method.

\begin{prob}\label{Problem 5.29}

\begin{enumerate}

\item[(a)] Do there exist two oriented diagrams, which have the same
Jones-Conway-Kauffman polynomial but which can be distinguished by
some invariant yielded by a Kauffman algebra?

\item[(b)] Do there exist two oriented diagrams which have the same
value of invariant yielded by any Kauffman algebra but which can be
distinguished by some invariant yielded by a geometrically
sufficient partial Kauffman algebra?

\item[(c)] Do there exist two oriented diagrams which are not
$\sim_K$ equivalent but which cannot be distinguished by the
invariant yielded by any geometrically sufficient partial Kauffman
algebra?

\item[(d)] Assume that an oriented diagram of a knot $L$ satisfies
$L \sim_K \overline{L}$.  Does it follow that $L$ is isotopic to
$\overline{L}$ or $-\overline{L}$?

\item[(e)] Assume that oriented knots $L_1$ and $L_2$ have the same
value of the Kauffman polynomial.  Can it happen that these knots
have different Jones-Conway polynomials? (In particular is it
possible if $L_2 = \overline{L_1}$?)
\end{enumerate}

\end{prob}

The knot $9_{42}$ (in the Rolfsen \cite{Ro} notation) ha the same
value of the JCK polynomial $\widetilde{J}(a,t,z)$ as its mirror
image but different signature.  The signature of knots is a skein
invariant and it can be yielded (together with the determinant) by
some geometrically sufficient partial Kauffman algebra.

The problem (d) is a weak version of the Kauffman conjecture
(\ref{Conjecture 5.11}).  It is true for knots up to 9 crossings and
the only knots up to 11 crossings for which it still should be
verified are $10_{71}$ (in the Rolfsen notation) and $11_{449}$ (in
the Thistlethwaite \cite{Thist_2} notation).  The second part of the
problem (e) is true for knots up to 11 crossings.

The Kauffman polynomial seems to be powerful in distinguishing
closed 3-braids.

\begin{conj}\label{Conjecture 5.30} Let $\gamma$ be a closed 3-braid
which closure is not isotopic to the mirror image.  The
$$\widetilde{J}_{\hat{\gamma}}(a,t,z) \neq
\widetilde{J}_{\hat{\overline{\gamma}}}(a,t,z).$$\end{conj}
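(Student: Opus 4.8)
The plan is to leverage Lemma~\ref{Lemma 5.27}, which shows that the Jones--Conway--Kauffman polynomial $\widetilde{J}_L(a,t,z)$ determines both the (writhe-normalized) Jones--Conway polynomial $P_L$ and the Kauffman polynomial $F_L(a,z)$. Consequently it is enough to prove the statement for $F$ alone: if the closure $\hat{\gamma}$ of a $3$-braid $\gamma$ is not isotopic to its mirror image, then $F_{\hat{\gamma}}(a,z)\neq F_{\hat{\overline{\gamma}}}(a,z)$. By Lemma~\ref{Lemma 5.9}(d) one has $F_{\hat{\overline{\gamma}}}(a,z)=F_{\hat{\gamma}}(a^{-1},z)$, so the conjecture reduces to the purely combinatorial assertion that the Kauffman polynomial of a chiral closed $3$-braid is not palindromic in $a$. (Note this is consistent with the known failures, e.g. $9_{42}$: that knot has braid index $4$, so it is not a closed $3$-braid and does not fall under the conjecture.)

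The first step is to cut the problem down to a finite list of cases. By the Birman--Menasco classification of closed $3$-braids together with Murasugi's normal forms \cite{Mu_1}, one may assume $\gamma=\Delta^{2d}\gamma_0$ with $\gamma_0$ lying in one of the canonical families $\Omega_1,\dots,\Omega_6$, up to conjugation in $B_3$, and up to an explicit short list of Markov-type exceptions and the torus-link cases $\widehat{\Delta^{2d}}$, which would be checked separately. The mirror operation sends $\Delta\mapsto\Delta^{-1}$ and $\sigma_i^{\pm1}\mapsto\sigma_i^{\mp1}$, so $\overline{\gamma}$ is conjugate to a normal-form word $\Delta^{-2d}\gamma_0'$; by Murasugi's uniqueness of normal forms, $\hat{\gamma}$ is amphichiral precisely when these two are conjugate, a condition controlled by $d$ together with the type of $\gamma_0$. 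For each non-amphichiral pair one then needs to exhibit a coefficient of $F_{\hat{\gamma}}$ that is altered by $a\mapsto a^{-1}$.

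The second step is to compute $F_{\hat{\gamma}}$ via the Birman--Wenzl--Murakami algebra $\mathrm{BMW}_3$, the representation-theoretic home of the Kauffman polynomial, which is finite dimensional; thus $F_{\hat{\gamma}}$ is an explicit weighted Markov trace of the image of $\Delta^{2d}\gamma_0$. Since $\Delta^2$ is central, its image acts as a scalar (a monomial in $a$ times a power of $z$) on each simple summand, so $F_{\widehat{\Delta^{2d}\gamma_0}}$ is a transparent function of $d$ times the trace of the image of $\gamma_0$. This makes the $a$-dependence almost explicit, and one expects the extreme-degree term in $a$ — whose exponent is governed by $e(\gamma)=\tw(\hat{\gamma})$ and by Bennequin-type data of the braid — to be where the asymmetry manifests: it is fixed by $a\mapsto a^{-1}$ only if $e(\gamma)=0$ \emph{and} the internal part is itself symmetric, which by the classification forces amphichirality. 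This settles the generic case $e(\gamma)\neq 0$.

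The hard part — and the reason the statement remains a conjecture — is the residual case $e(\gamma)=0$. There the Jones--Conway part is useless (this is exactly the situation of Theorem~\ref{Theorem 3.29}, where every Conway-algebra invariant agrees on $\hat{\gamma}$ and $\hat{\overline{\gamma}}$), so the argument rests entirely on Kauffman-specific coefficients, and one must rule out accidental cancellation in the $\mathrm{BMW}_3$ trace across the whole $\Delta^{2d}$-family and the exceptional list. I would attack this by expanding the $\mathrm{BMW}_3$ trace of $\gamma_0$ in a basis adapted to the $a\mapsto a^{-1}$ involution and tracking a single off-symmetry coefficient along each Murasugi family by induction on word length — organized exactly like the inductive resolving-tree arguments of Theorems~\ref{Theorem 3.29} and~\ref{Theorem 3.34}. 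A closed form for that coefficient, or a finite verification once each family has been reduced sufficiently, would complete the proof; producing such a form is where the real difficulty lies.
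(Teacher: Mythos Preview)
The paper does not prove this statement: it is presented as a \emph{conjecture} (Conjecture~\ref{Conjecture 5.30}), motivated only by the remark that ``the Kauffman polynomial seems to be powerful in distinguishing closed 3-braids.'' There is no argument in the paper to compare your proposal against.

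Your proposal is not a proof either, and you say so yourself. The preliminary reductions are fine and follow the paper's own toolkit: Lemma~\ref{Lemma 5.27} does reduce $\widetilde{J}$ to the pair $(P,F)$, and Lemma~\ref{Lemma 5.9}(d) does rephrase the $F$-part as non-palindromicity in $a$. But from there on the proposal is a program, not an argument. The ``generic'' case $e(\gamma)\neq 0$ is handled only heuristically (``one expects the extreme-degree term in $a$\ldots''); you would need an actual bound identifying a nonzero extreme coefficient and showing its $a$-degree is nonzero, and that is already nontrivial. The residual case $e(\gamma)=0$ you explicitly leave open, correctly noting that Theorem~\ref{Theorem 3.29} kills the $P$-contribution there, so everything rests on $F$; your proposed inductive tracking of an ``off-symmetry coefficient'' through the $\mathrm{BMW}_3$ trace is a reasonable plan of attack, but no step of it is carried out. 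Note also that the structural inputs you lean on (the Birman--Menasco classification of closed $3$-braids, the Birman--Wenzl--Murakami algebra) postdate this 1986 survey, so even as a sketch this goes well beyond what the paper had available. In short: the statement is open in the paper, and your write-up locates the difficulty but does not resolve it.
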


\begin{prob}\label{Problem 5.31} When we have defined invariants of
diagrams using Kauffman algebras or we have defined the relation
$\sim_K$ we have had the problem with orientation of $L^p_\infty$.
New component of $L^p_\infty$ inherits from $L$ two different
orientations on its pieces (Figure \ref{Fig 5.15})\end{prob}

\begin{figure}[htbp]
\centering
\includegraphics{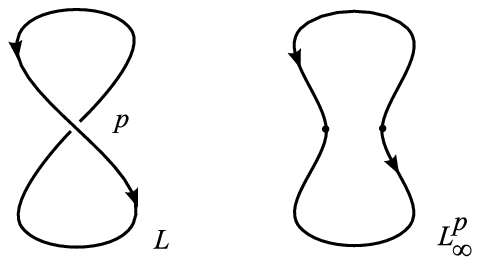}
\caption{}\label{Fig 5.15}
\end{figure}

It seems to be the reasonable idea to consider diagrams each
component of which can have different orientations (i.e. each
components is divided into arcs and every arc is oriented).  The
author tried a polynomial invariant and his computations show that
the problem is difficult but hopefully not impossible to solve (we
suggest to consider the simple diagram from Figure \ref{Fig 5.16}
and to build a resolving tree starting at first from $p$ and then
starting from $q$.

\begin{figure}[htbp]
\centering
\includegraphics{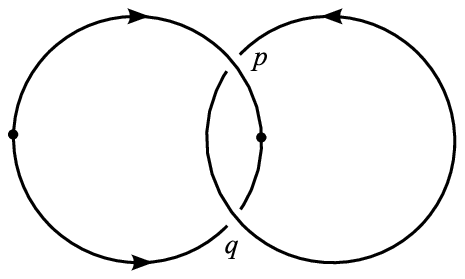}
\caption{}\label{Fig 5.16}
\end{figure}

\section*{Table (made by T.~Przytycka)}

The following table gives a braid expression, the value of the
Jones-Conway-Kauffman polynomial ($\widetilde{J}_K(a,t,z)$) and the
value of the supersignatures $\sigma_{0.5,0.5}$, $\sigma_{2,2}$,
$\sigma_{1.6, 0.1}$, and $ \sigma_{0.1,1.6}$ for some knots which
were considered in the survey.  For knots up to 10 crossings the
Rolfsen \cite{Ro} notation is used, for knots with 11 or more
crossings we use the notation of Thistlethwaite \cite{Thist_2} or
Perko \cite{Pe} (for the meaning of $K_a$ see the remark before
Problem 3.35).  $\sigma_{0.5, 0.5}$ is the classical
(Murasugi) signature, $\sigma_{2,2}$ is a Tristram-Levine signature
and $\sigma_{1.6, 0.1}$ and $\sigma_{0.1,1.6}$ are supersignatures
associated with the Jones polynomial.\bigskip

\begin{equation*} \begin{array}{ll} 8_8 & \scriptstyle \sigma^{2}_{1} \sigma^{2}_{2}
\sigma^{-2}_{3} \sigma^{-1}_{1} \sigma^{}_{2} \sigma^{-1}_{3} \ \ \
\ \ \ \  -a^{-4}-a^{-2}+2 + a^2 + t^2(a^{-4} + 2a^{-2} -2
-a^2)+t^4(-a^{-2}+1) + \\  & \scriptstyle+
z(2a^{-5}+3a^{-3}+a^{-1}-a-a^3 +
t(3a^{-4}+3a^{-2} +1 -a^2)+t^2(-3a^{-5} -5a^{-3} -3a^{-1}+a^3)+ \\
& \scriptstyle  +t^3(-6a^{-4}-8a^{-2}-2 +2a^2) +
t^4(a^{-5}a^{-1}+2a^1)+t^5(2a^{-4} + 4a^{-2}
+2)+t^6(a^{-3}+a^{-1})), \\ & \scriptstyle  0,0,0,0.
\end{array}\end{equation*}

\begin{equation*} \begin{array}{ll} \overline{10}_{129} & \scriptstyle
\sigma^{}_{1} \sigma^{2}_{2} \sigma^{}_{1} \sigma^{-2}_{3}
\sigma^{-1}_{2} \sigma^{}_{3} \sigma^{}_{1} \sigma^{-2}_{2} \ \ \ \
\ \ \ -a^{-4}-a^{-2}+2 + a^2 + t^2(a^{-4} + 2a^{-2} -2 -a^2) + \\  &
\scriptstyle +t^4(-a^{-2}+1) + z(a^{-5}-a^{-3} -5a^{-1} -5a
-2a^3+t(2a^{-4}-2-2a^{-2})+t^2(-3a^{-5}+ \\  & \scriptstyle +
4a^{-2} +15a^{-1} + 9a +a^3)+t^3(-6a^{-4} + a^{-2} +7
+2a^2)+t^4(a^{-5}-6a^{-3} -11a^{-1}-4a)+\\  & \scriptstyle
+t^5(2a^{-4}-a^{-2}-4)+t^6(2a^{-3}3a^{-1}+a) +t^7(a^{-2}+1)); \\  &
\scriptstyle  0,0,0,0. \end{array}\end{equation*}

\begin{equation*} \begin{array}{ll} {13}_{6714} & \scriptstyle
\sigma^{-2}_{2} \sigma^{-1}_{3} \sigma^{-1}_{1} \sigma^{}_{2}
\sigma^{}_{3} \sigma^{}_{1} \sigma^{-2}_{2} \sigma^{}_{1}
\sigma^{}_{3} \sigma^{-1}_{2} \sigma^{-1}_{3} \sigma^{}_{2}
\sigma^{-1}_{3} \ \ \ \ \ \ \ -a^{-4}-a^{-2}+2 + a^2 + \\  &
\scriptstyle t^2(a^{-4} + 2a^{-2} -2 -a^2)+t^4(-a^{-2}+1) +
z(3a^{-5}+7a^{-3} + 7a^{-1} + 3a + \\  & \scriptstyle
+t(4a^{-4}+6a^{-2}+4) +t^2(-4a^{-5}-18a^{-3}
-27a^{-1}-13a)+t^3(-7a^{-4} -20a^{-2} - \\  & \scriptstyle -14+a^2)
+t^4(a^{-5} + 15a^{-3} +31a^{-1} 17a) +t^5 (2a^{-4} + 19a^{-2} +17)
+t^6(-6a^{-3} - \\  & \scriptstyle -
13a^{-1}-7a)+t^7(-7a^{-2}-7)+t^8(a^{-3}+2a^{-1}+a)+t^9(a^{-2}+1)),
\\  & \scriptstyle 0,0,0,0. \end{array}\end{equation*}

\begin{equation*} \begin{array}{ll} \overline{11}_{388} &
\scriptstyle \sigma^{5}_{1} \sigma^{}_{3} \sigma^{-1}_{2}
\sigma^{-2}_{1} \sigma^{-2}_{3} \sigma^{2}_{2}, \, a^{-6}+4a^{-4}
+5a^{-2} +3+t^2(-5a^{-4}-10a^{-2}-4)+ \\  & \scriptstyle
+t^4(a^{-4}+6a^{-2}+1)-t^6a^{-2}+z(a^{-5}+3a^{-3} +2a^{-1}+t(-a^{-6}
-8a^{-4} -13a^{-2}-7)+ \\  & \scriptstyle
+t^2(-a^{-5}-12a^{-3}-11a^{-1})+t^3(14a^{-4}24a^{-2}+14)t^4(15a^{-3}+15a^{-1})+
\\  & \scriptstyle +t^5(
-7a^{-4}-13a^{-2}-7)+t^6(-7a^{-3}-7a^{-1})+t^7(a^{-4}+2a^{-2}+1)+t^8(a^{-3}+a^{-1})),
\\  & \scriptstyle -4, 0, -4, -4. \end{array}\end{equation*}

\begin{equation*} \begin{array}{ll} \overline{9}_{42}=K_3=K_{-2} &
\scriptstyle \sigma^{-3}_{2} \sigma^{-1}_{3} \sigma^{}_{1}
\sigma^{-1}_{2} \sigma^{2}_{3} \sigma^{}_{1} \sigma^{-1}_{2}
\sigma^{}_{3}, \, -2a^{-2}-3-2a^2+t^2(a^{-2}+4+a^2)-t^4 + \\  &
\scriptstyle +z(-2a^{-1}-2a +t(5a^{-2}+8+5a^2)+t^2(6a^{-1}+6a)
+t^3(-5a^{-2}-9-5a^2)+ \\  & \scriptstyle
t^4(-5a^{-1}-5a)+t^5(a^{-2}+2 +a^2)+t^6(a^{-1}+a), \\  &
\scriptstyle 2,0,2,2.\end{array}\end{equation*}

\begin{equation*} \begin{array}{ll} 10_{71} &
\scriptstyle \sigma^{-2}_{3} \sigma^{-1}_{1} \sigma^{-1}_{4}
\sigma^{}_{2} \sigma^{-1}_{3} \sigma^{-1}_{4} \sigma^{}_{2}
\sigma^{-1}_{1} \sigma^{}_{2} \sigma^{}_{3} \sigma^{2}_{2}
\sigma^{}_{4}, \, -a^{-4} -3a^{-2}-3-3a^2-a^4+ \\  & \scriptstyle +
t^2(a^{-4}+4a^{-2}+5+4a^2 +a^4)+t^4(-2a^{-2}-3
-2a^2)+t^6+z(a^{-5}+a^{-3}-a^{-1}-a + \\  & \scriptstyle +a^3 +a^5
t(3a^{-4}+6a^{-2}+7+6a^2+3a^4)+t^2(-2a^{-5}+7a^{-1}+7a -2a^5)+ \\  &
\scriptstyle +t^3(-6a^{-4}-10a^2 -9
-10a^2-6a^4)+t^4(a^{-5}-5a^{-3}-15a^{-1}-15a -5a^3 +a^5)+\\  &
\scriptstyle t^5(3a^{-4}+2a^{-2}-3 + 2a^2+3a^4) +
t^6(4a^{-3}+8a^{-1}+8a 4a^3)+t^7(3a^{-2}+6+ \\  & \scriptstyle
+3a^2)+t^8(a^{-1}+a)), \\  & \scriptstyle 0,0,0,0.
\end{array}\end{equation*}

\begin{equation*} \begin{array}{ll} \overline{11}_{394} &
\scriptstyle \sigma^{-2}_{1} \sigma^{-2}_{3} \sigma^{}_{2}
\sigma^{}_{3} \sigma^{-1}_{1} \sigma^{-1}_{2} \sigma^{}_{3}
\sigma^{2}_{2}, \, 2a^{-2} +5 +2a^2 +t^2(-3a^{-2}-8
-3a^2)+t^4(a^{-2}+ \\  & \scriptstyle +5
+a^2)-t^6+z(-3a^{-3}-7a^{-1}-7a -3a^3 +t(-5a^{-2} -8
-5a^2)+t^2(7a^{-3}+16a^{-1} + \\  & \scriptstyle +16a +7a^3)+
t^3(13a^{-2}+23+ 13a^2
)+t^4(-5a^{-3}-8a^{-1}-8a-5a^3)+t^5(-10a^{-2}- \\  & \scriptstyle
-19
-10a^2)+t^6(a^{-3}-2a^{-1}-2a+a^3)+t^7(2a^{-2}+4+2a2)+t^8(a^{-1}+a)),
\\  & \scriptstyle 2,0,2,2.\end{array}\end{equation*}

\begin{equation*} \begin{array}{ll} \overline{11}_{449}=K_4=K_{-3} &
\scriptstyle \sigma^{-4}_{2} \sigma^{-1}_{3} \sigma^{}_{1}
\sigma^{-1}_{2} \sigma^{3}_{3} \sigma^{}_{1} \sigma^{-1}_{2}
\sigma^{}_{3}, \, a^{-2}+3+a^2+t^2(-3a^{-2}-7-3a^2)+ \\  &
\scriptstyle
+t^4(a^{-2}+5+a^2)-t^6+z(a^{-5}+2a^{-3}+a^{-1}-a-a^3+t(a^{-4}-3a^{-2}-8-5a^2)+\\
& \scriptstyle
+t^2(-3a^{-3}-7a^{-1}+2a+6a^3)+t^3(7a^{-2}+20+16a^2)+t^4(a^{-3}+8a^{-1}+2a-5a^3)+\\
& \scriptstyle
+t^5(-5a^{-2}-15-11a^2)+t^6(-5a^{-1}-4a+a^3)+t^7(a^{-2}+3+2a^2)+t^8(a^{-1}+a)),
\\  & \scriptstyle 2,0,2,2.\end{array}\end{equation*}

\begin{equation*} \begin{array}{ll} 10_{48} &
\scriptstyle \sigma^{4}_{1} \sigma^{-3}_{2} \sigma^{}_{1}
\sigma^{-2}_{2}, \,
4a^{-2}+9+4a^2+t^2(-8a^{-2}-20-8a^2)+t^4(5a^{-2}+18a+5a^2)+ \\  &
\scriptstyle
+t^6(-a^{-2}-7-a^2)+t^8+z(2a^{-5}-7a^{-1}-9a-3a^3+a^5+t(2a^{-4}-3a^{-2}-7-
\\  & \scriptstyle
-5a^2+a^4)+t^2(-3a^{-5}-a^{-3}+12a^{-1}+21a+8a^3-3a^5)+t^3(-5a^{-4}+4a^{-2}+19+\\
& \scriptstyle +13a^2 -5a^4)+t^4(a^{-5}-3a^{-3}-5a^{-1}-11a
-9a^3+a^5)+t^5(2a^{-4}-4a^{-2}-13-\\  & \scriptstyle
-10a^2+2a^4)+t^6(2a^{-3}+a+3a^3)+t^7(2a^{-2}+4+3a^2)+t^8(a_{-1}+a)),
\\  & \scriptstyle 0,0,0,0.\end{array}\end{equation*}

\begin{equation*} \begin{array}{ll} 10_{104} &
\scriptstyle \sigma^{2}_{1} \sigma^{-3}_{2} \sigma^{2}_{1}
\sigma^{-1}_{2} \sigma^{}_{1} \sigma^{-1}_{2}, \,
a^{-2}+3+a^2+t^2(-5a^{-2}-11-5a^2)+t^4(4a^{-2}+13+4a^2)+ \\  &
\scriptstyle t^6(-a^{-2}-6 -a^2) +t^8 +z(-2a^{-3}-4a^{-1} -2a +a^3
+a^5 +t(2a^{-4}-a^{-2}-4+a^2 +3a^4)+\\  & \scriptstyle +t^2(-2a^{-5}
+8a^{-3}+13a^{-1}+4a-a^3-2a^5)+t^3(-6a^{-4}+8a^{-2}+14 -a^2 -6a^4)+
\\  & \scriptstyle +t^4( a^{-5}-11a^{-3}-12a^{-1}-6a
-5a^3+a^5)+t^5(3a^{-4}-10a^{-2}-16 -4a^2 +3a^4)+\\  & \scriptstyle
+t^6(5a^{-3}3a^{-1}+2a +4a^3) +t^7(5a^{-2}+8
+4a^2)+t^8(2a^{-1}+2a)), \\  & \scriptstyle 0, 0, 0,
0.\end{array}\end{equation*}

\begin{equation*} \begin{array}{ll} 10_{125} &
\scriptstyle \sigma^{-3}_{1} \sigma^{-1}_{2} \sigma^{5}_{1}
\sigma^{-1}_{2} =\Delta^{-2}\sigma^{7}_{1} \sigma^{-1}_{2}, \,
3a^{-2}+7 +3a^2+t^2(-4a^{-2}-11-4a^2)+ \\  & \scriptstyle +
t^4(a^{-2}+6+a^2) -t^6 +z( a^{-5}-a^{-3} -6a^{-1}-8a -4a^3 +t(
a^{-4}-2a^{-2}-4-4a^2)+ \\  & \scriptstyle
+t^2(a^{-2}+8a^{-1}_17a+10a^3)+t^3(a^{-2}+7+10a^2)+t^4(-5a^{-1}-11a-6a^3)+
\\  & \scriptstyle +t^5(-5-6a^2) +t^6(a^{-1}+2a+a^3)+t^7(1+a^2)), \\  &
\scriptstyle -2, 0, -2, -2.\end{array}\end{equation*}

\newpage

\ \\
\ \\
\footnotesize{Z-d Ma{\l}ej Poligr. U.W. zam. 981/86; 100 egz.}
\end{document}